\newtheorem{lemma}{Lemma}[section]
\newtheorem{theorem}[lemma]{Theorem}
\newtheorem*{theorem*}{Theorem}
\newtheorem{corollary}[lemma]{Corollary}
\newtheorem{proposition}[lemma]{Proposition}
\newtheorem*{conjecture*}{Conjecture}
\newtheorem*{proposition*}{Proposition}
\newtheorem{conjecture}{Conjecture}
\newtheorem*{problem}{Problem}
\theoremstyle{definition}
\newtheorem*{claim*}{Claim}
\newtheorem*{convention}{Convention}
\newtheorem*{notation}{Notation}
\newtheorem{definition}[lemma]{Definition}
\newtheorem*{remark}{Remark}
\newtheorem*{remarks}{Remarks}
\newcommand{\lE}{\mathbb{E}^{\log}}
\newcommand{\C}{{\mathbb C}}
\newcommand{\E}{{\mathbb E}}
\newcommand{\N}{{\mathbb N}}
\renewcommand{\P}{{\mathbb P}}
\newcommand{\Q}{{\mathbb Q}}
\newcommand{\R}{{\mathbb R}}
\newcommand{\T}{{\mathbb T}}
\newcommand{\Z}{{\mathbb Z}}
\newcommand{\CB}{{\mathcal B}}
\newcommand{\CF}{{\mathcal F}}
\newcommand{\CO}{{\mathcal O}}
\newcommand{\CP}{{\mathcal P}}
\newcommand{\CX}{{\mathcal X}}
\newcommand{\CY}{{\mathcal Y}}
\newcommand{\CZ}{{\mathcal Z}}
\newcommand{\CK}{{\mathcal K}_{\text{rat}}}
\newcommand{\ug}{\underline g}
\newcommand{\uX}{{\underline X}}
\newcommand{\uG}{\underline G}
\newcommand{\ue}{\underline e}
\newcommand{\uz}{\underline z}
\newcommand{\uGamma}{\underline \Gamma}
\newcommand{\uh}{\underline h}
\newcommand{\umu}{\underline \mu}
\newcommand{\ux}{\underline x}
\newcommand{\bN}{{\mathbf N}}
\newcommand{\bI}{{\mathbf{I}}}
\newcommand{\bmu}{{\boldsymbol{\mu}}}
\newcommand{\blambda}{{\boldsymbol{\lambda}}}
\newcommand{\wt}{\widetilde}
\newcommand{\e}{\mathrm{e}}
\newcommand{\one}{\mathbf{1}}
\newcommand{\Aut}{\mathrm{Aut}}
\newcommand{\nnorm}[1]{\lvert\!|\!| #1|\!|\!\rvert}
\newcommand{\inv}{^{-1}}
\DeclareMathOperator{\id}{id}
\begin{document}

		\title{The  logarithmic Sarnak conjecture for ergodic weights}
		
		\author{Nikos Frantzikinakis}
		\address[Nikos Frantzikinakis]{University of Crete, Department of mathematics, Voutes University Campus, Heraklion 71003, Greece} \email{frantzikinakis@gmail.com}
		\author{Bernard Host}
		\address[Bernard Host]{
			Universit\'e Paris-Est Marne-la-Vall\'ee, Laboratoire d'analyse et
			de math\'ematiques appliqu\'ees, UMR CNRS 8050, 5 Bd Descartes,
			77454 Marne la Vall\'ee Cedex, France }
		\email{bernard.host@u-pem.fr}

		\begin{abstract} The M\"obius disjointness conjecture of Sarnak states that
			the M\"obius function does not correlate  with any bounded sequence of complex numbers arising from a  topological dynamical system with zero topological  entropy. We verify the logarithmically averaged variant of this  conjecture for a large class of systems, which includes  all  uniquely ergodic  systems with zero entropy. One consequence of our results is that the Liouville function has super-linear block growth. 	Our proof uses  a  disjointness argument and the key ingredient  is a structural result for measure preserving  systems naturally associated with the  M\"obius and the Liouville function. We prove that such systems  have no irrational spectrum
			and their building blocks are infinite-step nilsystems and Bernoulli systems.
			  To establish this structural result we
			  make a connection
			  with a problem of purely  ergodic nature via some identities recently obtained by Tao. In addition to an ergodic  structural result of Host and Kra,
			   our analysis is guided by the notion of strong stationarity which was introduced by Furstenberg and Katznelson in the early 90's and naturally plays a central  role in  the structural analysis of measure preserving systems associated with  multiplicative functions. 		
\end{abstract}

		\subjclass[2010]{Primary: 11N37; Secondary:  37A45. }
		
		\keywords{M\"obius function, Liouville function, Sarnak conjecture, Chowla conjecture}  
		
		\maketitle

\section{Introduction and main results }

\subsection{Main results related to the Sarnak conjecture}

Let $\blambda\colon \N\to \{-1,1\}$ be  the Liouville function which  is defined to be $1$ on positive integers with an even number of prime factors, counted with  multiplicity, and $-1$ elsewhere. We extend $\blambda$ to the integers in an arbitrary way, for example by letting $\blambda(-n)=\blambda(n)$ for negative $n\in \Z$ and $\blambda(0)=0$. The M\"obius function $\bmu$
is equal to $\blambda$ on integers  which are not divisible by any square number and is $0$ otherwise.

It is widely believed that
 the values of  the Liouville function and the non-zero values of the M\"obius function fluctuate between
$-1$ and $1$ in such a random way that forces non-correlation with any ``reasonable''  sequence
of complex numbers. This rather vague principle is referred  to  as the ``M\"obius randomness law'' (see \cite[Section~13.1]{IK04}) and   is often used to give   heuristic 
 asymptotics for  various
sums over primes (for examples see \cite{T15b}). The class of ``reasonable'' sequences is expected to include  all  bounded  ``low complexity'' sequences, and in this direction
a precise conjecture that uses the language of dynamical systems
was formulated by  Sarnak in~\cite{S, Sa12}:
\begin{conjecture*}[Sarnak]	Let $(Y,R)$ be  a topological dynamical system\footnote{Meaning that $Y$ is a compact metric space and $R\colon Y\to Y$ is a homeomorphism.} with zero  topological entropy. Then  for every $g\in C(Y)$ and $y\in Y$ we have
	$$
	\lim_{N\to\infty} \frac{1}{N}\sum_{n=1}^N g(R^ny)\, \bmu(n)=0.
	$$
	\end{conjecture*}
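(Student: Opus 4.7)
The plan is to proceed by a correspondence-and-disjointness argument and reduce the statement to a structural theorem for Furstenberg systems of $\bmu$. Fix $(Y,R)$, $g\in C(Y)$, $y\in Y$, and a subsequence $N_k\to\infty$ along which the averages $\frac{1}{N_k}\sum_{n=1}^{N_k} g(R^n y)\,\bmu(n)$ converge to some $L$; the goal is to force $L=0$. Passing to a further subsequence, I would form the weak-$*$ limit of the empirical measures of $(\sigma^n\bmu, R^n y)$ in $\{-1,0,1\}^{\Z}\times Y$. This produces a joint measure-preserving system $(X\times Y, S\times R,\nu)$ whose first marginal $(X,S,\mu_X)$ is a Furstenberg system of $\bmu$ and whose second marginal $\nu_Y$ is $R$-invariant and has zero measure-theoretic entropy by the variational principle. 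Writing $F_0(x)=x_0$ for the coordinate that reads off $\bmu$, one has $L=\int F_0(x)\,g(y)\,d\nu(x,y)$, so the task becomes to show that $F_0\otimes g$ integrates to zero against every such joining $\nu$.

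The next step would be to establish that every Furstenberg system of $\bmu$ is disjoint in the sense of Furstenberg from every zero-entropy system. Via the Host--Kra machinery this reduces to showing that such systems have no non-trivial Kronecker factor, no irrational spectrum, and that their characteristic factors split as an inverse limit of nilsystems together with Bernoulli components. Concretely, I would attempt to compute the Host--Kra seminorms $\nnorm{F_0}_{k}$ on $(X,\mu_X)$ by relating multilinear averages of $\bmu(n)\bmu(n+h_1)\cdots\bmu(n+h_k)$ to ergodic averages over dilations, so as to identify the maximal pronilfactor as the unique characteristic factor; the argument outlined in the abstract, combining Tao's identities with strong stationarity, is exactly of this shape. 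Once $F_0$ is shown to be orthogonal to every characteristic factor that can couple non-trivially with a zero-entropy $(Y,\nu_Y,R)$, one obtains $\int F_0\otimes g\,d\nu=\int F_0\,d\mu_X\cdot\int g\,d\nu_Y=0$, where the first factor vanishes by the prime number theorem.

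The essential obstacle for the statement as worded is that every analytic input in the plan above is currently available only in the logarithmically averaged regime. Tao's identity converts $\frac{1}{\log N}\sum_{n\le N}\frac{1}{n}\bmu(n)\bmu(n+h)$ into an ergodic average of dilated orbits, and the entropy-decrement and strong-stationarity arguments exploit the dilation invariance built into $\lE$; no analogous identity for ordinary Cesàro averages is known. To run the same scheme for the unweighted averages demanded here one would need either an unconditional removal of the logarithmic weighting (equivalent to the two-point Chowla conjecture $\frac{1}{N}\sum_{n\le N}\bmu(n)\bmu(n+h)\to 0$) or a Cesàro substitute for Tao's identity. I would therefore expect the disjointness program sketched above to succeed, without further input, only for the logarithmically averaged variant and for sufficiently structured $(Y,R)$ (such as uniquely ergodic ones, where the choice of $y$ is immaterial and the joining $\nu_Y$ is unique); the full Sarnak conjecture in Cesàro form remains out of reach for exactly this reason.
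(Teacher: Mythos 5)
The statement you are asked to prove is Sarnak's conjecture itself, which the paper does not prove: it is stated as an open conjecture, and the body of the paper establishes only the \emph{logarithmically averaged} variant, and only for systems with at most countably many ergodic invariant measures (Theorems~\ref{th:main-new} and \ref{th:main-1}). Your proposal is, in effect, an accurate reconstruction of that strategy --- correspondence principle, joining with a Furstenberg system of $\bmu$, Tao's identities, strong stationarity, and a disjointness theorem --- together with a correct diagnosis of why it cannot yield the Ces\`aro statement: Tao's identity and the entropy-decrement argument are only available for logarithmic averages, and removing the weight is essentially equivalent to the two-point Chowla conjecture. So you have not proved the statement, but you have correctly identified that no proof is given in the paper and why the method stops short.

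Two points in your sketch overstate even what the logarithmic method achieves, and are worth flagging. First, you assert that every Furstenberg system of $\bmu$ should be disjoint from \emph{every} zero-entropy system. The paper's disjointness result (Proposition~\ref{P:disjoint} and Corollary~\ref{C:disjoint}) requires the target system to be ergodic, or to have at most countably many ergodic components, plus spectral compatibility hypotheses; Section~\ref{SS:problems} exhibits a zero-entropy system with uncountably many ergodic components (a relatively trivial skew product on $\T^2$) for which the method breaks down, and it is even consistent with current knowledge that a Furstenberg system of $\blambda$ is isomorphic to that system. Second, you claim the Furstenberg systems have ``no non-trivial Kronecker factor.'' That is not established and is false for $\bmu$ (the squarefree structure forces rational spectrum). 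What the paper proves is weaker and carefully calibrated: the extended system has no \emph{irrational} spectrum (Theorem~\ref{th:no-irr-eigen}), and the function $F_0$ is orthogonal to the \emph{rational} Kronecker factor, the latter using Tao's two-point correlation theorem as number-theoretic input. The disjointness argument is then run against functions orthogonal to the rational Kronecker factor rather than via full disjointness. With these corrections your account matches the paper's actual (logarithmic, restricted) results; the Ces\`aro conjecture as stated remains open.
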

This is a fundamental and difficult problem and there is a long  list of partial results that cover a variety of dynamical systems (see Section~\ref{subsec:particular}).
The goal of this article is to verify the conjecture of Sarnak for a  large class of dynamical systems $(Y,R)$, by exploiting mostly the structure of measure preserving dynamical systems generated by  the M\"obius  and the Liouville function
rather than the structure of the topological dynamical  system  $(Y,R)$  for which we have limited information.
The price to pay is that we have to restrict to logarithmic averages rather than the more standard Ces\`aro averages.

We give two variants of our main result, the first
 imposes a global condition on the topological dynamical system:
\begin{theorem}
\label{th:main-new}
	Let $(Y,R)$ be a topological  dynamical system with zero topological entropy and at most countably many  ergodic invariant measures. Then for every  $y\in Y$  and  every $g\in C(Y)$  we have
	\begin{equation}
	\label{eq:main-new}
	\lim_{N\to\infty} \frac 1{\log N}\sum_{n=1}^N\frac{g(R^ny)\, \bmu(n)}{n} =0.
	\end{equation}
		Moreover, a similar statement holds with the Liouville function $\blambda$ in place of $\bmu$.
\end{theorem}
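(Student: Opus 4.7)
The plan is a disjointness argument in the spirit of Furstenberg. Assume for contradiction that \eqref{eq:main-new} fails, so there exist $g \in C(Y)$, $y\in Y$ and a sequence $N_k \to \infty$ along which the logarithmic averages stay bounded away from $0$. Embed $\bmu$ into the shift space $(X,S)$ with $X = \{-1,0,1\}^{\Z}$ and $S$ the left shift, so that $\bmu(n)=F(S^n\bmu)$ for the $0$-th coordinate projection $F\colon X\to\{-1,0,1\}$. After passing to a further subsequence the logarithmic empirical measures
\[
\rho_k \;=\; \frac{1}{\log N_k}\sum_{n=1}^{N_k}\frac{1}{n}\,\delta_{(R^n y,\,S^n\bmu)}
\]
on $Y\times X$ converge weak-$*$ to an $(R\times S)$-invariant Borel probability measure $\rho$ satisfying $\int g\otimes F\,d\rho\neq 0$. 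The second marginal $\mu$ of $\rho$ is, by construction, a Furstenberg system of $\bmu$ in the logarithmic sense.

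The countability hypothesis on $(Y,R)$ is used to reduce to a single ergodic base. The first marginal of any $(R\times S)$-ergodic component of $\rho$ is automatically $R$-ergodic on $Y$, and by hypothesis there are only countably many such measures. Grouping the ergodic decomposition of $\rho$ accordingly yields $\rho=\sum_i c_i\rho_i$, where the first marginal of each $\rho_i$ is one of the ergodic measures $\nu_i$. By the variational principle and zero topological entropy, each $\nu_i$ has zero measure-theoretic entropy. Choose an index $i$ with $\int g\otimes F\,d\rho_i\neq 0$ and call this joining $\kappa$ and its base $\nu_0$.

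I now invoke the structural theorem announced in the abstract: the Furstenberg system $(X,\mu,S)$ has no irrational spectrum, and its ergodic components are built from infinite-step nilsystems and Bernoulli systems. Decompose $F=F_{\mathrm{nil}}+F_{\mathrm{B}}$ modulo $\mu$-null sets, with $F_{\mathrm{nil}}$ measurable with respect to the infinite-step nilfactor (whose Kronecker factor is profinite) and $F_{\mathrm{B}}$ measurable with respect to the complementary Bernoulli factor. Furstenberg's classical disjointness of Bernoulli systems from zero-entropy systems forces $\kappa$ to be a product on the $Y$ and Bernoulli factors, so
\[
\int g\otimes F_{\mathrm{B}}\,d\kappa \;=\; \Bigl(\int g\,d\nu_0\Bigr)\Bigl(\int F_{\mathrm{B}}\,d\mu\Bigr) \;=\; 0,
\]
where $\int F_{\mathrm{B}}\,d\mu=0$ follows from the prime number theorem in arithmetic progressions, which already forces $\int F\,d\mu$ and $\int F_{\mathrm{nil}}\,d\mu$ to vanish. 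For the nilpart I would approximate the infinite-step nilsequence by functions pulled back from finite quotients of the profinite Kronecker factor and then apply the Tao identities and the Host--Kra strong-stationarity machinery (both cited in the abstract) to rewrite the correlation as a combination of logarithmically averaged sums of $g(R^n y)\bmu(n)/n$ restricted to arithmetic progressions, each of which vanishes by PNT in APs together with an orbit-average bound on $g$. The Liouville version follows verbatim since the same structural theorem applies.

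The hardest step is the last one: translating the abstract structural decomposition of the Furstenberg system of $\bmu$ into actual vanishing of the correlation with the observable $n\mapsto g(R^n y)$ coming from the zero-entropy system. The no-irrational-spectrum conclusion removes abelian-level obstructions, but to push through the higher-step nilpotent structure one must show that, after joining with a zero-entropy system, the effective behaviour of an infinite-step nilsequence with profinite Kronecker factor is encoded by its profinite data. This reduction, which uses the strong stationarity idea of Furstenberg and Katznelson and the Tao identities, is where the main technical work of the paper will sit.
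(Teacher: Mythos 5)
Your overall architecture (correspondence principle, reduction to countably many ergodic bases via the variational principle, splitting off the Bernoulli part by Furstenberg disjointness, then handling the structured part) matches the paper's strategy, and the first half of your argument is essentially the paper's proof of Theorem~\ref{th:main-new} via Theorem~\ref{th:main-1}. But the last step, which you correctly identify as the hard one, contains a genuine gap, and the specific way you propose to close it would fail. The sums $\lE_{n}\, g(R^ny)\,\bmu(n)\cdot\one_{n\equiv a\,(q)}$ do \emph{not} vanish by the prime number theorem in arithmetic progressions together with an orbit-average bound on $g$: restricting a zero-entropy weight to an arithmetic progression produces another zero-entropy weight, so this assertion is essentially the theorem you are trying to prove. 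The actual number-theoretic input the paper needs at this point is not PNT in APs but Tao's logarithmically averaged two-point Chowla theorem (resting on Matom\"aki--Radziwi{\l}{\l}): it is used to show that $\E_{n\in\N}\bigl|\int F_0\cdot T^nF_0\,d\mu\bigr|=0$, i.e.\ that the function $F_0$ (more precisely its lift $\wt F_0$ to the extension) is orthogonal to the \emph{entire} Kronecker factor. Orthogonality to the Kronecker factor is a statement about two-point correlations $\lE_n\,\bmu(n)\bmu(n+h)$, which PNT in APs says nothing about. Once that orthogonality is in hand, the vanishing of the correlation of the nil-part with the zero-entropy system is a disjointness statement (Part~(i) of Proposition~\ref{P:disjoint}), proved via the Host--Kra structure theorem applied to $(Y,\nu,R)$ and unique ergodicity of product nilsystems with disjoint spectrum — none of which appears in your sketch.

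Two further points. First, the structural theorem gives that the Furstenberg system is a \emph{factor} of a system whose ergodic components are products of infinite-step nilsystems and Bernoulli systems; a factor of such a product need not itself split as $F=F_{\mathrm{nil}}+F_{\mathrm{B}}$ in the way you use it. The paper therefore lifts the joining $\sigma$ to a joining $\wt\sigma$ of the extension $(X^\Z,\wt\mu,S)$ with $(Y,\nu,R)$ and runs the disjointness argument upstairs; your argument should do the same. Second, even granting the product decomposition on each ergodic component, the function $F-\E(F\mid Z_\infty)$ is orthogonal to the nilfactor but need not be measurable with respect to the Bernoulli coordinate, so the clean identity $\int g\otimes F_{\mathrm{B}}\,d\kappa=\bigl(\int g\,d\nu_0\bigr)\bigl(\int F_{\mathrm{B}}\,d\mu\bigr)$ requires the more careful case analysis of Lemma~\ref{L:disjoint2} (approximating by product functions $f_1\otimes f_2$ and treating the cases $\int f_2\,d\lambda=0$ and $f_1\perp\CK(T')$ separately).
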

\begin{remark}
			In particular,  our result applies if the system $(Y,R)$ has zero topological entropy
	and  is uniquely ergodic.
	\end{remark}
A rather surprising consequence of the previous result is a seemingly unrelated statement
about the block complexity $P_\blambda(n)$ of the Liouville function which is defined to be  the number of  sign patterns of size $n$  that  are taken by consecutive values of the Liouville function (see Section~\ref{th:Liou-non-linear} for a more formal definition).
 Since the Liouville function is not periodic (because $\lambda(2n)=-\lambda(n)$), it follows from \cite{MH40} that $P_\blambda(n)\geq n+1$ for every $n\in\N$. Moreover, in
  \cite[Proposition~2.9]{MRT15b} it was shown that $P_\blambda(n)\geq n+5$ for every $n\geq 3$ and that these $n+5$ sign patterns are taken on a set of positive upper density of starting points. The Chowla conjecture predicts that  $P_\blambda(n)=2^n$ for every $n\in\N$, equivalently, all possible sign patterns of size $n$ are taken by the Liouville function.  But we are  far from being able to verify this. In fact, it was not even known  that  $P_\blambda(n)$ has super-linear growth, meaning, $\lim_{n\to\infty}P_\blambda(n)/n=\infty$. We verify this property:
\begin{theorem}
	\label{th:Liou-non-linear}
	The  Liouville function has super-linear block growth.
\end{theorem}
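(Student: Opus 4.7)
The plan is to argue by contradiction and reduce to Theorem~\ref{th:main-new}. Suppose that $P_\blambda(n)/n$ does \emph{not} tend to infinity, so there is a constant $C$ and a sequence $n_k\to\infty$ with $P_\blambda(n_k)\le C\,n_k$ for all $k$.

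First I would set up the topological dynamical system naturally associated to $\blambda$. Fix some bi-infinite extension of $\blambda$ (say by reflection, as already chosen in the paper) and let $y_0\in\{-1,0,1\}^\Z$ be the resulting point; let $Y\subseteq\{-1,0,1\}^\Z$ be the orbit closure of $y_0$ under the shift $R$. Since the $n$-block complexity of $Y$ agrees (up to a harmless additive constant coming from the $0$ at the origin of the extension) with $P_\blambda(n)$, the subshift $Y$ also has linear complexity along the subsequence~$n_k$. By subadditivity of $\log P_Y$ (Fekete), $\lim_{n\to\infty}\frac{\log P_Y(n)}{n}=\inf_{n}\frac{\log P_Y(n)}{n}=0$, so $(Y,R)$ has zero topological entropy.

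Next I would establish that $(Y,R)$ has at most countably many ergodic invariant probability measures, so that Theorem~\ref{th:main-new} applies. This is exactly where linear-complexity structure theory enters: by the theorem of Boshernitzan (for the minimal components of $Y$) together with its extensions to non-minimal subshifts by Cyr--Kra, a subshift with $\liminf_n P(n)/n<\infty$ admits only finitely many non-atomic ergodic invariant probability measures, and the atomic ones are supported on periodic orbits, of which there are only countably many. I expect this to be the main technical step to pin down cleanly, because the statement from the literature has to be applied to the non-minimal subshift $Y$ and one must keep track of whether the invariant measures contributed by periodic orbits are finite or merely countable; fortunately Theorem~\ref{th:main-new} only needs countability.

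Finally I would extract the contradiction. Let $g\in C(Y)$ be the coordinate projection $g(y)=y(0)$, which is continuous in the product topology. Since $g(R^n y_0)=\blambda(n)$ for every $n\in\N$, Theorem~\ref{th:main-new} (in its Liouville form) gives
\[
0=\lim_{N\to\infty}\frac{1}{\log N}\sum_{n=1}^{N}\frac{g(R^n y_0)\,\blambda(n)}{n}
=\lim_{N\to\infty}\frac{1}{\log N}\sum_{n=1}^{N}\frac{\blambda(n)^{2}}{n}
=1,
\]
a contradiction. Hence $P_\blambda(n)/n\to\infty$, which is the claimed super-linear block growth.
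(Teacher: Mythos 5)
Your proposal is correct and is essentially the paper's own proof: extend $\blambda$ to a two-sided sequence, take the orbit closure $(Y,R)$, note it is a transitive zero-entropy subshift of linear complexity with only finitely (hence countably) many ergodic invariant measures by the Boshernitzan/Cyr--Kra circle of results (packaged in the paper as Proposition~\ref{prop:linear-grow}), and then contradict Theorem~\ref{th:main-new} via $g(R^ny_0)\blambda(n)=\blambda(n)^2=1$. One small imprecision: with the reflection extension the left tail contributes \emph{reversed} $\blambda$-words, so $P_Y(n)$ is bounded by roughly $2P_\blambda(n)+n$ rather than $P_\blambda(n)+O(1)$ --- still linear, so nothing breaks; the paper sidesteps this by extending with the constant value $1$ on the non-positive integers.
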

\begin{remark}
	In fact, we prove something stronger. If $a\colon \N\to \C$ takes finitely many values and  has linear block growth, then the logarithmic averages of $a(n)\, \blambda(n)$ are $0$. It follows that even if we modify the values of $\blambda$ on a set of logarithmic density $0$, using values taken from a finite set of real numbers, then the new sequence  still has super-linear block growth.
	\end{remark}
Theorem~\ref{th:Liou-non-linear} is deduced from Theorem~\ref{th:main-new} in   Section~\ref{S:linear-growth}.

Another variant of our main result
assumes genericity of  the point defining the weight
sequence for  a zero entropy system that has at most countably many ergodic components:
\begin{theorem}
	\label{th:main-1}
	Let $(Y,R)$ be a topological  dynamical system and $y\in Y$ be   generic for a  measure with  zero entropy and  at most countably many ergodic components.
	Then for every $g\in C(Y)$  we have
	\begin{equation}
	\label{eq:main-1}
	\lim_{N\to\infty} \frac 1{\log N}\sum_{n=1}^N\frac{g(R^ny)\, \bmu(n)}{n} =0.
	\end{equation}
	Moreover, a similar statement holds with the Liouville function $\blambda$ in place of $\bmu$.
\end{theorem}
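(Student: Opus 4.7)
The plan is to run the same joinings-based disjointness scheme that underlies Theorem~\ref{th:main-new}, but now extracting the required measure-theoretic input from the Cesàro genericity of $y$ for $\nu$ rather than from a hypothesis on the topological system $(Y,R)$. Fix $g\in C(Y)$ and a subsequence $N_k\to\infty$ along which the logarithmic averages in~\eqref{eq:main-1} converge; it suffices to show the limit is zero. Let $X=\{-1,0,1\}^{\Z}$ be equipped with the left shift $T$ and identify $\bmu$ with the corresponding point of $X$. By compactness, passing to a further subsequence, the probability measures
\[
\sigma_k:=\frac{1}{\log N_k}\sum_{n=1}^{N_k}\frac{1}{n}\,\delta_{(T^n\bmu,\,R^ny)}
\]
on $X\times Y$ converge weak-$*$ to a $(T\times R)$-invariant probability $\sigma$, and writing $x_0\colon X\to\{-1,0,1\}$ for the evaluation at $0$, the desired limit is exactly $\int x_0(x)\,g(y)\,d\sigma(x,y)$.

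I identify the two marginals of $\sigma$ as follows. The $X$-marginal is, by construction, a logarithmic Furstenberg system of $\bmu$, and therefore enjoys the structural description proved in the paper: no irrational spectrum, with infinite-step nilsystems and Bernoulli systems as building blocks. The $Y$-marginal equals $\nu$: a one-line summation by parts upgrades Cesàro genericity of $y$ for $\nu$ to logarithmic genericity, so the logarithmic empirical measures $\frac{1}{\log N_k}\sum_{n\le N_k}\frac{1}{n}\delta_{R^ny}$ converge to $\nu$, which is then the $Y$-projection of $\sigma$.

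Now decompose $\nu=\sum_i c_i\nu_i$ into its ergodic components---a countable sum by hypothesis---and correspondingly decompose $\sigma=\sum_i c_i\sigma_i$, where $\sigma_i$ is the renormalized restriction of $\sigma$ to the preimage of the $i$-th atom of the ergodic decomposition map. Each $\sigma_i$ is a $(T\times R)$-invariant joining whose $Y$-marginal is the ergodic measure $\nu_i$; since entropy is affine under ergodic decomposition and $\nu$ has zero entropy, each $\nu_i$ has zero entropy as well. At this point one is in exactly the situation handled by the disjointness step of Theorem~\ref{th:main-new}: a joining of a $T$-invariant measure on $X$, whose structural description forces its spectrum and entropy-bearing components to live in nilsystems and Bernoulli factors, with an ergodic zero-entropy system. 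Zero entropy defeats the Bernoulli part, absence of irrational spectrum together with the nilfactor structure defeats the nilpart, and one concludes $\int x_0(x)\,g(y)\,d\sigma_i(x,y)=0$ for every $i$. Summing over $i$ by dominated convergence---legal because the decomposition is countable and $|x_0\cdot g|\le\|g\|_\infty$---gives $\int x_0\cdot g\,d\sigma=0$, proving~\eqref{eq:main-1}. The statement for $\blambda$ is handled identically because the structural theorem is established for both functions.

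The main obstacle is ensuring the disjointness step survives restriction to individual $\sigma_i$: the structural description was stated for the full $X$-marginal of $\sigma$, whereas disjointness has to be applied fibrewise to the $X$-marginal of each ergodic piece. The cleanest fix is to phrase the disjointness criterion joining-theoretically, so that it applies whenever one is handed a joining of a logarithmic Furstenberg system of $\bmu$ with an ergodic zero-entropy system, regardless of how that joining arose inside the ergodic decomposition of the full Furstenberg measure; the countability hypothesis on the ergodic components of $\nu$ is what then legitimizes reassembling the vanishing pieces into a vanishing whole.
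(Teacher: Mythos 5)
Your setup (the logarithmically weighted empirical measures $\sigma_k$, identification of the two marginals, the countable ergodic decomposition $\sigma=\sum_i c_i\sigma_i$, and the fibrewise application of a disjointness criterion) matches the paper's proof. But there is a genuine gap at the disjointness step. You assert that ``absence of irrational spectrum together with the nilfactor structure defeats the nilpart.'' It does not. The Furstenberg system of $\bmu$ has nontrivial \emph{rational} spectrum (e.g.\ coming from the square-free factor $\bmu^2$), and the ergodic components $(Y,\nu_i,R)$ are allowed to have rational eigenvalues as well; a joining of two systems sharing a common rational eigenvalue need not be a product, and $\int f\,g\,d\sigma_i$ need not vanish for an arbitrary observable $f$. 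This is why Part~(i) of Proposition~\ref{P:disjoint} does not give disjointness: it only gives $\int f\,g\,d\sigma_i=0$ for those $f$ that are \emph{orthogonal to the rational Kronecker factor} $\CK(T)$, and this hypothesis must be verified for the specific observable $F_0$ (equivalently, for its lift $\wt F_0$ to the system $(X^\Z,\wt\mu,S)$ of arithmetic progressions with prime steps, through which the structural theorem is actually routed via Proposition~\ref{prop:mu-fator-tilde-mu}).

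That verification is not soft: the paper proves \eqref{E:Kronecker}, i.e.\ $\E_{n\in\N}\bigl|\int \wt F_0\cdot S^n\wt F_0\,d\wt\mu\bigr|=0$, by reducing it through the defining identity of $\wt\mu$ to the vanishing of the logarithmically averaged two-point correlations $\lE_{n\in\bN}\,\bmu(n)\bmu(n+h)$, which is Tao's theorem from \cite{Tao15} and ultimately rests on Matom\"aki--Radziwi{\l}{\l}. Your proposal never invokes this number-theoretic input, and without it the argument cannot close: if one ran your scheme with $\bmu^2$ (suitably recentred) in place of $\bmu$, every structural ingredient you cite would still hold, yet the conclusion is false because of correlation with periodic systems. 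So the missing idea is precisely the orthogonality of $\wt F_0$ to the (rational) Kronecker factor, together with the two-point correlation theorem needed to establish it.
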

Genericity of  $y\in Y$ for a Borel probability measure $\nu$ on $Y$  means  that for every $f\in C(Y)$ we have
$\lim_{N\to\infty}	\frac{1}{N}\sum_{n=1}^N  f(R^ny)=\int f\, d\nu$. Our assumption is that the induced system
$(Y,\nu,R)$ has zero entropy and at most  countably many ergodic components.
\begin{remarks}
$\bullet$	A straightforward adaptation of our argument shows that the conclusion of Theorem~\ref{th:main-1}   holds
	for those
	$y\in Y$ that satisfy the following property:  for any sequence  $(N_k)_{k\in\N}$ with $N_k\to \infty$ along which $y$ is quasi-generic for logarithmic averages for some measure $\nu$ (meaning,  $\lim_{k\to\infty} \frac{1}{\log{N_k}} \sum_{n=1}^{N_k}\frac{f(R^ny)}{n}=\int f\, d\nu$ for every $f\in C(Y)$),  the system  $(Y,\nu,R)$  has zero entropy  and  countably many ergodic
	components.
	
	$\bullet$ See Section~\ref{SS:problems} for an example of a topological system and a point which is generic for a zero entropy system with uncountably many ergodic components; in this case our result does not apply.
\end{remarks}

If the ergodic components of the measure in the statement of Theorem~\ref{th:main-1} are assumed to be  totally ergodic, then
we get a much stronger conclusion:
\begin{theorem}
\label{th:main-2}
Let $(Y,R)$ be a topological  dynamical system and
$y\in Y$ be   generic for a  measure $\nu$ with  zero entropy and  at most countably many ergodic components all of which  are totally ergodic.
Then  for every $g\in C(Y)$  that is orthogonal in $L^2(\nu)$ to all $R$-invariant functions
 we have
	\begin{equation}
		\label{eq:main-2}
\lim_{N\to\infty}		\frac 1{\log N}\sum_{n=1}^N\frac{g(R^ny)\, \prod_{j=1}^\ell\bmu(n+h_j)}{n}=0
	\end{equation}
for all $\ell\in\N$ and  $h_1,\dots,h_\ell \in\Z$. Moreover, a similar statement holds with the Liouville function $\blambda$ in place of $\bmu$.
\end{theorem}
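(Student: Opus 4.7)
The plan is to carry out the disjointness strategy indicated in the abstract, adapted to the multi-correlation $\prod_{j=1}^\ell\bmu(n+h_j)$. Encode $\bmu$ via the shift $T$ on $X:=\{-1,0,1\}^{\Z}$, setting $\Phi(x):=x(0)$ so that $\Phi(T^nx_\bmu)=\bmu(n)$ for the point $x_\bmu\in X$ with coordinates $\bmu(n)$. Along a suitable subsequence $(N_k)_{k\in\N}$ the pair $(y,x_\bmu)\in Y\times X$ is quasi-generic for logarithmic averages with respect to some joining $\rho$ of $\nu$ with a measure $\mu$ on $X$. The left-hand side of~\eqref{eq:main-2} along $(N_k)$ then equals
\[
\int_{Y\times X}g(y)\,F(x)\,d\rho(y,x),\qquad F(x):=\prod_{j=1}^\ell \Phi(T^{h_j}x),
\]
so the theorem reduces to showing this integral vanishes for every such subsequential joining $\rho$.

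Next, invoke the paper's structural theorem for M\"obius-associated systems: the ergodic components of $\mu$ are built from infinite-step nilsystems with no irrational spectrum and from Bernoulli systems. Using countability of the ergodic decomposition of $(Y,\nu,R)$, and the analogous control on $\mu$, disintegrate $\rho$ and treat each ergodic pair separately; the $Y$-component is then totally ergodic. The Bernoulli part is disposed of by classical disjointness of Bernoulli systems from zero-entropy systems: the joining splits as a product on that factor and the $Y$-integral reduces to $\int g\,d\nu=0$, since $g$ is orthogonal to the $R$-invariant functions and hence to the constants.

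What remains is the case in which $(X,\mu,T)$ is an inverse limit of ergodic nilsystems with only rational eigenvalues and $(Y,\nu,R)$ is totally ergodic. Total ergodicity of $Y$ kills all nontrivial rational eigenvalues there, so the Kronecker factor of $(X,\mu,T)$ meets $Y$ only in the constants and the joining is trivial at the Kronecker level. To upgrade this to disjointness against the higher-step nilfactor that controls $F$, one bounds the correlation by a Host--Kra seminorm of order $O(\ell)$ applied to $\Phi$, effectively replacing $F$ by its projection onto an appropriate nilfactor of $(X,\mu,T)$. A stepwise relative disjointness argument along the Host--Kra tower of $(X,\mu,T)$, exploiting the no-irrational-spectrum property at each level together with total ergodicity of $Y$, then forces $\E_\rho(F\mid Y)$ to be $R$-invariant, and the orthogonality of $g$ to $R$-invariant functions concludes.

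The main obstacle is precisely this stepwise propagation from Kronecker-level to full nilpotent-level disjointness: passing from triviality of the Kronecker joining to triviality of the joining with successive Host--Kra factors requires systematic use of the structural theorem at each level, together with the strong stationarity framework of Furstenberg--Katznelson to ensure that the structural features of the M\"obius system persist in the joining. This is where the bulk of the technical work lies, and where the interaction between Tao's identities, the Host--Kra structure theorem, and strong stationarity that drives the paper's earlier results is most delicate.
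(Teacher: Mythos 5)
Your overall architecture is the paper's: pass to a subsequential joining $\sigma$ of a Furstenberg system $(X,\mu,T)$ of $\bmu$ with $(Y,\nu,R)$, invoke the structural theorem, split off the Bernoulli part by entropy disjointness, and finish by a spectral disjointness argument. But the step you yourself flag as ``the main obstacle'' --- upgrading Kronecker-level independence to full disjointness --- is where your proposal does not actually contain a proof, and the mechanism you sketch is not the one that works. First, there is no ``stepwise relative disjointness along the Host--Kra tower of $(X,\mu,T)$,'' and strong stationarity plays no role in the disjointness argument (in the paper it is used only to establish the absence of irrational spectrum). Second, the Host--Kra seminorm estimate is applied on the wrong side and with the wrong order: after averaging the $(T\times R)$-invariant integral over $n$ one faces a \emph{single-term} correlation $\E_{n}\int f(T^nx)\,g(R^ny)\,d\sigma$ with $f$ a function on an $s$-step nilsystem, and the relevant input (\cite[Theorem~2.13]{HK09}) controls this by the projection of $g$ onto $\CZ_s(R)$; the parameter $\ell$ is irrelevant because $F=\prod_j T^{h_j}F_0$ is just some bounded function on $X$. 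This reduces matters to the case where \emph{both} systems are ergodic nilsystems with no common eigenvalue except $1$, and the argument then closes not by an induction up the tower but by the classical fact that the product of two such nilsystems is ergodic, hence uniquely ergodic, so the Birkhoff averages converge everywhere to $\int f\,d\mu\cdot\int g\,d\nu=0$. Without this (or an equivalent) mechanism, your ``$\E_\rho(F\mid Y)$ is $R$-invariant'' conclusion is unsupported.

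Two further points need attention. The structural theorem only asserts that $(X,\mu,T)$ is a \emph{factor} of a system whose ergodic components are products of infinite-step nilsystems and Bernoulli systems; a factor of such a product need not again be such a product, so you must lift $\sigma$ to a joining of $(X^\Z,\wt\mu,S)$ with $(Y,\nu,R)$ and prove the vanishing there before pushing it back down --- you apply the structural description directly to $\mu$, which is not justified. Also, your description of the Bernoulli step conflates two different vanishing mechanisms: when the test function has a mean-zero component along the Bernoulli coordinate the integral vanishes because that factor is independent of the zero-entropy system $X'\times Y$, not because $\int g\,d\nu=0$; the latter is only invoked at the very end, after full disjointness of the nil part has been established.
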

\begin{remarks}	
		$\bullet$ Suppose  that the system $(Y,\nu,R)$ is ergodic. Then  for $\ell=2$ and all odd values of $\ell$  the conclusion  holds even if we omit the hypothesis $\int g\, d\nu$=0  assuming that  $h_1\neq h_2$ when $\ell=2$.    Indeed, if $g$  is constant, then \eqref{eq:main-2} holds for $\ell=2$ by \cite{Tao15} and for odd values of $\ell$ by \cite{TT17}. By adding and subtracting a constant we can thus reduce to the zero integral case.
	

$\bullet$  A variant similar to Theorem~\ref{th:main-new} can be proved in the same way:
the conclusion of Theorem~\ref{th:main-2} holds for every $y\in Y$ if $(Y,R)$ has zero topological entropy and at most countably many ergodic invariant measures assuming in addition that  they  are all totally ergodic and the function $g$ is orthogonal in $L^2(\nu)$ to all $R$-invariant functions.

$\bullet$ The   remark following Theorem~\ref{th:main-1} is also valid in this case if we assume in addition that the ergodic components of $(Y,\nu,R)$ are totally ergodic.
\end{remarks}
Theorem~\ref{th:main-2}  is new even in the case where $R$ is given by  an irrational rotation on $\T$ and $g(t):=\e^{2\pi i t}$, $t\in \T$. In this case we have $g(R^n0)=\e^{2\pi i n\alpha}$, $n\in\N$,  for some irrational $\alpha$, and we get the following result as a consequence:
\begin{corollary}
\label{C:main-2}
Let $\alpha\in \R$ be irrational. Then
	\begin{equation} \label{E:irrational}
	\lim_{N\to\infty}		\frac 1{\log N}\sum_{n=1}^N\frac{\e^{2\pi  i n \alpha}\, \prod_{j=1}^\ell\bmu(n+h_j)}{n}=0
\end{equation}
for all $\ell\in\N$ and  $h_1,\dots,h_\ell \in\Z$. Moreover, a similar statement holds with the Liouville function $\blambda$ in place of $\bmu$.	
	\end{corollary}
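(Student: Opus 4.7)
The plan is to derive Corollary~\ref{C:main-2} as an immediate specialization of Theorem~\ref{th:main-2} applied to the irrational rotation system. Take $Y=\T$, let $R\colon\T\to\T$ be the rotation $R(t)=t+\alpha\pmod 1$, set $y=0\in\T$, and $g(t)=\e^{2\pi i t}$. Then $g(R^n y)=\e^{2\pi i n\alpha}$ so the sum in \eqref{E:irrational} matches the one in \eqref{eq:main-2}. The entire argument therefore reduces to checking that every hypothesis of Theorem~\ref{th:main-2} holds in this setting.

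First I would verify genericity: by Weyl's equidistribution theorem, since $\alpha$ is irrational, $\frac1N\sum_{n=1}^N f(n\alpha)\to\int_\T f\,dm$ for every $f\in C(\T)$, where $m$ is Haar measure. Hence $y=0$ is generic for $\nu:=m$. The system $(\T,m,R)$ is ergodic (irrational rotation) and in fact totally ergodic, because for every $k\ge 1$ the iterate $R^k$ is rotation by $k\alpha$, which is again irrational, hence ergodic. The measure $\nu$ has a single ergodic component (itself) and zero entropy, since rotations on compact abelian groups are well known to have zero measure-theoretic entropy. Thus $\nu$ satisfies all the hypotheses of Theorem~\ref{th:main-2}.

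Next I would verify the orthogonality condition on $g$. Because $(\T,m,R)$ is ergodic, the space of $R$-invariant functions in $L^2(\nu)$ consists of the constants, so orthogonality of $g$ to all invariant functions amounts to $\int_\T g\,dm=0$. Since $\int_0^1 \e^{2\pi i t}\,dt=0$, this is immediate. With all hypotheses verified, Theorem~\ref{th:main-2} yields \eqref{E:irrational} for $\bmu$, and the parallel statement for $\blambda$ follows from the corresponding variant of that theorem.

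There is essentially no obstacle: every step is a routine verification of standard properties of irrational circle rotations. The deduction serves only to make the corollary explicit, since the interesting case where $\alpha$ is irrational and $\ell=1$ already gives a logarithmically averaged non-correlation estimate $\frac{1}{\log N}\sum_{n\le N}\e^{2\pi i n\alpha}\bmu(n)/n\to 0$, and higher $\ell$ extends this to products of shifted M\"obius values.
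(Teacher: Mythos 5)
Your proposal is correct and is exactly the paper's own derivation: the authors obtain Corollary~\ref{C:main-2} by specializing Theorem~\ref{th:main-2} to the irrational rotation $R(t)=t+\alpha$ on $\T$ with $y=0$ and $g(t)=\e^{2\pi i t}$, and your verifications of genericity, total ergodicity, zero entropy, and the orthogonality $\int g\,dm=0$ are the routine checks implicit in that one-line deduction.
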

	\begin{remarks} $\bullet$ For $\ell=1$ the result is well known   and follows from  classical methods of Vinogradov. But even for $\ell=2$ the result is new.
		
$\bullet$	 More generally, if we  apply Theorem~\ref{th:main-2} for $R$  given by appropriate totally ergodic affine transformations on a torus with the Haar measure (as in \cite[Section~3.3]{Fu77}), we get that
	 \eqref{E:irrational} holds with $(\e^{2\pi i n\alpha})_{n\in\N}$ replaced by any sequence of the form  $(\e^{2\pi i P(n)})_{n\in \N}$, where $P\in \R[t]$ has an irrational non-constant coefficient.
	\end{remarks}
 It is straightforward to adapt our arguments in order to strengthen the conclusion in Theorems~\ref{th:main-new}, \ref{th:main-1}, and  \ref{th:main-2}  replacing  $\lim_{N\to\infty}		\frac 1{\log N}\sum_{n=1}^N$ by  $\lim_{N/M\to\infty}		\frac 1{\log (N/M)}\sum_{n=M}^N$.

\subsection{Proof strategy and a key structural result}
A brief description of the proof strategy of  Theorem~\ref{th:main-2}  is as follows (Theorems~\ref{th:main-new}  and~\ref{th:main-1} are proved similarly): In the case where the system $(Y,\nu, R)$ is totally ergodic (the more general case can be treated similarly), we first reinterpret the result as a
statement in ergodic theory about the disjointness of two measure preserving systems. The first is what we call  a
Furstenberg system of  the M\"obius (or the Liouville) function.  Roughly speaking, it  is defined on the  sequence space $X:=\{-1, 0, 1\}^\Z$ with the shift transformation,
 by a measure which assigns to each  cylinder set $\{x\in X\colon x(j)=\epsilon_j,j=-m,\ldots,m\}$  value equal to the logarithmic density of the set $\{n\in \N\colon \bmu(n+j)=\epsilon_j,j=-m\ldots,m\}$, where
$\epsilon_{-m},\ldots, \epsilon_m\in \{-1,0,1\}$ and  $m\in \N$ (we restrict to sequences of intervals along which all these densities exist). The precise definition is given in
Section~\ref{subsec:Furstenberg} and is motivated by analogous constructions made by Furstenberg  in \cite{Fu}. The second system is an  arbitrary  totally ergodic system
with zero entropy. In order to prove  that these two systems are disjoint, we have to understand in some fine detail the structure of all possible Furstenberg systems of the M\"obius and the Liouville   function. Our main structural result is the following (see Sections~\ref{S:back} and \ref{subsec:Furstenberg}  and Appendix~\ref{subsec:infinite-step}   for the definition of the  notions involved):

\begin{theorem}[Structural result]\label{th:main-structure}
A	Furstenberg system  
of  the M\"obius or the Liouville  function  is a factor of a system  that
	\begin{enumerate}
\item 
has no irrational spectrum;
	
	\item  has ergodic components
	 isomorphic to   direct products of infinite-step nilsystems and  Bernoulli systems.
	\end{enumerate}
\end{theorem}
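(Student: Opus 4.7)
The plan is to derive structural constraints on an arbitrary Furstenberg system $(X,\mu,T)$ of $\blambda$ (the $\bmu$ case being essentially identical) by combining three ingredients advertised in the abstract: Tao's recently obtained identities, which encode multiplicativity of $\blambda$; the Host--Kra structure theory of characteristic factors; and the Furstenberg--Katznelson notion of strong stationarity. I first fix a Furstenberg system arising as a weak-$*$ limit of logarithmic empirical measures along a sequence $(N_k)$, and translate Tao's identities --- obtained from $\blambda(pn)=-\blambda(n)$ in logarithmically averaged form --- into identities on $(X,\mu,T)$ relating multiple correlations taken with $T$ to those taken with dilates of $T$ along primes. This equips $\mu$ with a \emph{strong stationarity} property: joint distributions of shifts are, up to prescribed signs, insensitive to rescaling the shift pattern by primes.

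For claim~(i), I would rule out irrational spectrum by arguing that any eigenfunction of $T$ with irrational eigenvalue $\e^{2\pi i\alpha}$ occurring in any factor would, via strong stationarity, force nontrivial logarithmic averages of $\blambda(n)\,\e^{2\pi i n\alpha}$ --- contradicting Daboussi--Delange--K\'atai cancellation for completely multiplicative functions twisted by irrational exponentials. In fact, to rule out irrational spectrum inside an arbitrary factor rather than only in the Kronecker factor, I would apply this argument to the eigenfunctions generated within any intermediate factor, using that strong stationarity is inherited by factors.

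For claim~(ii), I would run Host--Kra on each ergodic component to produce the tower of characteristic factors $(Z_k)_{k\in\N}$, where each $Z_k$ is an inverse limit of $k$-step nilsystems. Propagating the no-irrational-spectrum conclusion through the tower --- again via strong stationarity, which forces the group-rotation parts at each level to have only rational spectrum --- yields an infinite-step nilsystem $Z_\infty$ as the structured part of each ergodic component. The orthogonal complement of $Z_\infty$ I would handle by constructing an \emph{extension} of $(X,\mu,T)$ in which this complement splits off as an independent Bernoulli shift, exploiting precisely the freedom afforded by the phrase ``factor of a system'' in the statement.

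The main obstacle I anticipate is producing this Bernoulli extension: weak mixing of the orthogonal complement of $Z_\infty$ is not by itself enough, so the argument must leverage the stronger independence encoded by Tao's identities across distinct primes in order to upgrade weak mixing of the complement into Bernoullicity on a suitable extension. A secondary technical difficulty is the measurable coherence of the structure across the ergodic decomposition: one needs a version of the Host--Kra machinery together with strong stationarity that is uniform over ergodic components, so that the infinite-step nilsystem and Bernoulli pieces can be assembled into a single global factor as required by the statement.
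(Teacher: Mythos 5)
Your proposal assembles the right named ingredients (Tao's identities, Host--Kra, strong stationarity) but misses the one construction that makes the paper's argument work: the explicit extension. The paper does not establish any stationarity property of the Furstenberg system $(X,\mu,T)$ itself. Tao's identity relates $\int\prod T^{h_j}F_0\,d\mu$ to $(-1)^\ell\,\E_{p\in\P}\int\prod T^{ph_j}F_0\,d\mu$ --- an average over prime dilates with a sign --- which is not strong stationarity of $\mu$ and does not become so. Instead, the paper uses this identity (together with the fact that $\bmu,\blambda$ equal $-1$ on primes) to exhibit $(X,\mu,T)$ as a \emph{factor} of the ``system of arithmetic progressions with prime steps'' $(X^\Z,\wt\mu,S)$, where $\wt\mu$ is defined by $\int\prod f_j(x_j)\,d\wt\mu=\E_{p\in\P}\int\prod T^{pj}f_j\,d\mu$ (Proposition~\ref{prop:mu-fator-tilde-mu}). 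All structural properties are then proved for this extension, and even there only \emph{partial} strong stationarity is available, and only after reducing to the case where the base system is a nilsystem (the authors explicitly cannot verify total ergodicity of the ergodic components, which is what full strong stationarity would require). Your claim that Tao's identities ``equip $\mu$ with a strong stationarity property'' is therefore not justified, and the subsequent steps built on it do not stand.

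Two concrete failures follow. For (i), ruling out an irrational eigenvalue $\e(\alpha)$ via Daboussi--K\'atai does not work: that theorem gives $\lE_n\,\blambda(n)\e(n\alpha)=0$, i.e.\ $F_0$ is orthogonal to any such eigenfunction, but the Furstenberg system is generated by \emph{products} of shifts of $F_0$, and nothing prevents, say, $F_0\cdot T^hF_0$ from carrying the eigenvalue; Daboussi--K\'atai says nothing about $\lE_n\,\blambda(n)\blambda(n+h)\e(n\alpha)$. The paper's route is entirely different: it proves that partially strongly stationary systems have no irrational spectrum by a Jenvey-type van der Corput iteration (Proposition~\ref{P:sst-spectrum}), applied to $(X^\Z,\wt\mu,S)$ after reducing to a nilsystem base. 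For (ii), the obstacle you flag --- upgrading the complement of $Z_\infty$ to a Bernoulli factor --- is resolved in the paper precisely by passing to $(X^\Z,\wt\mu,S)$: since $Z_\infty$ is characteristic for the averages defining $\wt\mu$ (second parts of Theorems~\ref{T:HK} and~\ref{T:FHK}), the measure $\wt\mu$ splits as $\wt\mu_\infty\times\rho^\Z$, so the complement is literally an i.i.d.\ product (Lemma~\ref{L:mutilde-infty2}); no independent upgrading argument is needed. What remains, and what your sketch omits entirely, is showing that the ergodic components of $(Z_\infty^\Z,\wt\mu_\infty,S)$ are themselves infinite-step nilsystems; this is the substantial part of Section~\ref{S:structure}, carried out via the Hall--Petresco group and the nilmanifold of arithmetic progressions.
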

 \begin{remarks}
$\bullet$ We allow the Bernoulli systems and the infinite-step nilsystems to be trivial, in other words,
a direct product of a Bernoulli system and an infinite-step nilsystem is   either a Bernoulli system,   an infinite-step  nilsystem, or  a direct product of both.

$\bullet$ The product decomposition depends on the ergodic component, in particular,  the infinite-step nilsystem depends on  the  ergodic component. On the other hand, our argument allows  us to take the   Bernoulli system to be the same on every ergodic component;   we are not going to   use this property though.

$\bullet$ A related result in a complementary direction was recently obtained in \cite{Fr16}; it states that  if
 a Furstenberg system of  the M\"obius or the Liouville  function is ergodic,  then it is isomorphic to a  Bernoulli system. The tools and the underlying ideas used in the proof of this result are very different and apply to a larger class of multiplicative functions.

$\bullet$ It is not clear to us how to adapt our argument in order to deal with more general bounded multiplicative functions. One would have to find a suitable variant of Proposition~\ref{prop:mu-fator-tilde-mu}  below and to also modify significantly the subsequent analysis.
 \end{remarks}


Using  ergodic theory machinery we prove  (see Part $\text{(ii)}$ of Proposition~\ref{P:disjoint}) that  any system satisfying properties  $\text{(i)}$ and $\text{(ii)}$  of Theorem~\ref{th:main-structure} is necessarily disjoint from every totally ergodic system with zero entropy, leading to a proof of Theorem~ \ref{th:main-2}. The argument used in the proof of Theorems~ \ref{th:main-new} and \ref{th:main-1}  depends on a different disjointness result (see Part  $\text{(i)}$ of Proposition~\ref{P:disjoint})
and this necessitates the use of some additional input from number theory that is contained in \cite{Tao15} in order to verify its hypothesis.

To prove  properties  $\text{(i)}$ and $\text{(ii)}$ of Theorem~\ref{th:main-structure} we combine tools from analytic number theory and ergodic theory.
Our starting point is an identity of Tao (Theorem~\ref{T:Tao1}) which  is implicit
in \cite{Tao15} and enables to express the self-correlations of the M\"obius and the Liouville function as an average of its
dilated self-correlations with prime dilates   (this step necessitates the use of logarithmic averages).  We use  this identity in order to
reduce our problem  to a result of purely  ergodic context. Roughly speaking, it asserts  that if we average the correlations
 of an  arbitrary measure preserving  system  $(X,\mu,T)$ over all prime dilates of its iterates, then the resulting system $(\wt X,\wt \mu, \wt T)$ (see Definition~\ref{D:tilde}), which we call the ``system of arithmetic progressions with prime steps'',   necessarily possesses  properties $\text{(i)}$ and$\text{(ii)}$ (see Theorem~\ref{th:main-mutilde}). Our motivation for establishing this property comes from the case where the ergodic components of the system    $(X,\mu,T)$  are totally ergodic.   It can then be shown that  the  resulting system $(\wt X,\wt \mu, \wt T)$ has additional structure, namely it is strongly stationary
(see Definition~\ref{D:sst}).  The structure of strongly stationary systems
was completely determined in \cite{J} and \cite{Fr04}, where it was shown  that they satisfy properties $\text{(i)}$ and $\text{(ii)}$  of Theorem~\ref{th:main-structure}. Unfortunately, we do not know how to establish  total ergodicity  of the ergodic components of Furstenberg systems of the  Liouville function (for the M\"obius function this property is not even true).  In order to overcome this obstacle we
 use  a more complicated line of arguing which we briefly describe next.

To prove that the system  $(\wt X,\wt \mu, \wt T)$ enjoys property $\text{(ii)}$  we initially use   a structural result of Host and Kra (see Theorem~\ref{T:HK} and Corollary~\ref{C:HK} in the Appendix)  and an ergodic  theorem (see Theorem~\ref{T:FHK}) in
order  to reduce  the problem to the case where the system
$(X,\mu,T)$ is an ergodic infinite-step nilsystem (see Lemma~\ref{L:mutilde-infty2}).  In this  case, we show (see Proposition~\ref{P:infnil})
 that  the ergodic components of the system  $(\wt X,\wt \mu, \wt T)$ are infinite-step nilsystems. Essential role in this part of the argument plays the theory of arithmetic progressions on nilmanifolds which we briefly review in Appendix~\ref{A:AP}. The details are given in Section~\ref{S:structure}.

The key ingredient in the proof of property $\text{(i)}$ is to  establish that the system $(\wt X,\wt \mu, \wt T)$
 satisfies a somewhat weaker property than strong stationarity, roughly speaking, it is  an inverse limit of  partially strongly stationary systems (a notion defined in Definition~\ref{D:sst}).
  We then adjust an argument of Jenvey \cite{J} in order to show that such systems do not have irrational spectrum. The details are given in Section~\ref{S:sst}.



 Finally, we  briefly record the input from analytic number theory needed to carry out our analysis:
The structural result of Theorem~\ref{th:main-structure}  uses some identities of Tao for the M\"obius and the  Liouville function which are implicit in \cite{Tao15}  and were obtained
from first principles using techniques  from probabilistic number theory. It also
 uses indirectly (via the use of Theorems~\ref{T:FHK} and \ref{T:ergid2} in various places) the Gowers uniformity of the $W$-tricked von Mangoldt function which was established in   \cite{GT09b, GT7, GTZ12}.  Theorem~\ref{th:main-2} does not use any other tools from  number theory. Theorems~\ref{th:main-new}, \ref{th:Liou-non-linear}, and \ref{th:main-1} use, in addition
to the previous number theoretic  tools,   a recent result of Tao~\cite{Tao15} on the two-point correlations of the Liouville function which in turn depends upon a recent result  of Matom\"aki and  Radziwi{\l}{\l}~\cite{MR15} on averages of the M\"obius and the Liouville function on short intervals. This additional input from number theory is used in order to verify that  on any   Furstenberg system of the   M\"obius (resp. Liouville) function, a function naturally associated to $\bmu$ (or $\blambda$) is orthogonal to the rational Kronecker factor of the system; this is needed in order to verify  the hypothesis  of the disjointness result stated in  Part~$\text{(i)}$ of Proposition~\ref{P:disjoint}.

\subsection{Comparison with existing results}
\label{subsec:particular}
We say that a topological dynamical system $(Y,R)$ \emph{satisfies the Sarnak conjecture} if for every continuous function $g$ on $Y$ and every $y\in Y$, the Ces\`aro averages
$$
\frac 1N\sum_{n=1}^N g(R^ny)\, \bmu(n)
$$
 tend to $0$ as $N\to\infty$. We say that $(Y,R)$ \emph{satisfies the logarithmic Sarnak conjecture} if the same property  holds with the logarithmic averages
$$ \frac 1{\log N}\sum_{n=1}^N\frac{g(R^ny)\, \bmu(n)}{n}
$$
in place of the Ces\`aro averages.  Note that the Sarnak conjecture for a system implies  the logarithmic Sarnak conjecture for the same  system.

The Sarnak conjecture has been proved for a variety of systems,
for example nilsystems~\cite{GT7},  some horocycle flows~\cite{BoSZ} and more general zero entropy systems arising from homogeneous dynamics~\cite{Pec}, certain  distal systems, in particular some  extensions of a rotation by a
torus~\cite{KL, LS, W17}, a large class of  rank one transformations~\cite{ELdlR, Bo, FM}, systems generated by  various substitutions  \cite{EKL,DDM,FKLM,MRi15}, all  automatic sequences~\cite{Mu16},
some interval exchange transformations~\cite{Bo,CE,FM}, some systems of number theoretic origin~\cite{Bo2,Gr}, and more... The survey article \cite{FKL17} contains an up to date list of relevant bibliography.
In most cases  the systems under consideration  are uniquely ergodic. The proof techniques  vary a lot since they make essential use of  special properties  of the system at hand.
However, in many cases, the proof is based upon a Lemma of K\'atai~\cite{Ka}, in a way introduced in~\cite{BoSZ}, and our method is completely different.

 Theorems~\ref{th:main-new}  and \ref{th:main-1}  in this article allow  one to deal with the  vastly  more general  class of zero entropy topological dynamical systems which are uniquely ergodic or have at  most countably many  ergodic invariant measures.
 The price to pay is  that  we  cover only the logarithmic variant of Sarnak's conjecture. Modulo this shortcoming,   Theorems~ \ref{th:main-new} and \ref{th:main-1}    cover most of the   systems cited above and can be used to handle   a wide variety of new systems. We briefly give a non-exhaustive list of examples covered by our main results:

\subsubsection*{Systems with countable support.} If $Y$ is a countable set, then
  the system $(Y,R)$ has  at most countably many ergodic invariant measures all of them giving rise to periodic systems. Hence, Theorem~\ref{th:main-new} applies and shows that the system $(Y,R)$ satisfies the logarithmic Sarnak conjecture (the same conclusion can also be obtained using \cite[Theorem~1.4]{HWZ16} which deals with  Ces\`aro averages). In particular, this implies that the support of the subshift generated by the Liouville function is an uncountable set, and this is true even  if we change the values of the Liouville function on a set of logarithmic density $0$.

\subsubsection*{Homogeneous dynamics.}  Nilsystems and several horocycle flows  have zero entropy and    every point is generic for an ergodic measure, hence
Theorem~\ref{th:main-1} applies. The same holds for more general unipotent actions on homogeneous spaces of connected Lie groups.

\subsubsection*{Some distal systems.}  Our result  applies  for a wide family of topological distal systems. For example, suppose that  $(W,T)$ is a uniquely ergodic   system and  $(Y,R)$ is built from $(W,T)$ by a sequence of compact group extensions in the topological sense. Then the transformation  $R$ admits a ``natural'' invariant measure $\nu$ and if $(Y,\nu,R)$ is ergodic, then $(Y,R)$ is uniquely ergodic~\cite[Proposition~3.10]{Fu}, and Theorem~\ref{th:main-new} applies.

\subsubsection*{Rank one transformations.} Strictly speaking, rank one systems are defined in a pure measure theoretical setting, but they have a natural topological model. Most of these models (including those considered in the bibliography cited above) are uniquely ergodic and have zero topological entropy,
hence, Theorem~\ref{th:main-new} applies.

\subsubsection*{Subshifts with linear block growth.}
Let $(Y,R)$ be  a transitive subshift  with linear block growth (see Section~\ref{S:linear-growth}). Then $(Y,R)$ has zero topological entropy and  by Proposition~\ref{prop:linear-grow} it admits only finitely many ergodic invariant measures (for minimal subshifts this result was already known~\cite{Bos}). Hence, Theorem~\ref{th:main-new} applies  and shows that it satisfies the logarithmic Sarnak conjecture.
We use this fact in the proof of Theorem~\ref{th:Liou-non-linear}.

\subsubsection*{Substitution dynamical systems.}
Theorem~\ref{th:main-new}  applies to  all systems of  \emph{primitive substitutions}~\cite{Q} with not necessarily  constant length,
 because they have zero topological entropy and are uniquely ergodic.

\subsubsection*{Interval exchange transformations.} All
 interval exchange transformations have zero entropy and  minimality of  the interval exchange (which is equivalent to the non-existence of a point with a finite orbit)   implies that it has a finite number of  ergodic invariant measures \cite{Kat,Ve78}.
Hence, Theorem~\ref{th:main-new} applies and shows that all  minimal interval exchange transformations
satisfy the logarithmic Sarnak conjecture.

 \subsubsection*{Finite rank  Bratteli-Vershik dynamical systems.} More generally, Theorem~\ref{th:main-new} applies to all finite rank  Bratteli-Vershik dynamical systems~\cite{BDM2} (minimality is part of their defining properties) because they have zero  entropy and finitely many ergodic invariant measures. This class contains all the examples
 mentioned  in the previous two classes.

\medskip

Although the class of topological dynamical systems to which Theorem~\ref{th:main-2}  applies is more restrictive (due to our total ergodicity assumption) it is still large.
For instance, totally ergodic nilsystems, several  horocycle flows,   several distal systems as the ones described above,
some classical rank one transformations (for example the Chacon  system),
 and typical interval exchange transformations, have zero topological entropy and are uniquely ergodic and totally ergodic, hence
Theorem~\ref{th:main-2}
applies.

\subsection{Further comments and some conjectures}\label{SS:problems}
Theorems~\ref{th:main-new}, \ref{th:main-1}, and \ref{th:main-2} deal with logarithmic averages rather than the more standard Ces\`aro averages. This is necessary for our proof since on the first step of our argument we use the identities of Tao stated in  Theorem~\ref{T:Tao1}, and these are only known in a form useful to us for logarithmic averages.

If one  shows that  Furstenberg systems of the  Liouville function have no  rational spectrum except $1$,
 then
   Theorem~\ref{th:main-2} can be proved  for the Liouville function  for any $y\in Y$ that is generic  for a measure $\nu$ such that the system $(Y,\nu,R)$ has zero entropy and  at most countably many ergodic components and every $g\in C(Y)$ that is orthogonal in $L^2(\nu)$ to all $R$-invariant functions.

Theorem \ref{th:main-1}  handles the case where   a point $y\in Y$ is generic (or quasi-generic) for a measure $\nu$ such that the system $(Y,\nu,S)$ has zero entropy  and  at most countably many ergodic components. But if  $(Y,\nu,S)$ has
uncountably many ergodic components,   our argument falls apart.
A particular instance is the following one: Let $(\alpha_k)_{k\in\N}$ be a sequence that is equidistributed in $\T$
and suppose  that the finite sequences  $(n\alpha_k)_{n\in [k^2,(k+1)^2)}$, $k\in \N$,  are asymptotically equidistributed in $\T$ as $k\to\infty$, meaning, $\lim_{k\to\infty}\frac{1}{2k+1}\sum_{k^2\leq n< (k+1)^2}f(n\alpha_k)= \int f\, dm_\T$ for every $f\in C(\T)$.
We let
$$
y_0(n):=\sum_{k=1}^\infty \e^{2\pi i n \alpha_k}\,
\one_{[k^2,(k+1)^2)}(n),\quad n\in\N,
$$
and $y_0(n):=1$ for $n\leq 0$.
Let  $\mathbb{S}$ be the unit circle, $Y=\mathbb{S}^\Z$, $R\colon Y\to Y$ be the shift transformation,  and  let $g\in C(Y)$ be defined by $g(y):=y(0)$ for $y\in Y$. Note that  $y_0(n)=g(R^ny_0)$ for every  $n\in\Z$. We claim that the point $y_0\in Y$ is generic for some invariant measure $\nu$ on $Y$ and that the system $(Y,\nu,R)$ is measure-theoretically isomorphic to the system $(\T^2,m_{\T^2},T)$ where $m_{\T^2}$ is the Haar measure of $\T^2$ and $T\colon \T^2\to\T^2$ is defined by
$$
T(s,t):=(s,t+s),\quad s,t\in\T.
$$
Assuming the claim for the moment, we easily  conclude that the system $(Y,\nu,R)$ has zero entropy, no  eigenvalue other than $1$,   uncountably many ergodic components, and is disjoint from
every ergodic system.  Our methods do not allow   us to prove that this system is  disjoint from  Furstenberg systems of the M\"obius or the Liouville function or that the logarithmic averages of $y_0(n)\, \bmu(n)$ or  $y_0(n)\, \blambda(n)$ are $0$.

To prove the claim,  define the map $\phi\colon\T^2\to\mathbb{S}$ by $\phi(s,t):=\e^{2\pi i t}$, for  $s,t\in \T,$ and the map $\Phi\colon\T^2\to Y$ by $(\Phi(s,t))(n)=\phi(T^n(s,t)):=\e^{2\pi i(t+ns)}$ for $n\in\Z$, $s,t\in \T$.
We have $\Phi\circ T=R\circ\Phi$ and the image $\nu$ of the measure $m_{\T^2}$ under $\Phi$ is invariant under $R$.
Moreover, $\phi(T(s,t))\, \overline{\phi(s,t)}=\e^{2\pi is}$ and it follows that $\Phi$ is one to one and  thus is an isomorphism from $(\T^2,m_{\T^2},T)$ to $(Y,\nu,R)$. It remains to show that the point $y_0$ is generic for the measure $\nu$.
 For $m\in \N$ let  $\ell_{-m},\ldots, \ell_m\in\Z$ and define
$$
F(y):=\prod_{j=-m}^m y(j)^{\ell_j} \quad \text{ for } \ y=(y(n))_{n\in\Z}\in Y.
$$
Then by a direct computation it is not hard to verify that
\begin{multline*}
\lim_{N\to\infty}\frac{1}{N} \sum_{n=1}^N F(R^ny_0)
=
\lim_{N\to\infty}\frac{1}{N} \sum_{n=1}^N\prod_{j=-m}^m y_0(n+j)^{\ell_j}\\
=\int_{\T^2}\prod_{j=-m}^m\e^{2\pi i(t+js) \ell_j}\,ds\,dt
=\int_{\T^2}F\circ\Phi\,dm_{\T^2}
=\int_Y F\,d\nu.
\end{multline*}
By linearity and density, the same formula holds for every continuous function $F$ on $Y$ and the claim follows.

We would also like to remark that it is consistent  with existing knowledge (though highly unlikely) that some  Furstenberg system of the Liouville function is isomorphic to the low complexity system $(\T^2,m_{\T^2},T)$ described above. Here is a related problem:
\begin{problem}
	Let $\phi\colon \T\to \{-1,1\}$ be the function defined by $\phi(t):={\bf 1}_{[0,1/2)}(t)-{\bf 1}_{[1/2,1)}(t)$. Show that the following identity cannot hold:
	$$
	\lim_{N\to\infty}\frac{1}{N}\sum_{n=1}^N \prod_{j=1}^\ell\blambda(n+h_j)=
	\int_{\T^2}\prod_{j=1}^\ell\phi(t+h_js)\, dt \, ds
	$$
	for all $\ell\in \N$ and $h_1,\ldots, h_\ell\in \Z$.
\end{problem}
In the initial step of our argument (Proposition~\ref{prop:mu-fator-tilde-mu}) we make  essential use of the fact  that $\bmu$ and  $\blambda$ are equal to $-1$   on   the primes. But
we expect  the conclusion of   Theorem~\ref{th:main-structure} to remain
valid even when one uses an arbitrary  multiplicative function $f\colon \N\to [-1,1]$ in place of $\bmu$ and $\blambda$. In fact, we expect ergodicity in all cases and  we conjecture the following:
\begin{conjecture}\label{Conj1}
	Every multiplicative function  $f\colon \N\to [-1,1]$  has a unique Furstenberg system.\footnote{Equivalently, the point $(f(n))_{n\in\N}$ is generic for some measure on the sequence space $[-1,1]^\N$.} This
	system   is ergodic and  isomorphic  to the
	direct product of a Bernoulli system and an ergodic odometer.\footnote{An ergodic odometer is an ergodic inverse limit of periodic systems, or equivalently, an ergodic system $(X,\mu,T)$ for which   the rational eigenfunctions span a dense subspace of $L^2(\mu)$.}
\end{conjecture}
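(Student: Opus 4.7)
\textbf{Proof strategy for Conjecture~\ref{Conj1}.}

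The plan is to extend the disjointness and structural analysis of this paper from $\bmu$ and $\blambda$ to a general bounded multiplicative function $f\colon\N\to[-1,1]$. First I would try to generalise the key identity of Tao (Theorem~\ref{T:Tao1}) to arbitrary bounded multiplicative $f$. For $\bmu$ and $\blambda$ the identity exploits the fact that $\bmu$ vanishes on prime multiples and $\blambda(pn)=-\blambda(n)$, which turns each self-correlation into a logarithmic average over prime dilates. For general $f$ one has instead $f(pn)=f(p)f(n)$ when $p\nmid n$, plus a correction when $p\mid n$, and the task is to derive a logarithmically averaged analogue in which the correction is negligible (for instance by restricting to primes $p$ larger than some slowly growing function of $n$ and using Mertens' estimate). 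Modulo such an identity, the entire machinery of Section~\ref{S:structure} and Section~\ref{S:sst} should carry over and yield that a Furstenberg system of $f$ is still a factor of a system with no irrational spectrum whose ergodic components are products of infinite-step nilsystems and Bernoulli systems.

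The next step is to replace the infinite-step nilsystem factor by an ergodic odometer. The heuristic is that the relation $f(qn)=f(q)f(n)$ for $\gcd(q,n)=1$ couples the values of $f$ on different residues modulo $q$ in a rigid, periodic way, which should force the discrete spectrum of the Furstenberg system to be purely rational. Concretely, I would try to show that for each continuous eigenfunction of the Furstenberg system there exists an integer $q$ such that on each residue class modulo $q$ the eigenfunction is asymptotically constant in logarithmic density; combined with the absence of irrational spectrum this gives that the Kronecker factor is an odometer. The harder half is to upgrade this from the $1$-step to the full infinite-step nilfactor: one would need to show that the higher-order Host--Kra seminorms $\nnorm{f}_{s}$ control $f$ only through its means on arithmetic progressions, ruling out genuinely nilpotent structure. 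Here one might borrow from the analysis in \cite{Fr16} and adapt the strong-stationarity results of Section~\ref{S:sst} to general $f$.

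Ergodicity and uniqueness form the third and fourth steps, and are where I expect the real obstacles to sit. Uniqueness of the Furstenberg system amounts to existence of
\[
\lim_{N\to\infty}\frac{1}{\log N}\sum_{n=1}^N \frac{1}{n}\prod_{j=1}^\ell f(n+h_j)^{\ell_j}
\]
for every tuple $(h_j,\ell_j)$, which is wide open even for $f=\blambda$ beyond the cases settled in \cite{Tao15,TT17}. My plan is to proceed conditionally: prove that \emph{any} Furstenberg system of $f$ is ergodic and of the form Bernoulli $\times$ odometer, and then use the rigidity of this class (namely that such a product is determined up to isomorphism by its two-point correlations, which in turn reduce to the mean of $f$ on each residue class) to deduce that all Furstenberg systems coincide, yielding uniqueness.

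The main obstacle is ergodicity itself. For $\bmu$ one does not have ergodicity (because $\bmu^2$ is non-trivial on squarefree residues), so the conjecture genuinely uses the $[-1,1]$-valuedness of $f$; for $\blambda$ ergodicity is already open. I would attempt to show that the $T$-invariant $\sigma$-algebra of the Furstenberg system of $f$ is generated by the rational Kronecker factor, by combining the structural result of the previous step with a mixing input of Matom\"aki--Radziwi{\l}{\l} type applied to $f$ rather than $\blambda$. This last ingredient looks like the genuinely new analytic input required, and a complete proof of the conjecture seems to demand quantitative estimates on short averages of arbitrary bounded multiplicative functions that are presently available only for $\blambda$ and $\bmu$.
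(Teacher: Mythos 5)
The statement you are trying to prove is stated in the paper as a \emph{conjecture}, and the paper offers no proof of it: the authors explicitly write that they expect but cannot establish ergodicity, and elsewhere they note that Bernoullicity of the Furstenberg system of $\blambda$ is equivalent to the logarithmic Chowla conjecture, none of which is known unconditionally. Your text is accordingly a research programme rather than a proof, and you are candid about this; but as a verification exercise the answer is that every one of its four steps has a genuine gap. Concretely: (1) the reduction of the paper hinges on Proposition~\ref{prop:mu-fator-tilde-mu}, whose factor map $\pi(\ux)(n)=-x_n(0)$ works only because $\bmu(p)=\blambda(p)=-1$ for (almost) all primes, so that the prime average $\E_{p\in\P}$ produces the constant $(-1)^\ell$ outside the correlation; for a general $f$ with $f(p)$ varying, $\E_{p\in\P}\prod_j f(p)\,(\cdots)$ does not factor, and no substitute identity is supplied — the authors themselves remark that they do not know how to adapt the argument beyond functions equal to $-1$ on a density-one set of primes. (2) Uniqueness of the Furstenberg system is exactly the existence of all logarithmic self-correlations of $f$, which is open even for $f=\blambda$ beyond the two-point and odd-order cases of \cite{Tao15,TT17}. (3) Ergodicity is likewise open for $\blambda$, and no mechanism is proposed that would produce it; a Matom\"aki--Radziwi{\l}{\l}-type input for arbitrary bounded multiplicative functions does not currently exist. (4) The rigidity claim you invoke to deduce uniqueness — that a product of a Bernoulli system and an odometer is determined by its two-point correlations — is false as stated (Bernoulli systems of different entropies have identical two-point correlations for suitable generating functions), and in any case uniqueness of the Furstenberg system requires equality of the measures on the sequence space, not merely isomorphism of the abstract systems, so all higher correlations would have to be controlled.

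One smaller point worth flagging: the conjecture as stated concerns $f\colon\N\to[-1,1]$, and the paper's surrounding discussion shows that the expected structure genuinely varies within this class ($\bmu^2$ gives an odometer, $\blambda$ conjecturally a Bernoulli system, $\bmu$ a relatively Bernoulli extension of an odometer), so any proof must detect which component is trivial from arithmetic data about $f$; your proposed dichotomy via aperiodicity addresses the $\{-1,1\}$-valued case (the paper's Conjecture~\ref{Conj2}) but not the general $[-1,1]$-valued one.
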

Note that all three possibilities can occur, for example it is known that the Furstenberg system of $\bmu^2$ (called the square-free system)
is  an ergodic odometer \cite{CS}, and conditional to the Chowla conjecture it is known that the Furstenberg system of the Liouville function $\blambda$ is isomorphic to a Bernoulli system and the Furstenberg system of  the M\"obius function $\bmu$ is a relatively Bernoulli extension over the procyclic factor induced by  $\bmu^2$ (see \cite[Lemma~4.6]{AKLR14}).

How do we then distinguish
 (at least conjecturally) between the possible structures of the
Furstenberg system of a  multiplicative function $f\colon \N\to [-1,1]$?  It seems easier to do this  when $f$ takes values  in $\{-1,1\}$ in which case we expect the following dichotomy:
\begin{conjecture}\label{Conj2}
The Furstenberg system of a multiplicative function  $f\colon \N\to \{-1,1\}$ is either a  Bernoulli system
or an ergodic odometer. Moreover, it is a Bernoulli system if and only if   $f$ is aperiodic.
\end{conjecture}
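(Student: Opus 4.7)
The plan is to split on whether $f$ is aperiodic (logarithmic mean $0$ on every arithmetic progression) and to extend the structural machinery of the paper to a general multiplicative $f\colon\N\to\{-1,1\}$. A preliminary step --- essentially what the paper flags as the main obstruction to generalizing Theorem~\ref{th:main-structure} --- is to replace Proposition~\ref{prop:mu-fator-tilde-mu}, which uses $\bmu(p)=\blambda(p)=-1$: for a general multiplicative $f$, Tao's prime-dilated identity still applies but carries an extra weight $f(p)$ inside the average over primes, and one would combine it with Hal\'asz-type control of $\sum_p f(p)/p$ in residue classes to re-establish that the associated ``tilde'' system has no irrational spectrum and ergodic components that are products of infinite-step nilsystems with Bernoulli systems.

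\emph{Case 1: $f$ aperiodic.} By the Daboussi--Delange theorem (the real-valued case of Hal\'asz), aperiodicity of a $\pm 1$-valued multiplicative $f$ is equivalent to $f$ pretending to no real Dirichlet character, and forces $\lE_n \chi(n)f(n)=0$ for every Dirichlet character $\chi$. Combined with the hypothetical extension of Theorem~\ref{th:main-structure} this kills the rational Kronecker factor on every Furstenberg system of $f$; combined with the standard fact that a $\pm 1$-valued multiplicative function cannot pretend to $n^{it}\chi(n)$ for $t\neq 0$, it also eliminates the remaining spectrum, forcing each ergodic component of the structural over-system to be purely Bernoulli. Promoting ``factor of Bernoulli'' to Bernoulli itself would proceed as in~\cite{Fr16}, using the (partial) strong stationarity results of Section~\ref{S:sst} to synchronize the ergodic fibers.

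\emph{Case 2: $f$ not aperiodic.} Then by Hal\'asz there is a primitive real Dirichlet character $\chi$ of some conductor $q$ with $\sum_p (1-f(p)\chi(p))/p<\infty$, so $f=\chi\cdot g$ with $g$ multiplicative, real, and pretentious to $1$. The Furstenberg system of $\chi$ alone is a periodic rotation modulo $q$; the ``error'' factor $g$ is --- by $\sum_p(1-g(p))/p<\infty$ together with $g(p^2)=1$ --- governed by information at higher prime conductors and should contribute only an additional odometer layer on top. The model case is $f=\bmu^2$ from~\cite{CS}; adapting that proof to $g$ and combining it with the rotation coming from $\chi$ should show that the Furstenberg system of $f$ is an inverse limit of finite cyclic rotations, i.e.\ an ergodic odometer.

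The main obstacle lies in Case~1: the paper's machinery yields only a \emph{factor of} an infinite-step nilsystem times a Bernoulli system, and promoting this to an actual Bernoulli isomorphism while simultaneously ruling out \emph{all} (not merely rational) spectrum is strictly stronger than Theorem~\ref{th:main-structure}. Moreover, Conjecture~\ref{Conj1} (uniqueness of the Furstenberg system) is used implicitly when one speaks of ``the'' Furstenberg system, and establishing it appears to require logarithmic-Chowla-type identities at all orders for $f$, currently known only at the two-point level via Matom\"aki--Radziwi{\l}{\l} and Tao. In short, Conjecture~\ref{Conj2} in the aperiodic direction essentially amounts to the logarithmic Chowla conjecture for real multiplicative $\pm 1$-valued functions, so a genuine proof appears to require substantial new input from analytic number theory beyond what is available here.
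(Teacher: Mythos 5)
You are trying to prove a statement that the paper itself presents only as a conjecture (Conjecture~\ref{Conj2}); there is no proof of it in the paper, and the authors explicitly note that, beyond aperiodicity, none of the relevant properties is known unconditionally even for the Liouville function. Your text is a strategy outline rather than a proof, and you say so yourself: the decisive steps are all left open. Concretely, in Case~1 you need (a) a replacement for Proposition~\ref{prop:mu-fator-tilde-mu} valid for general multiplicative $f\colon\N\to\{-1,1\}$ --- which the paper flags as an unresolved obstruction, since the weight $f(p)$ in the prime average destroys the identity that makes the Furstenberg system a factor of $(X^\Z,\wt\mu,S)$; (b) elimination of \emph{all} spectrum, not just the irrational and rational parts separately, which is strictly stronger than Theorem~\ref{th:main-structure}; and (c) promotion of ``factor of a product of an infinite-step nilsystem and a Bernoulli system'' to ``Bernoulli,'' which by the equivalences recorded after Conjecture~\ref{Conj2} (via \cite{T1} and \cite{Fr16}) is equivalent to the logarithmic Chowla conjecture for $f$. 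None of (a)--(c) is supplied, so the aperiodic direction is not proved but merely restated.

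Case~2 has the same character: the claim that the pretentious factor $g$ contributes ``only an additional odometer layer'' is an analogy with the square-free system of \cite{CS}, not an argument --- one would at minimum have to show that every Furstenberg system of $g$ is generated by rational eigenfunctions, and the phrase ``should show'' does the real work. Finally, as you note, speaking of \emph{the} Furstenberg system presupposes Conjecture~\ref{Conj1} (uniqueness/genericity), which is also open. So the proposal contains a genuine gap at every essential step; it is a reasonable roadmap consistent with the paper's discussion, but it does not constitute a proof, and the paper offers none to compare it with.
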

Aperiodicity, which is also often referred to as non-pretentiousness,  means that
the averages $\frac{1}{N}\sum_{n=1}^N f(an+b)$ converge to $0$ as $N\to \infty$
for all $a,b\in \N$.
 It can be shown that  the Furstenberg system of a zero mean multiplicative function $f\colon \N\to \{-1,1\}$ is Bernoulli if and only if  all   multiple correlations of  distinct shifts of $f$  vanish. When one works with logarithmic averages, Tao showed in \cite{T1} (when $f=\blambda$
but his argument applies with some modifications for general multiplicative $f\colon\N\to\{-1,1\}$, see~\cite[Theorem~1.8]{Fr16})
that this is equivalent to asserting that $f$ satisfies the Sarnak conjecture.
So for multiplicative functions $f\colon \N\to \{-1,1\}$, aperiodicity, Bernoullicity of the corresponding Furstenberg system, $f$ satisfies the logarithmic Chowla conjecture, and $f$ satisfies the logarithmic Sarnak
conjecture,
are expected to be equivalent properties. Of course, none of the last three properties is known unconditionally
 even for the Liouville function (only aperiodicity is known).

\subsection{Notation and conventions} For readers convenience, we gather here   some notation used throughout the article.

 We write $\T=\R/\Z$ and $\mathbb{S}$ for the unit circle. For $t\in\R$ or $\T$ we write $\e(t):=\e^{2\pi it}$.

We denote by $\N$ the set of positive integers and by $\P$ the set of prime numbers.
For $N\in \N$ we denote by $[N]$ the set $\{1,\ldots,
N\}$. Whenever we write $\bN$ we mean a sequence of intervals of integer $([N_k])_{k\in\N}$ with $N_k\to\infty$.

Unless otherwise specified,  with $\ell^\infty(\Z)$ we denote the space of all bounded,
real valued, doubly infinite
sequences.

If $A$ is a finite non-empty set we let  $\E_{n\in A}:=\frac{1}{|A|}\sum_{n	\in A}$.

With $(Y,R)$ we denote  the topological dynamical system used to define the weight in the
formulation of Theorems~\ref{th:main-new}, \ref{th:main-1}, and   \ref{th:main-2};  it sometimes comes equipped with an $R$-invariant measure $\nu$.

With $(X,\mu,T)$ we denote a  Furstenberg system
of the M\"obius or the Liouville function, and we also use the same notation when we study properties of abstract measure preserving systems.

With $(X^\Z,\wt \mu, S)$ we denote  the system of arithmetic progressions with prime steps associated with a system $(X,\mu,T)$.

\subsection{Acknowledgement} We would like to thank F.~Durand, B.~Kra,  M.~Lema\'nczyk, and P.~Sarnak  for useful remarks.
 We also thank M.~Lema\'nczyk and T. de la Rue for pointing out a correction in Theorem 1.4 and Corollary 3.13. The second author thanks the CMM -- Universitad de Chile for its hospitality and support.

\section{Background in ergodic theory}\label{S:back}
We gather here some basic background in ergodic theory and  related notation used throughout the article.

\subsubsection*{Topological dynamical systems}
A \emph{topological dynamical system} $(X,T)$ is a compact metric space endowed with a homeomorphism $T\colon X\to X$. It is \emph{topologically transitive} if it has at least one dense orbit under $T$, and it is \emph{minimal} if each orbit is dense.

If $(X,T)$ and $(Y,S)$ are two topological dynamical systems, then the second system is a {\em factor} of the first if
there exists a  map $\pi\colon X\to Y$, continuous  and onto,  such that $S\circ\pi(x) = \pi\circ T(x)$ for every $x\in
X$. If the factor map $\pi$ is injective, then the two systems are {\em isomorphic}.

\subsubsection*{Measure preserving systems}
Throughout the article, we make the standard assumption that all probability  spaces $(X,\CX,\mu)$ considered are Lebesgue, meaning, $X$  can be given the structure of a compact metric space
and $\CX$ is its Borel $\sigma$-algebra.
 A {\em measure preserving system}, or simply {\em a system}, is a quadruple $(X,\CX,\mu,T)$
where $(X,\CX,\mu)$ is a probability space and $T\colon X\to X$ is an invertible, measurable,  measure preserving transformation.
We often omit the $\sigma$-algebra $\CX$  and write $(X,\mu,T)$. Throughout,  for $n\in \N$ we denote  with $T^n$   the composition $T\circ  \cdots \circ T$ ($n$ times) and let $T^{-n}:=(T^n)^{-1}$ and $T^0:=\id_X$. Also, for $f\in L^1(\mu)$ and $n\in\Z$ we denote by  $T^nf$ the function $f\circ T^n$.

\subsubsection*{Factors and isomorphisms}
A {\it homomorphism},  also called a \emph{factor map}, from a system $(X,\CX,\mu, T)$ onto a
system $(Y, \CY, \nu, S)$ is a measurable map $\pi\colon X\to Y$,
such that
$\mu\circ\pi^{-1} = \nu$ and with $S\circ\pi = \pi\circ T$ valid  $\mu$-almost everywhere.
 When we have such a homomorphism we say that the system $(Y, \CY,
\nu, S)$ is a {\it factor} of the system $(X,\CX,\mu, T)$.  If
the factor map $\pi\colon X\to Y$ is invertible\footnote{Meaning that there exists a factor map $Y\to X$, written $\pi\inv$, with $\pi\inv\circ\pi=\id_X$ valid $\mu$-almost everywhere (this implies that $\pi\circ\pi\inv=\id_Y$ holds $\nu$-almost everywhere).}
we say that $\pi$ is an \emph{isomorphism} and that
 the systems $(X,\CX, \mu, T)$ and $(Y, \CY, \nu, S)$
are {\it isomorphic}.

If $\pi\colon(X,\CX,\mu, T)\to(Y, \CY, \nu, S)$ is a factor map and $\phi\in L^1(\mu)$, the  function
$\E_{\mu}(\phi\mid Y)$ in $L^1(\nu)$ is determined by the property
	$\int_{A}\E_{\mu}(\phi\mid Y)\, d\nu=\int_{\pi^{-1}(A)} \phi\, d\mu$ for every $A\in \CY$.

If $\pi\colon(X,\CX,\mu, T)\to(Y, \CY, \nu, S)$ is a factor map, then $\pi^{-1}(\CY)$  is a $T$-invariant sub-$\sigma$-algebra  of $\CX$. Conversely,
 for any $T$-invariant sub-$\sigma$-algebra $\CY'$ of
$\CX$ there exists a factor map $\pi\colon(X,\CX,\mu, T)\to(Y, \CY, \nu, S)$ with $\CY'=\pi\inv(\CY)$ up to $\mu$-null sets.
This factor is unique up to isomorphism  and we call it
{\em the factor associated with (or induced by) $\CY'$}.
 See \cite[Section~2.3]{Wa82} or \cite[Section~6.2]{EW11} for details. When there is no danger of confusion, we may abuse notation and denote the transformation  $S$ on $Y$ by $T$.
We  pass constantly from invariant sub-$\sigma$-algebras to factors, the convention being that the
factors associated to the $\sigma$-algebras $\CY,\CZ,\dots$, are written $Y,Z,\dots$.


We will sometimes abuse notation and use the sub-$\sigma$-algebra $\mathcal{Y}$ in place of
the subspace $L^2(X,\CY,\mu)$. For example, if we write that a function is orthogonal to $\mathcal{Y}$,
we mean  that  it  is orthogonal to the subspace $L^2(X,\CY,\mu)$.

\subsubsection*{Spectrum}
  Let $(X,\mu,T)$ be a  system.
 For $t\in \T$, we say that $\e(t)$ is an \emph{eigenvalue} of the system if there exists a non-identically zero function $f\in L^2(\mu)$ such that $Tf=\e(t)f$, in which case we  say that $f$ is an \emph{eigenfunction} associated to the eigenvalue $\e(t)$. We call the eigenvalue $\e(t)$ \emph{rational} if $t$ is rational and \emph{irrational} otherwise.
The \emph{spectrum} of the system   is the subset of $\T$ consisting  of all eigenvalues,  and we define the \emph{rational} and the \emph{irrational spectrum} to be the subset of the  spectrum consisting  of  all rational (resp. irrational) eigenvalues.
With $\CK(T)$ we denote the  {\em rational Kronecker factor}  of $(X,\CX,\mu,T)$, it is  the smallest $T$-invariant sub-$\sigma$-algebra of $\CX$ with respect to which   all  eigenfunctions
with rational eigenvalues are measurable.  The linear span of these eigenfunctions is
 dense in $L^2(X,\CK(T),\mu)$.

\subsubsection*{Ergodicity and ergodic decomposition}
A system $(X,\mu,T)$ is {\em ergodic} if  all functions $f\in L^1(\mu)$ which satisfy  $Tf=f$ are constant. It is
{\em totally ergodic} if $(X,\mu,T^d)$ is ergodic for every $d\in \N$, equivalently, if it is ergodic and has no rational spectrum except $1$.

Let $(X,\CX,\mu,T)$ be a system and let $\pi\colon(X,\CX,\mu,T)\to(\Omega,\CO,P,T)$ be the factor map associated to the  $\sigma$-algebra of $T$-invariant sets of $X$. Then the disintegration of $\mu$ over $P$
\begin{equation}\label{E:ErDec}
\mu=\int_\Omega\mu_\omega\,dP(\omega),
\end{equation}
is called the \emph{ergodic decomposition} of $\mu$ under $T$ (see~\cite[Theorem~3.22]{G03}). The following properties hold:
\begin{itemize}
	\item
	$T$ acts as the identity on $\Omega$;
	\item
	the map $\omega\mapsto\mu_\omega$ is a measurable map from $\Omega$ to the set of  ergodic $T$-invariant measures on $X$;
	
	\item
	the decomposition~\eqref{E:ErDec} is  unique in the following sense: If $(Y,\CY,\nu)$ is a probability space and $ y\mapsto\mu'_y$ is a measurable map from $Y$ into the  set of ergodic measures on $X$ such that $\mu=\int_Y\mu'_y\,d\nu(y)$, then there exists a measurable map $\phi\colon Y\to \Omega$, mapping the measure $\nu$ to the measure $P$, such that
	$\mu_{\phi(y)}=\mu'_y$ for $\nu$-almost every $y\in Y$.
\end{itemize}

We call the systems  $(X,\CX,\mu_\omega,T)$, $\omega \in \Omega$, the {\em ergodic components} of
$(X,\CX,\mu,T)$.

\subsubsection*{Unique ergodicity} A topological dynamical system $(X,T)$ is   {\em uniquely ergodic} if there is a unique $T$-invariant Borel probability measure on $X$.

\subsubsection*{Bernoulli systems}
 For the purposes of this article, a {\em Bernoulli system} has the form $(X^\Z, \CB_{X^\Z}, \nu,S)$,  where $(X,\CX,\rho)$ is a  probability space,
 $S$ is the shift transformation on $X^\Z$,
$\CB_{X^\Z}$ is  the  product $\sigma$-algebra of  $X^\Z$, and  $\nu$ is the product measure $\rho^\Z$.

\subsubsection*{Nilsystems}
Let $s\in\N$, $G$ be an $s$-step nilpotent Lie group, and $\Gamma$ be a discrete cocompact subgroup of $G$. Then the quotient space $X=G/\Gamma$ is called an \emph{$s$-step nilmanifold}.
We denote the  elements of $X$ as points $x,y,\dots$, not  as cosets. The point $e_X$ is the image in $X$ of the unit element of $G$. The natural action of $G$ on $X$ is written $(g,x)\mapsto g\cdot x$ and the unique Borel measure on $X$ that is invariant under this action is called the \emph{Haar measure} of $X$ and is denoted by  $\mu_X$.
If $a\in G$, then the transformation $T\colon X\to X$ defined by $Tx=ax$, $x\in X$, is  called a {\em nilrotation of $X$}, and the system  $(X,\CX,\mu_X,T)$, where  $\CX$ is the  Borel-$\sigma$-algebra of $X$,   is called an \emph{$s$-step nilsystem}. When we do not care about the degree of nilpotency $s$ we simply call it a {\em nilsystem}. It is well known that  if $T$ is a nilrotation on $X$, then the statements  $(X,T)$ is topologically transitive, $(X,T)$ is  minimal,
$(X,\mu_X,T)$ is ergodic, and  $(X,T)$ is uniquely ergodic, are equivalent. Moreover, an ergodic nilsystem $(X,\mu_X,T)$ is
totally ergodic if and only if the nilmanifold $X$ is connected.

\subsubsection*{Joinings and disjoint systems}
Given two systems $(X,\CX, \mu,T)$ and $(Y,\CY, \nu,S)$ we call a measure $\rho$ on $(X\times Y, \CX\times \CY)$ a {\em joining} of the two systems
if it is $T\times S$ invariant and its projection onto the  $X$ and $Y$ coordinates are the measures $\mu$ and $\nu$ respectively. We say that the systems on $X$ and on $Y$ are {\em disjoint} if the only joining of the systems
is the product measure $\mu\times \nu$. If two systems are disjoint, then they have no
 non-trivial common factor,  but the converse is not true.
It is well known  that every Bernoulli system is disjoint from every zero-entropy system;  we will use  the zero entropy assumption in the proofs of our main results only via this property.

\section{Overview of the proof and  reduction to an ergodic statement}
\label{sec:overview}
In this section we give an overview of the proof of our main results and eventually reduce   to some statements of purely ergodic context which we establish in Sections~\ref{S:structure}-\ref{S:Disjoint}.   In Section~\ref{subsec:Furstenberg} we define the notion of a Furstenberg system  of an arbitrary bounded sequence.
In Section~\ref{SS:Tao} we
 reproduce  some striking identities of Tao that are implicit in \cite{Tao15} and  we use them in Section~\ref{SS:AP}  in order to  show that a Furstenberg system of the Liouville function  is a factor of a measure preserving system of purely ergodic origin;  we call it the ``system of arithmetic progressions with prime steps''. In Section~\ref{SS:StructureAP}  we state our main structural results for such systems  and we use them in Section~\ref{SS:Proof1} in order to
 get  similar structural results for   Furstenberg systems of the  M\"obius and the Liouville   function, thus proving Theorem~\ref{th:main-structure}. In Section~\ref{SS:Disj}
 we state a disjointness result which we use in Section~\ref{SS:Proof2} in order
 to prove  Theorems~\ref{th:main-new}, \ref{th:main-1}, and  \ref{th:main-2}.

\subsection{Notation regarding averages}
For  $N\in\N$  we let  $[N]=\{1,\dots,N\}$. For an arbitrary bounded sequence $a=(a(n))_{n\in\N}$ we write
$$
\E_{n\in[N]}\,a(n):=\frac 1N\sum_{n=1}^Na(n)\ \text{ and }\
\E_{n\in\N}:=\lim_{N\to\infty} \E_{n\in [N]}\, a(n)
$$
if  this limit exists.
Let $\bN= ([N_k])_{k\in\N}$ be a sequence of intervals with $N_k\to \infty$.
For an arbitrary bounded sequence $a=(a(n))_{n\in\N}$ we write
$$
\E_{n\in\bN}\, a(n):=\lim_{k\to\infty}\E_{n\in[N_k]} \, a(n)
$$
if  this limit exists and
$$
\lE_{n\in[N_k]}:=\frac 1{\log N_k}\sum_{n=1}^{N_k} \frac{ a(n)}{n}, \qquad
\lE_{n\in\bN}\, a(n):=\lim_{k\to\infty}\lE_{n\in[N_k]}\,  a(n)
$$
if this limit exists.
If $(a(p))_{p\in\P}$ is a sequence indexed by the primes, we write
$$
\E_{p\in\P}\, a(p):=\lim_{N\to\infty}\frac{1}{\pi(N)}\sum_{p\leq N}a(p),
$$
where $\pi(N)$ denotes the number of prime numbers less than $N$, if this limit exists.

Using partial summation one easily verifies  that  for a bounded sequence $(a(n))_{n\in\N}$, convergence of the Ces\`aro averages $\E_{n\in [N]}\, a(n) $ implies
convergence of the logarithmic averages $\lE_{n\in[N]}\,a(n)$ as $N\to \infty$, but the converse does not hold.
Moreover, the direct implication does not hold if we average over subsequences of intervals.

\subsection{Furstenberg systems of bounded sequences}
\label{subsec:Furstenberg}
To each bounded sequence  that is distributed ``regularly'' along a  sequence of intervals with lengths increasing to infinity,
we  associate a measure preserving system. For the purposes of this  article
all averages in the definition of Furstenberg systems of bounded sequences are taken to be logarithmic and we restrict to real valued bounded sequences.
\begin{definition}\label{D:correlations}
	Let $ \bN:=([N_k])_{k\in\N}$ be a sequence of intervals with $N_k\to \infty$.
	We say that the real valued   sequence $a\in \ell^\infty(\Z)$  {\em admits log-correlations  on $\bN$}, if the following  limits exist
	$$
	\lim_{k\to\infty} \lE_{n\in [N_k]}\,  a(n+h_1)\cdots a(n+h_\ell)
	$$
	 for every $\ell \in \N$ and  $h_1,\ldots, h_\ell\in \Z$ (not necessarily distinct).
\end{definition}
\begin{remarks}
$\bullet$	If $a\in \ell^\infty(\Z)$, then using a diagonal argument we get that every sequence of intervals $\bN=([N_k])_{k\in \N}$
	has a subsequence $\bN'=([N_k'])_{k\in\N}$, such that the sequence  $a\in \ell^\infty(\Z)$ admits log-correlations on $\bN'$.

$\bullet$ If $a(n)$ is only defined for $n\in\N$ we extend it in an arbitrary way to $\Z$ and define the analogous notion. Then all the limits above do not depend on the choice of the extension.
\end{remarks}

 The correspondence principle of Furstenberg was originally used in \cite{Fu77} in order to restate  Szemer\'edi's theorem on arithmetic progressions in ergodic terms.
We will use the following  variant of this principle
which applies to general real valued bounded sequences:
\begin{proposition}\label{P:correspondence}
	Let $a\in\ell^\infty(\Z)$ be a real valued sequence that  admits
	log-correlations  on 
	$\bN:=([N_k])_{k\in\N}$.
	Then there exist a  topological  system $(X,T)$,  a    $T$-invariant Borel probability measure $\mu$, and a real valued $T$-generating function $F_0\in C(X)$,\footnote{A real valued function $F_0\in C(X)$ is $T$-generating if the functions $T^nF_0$, $n\in\Z$, separate points  of $X$.
		By  the Stone-Weierstrass theorem, this holds if and only if the $T$-invariant subalgebra  generated by $F_0$ 
		is dense in $C(X)$ (we restrict to real valued functions) with the uniform topology. }
	 such that
\begin{equation}\label{E:correspondence}
	\lE_{n\in {\bN} }\,  \prod_{j=1}^\ell a(n+h_j) =\int \prod_{j=1}^\ell
	T^{h_j}F_0 \, d\mu
	\end{equation}
	for every $\ell\in \N$ and  $h_1, \ldots, h_\ell\in \Z$. 
\end{proposition}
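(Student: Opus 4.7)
My plan is to execute the standard Furstenberg correspondence principle, adapted to the logarithmic-averaging setting. Since $a$ is bounded, say $|a(n)|\le M$ for all $n$, I would take the ambient space to be the compact metrizable product space $X:=[-M,M]^{\Z}$, equipped with the shift transformation $T$ defined by $(Tx)(n):=x(n+1)$. Define the distinguished point $x_0\in X$ by $x_0(n):=a(n)$ and the $0$-th coordinate projection $F_0\in C(X)$ by $F_0(x):=x(0)$. Then $F_0$ is continuous and $(T^hF_0)(x)=x(h)$, so the family $\{T^hF_0:h\in\Z\}$ separates points, making $F_0$ $T$-generating.

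Next I would introduce the logarithmic empirical measures
\begin{equation*}
\mu_k:=\frac 1{\log N_k}\sum_{n=1}^{N_k}\frac 1n\,\delta_{T^n x_0},
\end{equation*}
which are (asymptotically) Borel probability measures on the compact space $X$. For any $\ell\in\N$ and $h_1,\dots,h_\ell\in\Z$, a direct computation gives
\begin{equation*}
\int \prod_{j=1}^\ell T^{h_j}F_0\,d\mu_k=\frac 1{\log N_k}\sum_{n=1}^{N_k}\frac{1}{n}\prod_{j=1}^\ell a(n+h_j),
\end{equation*}
and by the hypothesis that $a$ admits log-correlations on $\bN$, these averages converge as $k\to\infty$ to the claimed limit $\lE_{n\in\bN}\prod_j a(n+h_j)$. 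By linearity this extends to convergence of $\int P\,d\mu_k$ for every polynomial $P$ in the variables $T^hF_0$. The real unital algebra $\CA$ generated by $\{T^hF_0:h\in\Z\}$ separates points, so the Stone--Weierstrass theorem yields that $\CA$ is uniformly dense in $C(X)$; combined with the uniform boundedness $\|\mu_k\|\le 1+o(1)$, this upgrades the convergence to weak$^*$ convergence of $(\mu_k)$ to a Borel probability measure $\mu$ on $X$. Identity \eqref{E:correspondence} then follows by taking $k\to\infty$.

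It remains to verify that $\mu$ is $T$-invariant. For any $F\in C(X)$,
\begin{equation*}
\int F\circ T\,d\mu_k-\int F\,d\mu_k=\frac 1{\log N_k}\sum_{n=2}^{N_k}\Bigl(\tfrac{1}{n-1}-\tfrac{1}{n}\Bigr)F(T^n x_0)+O\!\left(\tfrac{1}{\log N_k}\right),
\end{equation*}
and since $\sum_{n\ge 2}\frac 1{n(n-1)}<\infty$ and $F$ is bounded, the right-hand side tends to $0$ as $k\to\infty$. Passing to the limit yields $\int F\circ T\,d\mu=\int F\,d\mu$, so $T_*\mu=\mu$.

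The argument is essentially routine; the only point requiring a moment of care is the verification of invariance, where the non-uniform weights $1/n$ prevent the naive telescoping used for Ces\`aro averages and force one to invoke summability of $1/(n(n-1))$. No step presents a serious obstacle.
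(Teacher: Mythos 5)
Your proof is correct and follows essentially the same route as the paper: the space $I^\Z$ with the shift, the coordinate function $F_0$, the logarithmic empirical measures at the point $a$, weak-star convergence via Stone--Weierstrass, and $T$-invariance. The only difference is that you spell out the invariance verification (which the paper dismisses as clear), correctly handling the non-uniform weights via the summability of $1/(n(n-1))$.
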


\begin{definition}
	Let $a\in\ell^\infty(\N)$ be a real valued sequence that  admits
	log-correlations on
	$\bN:=(N_k)_{k\in\N}$.
	We call the system (or the measure $\mu$) defined in Proposition~\ref{P:correspondence}
	the {\em Furstenberg system  (or measure) associated with $a$ and $\bN$}.
\end{definition}
\begin{remarks}
	$\bullet$  Given $a\in \ell^\infty(\Z)$ and $\bN$,  the measure  $\mu$ is uniquely determined by \eqref{E:correspondence} since this identity determines the values of $\int f\, d\mu$ for all real valued  $f\in C(X)$.
	
$\bullet$ 	A priori a sequence $a\in \ell^\infty(\Z)$ may have several, perhaps   uncountably many, non-isomorphic Furstenberg systems
	depending on which sequence of intervals $\bN$ we use in the evaluation of the log-correlations of the sequence $a\in \ell^\infty(\Z)$.
	When we write that  a Furstenberg measure or system of  a sequence has a certain property we mean that any of these measures or systems has the asserted property.
\end{remarks}

In the construction of the Furstenberg system $(X,\CX,\mu,T)$  we can take $X$ to be the compact metric space $I^\Z$ (with the product topology) where $I$ is any closed and  bounded  interval  containing the range of $(a(n))_{n\in\Z}$,
$\CX$ to be  the   Borel-$\sigma$-algebra of $I^\Z$, and  $T$ to be  the shift transformation on $I^\Z$. Points of $X$ are written as $x=(x(n) )_{n\in\Z}$  and we let $F_0(x):=x(0)$, $x\in X$. Then  $F_0\in C(X)$ and $F_0$  is $T$-generating. We consider the sequence $a=(a(n))_{n\in\Z}$ as a point of $X$. Our hypothesis implies that the measures
\begin{equation}
\label{E:nu}
\lE_{n\in[N_k]}\delta_{T^na},
\quad  k\in \N,
\end{equation}
converge weak-star as $k\to\infty$ to a measure $\mu$ on $X$, and this measure  is  clearly $T$-invariant  and satisfies \eqref{E:correspondence}. Indeed, if   $F=\prod_{j=1}^\ell T^{h_j}F_0$, then  $F\in C(X)$ and
$F(T^na)=\prod_{j=1}^\ell a(n+h_j)$, $n\in \N$, and the weak-star convergence of the measures in \eqref{E:nu} to $\mu$ gives identity \eqref{E:correspondence}.

 In this article  we are mostly interested in applying the previous result when $a=\bmu$  in which  case we take $X:=\{-1,0, 1\}^\Z$. For every $h\in\Z$  we write $F_h\colon X\to\{-1,0, 1\}$ for the function given by
$$
F_h(x):=x(h),   \quad x\in X.
$$
Then  for every $h\in\Z$ we have  $F_h=T^hF_0$.   If  $(X,\CX,\mu,T)$  is the Furstenberg system associated with the M\"obius function and  the sequence  $\bN$,  by Proposition~\ref{P:correspondence} we have
$$
\int\prod_{j=1}^\ell F_{h_j}(x)\,d\mu(x)=
\int\prod_{j=1}^\ell  T^{h_j}F_0\,d\mu=
\lE_{n\in\bN}\prod_{j=1}^\ell\bmu(n+h_j).
$$
for every $\ell \in\N$ and  $h_1,\dots,h_\ell\in\Z$.

\subsection{A convergence result for  multiple correlation sequences} We will make use of  the following consequence of Theorem~\ref{T:FHK} below:
\begin{proposition}	\label{P:conv}
	Suppose that the sequence $a\in \ell^\infty(\Z)$
	admits log-correlations on the sequence of intervals $\bN$. Then  the limit
$$
\E_{p\in\P}\Bigl(\lE_{n\in\bN}\prod_{j=1}^\ell a(n+ph_j)\Bigr)
$$
exists  for all  $\ell\in\N$ and $h_1,\ldots, h_\ell\in \Z$.
\end{proposition}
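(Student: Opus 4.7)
The plan is to use the Furstenberg correspondence (Proposition~\ref{P:correspondence}) to turn the inner logarithmic average into an integral on a measure preserving system, and then invoke Theorem~\ref{T:FHK} to handle the outer prime average.

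First, since $a\in\ell^\infty(\Z)$ admits log-correlations on $\bN$, Proposition~\ref{P:correspondence} yields a Furstenberg system $(X,\CX,\mu,T)$ and a generating function $F_0\in C(X)$ such that
\begin{equation*}
\lE_{n\in \bN}\prod_{j=1}^\ell a(n+k_j)=\int \prod_{j=1}^\ell T^{k_j}F_0\, d\mu
\end{equation*}
for every $\ell\in\N$ and every $k_1,\ldots,k_\ell\in\Z$. Applied with $k_j:=ph_j$ for a fixed prime $p$, this shows in particular that the inner limit defining $\lE_{n\in\bN}\prod_{j=1}^\ell a(n+ph_j)$ exists for every $p\in\P$, and equals $\int\prod_{j=1}^\ell T^{ph_j}F_0\, d\mu$. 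Thus the existence of the double limit $\E_{p\in\P}\bigl(\lE_{n\in\bN}\prod_j a(n+ph_j)\bigr)$ is reduced to the existence of
\begin{equation*}
\lim_{N\to\infty}\frac{1}{\pi(N)}\sum_{p\leq N}\int \prod_{j=1}^\ell T^{ph_j}F_0\, d\mu.
\end{equation*}

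Next, I would quote Theorem~\ref{T:FHK}, which asserts convergence in $L^2(\mu)$ of the prime-averaged multiple ergodic averages $\frac{1}{\pi(N)}\sum_{p\leq N}\prod_{j=1}^\ell T^{ph_j}f_j$ for any bounded measurable $f_1,\ldots,f_\ell$ on an arbitrary measure preserving system. Specialising to $f_1=\cdots=f_\ell=F_0$ and integrating against $\mu$, the bounded convergence theorem turns $L^2$-convergence of the averages into convergence of their integrals, which is exactly what we needed in the previous step.

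Putting these two steps together proves the proposition. There is essentially no obstacle here beyond the invocation of Theorem~\ref{T:FHK}: all the work is hidden in that deep input from ergodic theory (which itself relies on the Gowers uniformity of the $W$-tricked von Mangoldt function, as the introduction indicates). The correspondence step is routine once the log-correlation hypothesis is in place, and no delicate interchange of limits is required because we never need to commute the prime average with the logarithmic Ces\`aro average in $n$; we simply evaluate the inner limit exactly using the Furstenberg system and then invoke the ergodic convergence result along primes.
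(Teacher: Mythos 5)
Your proposal is correct and is essentially identical to the paper's own proof: both pass to the Furstenberg system via Proposition~\ref{P:correspondence} to rewrite the inner logarithmic limit as $\int\prod_{j=1}^\ell T^{ph_j}F_0\,d\mu$ and then invoke Theorem~\ref{T:FHK} to get convergence of the prime average of these integrals. (Only cosmetic quibble: $L^2$-convergence gives convergence of the integrals directly by pairing with the constant function $1$; no appeal to bounded convergence is needed.)
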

\begin{proof}
	Let $(X,\CX, \mu,T)$ be the Furstenberg system associated with  $a\in \ell^\infty(\Z)$ and  $\bN$, and let  also $F_0\in L^\infty(\mu)$ be as in Proposition~\ref{P:correspondence}.
Using Theorem~\ref{T:FHK} \ in Section~\ref{subsec:prime-steps} we get  that  for every  $\ell\in\N$ and $h_1,\ldots, h_\ell\in \Z$  the limit
$$
 \E_{p\in\P}\int\prod_{j=1}^\ell T^{ph_j}F_0\,d\mu
$$
exists. By \eqref{E:correspondence} we can replace
  $\int \prod_{j=1}^\ell T^{ph_j}F_0\,d\mu$ by  $\lE_{n\in\bN}\prod_{j=1}^\ell a(n+ph_j)$ and we arrive to the asserted
conclusion.
\end{proof}

\subsection{Tao's identities}\label{SS:Tao}
A key tool in our argument is the following rather amazing  identity which is implicit in  \cite{Tao15}:
\begin{theorem}[Tao's identity for general sequences]\label{T:Tao}
	Let $\bN=([N_k])_{k\in\N}$ be a sequence of intervals with $N_k\to \infty$,   $a\in \ell^\infty(\Z)$ be a sequence (perhaps complex valued),  and $\ell\in\N$,  $h_1,\ldots, h_\ell\in \Z$.
	If we assume
	 that on the left and  right hand side below the   limits $\lE_{n\in \bN}$ exist for every $p\in\P$ and the limit  $\E_{p\in\P}$  exists, then we have the identity
	$$
	\E_{p\in\P}\, 
		\Big(\lE_{n\in \bN}\,\prod_{j=1}^\ell a(pn+ph_j)\Big)=
		\E_{p\in \P}\,
\Big(\lE_{n\in \bN}\, \prod_{j=1}^\ell a(n+ph_j)\Big).
$$
\end{theorem}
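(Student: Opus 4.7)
Let me set $F_p(m) := \prod_{j=1}^\ell a(m+ph_j)$, which is bounded uniformly in $p$. The identity to prove reads
\[
\E_{p\in\P}\lE_{n\in\bN}F_p(pn)=\E_{p\in\P}\lE_{n\in\bN}F_p(n).
\]
The plan is to transform both sides, via a number-theoretic decomposition, into a common iterated log-weighted double prime sum.

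First I would perform a dilation change of variables on the left-hand side. For each prime $p$, writing $m=pn$ gives
\[
\lE_{n\in[N]}F_p(pn) = \frac{p}{\log N}\sum_{\substack{m\leq pN\\ p\mid m}}\frac{F_p(m)}{m},
\]
exhibiting the left-hand side, up to a $\log(pN)/\log N$ correction tending to $1$, as $p$ times the logarithmic density of $F_p$ restricted to multiples of $p$ (on $[pN]$).

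The crucial step employs the classical identity $\log n=\sum_{d\mid n}\Lambda(d)$, with $\Lambda$ the von Mangoldt function. For any bounded sequence $g$, applying this to $\sum_{n\leq N}g(n)\log n/n$, swapping the order of summation, and discarding prime-power contributions $d=q^k$ with $k\geq 2$ (which sum to $O(\log N)$ by convergence of $\sum_q\sum_{k\geq 2}\log q/q^k$), yields
\[
\sum_{n\leq N}\frac{g(n)\log n}{n}=\sum_{q\in\P}\frac{\log q}{q}\sum_{m\leq N/q}\frac{g(qm)}{m}+O(\log N).
\]
An Abel summation identifies the left-hand side, asymptotically along $\bN$, as $\tfrac12(\log N)^2\,\lE_{n\in\bN}g(n)$ whenever this limit exists. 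Applying this identity with $g=F_p$ (for the right side of the target), and then, after the Step 1 dilation, also to the $p$-restricted sum representing $\lE_n F_p(pn)$ (for the left side), expresses both sides of the target identity as log-weighted averages over an auxiliary prime $q$ of shifted and dilated versions of $F_p$. Averaging over $p\in\P$ and rearranging the resulting double prime sum, the $p\leftrightarrow q$ symmetry should force the two expressions to coincide.

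The main obstacle I anticipate is the conversion between the log-weighted prime average $(\log P)^{-1}\sum_{p\leq P}(\log p/p)\,(\cdot)$ emerging naturally from the von Mangoldt step and the unweighted density average $\pi(P)^{-1}\sum_{p\leq P}(\cdot)=\E_{p\in\P}(\cdot)$ appearing in the statement; for arbitrary prime-indexed sequences the two need not coincide. The resolution goes through a partial-summation argument combined with the prime number theorem $\sum_{p\leq P}\log p=P+o(P)$: if the unweighted density limit is assumed to exist (as we have by hypothesis), it forces the log-weighted one to agree. A secondary technical point is the interchange of the $N\to\infty$ (along $\bN$) and $P\to\infty$ limits, justified by the uniform bound $\|F_p\|_\infty\leq\|a\|_\infty^\ell$ together with dominated convergence.
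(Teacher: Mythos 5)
There is a genuine gap. Your opening reductions are sound and match the first steps of the paper's argument: the dilation identity $\lE_{n\in \bN}\bigl(b(pn)-p\,b(n)\,\one_{p\Z}(n)\bigr)=0$, valid for logarithmic (but not Ces\`aro) averages, converts the left-hand side into $\E_{p\in\P}\, p\,\lE_{n\in\bN}\bigl[\prod_j a(n+ph_j)\,\one_{p\Z}(n)\bigr]$, and the passage between the unweighted prime average $\E_{p\in\P}$ and the log-weighted one $\frac{1}{W_H}\sum_{p\in\CP_H}\frac{1}{p}(\cdot)$ is handled exactly as you describe, by partial summation and the prime number theorem. But after these reductions the entire content of the theorem is the single statement
$$
\liminf_{H\to\infty}\Big|\lE_{n\in\bN}\frac{1}{W_H}\sum_{p\in\CP_H}\prod_{j=1}^\ell a(n+ph_j)\cdot\bigl(\one_{p\Z}(n)-p^{-1}\bigr)\Big|=0,
$$
i.e.\ that the residue $n\bmod p$ is asymptotically independent, on average over $p$, of the shifted values $a(n+ph_j)$. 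This is not a formal identity, and no amount of rearranging convergent sums will produce it: the random variables $p\,\one_{p\Z}(n)-1$ have second moment $p-1$ with respect to logarithmic density, so the Tur\'an--Kubilius/second-moment route diverges, and the statement is already nontrivial in the case $\ell=1$, $h_1=0$, where it reads $\E_{p\in\P}\lE_{n\in\bN}a(pn)=\lE_{n\in\bN}a(n)$. The paper (following Tao) proves this via the entropy decrement argument: one discretizes $a$, forms the random variables ${\bf X}_H(n)=(a(n+h))_{h\le H}$ and ${\bf Y}_H(n)=(n\bmod p)_{p\le H}$ under the logarithmic measure on $[N_k]$, finds scales $H$ at which their mutual information is $o(H/\log H)$, and then decouples via a Pinsker-type inequality and Hoeffding's inequality. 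None of this appears in your proposal, and it is the heart of the theorem.

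Your proposed substitute --- the von Mangoldt decomposition $\log n=\sum_{d\mid n}\Lambda(d)$ followed by a ``$p\leftrightarrow q$ symmetry'' --- does not close this gap. That decomposition, applied to $g=F_p$, yields an identity expressing $\lE_{n}F_p(n)$ as a weighted average over an auxiliary prime $q$ of quantities $\lE_{m}\prod_j a(qm+ph_j)$; averaging over $p$ then produces a double prime sum in the \emph{off-diagonal} terms $\prod_j a(qm+ph_j)$ with $p\neq q$. Swapping $p$ and $q$ turns these into $\prod_j a(pm+qh_j)$, which is a genuinely different correlation whose equality with the original is itself an unproved statement of the same non-formal kind, while the diagonal terms $p=q$ --- which are exactly what the left-hand side of the target identity concerns --- carry zero density in the double average. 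So the symmetry argument neither isolates the quantity you need nor establishes the independence statement above.
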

We give a sketch of the proof of a more general identity  in Appendix~\ref{A:Tao}; the argument  is almost entirely based on
the argument given by  Tao in \cite{Tao15}.

Using the previous result we verify the following identities for the M\"obius and the  Liouville  function:
\begin{theorem}[Tao's identity for $\bmu$ and $\blambda$]\label{T:Tao1}
	Suppose that the M\"obius function $\bmu$  admits log-correlations on the sequence of intervals $\bN$. Then  we have
	$$
	\lE_{n\in \bN}\,  \prod_{j=1}^\ell \bmu(n+h_j)=
	(-1)^\ell \, \E_{p\in \P}\,  \,\Big( 	\lE_{n\in \bN}\,  \prod_{j=1}^\ell \bmu(n+ph_j)\Big)
	$$
	for all $\ell \in \N$ and $h_1,\ldots, h_\ell\in \Z$, in particular the limit $\E_{p\in\P}$ on the right hand side exists. A similar statement holds for the Liouville function $\blambda$.
\end{theorem}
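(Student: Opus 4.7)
The plan is to combine the elementary multiplicative identities for $\bmu$ and $\blambda$ at a prime $p$ with the general identity of Theorem~\ref{T:Tao}. Throughout, set $M:=\lE_{n\in\bN}\prod_{j=1}^\ell \bmu(n+h_j)$ (resp.\ with $\blambda$); this exists by the standing log-correlation hypothesis. The goal is to show that $M=(-1)^\ell\,\E_{p\in\P}\lE_{n\in\bN}\prod_{j=1}^\ell \bmu(n+ph_j)$.

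The Liouville case is immediate. Since $\blambda(pm)=-\blambda(m)$ for every prime $p$ and every $m\in\Z$, we have
$$\prod_{j=1}^\ell \blambda(pn+ph_j)=(-1)^\ell\prod_{j=1}^\ell \blambda(n+h_j),$$
so the limit $\lE_{n\in\bN}\prod_j \blambda(pn+ph_j)=(-1)^\ell M$ exists for every $p$. The limit $\lE_{n\in\bN}\prod_j\blambda(n+ph_j)$ exists for every $p$ by hypothesis, and Proposition~\ref{P:conv} guarantees that $\E_{p\in\P}$ of this quantity exists. All hypotheses of Theorem~\ref{T:Tao} are thereby verified, and that theorem yields $(-1)^\ell M=\E_{p\in\P}\lE_{n\in\bN}\prod_j\blambda(n+ph_j)$, which rearranges to the desired identity.

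The Möbius case requires slightly more care because $\bmu(pm)=-\bmu(m)$ only when $p\nmid m$ and vanishes otherwise. This gives the exact formula
$$\prod_{j=1}^\ell \bmu(pn+ph_j)=(-1)^\ell\prod_{j=1}^\ell\bmu(n+h_j)\cdot\mathbf{1}_{p\nmid n+h_j\text{ for all }j},$$
and hence a pointwise bound $|\prod_j\bmu(pn+ph_j)-(-1)^\ell\prod_j\bmu(n+h_j)|\le \sum_{j=1}^\ell\mathbf{1}_{p\mid n+h_j}$. Since each indicator has log-average tending to $1/p$, one obtains $\limsup_{k\to\infty}\bigl|\lE_{n\in[N_k]}\prod_j\bmu(pn+ph_j)-(-1)^\ell M\bigr|\le \ell/p$ for every prime $p$. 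A priori the limit in $n$ may fail to exist individually, so I extract by a diagonal argument over the countable set $\P$ a subsequence $\bN'$ of $\bN$ along which $L_p:=\lE_{n\in\bN'}\prod_j\bmu(pn+ph_j)$ exists for every $p$; then $|L_p-(-1)^\ell M|\le \ell/p$. Averaging over primes and using Mertens' theorem, which implies $\E_{p\in\P}(1/p)=0$, gives $\E_{p\in\P}L_p=(-1)^\ell M$. Since log-correlations of $\bmu$ on $\bN'$ coincide with those on $\bN$, Proposition~\ref{P:conv} shows that $\E_{p\in\P}\lE_{n\in\bN'}\prod_j\bmu(n+ph_j)$ exists and equals $\E_{p\in\P}\lE_{n\in\bN}\prod_j\bmu(n+ph_j)$. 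Applying Theorem~\ref{T:Tao} on $\bN'$ identifies $\E_{p\in\P}L_p$ with this quantity, proving the claim.

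The main obstacle is the verification of the hypotheses of Theorem~\ref{T:Tao} in the Möbius case: the indicator $\mathbf{1}_{p\nmid n+h_j}$ is not directly controlled by the log-correlation hypothesis, forcing the subsequence extraction. The crucial quantitative ingredient is that the $O(\ell/p)$ error from this indicator averages to zero over the primes, which is precisely Mertens' estimate and is what makes the logarithmic averaging indispensable at this step as well.
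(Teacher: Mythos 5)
Your proof is correct and follows essentially the same route as the paper's: complete multiplicativity handles $\blambda$ directly, while for $\bmu$ one uses $\bmu(p(n+h_j))=-\bmu(n+h_j)$ up to an $O(1/p)$ logarithmic-density correction that vanishes after averaging over primes, passes to a subsequence $\bN'$ to make the dilated correlations converge, and invokes Proposition~\ref{P:conv} together with Theorem~\ref{T:Tao}. The only difference is presentational — you argue directly where the paper frames the Möbius case as a proof by contradiction — but the mathematical content is identical.
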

\begin{proof}
 We first check the identity for the Liouville function. We verify that the hypothesis of   Theorem~\ref{T:Tao}  apply for $a:=\blambda$. The  limit
$\lE_{n\in \bN}$ on the left and right hand side   exists for every $p\in \P$
since $\blambda$  admits log-correlations on $\bN$ and
it is completely  multiplicative. Moreover,
using complete multiplicativity, the left hand side becomes
$(-1)^\ell\,\lE_{n\in \bN}\, \prod_{j=1}^\ell \blambda(n+h_j)$. The right hand side is $\E_{p\in \P}\,  \,\Big( 	\lE_{n\in \bN}\,  \prod_{j=1}^\ell \blambda(n+ph_j)\Big)$; note that   the existence of the  limit   $\E_{p\in\P}$  follows from Proposition~\ref{P:conv}.	So Theorem~\ref{T:Tao} applies for  $a:=\blambda$ and gives the asserted identity.

The argument is slightly more complicated for the M\"obius function because in this case we lose complete multiplicativity. Arguing by contradiction, suppose that  the asserted estimate fails. Then there exist a subsequence $\bN':=([N_k'])_{k\in\N}$ of $\bN:=([N_k])_{k\in\N}$ and  $\ell\in\N$, $h_1,\ldots, h_\ell\in\Z$,
such that
the limit $\lE_{n\in \bN'}\,\prod_{j=1}^\ell \bmu(pn+ph_j)$ exists for every $p\in\P$,  and we have
\begin{equation}\label{E:noteq}
\lE_{n\in \bN'}\,  \prod_{j=1}^\ell \bmu(n+h_j)\neq
(-1)^\ell \, \E_{p\in \P}\,  \,\Big( 	\lE_{n\in \bN'}\,  \prod_{j=1}^\ell \bmu(n+ph_j)\Big).
\end{equation}
Note that the existence of the limit $\E_{p\in\P}$  on the right hand side follows again  from Proposition~\ref{P:conv}.
For $j=1,\ldots, \ell$  and $p\in\P$ we have
$\bmu(pn+ph_j)=-\bmu(n+h_j)$   unless $n+h_j\equiv 0\pmod{p}$. For $p\in\P$ this leads to the identity
$$
\lE_{n\in \bN'}\,\prod_{j=1}^\ell \bmu(pn+ph_j)=(-1)^\ell\, \lE_{n\in \bN'}\,\prod_{j=1}^\ell \bmu(n+h_j)+O(1/p)
$$
where the implicit constant depends only on $\ell$. Averaging over $p\in \P$ we get
\begin{equation}\label{E:eq}
\E_{p\in\P}\Big(\lE_{n\in \bN'}\,\prod_{j=1}^\ell \bmu(pn+ph_j)\Big)=(-1)^\ell\, \lE_{n\in \bN'}\,\prod_{j=1}^\ell \bmu(n+h_j),
\end{equation}
in particular, the limit $\E_{p\in\P}$ on the left hand side exists. So Theorem~\ref{T:Tao} applies for  $a:=\bmu$ and the sequence of intervals $\bI'$, and gives that
$$
\E_{p\in\P}\Big(\lE_{n\in \bN'}\,\prod_{j=1}^\ell \bmu(pn+ph_j)\Big)=
 \E_{p\in \P}\,  \,\Big( 	\lE_{n\in \bN'}\,  \prod_{j=1}^\ell \bmu(n+ph_j)\Big).
$$
Combining this identity with \eqref{E:eq} we get an identity which contradicts \eqref{E:noteq}. This completes the proof.
	\end{proof}

Using Theorem~\ref{T:Tao1} we immediately deduce the following identities
for Furstenberg systems of the M\"obius and the Liouville function:
\begin{theorem}[Ergodic form of Tao's identities for $\bmu$ and  $\blambda$] \label{T:Tao2}
	Let $(X,\CX,\mu,T)$ be a Furstenberg system of the M\"obius or the Liouville function and let $F_0$ be as in Proposition~\ref{P:correspondence}. Then the limit in the right hand side below exists and  we have
	\begin{equation}
	\label{eq:Furstenberg-Tao}
	\int \prod_{j=1}^\ell T^{h_j} F_0 \, d\mu=
	(-1)^\ell\,  \E_{p\in \P} \,   \int \prod_{j=1}^\ell T^{ph_j}F_0\, d\mu
	\end{equation}
	for all $\ell\in \N$ and  $h_1,\ldots, h_\ell\in \Z$.
\end{theorem}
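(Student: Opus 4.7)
The plan is to reduce the claim directly to Theorem~\ref{T:Tao1} by transferring the combinatorial identity there to the ergodic setting via the correspondence principle (Proposition~\ref{P:correspondence}). All the analytic content is already carried by Theorem~\ref{T:Tao1}; the present statement is essentially a restatement of that identity in the language of the associated measure preserving system, so the proof should be short and mechanical.

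First, fix the Furstenberg system $(X,\CX,\mu,T)$ as being associated with $\bmu$ (the Liouville case is identical) and some sequence of intervals $\bN=([N_k])_{k\in\N}$ along which the log-correlations converge. By Proposition~\ref{P:correspondence}, applied with arbitrary integer shifts $k_1,\dots,k_\ell$, we have
\begin{equation}\label{E:tranfer}
\int \prod_{j=1}^\ell T^{k_j}F_0 \, d\mu
= \lE_{n\in\bN}\,\prod_{j=1}^\ell \bmu(n+k_j).
\end{equation}
Apply \eqref{E:tranfer} once with $k_j=h_j$ (rewriting the left hand side of the claimed identity as a log-correlation) and, for each fixed prime $p$, with $k_j=ph_j$ (rewriting each integral $\int \prod_j T^{ph_j}F_0\,d\mu$ as a log-correlation). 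Observe that these integrals are individually finite and depend on $p$ only through the integer shifts $ph_1,\dots,ph_\ell$, so nothing is lost in this step.

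Second, invoke Theorem~\ref{T:Tao1} with the same data $\ell,h_1,\dots,h_\ell$ and the same sequence of intervals $\bN$; this provides both the existence of the limit $\E_{p\in\P}$ of the right hand side and the numerical identity
$$
\lE_{n\in\bN}\,\prod_{j=1}^\ell \bmu(n+h_j)
= (-1)^\ell\,\E_{p\in\P}\Bigl(\lE_{n\in\bN}\,\prod_{j=1}^\ell \bmu(n+ph_j)\Bigr).
$$
Substituting the expressions obtained in the previous paragraph on both sides yields \eqref{eq:Furstenberg-Tao} and simultaneously gives the existence of the limit $\E_{p\in\P}\int\prod_j T^{ph_j}F_0\,d\mu$.

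There is no real obstacle here: the entire proof is a two-step substitution. The only mild subtlety worth explicitly checking is that the $p$-dependence on the right hand side of Theorem~\ref{T:Tao1} enters exclusively through the shifts $ph_j$, so the prime $p$ never needs to interact with the limiting measure $\mu$ itself, and the substitution via \eqref{E:tranfer} can be carried out term by term before averaging over $p$. The Liouville case is handled by repeating the argument with $\blambda$ in place of $\bmu$, using that Theorem~\ref{T:Tao1} is stated uniformly for both functions.
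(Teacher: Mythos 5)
Your proposal is correct and is exactly the paper's argument: the paper deduces Theorem~\ref{T:Tao2} "immediately" from Theorem~\ref{T:Tao1} by substituting the correspondence-principle identity \eqref{E:correspondence} (with shifts $h_j$ on the left and $ph_j$ on the right, for each prime $p$), with the existence of the limit $\E_{p\in\P}$ already supplied by Theorem~\ref{T:Tao1}. Your write-up just makes this two-step substitution explicit.
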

Henceforth, our goal is to describe the structure of measure preserving systems that
satisfy the identities in \eqref{eq:Furstenberg-Tao} for some $T$-generating function $F_0\in C(X)$.  For technical reasons it is essential for us to work with
suitable extensions of such systems which we describe in the next subsection.  Our main task will then be  to get structural results for these extended systems.

\subsection{The system of arithmetic progressions with prime steps}\label{SS:AP}
Motivated by Theorem~\ref{T:Tao2},
given a system  $(X, \mu,T)$, we are going to construct a new system on the space $X^\Z$
by averaging  the prime dilates of  correlations of the system on the space $X$.
Since in some cases $X$ is itself  a sequence space  with elements  denoted by $x=(x(n))_{n\in\Z}$,  we denote elements of  $X^\Z$ by   $\ux=(x_n)_{n\in\Z}$.
\begin{definition}\label{D:tilde}
	Let $(X,\CX, \mu,T)$ be a system  and let $X^\Z$ be endowed with the product $\sigma$-algebra.	We write $\wt\mu$ for the   measure on $X^\Z$ characterized as follows:
	For every $m\in\N$ and all $f_{-m},\ldots,f_m\in L^\infty(\mu)$, we define
	\begin{equation}
		\label{eq:def-mutilde}
\int_{X^\Z}\prod_{j=-m}^m f_j(x_j)\,d\wt\mu(\ux):=		
\E_{p\in\P}\int_X\prod_{j=-m}^m T^{pj}f_j\,d\mu.
	\end{equation}
Note that the limit above exists by Theorem~\ref{T:FHK} in Section~\ref{subsec:prime-steps}.
Using the identity  $\int_X\prod_{j=-m}^m T^{p(j+1)}f_j\,d\mu=\int_X\prod_{j=-m}^m T^{pj}f_j\,d\mu$, we get that the measure $\wt\mu$ is invariant under 
 the shift transformation $S$ on $X^\Z$.  We say that $(X^\Z, \wt\mu,S)$ is the \emph{system of arithmetic progressions with prime steps} associated with the system  $(X,\mu,T)$.
\end{definition}

We return now to the case where  $(X,\mu,T)$ is a Furstenberg system of the Liouville function and make the  following  key observation:

\begin{proposition}
	\label{prop:mu-fator-tilde-mu}
	A Furstenberg system $(X,\mu,T)$ of the M\"obius or the Liouville function is a  factor of the associated system $(X^\Z,\wt\mu,S)$ of arithmetic progressions with prime steps.
\end{proposition}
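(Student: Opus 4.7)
The plan is to exhibit an explicit factor map $\pi\colon X^\Z \to X$ that intertwines $S$ and $T$ and pushes $\wt\mu$ to $\mu$. Recall that $X=\{-1,0,1\}^\Z$, so the function $F_0(x):=x(0)$ is $T$-generating, and by Stone--Weierstrass the measure $\mu$ is determined by the correlations $\int \prod_{j=1}^\ell T^{h_j}F_0\,d\mu$ for $\ell\in\N$ and $h_j\in\Z$. Theorem~\ref{T:Tao2} tells us precisely how these correlations relate to the integrals on $X^\Z$ given by \eqref{eq:def-mutilde}, but with the twist of a factor $(-1)^\ell$. This $(-1)^\ell$ reflects the fact that $\bmu(pn)=-\bmu(n)$ (and $\blambda(pn)=-\blambda(n)$) for primes $p\nmid n$, and is the main thing the construction of $\pi$ has to absorb.

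Guided by this, I would define $\pi\colon X^\Z\to X$ by
$$
\pi(\ux)(h) := -F_0(x_h) = -x_h(0), \quad h\in\Z, \ \ux=(x_n)_{n\in\Z}\in X^\Z.
$$
This is continuous, hence measurable, and
$$
\pi(S\ux)(h) = -x_{h+1}(0) = \pi(\ux)(h+1) = (T\pi(\ux))(h),
$$
so $\pi\circ S=T\circ\pi$ holds pointwise on $X^\Z$.

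To verify that $\pi_*\wt\mu=\mu$, I will test against the $T$-invariant algebra generated by $F_0$, whose linear span is dense in $C(X)$. For $\ell\in\N$ and $h_1,\ldots,h_\ell\in\Z$, substituting $F_{h_j}(\pi(\ux))=T^{h_j}F_0(\pi(\ux))=-F_0(x_{h_j})$, rewriting the resulting integral via the defining identity \eqref{eq:def-mutilde} for $\wt\mu$ (grouping repeated indices and padding the remaining $f_j=1$), and finally applying Theorem~\ref{T:Tao2}, I get
\begin{align*}
\int_X \prod_{j=1}^\ell F_{h_j}\,d(\pi_*\wt\mu)
&= (-1)^\ell\int_{X^\Z}\prod_{j=1}^\ell F_0(x_{h_j})\,d\wt\mu(\ux) \\
&= (-1)^\ell\,\E_{p\in\P}\int_X \prod_{j=1}^\ell T^{ph_j}F_0\,d\mu \\
&= (-1)^{2\ell}\int_X \prod_{j=1}^\ell T^{h_j}F_0\,d\mu
  = \int_X \prod_{j=1}^\ell F_{h_j}\,d\mu.
\end{align*}

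There is no real obstacle beyond recognizing that the sign flip in the definition of $\pi$ is exactly the device needed to cancel the $(-1)^\ell$ supplied by Tao's identity; the same argument works uniformly for $\bmu$ and $\blambda$ since Theorem~\ref{T:Tao2} carries the same sign in both cases. Once the monomial identity above is in place, a standard density/monotone-class argument upgrades it to equality of $\pi_*\wt\mu$ and $\mu$ on all Borel subsets of $X$, which completes the verification that $(X,\mu,T)$ is a factor of $(X^\Z,\wt\mu,S)$ via $\pi$.
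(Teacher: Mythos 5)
Your proposal is correct and is essentially identical to the paper's own proof: the same map $\pi(\ux)(h)=-x_h(0)$, the same verification of the intertwining relation, and the same computation using Tao's identity (Theorem~\ref{T:Tao2}) together with the defining property~\eqref{eq:def-mutilde} of $\wt\mu$ so that the sign in $\pi$ cancels the factor $(-1)^\ell$, followed by the same density argument on the algebra generated by the $F_h$.
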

\begin{remark}
	The fact that the M\"obius and the Liouville function  is $-1$ on primes is crucial for the proof of this result and  is used via the identity~\eqref{eq:Furstenberg-Tao}. In fact, our argument also works for all bounded multiplicative functions which take the value  $-1$ on a subset of the primes with relative density $1$.
	\end{remark}
\begin{proof}
	We  can take $X=\{-1,0,1\}^\Z$. We define the map $\pi\colon X^\Z\to X$  as follows:
	For  $\ux=(x_n)_{n\in\Z}\in X^\Z$ let
	$$
	(\pi(\ux))(n):=-x_n(0)=-F_0(x_n), \quad n\in\Z,
	$$	
	where, as usual, $F_h(x)=x(h)$, $x\in X$, $h\in \Z$.
	For $n\in\Z$ we then have
	$$
	(\pi(S\ux))(n)=-F_0((S\ux)_n)=-F_0(x_{n+1})=(\pi(\ux))(n+1)=(T\pi(\ux))(n).
	$$
	Thus
	$$
	\pi\circ S=T\circ\pi.
	$$

	Next, we claim that  $\wt\mu\circ \pi^{-1}=\mu$.
	Indeed, for every $\ell \in\N$ and  $h_1,\dots,h_\ell\in\Z$, by  identity~\eqref{eq:Furstenberg-Tao} in Theorem~\ref{T:Tao} and the definition~\eqref{eq:def-mutilde} of $\wt\mu$, we have
	\begin{multline*}
		\int_X\prod_{j=1}^\ell F_{h_j}(x)\, d\mu(x)=
		 \int_X\prod_{j=1}^\ell F_{0}(T^{h_j}x)\,d\mu(x)\\
		=
		(-1)^\ell \, \E_{p\in\P} \int_X\prod_{j=1}^\ell F_0(T^{ph_j}x)\,d\mu(x)
		=
		(-1)^\ell\, \int_{X^\Z}\prod_{j=1}^\ell F_0(x_{h_j})\,d\wt\mu(\ux)\\
		=
		\int_{X^\Z}\prod_{j=1}^\ell \bigl(-F_0(x_{h_j})\bigr)\,d\wt\mu(\ux)
		=\int_{X^\Z}\prod_{j=1}^\ell (F_{h_j}\circ\pi)(\ux)\,d\wt\mu(\ux).
	\end{multline*}
Since the algebra generated by  the functions $F_h$, $h\in \Z$,  is dense in $C(X)$ with the uniform topology, the claim follows.
	
	 Therefore, $\pi\colon (X^\Z,\wt\mu,S)\to (X,\mu,T)$ is a factor map and the proof is complete.
	\end{proof}

From this point on we   work with abstract systems of arithmetic progressions with prime steps
and use Proposition~\ref{prop:mu-fator-tilde-mu} in order to transfer any structural result  we get to
 a structural result for  Furstenberg systems of the M\"obius and the Liouville function.

\subsection{Structure of systems of arithmetic progressions with prime steps}\label{SS:StructureAP}
We state
our main structural results for abstract systems of arithmetic progressions with prime steps.
In Section~\ref{S:structure} we  show:
\begin{theorem}
	\label{th:main-mutilde}
	Let $(X,\mu,T)$ be a system. Then  almost every ergodic component
	of the system $(X^\Z,\wt\mu,S)$, of arithmetic progressions with prime steps,   is isomorphic  to
a direct product of an infinite-step nilsystem and a Bernoulli system.
\end{theorem}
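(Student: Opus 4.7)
The plan is to decompose the system $(X,\mu,T)$ into its Host-Kra structured part and its orthogonal complement, and then to handle each piece separately. First I would use the ergodic decomposition $\mu=\int_\Omega\mu_\omega\,dP(\omega)$: linearity of the defining identity~\eqref{eq:def-mutilde} in $\mu$, together with the convergence of the prime averages guaranteed by Theorem~\ref{T:FHK}, yields a corresponding decomposition $\wt\mu=\int_\Omega\wt{\mu_\omega}\,dP(\omega)$, whose own ergodic decomposition refines the target ergodic decomposition of $\wt\mu$. This reduces matters to the case where $(X,\mu,T)$ is ergodic.

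Next, assume $(X,\mu,T)$ is ergodic and invoke the Host-Kra structure theorem (Theorem~\ref{T:HK}/Corollary~\ref{C:HK}) to obtain a factor $Z_\infty$, an inverse limit of ergodic nilsystems, characterized by the property that every $f\in L^\infty(\mu)$ with $\E_\mu(f\mid\CZ_\infty)=0$ has vanishing Gowers-Host-Kra seminorms of all orders. Combined with the prime-average convergence theorem (Theorem~\ref{T:FHK}), this implies that in the defining multilinear expression
$$\E_{p\in\P}\int\prod_{j=-m}^mT^{pj}f_j\,d\mu,$$
every term containing at least one factor orthogonal to $\CZ_\infty$ vanishes. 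Expanding each $f_j$ as $\E_\mu(f_j\mid\CZ_\infty)+h_j$ with $h_j\perp\CZ_\infty$, this shows that $\wt\mu$ is a relatively independent joining, over the coordinate-wise lift of $\CZ_\infty$, of countably many copies of $\mu$. In particular, along the orthogonal complement of $\CZ_\infty$ the coordinates $(x_j)_{j\in\Z}$ under $\wt\mu$ are mutually independent, contributing a Bernoulli direct factor on each ergodic component. This reduction is the substance of Lemma~\ref{L:mutilde-infty2} and leaves only the analysis of the sub-system built over $(Z_\infty)^\Z$.

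What remains is to treat the prime-step system $(Z_\infty^\Z,\wt\nu,S)$ associated with an ergodic infinite-step nilsystem $(Z_\infty,\nu,T)$, which is the content of Proposition~\ref{P:infnil}. The plan is to write $Z_\infty$ as an inverse limit of finite-step ergodic nilsystems $(Z_k,\nu_k,T)$, compute the defining averages for each $Z_k$ via quantitative equidistribution of prime orbits on nilmanifolds (Appendix~\ref{A:AP}), and identify the ergodic components of $(Z_k^\Z,\wt{\nu_k},S)$ as nilsystems; passing to the inverse limit provides the desired infinite-step nilsystem structure on the $Z_\infty$-part, which when combined with the Bernoulli factor from the previous paragraph yields the direct product decomposition on almost every ergodic component. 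I expect this final step to be the main obstacle: the ergodic decomposition of a prime-averaged measure on a nilmanifold is governed by orbit closures of polynomial sequences, and identifying these closures along primes requires delicate Leibman-type arguments together with Green-Tao-Ziegler equidistribution, which is exactly the input collected in Appendix~\ref{A:AP}.
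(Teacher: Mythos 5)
Your proposal follows essentially the same route as the paper: reduce to the ergodic case via the ergodic decomposition of $\wt\mu$, split off the Bernoulli factor by combining the second part of Theorem~\ref{T:FHK} with the infinite-step nilfactor $Z_\infty$ and Rohlin's skew-product representation (this is exactly Lemma~\ref{L:mutilde-infty2}), and then identify the ergodic components over $Z_\infty^\Z$ as infinite-step nilsystems using the nilmanifold of arithmetic progressions and inverse limits (Propositions~\ref{P:nil} and~\ref{P:infnil}). The only cosmetic difference is that the paper handles the prime-step equidistribution on nilmanifolds by first converting prime averages to integer averages over coprime residue classes (Theorem~\ref{T:ergid2}) and then invoking the Bergelson--Host--Kra description of the Haar measure on $\uX$, rather than a direct quantitative equidistribution argument, but the underlying number-theoretic input is the same.
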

In Section~\ref{S:sst} we  show:
\begin{theorem}
	\label{th:no-irr-eigen}
	Let $(X,\mu,T)$ be a system. Then the system $(X^\Z,\wt\mu,S)$, of arithmetic progressions with prime steps, has no   irrational spectrum.
\end{theorem}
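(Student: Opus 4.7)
The plan is to follow the strategy outlined in the proof overview: first show that $(X^\Z,\wt\mu,S)$ is an inverse limit of systems satisfying a ``partial'' form of strong stationarity, and then adapt Jenvey's argument \cite{J} to rule out irrational eigenvalues of such systems. In rough terms, a strongly stationary system is one in which the joint distribution of a finite configuration $(f_{-m}(S^{dj}\cdot),\dots,f_m(S^{dj}\cdot))$ does not depend on the dilation parameter $d$; the ``partial'' version restricts $d$ to an infinite distinguished set, which will eventually be (a subset of) the primes.

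\textbf{Step 1 (partial strong stationarity).} For $m\in\N$ and $f_{-m},\dots,f_m\in L^\infty(\mu)$ and any $d\in\N$, applying the definition of $\wt\mu$ with $g_k:=f_{k/d}$ if $d\mid k$ and $g_k:=1$ otherwise gives
$$
\int_{X^\Z}\prod_{j=-m}^m f_j(x_{dj})\,d\wt\mu(\ux)=\E_{p\in\P}\int_X\prod_{j=-m}^m T^{pdj}f_j\,d\mu.
$$
The goal is to compare this with the $d=1$ case. Using Theorem~\ref{T:FHK} and the Host--Kra structure theorem (Theorem~\ref{T:HK} / Corollary~\ref{C:HK}) one reduces, as in the proof of Theorem~\ref{th:main-mutilde}, to the case where $(X,\mu,T)$ is an ergodic infinite-step nilsystem; on nilsystems the prime-averaged correlation $\E_{p\in\P}\int\prod T^{pn_j}f_j\,d\mu$ has an explicit description in terms of nil-orbits, and averaging once more over $d\in\P$ (and using that $q\P$ has prime-density one in $q\P$ for each fixed $q$) yields an identity of the form
$$
\E_{d\in\P}\int_{X^\Z}\prod_{j=-m}^m f_j(x_{dj})\,d\wt\mu(\ux)=\int_{X^\Z}\prod_{j=-m}^m f_j(x_j)\,d\wt\mu(\ux).
$$
This encodes the partial strong stationarity at the level of finite-dimensional marginals; passing to the inverse limit over $m$ expresses $(X^\Z,\wt\mu,S)$ as an inverse limit of partially strongly stationary systems in the sense of Definition~\ref{D:sst}.

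\textbf{Step 2 (Jenvey's argument).} Suppose for contradiction that $\e(t)$ with $t$ irrational is an eigenvalue of $(X^\Z,\wt\mu,S)$, with eigenfunction $F\in L^2(\wt\mu)$ of norm one. After approximation we may take $F$ measurable with respect to a finite window, so that the eigenfunction descends to one of the partially strongly stationary factors from Step~1. For $k\in\N$ set
$$
H_d(\ux):=\prod_{i=1}^{k} F(S^{d(2i-1)}\ux)\cdot\overline{F(S^{d(2i)}\ux)}.
$$
Since $SF=\e(t)F$, we have $S^{dj}F=\e(djt)F$, so telescoping gives $H_d=\e(-kdt)\,|F|^{2k}$ pointwise and thus $\int H_d\,d\wt\mu=\e(-kdt)\,\|F\|_{2k}^{2k}$. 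On the other hand, partial strong stationarity forces $\int H_d\,d\wt\mu$ to agree for (at least) two distinct allowed dilations $d_1,d_2$; since $\|F\|_{2k}>0$, this gives $\e(k(d_1-d_2)t)=1$, i.e.\ $k(d_1-d_2)t\in\Z$. Letting $k$ vary contradicts the irrationality of $t$.

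\textbf{Main obstacle.} The crux is Step~1: obtaining genuine dilation invariance of prime-averaged multiple correlation sequences on nilmanifolds. This is where the Host--Kra structural machinery and the equidistribution of $(\alpha^p\cdot e_X)_{p\in\P}$ on nilmanifolds (equivalently, Gowers uniformity of the $W$-tricked von Mangoldt function, via \cite{GT09b,GT7,GTZ12}) are needed — precisely the number-theoretic input the introduction flags as entering through Theorem~\ref{T:FHK}. Once that invariance is in hand, the inverse-limit packaging and the Jenvey-type calculation above are essentially formal.
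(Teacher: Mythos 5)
Your high-level skeleton matches the paper's (reduce via the Host--Kra machinery and inverse limits to ergodic nilsystems, establish a partial form of strong stationarity, then run a Jenvey-type argument), but Step~2 contains a genuine gap that breaks the proof. Partial strong stationarity is invariance of $\wt\mu$ under the maps $\tau_r$, $(\tau_r\ux)(j)=x(rj)$; equivalently, $\int\prod_j S^{rj}g_j\,d\wt\mu=\int\prod_j S^{j}g_j\,d\wt\mu$ \emph{for functions $g_j$ depending only on the $0$-th coordinate}. Your eigenfunction $F$ is a general element of $L^2(\wt\mu)$, so stationarity does \emph{not} force $\int H_{d_1}\,d\wt\mu=\int H_{d_2}\,d\wt\mu$. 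What the invariance actually gives is $\int H_d\,d\wt\mu=\int H_d\circ\tau_{r}\,d\wt\mu$, and since $S^{dj}\circ\tau_r=\tau_r\circ S^{rdj}$, the function $H_d\circ\tau_r$ is a product of shifts of $F\circ\tau_r$ --- which is not an eigenfunction for $\e(t)$ but, by Lemma~\ref{L:taueigen}, a combination of eigenfunctions with eigenvalues $\e((j+t)/r)$. Your telescoping identity then says nothing. (Note also that your pointwise computation $\int H_d\,d\wt\mu=\e(-kdt)\|F\|_{2k}^{2k}$ holds for \emph{every} $d$ with no input from stationarity, so if your comparison of two dilations were legitimate it would rule out irrational eigenvalues of arbitrary systems, which is absurd.) This is exactly the difficulty Jenvey's argument is built to circumvent: one tests $\chi$ against products $\prod_j S^jf_j$ with $f_j\in\CF_0$ (these \emph{are} fixed by $\tau_r$), uses the measure invariance to produce $\chi\circ\tau_r$, decomposes it via Lemma~\ref{L:taueigen}, and then needs $m+1$ rounds of van der Corput plus Cauchy--Schwarz to eliminate the unknown $f_j$, ending with a Diophantine argument about the alternating sums $\sum_\epsilon(-1)^{|\epsilon|}\phi(n+\epsilon\cdot h)$ with $\phi(n)=1/(dn+1)$. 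None of this is "essentially formal,'' and your shortcut does not replace it. The finite-window approximation is also problematic: those $\sigma$-algebras are not $S$-invariant, so conditioning $F$ on them does not produce an eigenfunction (the paper instead conditions on the shift-invariant factors $X_j^\Z$ coming from the inverse-limit description of an infinite-step nilsystem, and must further descend to \emph{finite}-step nilsystems, since only those are guaranteed to have finite rational spectrum).

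Step~1 also delivers less than is needed. The identity you propose is averaged over $d\in\P$, whereas Definition~\ref{D:sst} and the subsequent spectral argument require exact invariance under $\tau_r$ for every $r$ in a fixed arithmetic progression $d\N+1$. The paper obtains this (Proposition~\ref{P:partialsst}) not from an "explicit description in terms of nil-orbits'' but by combining Theorem~\ref{T:ergid2} (prime averages equal averages over $n\mapsto nd+k$, $(k,d)=1$) with the dilation invariance of integer-step multiple ergodic averages for systems with totally ergodic $T^d$-components (Theorem~\ref{T:ergid1}, from \cite{Fr04}); this is where the finite rational spectrum hypothesis, hence the reduction to finite-step nilsystems, enters. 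You correctly identify the number-theoretic input behind Theorem~\ref{T:FHK}, but the argument as you have written it does not close.
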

We also establish similar results for systems of arithmetic progressions with integer steps (see Definition~\ref{D:under}).

\subsection{Proof of   Theorem~\ref{th:main-structure}  assuming the preceding material}\label{SS:Proof1}
Combining Proposition~\ref{prop:mu-fator-tilde-mu}   and Theorem~\ref{th:no-irr-eigen},
we  get that any Furstenberg system of the M\"obius or the  Liouville function is a factor of a system with no irrational spectrum (and hence has no irrational spectrum)  thus establishing Property~$\text{(i)}$ of Theorem~\ref{th:main-structure}.
Combining Proposition~\ref{prop:mu-fator-tilde-mu} and Theorem~\ref{th:main-mutilde}, we get Property~$\text{(ii)}$ of Theorem~\ref{th:main-structure}.\qed

\subsection{Disjointness}\label{SS:Disj}
As we previously remarked, our proof strategy for Theorems~\ref{th:main-new}, \ref{th:main-1}, and  \ref{th:main-2}
is to study the structure of Furstenberg systems of the M\"obius and the Liouville  function in enough detail
that enables us to prove a useful  disjointness result. The relevant disjointness result
 is the following one and is proved  in Section~\ref{S:Disjoint}:
\begin{proposition}
	\label{P:disjoint}
	Let $(X,\mu,T)$ be  a system with ergodic components   isomorphic to
		 direct products of    infinite-step nilsystems and  Bernoulli systems. Let  $(Y,\nu,R)$ be an ergodic system of zero entropy.
\begin{enumerate}
\item
\label{it:disjoint-1}
If the two systems   have disjoint irrational spectrum, then for every  joining  $\sigma$  of the two systems  and   function $f\in L^\infty(\mu)$  orthogonal to $\CK(T)$, we have
$$
	\int f(x)\, g(y)\, d\sigma(x,y)=0
$$
for every $g\in L^\infty(\nu)$.

\item
\label{it:disjoint-2}
If the two systems  have no common eigenvalue except $1$,
 then they are disjoint.
\end{enumerate}
\end{proposition}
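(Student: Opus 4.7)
The plan is to decompose the joining into ergodic components of $\mu$, use a Bernoulli splitting in each component to reduce to joinings of the infinite-step nilsystem factor $N_\omega$ with the ergodic zero-entropy system $(Y,\nu,R)$, and then attack the two parts via spectral and distal-joinings arguments. Since $(Y,\nu,R)$ is ergodic, the ergodic decomposition of $\sigma$ is parametrized by that of $\mu=\int\mu_\omega\,dP(\omega)$; writing $\sigma=\int\sigma_\omega\,dP(\omega)$, each $\sigma_\omega$ is an ergodic joining of $\mu_\omega\cong\mu_{N_\omega}\times\mu_{B_\omega}$ with $\nu$. The factor $N_\omega\times Y$ of $\sigma_\omega$ has zero entropy (both $N_\omega$ and $Y$ do), while $B_\omega$ is a Bernoulli (hence Kolmogorov) factor; standard entropy bookkeeping together with the disjointness of Kolmogorov and zero-entropy factors in any joining forces the splitting $\sigma_\omega=\mu_{B_\omega}\times\tau_\omega$ for some joining $\tau_\omega$ of $N_\omega$ with $Y$. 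Setting $\bar f_\omega(n):=\int f(n,b)\,d\mu_{B_\omega}(b)$, everything reduces to understanding $\int\bar f_\omega\,g\,d\tau_\omega$ for $P$-a.e.~$\omega$.

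For Part~(ii), the no-common-eigenvalue hypothesis transfers to each pair $(N_\omega,Y)$: any eigenfunction of $\mu_\omega$ extends by zero on the other components to a global eigenfunction of $\mu$, so the Kronecker factors of $N_\omega$ and $Y$ are compact abelian group rotations with only the trivial common eigenvalue, hence disjoint. Representing $N_\omega$ as an inverse limit of finite-step nilsystems built from its Kronecker factor by towers of compact abelian group extensions, and using the classical fact that ergodic lifts of product joinings through compact-group extensions are themselves product joinings, disjointness propagates through the tower and across the inverse limit to yield $\tau_\omega=\mu_{N_\omega}\times\nu$; integrating over $\omega$ gives $\sigma=\mu\times\nu$.

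For Part~(i), an analogous zero-extension and measurable-selection argument transfers $f\perp\CK(T)$ to $\bar f_\omega\perp\CK(T_{N_\omega})$ for $P$-a.e.~$\omega$, and the problem becomes showing $\E_{\tau_\omega}(g\mid N_\omega)\in L^2(\CK(T_{N_\omega}),\mu_{N_\omega})$. The $T_{N_\omega}\times R$-invariance of $\tau_\omega$ gives the intertwining $T_{N_\omega}\circ\E_{\tau_\omega}(\cdot\mid N_\omega)=\E_{\tau_\omega}(\cdot\mid N_\omega)\circ R$; applied to an $R$-eigenfunction with eigenvalue $\e(\alpha)$, the image is a $T_{N_\omega}$-eigenfunction of the same eigenvalue, which by the disjoint-irrational-spectrum hypothesis must correspond to a rational $\alpha$ and hence lie in $\CK(T_{N_\omega})$. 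A Ces\`aro-averaging argument extends this from eigenfunctions to the full Kronecker factor of $Y$; for the orthocomplement of that Kronecker factor, the same intertwining shows the conditional expectation is orthogonal to the Kronecker factor of $N_\omega$, and one concludes by combining the distal/nilpotent structure of $N_\omega$ with a relative form of Furstenberg's theorem on joinings of distal and weakly mixing systems, propagated through the nilpotent tower and the inverse limit.

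The main obstacle is the last step of Part~(i): eliminating the contribution of the non-Kronecker part of $(Y,\nu,R)$ to $\E_{\tau_\omega}(g\mid N_\omega)$. This requires combining the inductive step-by-step analysis of the infinite-step nilsystem $N_\omega$ with a relative disjointness statement applied to the Furstenberg--Zimmer decomposition of $(Y,\nu,R)$ over its maximal distal factor, carried out uniformly across the inverse-limit construction; the ergodic-decomposition bookkeeping and the Bernoulli splitting in the first paragraph are routine once the entropy and eigenvalue structures are cataloged.
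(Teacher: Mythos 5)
Your outer scaffolding matches the paper: the ergodic decomposition of $\sigma$ parametrized by that of $\mu$, the transfer of the spectral hypotheses to almost every component, and the splitting off of the Bernoulli coordinate via disjointness of Bernoulli systems from zero-entropy systems are exactly the paper's Lemma~\ref{L:disjoint2} and the final assembly in Section~\ref{S:Disjoint}. The genuine gap is in the core step that both parts reduce to, namely controlling a joining of an ergodic (infinite-step) nilsystem $N_\omega$ with a \emph{general} ergodic zero-entropy system $(Y,\nu,R)$. For Part~(ii) you invoke ``the classical fact that ergodic lifts of product joinings through compact-group extensions are themselves product joinings'' and propagate disjointness up the nilpotent tower of $N_\omega$. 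That fact is false as stated: take $X_1$ trivial, $X=\T_\alpha$ its compact group extension, and $Y=\T_\alpha$; the off-diagonal joinings are ergodic lifts of the (trivially product) joining of $X_1$ with $Y$ but are not product. Making the propagation work requires analyzing, at each stage of the tower, when the relevant cocycle becomes a quasi-coboundary over the product with $Y$ (the Mackey group), and this cannot be read off from the eigenvalues of $X$ and $Y$ alone without structural information about $Y$ relative to the joining. This is precisely what your argument never supplies.

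The paper's mechanism is different and is the missing ingredient: after averaging $\int f(T^nx)\,g(R^ny)\,d\sigma$ over $n$, it applies the Host--Kra pointwise non-correlation result \cite[Theorem~2.13]{HK09} to kill the component of $g$ orthogonal to the nilfactor $\CZ_s(R)$ of $Y$, then uses the structure theorem (Theorem~\ref{T:HK'}) to replace $Y$ by a nilsystem, and finally exploits unique ergodicity of the product of two ergodic nilsystems whose totally ergodic parts have disjoint spectrum. Your Part~(i) gestures instead at the Furstenberg--Zimmer decomposition of $Y$ over its \emph{maximal distal factor} together with a relative disjointness statement; this is the wrong characteristic factor for correlation against nilsequences (the distal factor is in general much larger than $\CZ_s(R)$, and joinings of a nilsystem with a general distal system are not governed by eigenvalues alone), and you yourself flag this step as ``the main obstacle'' without resolving it. As written, neither part closes without importing the Host--Kra input, so the proposal does not constitute a proof.
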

We will use  the following direct consequence:
\begin{corollary}
\label{C:disjoint}
	Part~\eqref{it:disjoint-1} of Proposition~\ref{P:disjoint} holds under the weaker assumption that $(Y,\nu,R)$ is a zero entropy system with   at most countably many  ergodic components.
Furthermore, if  the two systems  have no common eigenvalue except $1$,	then  for every joining $\sigma$ of these systems we have
$$\int f(x)\, g(y)\, d\sigma(x,y)=0
$$ for every $f\in L^\infty(\mu)$ and  every  $g\in L^\infty(\nu)$ that is orthogonal in $L^2(\nu)$ to all $R$-invariant functions.
\end{corollary}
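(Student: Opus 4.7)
The plan is to prove the two assertions in turn, deducing the ``Furthermore'' part from the extended Part~\eqref{it:disjoint-1}.

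For the first assertion, decompose $\nu = \sum_i c_i\nu_i$ into its (at most countably many) ergodic components, with $Y_i\subset Y$ the support of $\nu_i$, and split the joining correspondingly as $\sigma = \sum_i c_i\sigma_i$ where $\sigma_i := \sigma|_{X\times Y_i}/c_i$. Each $\sigma_i$ is a $T\times R$-invariant probability measure on $X\times Y$ whose $Y$-marginal is the ergodic zero-entropy measure $\nu_i$ and whose $X$-marginal is some $T$-invariant measure $\mu_i$, with $\mu = \sum_i c_i\mu_i$. By the uniqueness of the ergodic decomposition the ergodic components of each $\mu_i$ appear among those of $\mu$, and are therefore still direct products of infinite-step nilsystems and Bernoulli systems. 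Moreover, any eigenfunction of $(Y,\nu_i,R)$ extends by zero across $Y\setminus Y_i$ to an eigenfunction of $(Y,\nu,R)$, so the spectrum of $\nu_i$ is contained in that of $\nu$. The proof of Part~\eqref{it:disjoint-1} of Proposition~\ref{P:disjoint} operates after an ergodic-component disintegration of the joining; applying that proof to each $\sigma_i$—whose ergodic components are ergodic joinings of ergodic components of $\mu$ with the ergodic zero-entropy system $(Y,\nu_i,R)$—gives $\int f\,g\,d\sigma_i = 0$ for every $f\in L^\infty(\mu)$ orthogonal to $\CK(T)$ and every $g\in L^\infty(\nu)$. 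Summing over $i$ yields the first assertion.

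For the ``Furthermore'' part, decompose $f = f_0 + f_1$ in $L^2(\mu)$ with $f_0$ measurable with respect to $\CK(T)$ and $f_1 \perp\CK(T)$. The first assertion gives $\int f_1\,g\,d\sigma = 0$, so we may assume $f = f_0$. Since rational eigenfunctions span $L^2(X,\CK(T),\mu)$, by linearity and density it suffices to treat $f = h$ with $Th = \zeta h$ for some root of unity $\zeta$. The $T\times R$-invariance of $\sigma$ together with $Th = \zeta h$ yields
\[
\int h\,g\,d\sigma = \zeta^n\int h\,(g\circ R^n)\,d\sigma\qquad\text{for every }n\in\N,
\]
and averaging in $n$ gives
\[
\int h\,g\,d\sigma = \int h\cdot G_N\,d\sigma,\qquad G_N:=\frac{1}{N}\sum_{n=1}^N(\zeta R)^n g.
\]
By von Neumann's mean ergodic theorem applied to the unitary $\zeta R$ on $L^2(\nu)$, the averages $G_N$ converge in $L^2(\nu)$ to the projection $Pg$ of $g$ onto the $\bar\zeta$-eigenspace of $R$. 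If $\zeta = 1$, then $Pg$ is the projection of $g$ onto the $R$-invariants, which vanishes by the hypothesis on $g$. If $\zeta\neq 1$, then $\bar\zeta\neq 1$ is an eigenvalue of $(X,\mu,T)$ with eigenfunction $\bar h$, so by the ``no common eigenvalue except~$1$'' assumption, $\bar\zeta$ is not an eigenvalue of $(Y,\nu,R)$ and hence $Pg = 0$. The Cauchy--Schwarz inequality then concludes $\int h\,g\,d\sigma = \lim_N \int h\,G_N\,d\sigma = 0$.

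The main obstacle is the first assertion: the delicate point is that eigenvalues of the $X$-marginal $\mu_i$ need not be eigenvalues of $\mu$, so the ``disjoint irrational spectrum'' hypothesis does not descend to $(\mu_i,\nu_i)$ at the level of spectra. The resolution is that the proof of Part~\eqref{it:disjoint-1} proceeds via the ergodic decomposition of the joining, and so the conclusion for $\sigma_i$ can be derived from properties of ergodic joinings whose $X$-marginal is an ergodic component of $\mu$ (already of the required nilsystem--Bernoulli form) and whose $Y$-marginal is the ergodic component $\nu_i$ of $\nu$; this reduction is structurally identical to the one used in the original proof of Part~\eqref{it:disjoint-1}.
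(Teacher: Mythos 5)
Your proof is correct. The first assertion is handled essentially as in the paper: you restrict the joining to $X\times Y_i$ over the countably many ergodic components of $\nu$ and reduce to ergodic joinings. One remark: the ``delicate point'' you flag is not actually an obstruction. Since $\mu_i$ is absolutely continuous with respect to $\mu$ and both are $T$-invariant, any eigenfunction of $(X,\mu_i,T)$ extends (after multiplying by the indicator of the $T$-invariant set where the Radon--Nikodym derivative is positive) to an eigenfunction of $(X,\mu,T)$ with the same eigenvalue; this is exactly how the paper argues that the spectra of the marginals $\mu_i$ and $\nu_i$ are contained in those of $\mu$ and $\nu$, so the hypotheses of Part~(i) descend directly to each pair $(\mu_i,\nu_i)$. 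Your workaround of descending all the way to ergodic components of the joining is valid but unnecessary; it also quietly relies on the fact that $f\perp\CK(X,\mu,T)$ implies $f\perp\CK(X,\mu_\omega,T)$ for almost every ergodic component, which the paper justifies via a pointwise (Wiener--Wintner type) characterization of orthogonality to the rational Kronecker factor — worth making explicit.

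Your proof of the ``Furthermore'' part is genuinely different from the paper's and is the more interesting contribution. The paper applies Part~(ii) of Proposition~\ref{P:disjoint} to each pair $(X,\mu_j,T)$, $(Y,\nu_j,R)$ to conclude $\sigma_j=\mu_j\times\nu_j$, and then uses $\int g\,d\nu_j=0$ (which holds because $\one_{Y_j}$ is $R$-invariant). You instead bypass the disjointness machinery entirely: after splitting off the part of $f$ orthogonal to $\CK(T)$ (handled by the first assertion), you reduce by density to a rational eigenfunction $h$ with $Th=\zeta h$ and kill $\int h\,g\,d\sigma$ via the mean ergodic theorem for the unitary $\zeta R$, using the hypothesis on $g$ when $\zeta=1$ and the no-common-eigenvalue hypothesis when $\zeta\neq 1$. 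This argument needs no ergodic decomposition of $\nu$ at all and is more elementary; its only cost is that it yields the vanishing of the correlation rather than the stronger product structure $\sigma_j=\mu_j\times\nu_j$ that the paper's route provides, but that stronger conclusion is not needed for the corollary.
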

\begin{proof}
Let $\nu=\sum_{j\in J}c_j\nu_j$ be the ergodic decomposition of $\nu$ under $R$, where $J$ is a finite or an infinite countable set, $c_j>0$, $\sum_{j\in J}c_j=1$, and $\nu_j$, $j\in J$,  are ergodic $R$-invariant measures.  Let $Y=\cup_{j\in J}Y_j$ be a partition of $Y$ into $R$-invariant subsets such that for every $j\in J$  we have  $\nu_j(Y_j)=1$.

Let  $\sigma$ be a joining of
the systems $(X,\mu,T)$ and $(Y,\nu,R)$.
For $j\in J$ we let $\sigma_j:=\frac 1{c_j} \one_{X\times Y_j}\cdot\sigma$ and  $\mu_j$ be the image of $\sigma_j$ under the projection of $X\times Y$ on $X$.
Then for $j\in J$ we have that  $\mu_j$ is a $T$-invariant probability measure on $X$, the image of $\sigma_j$ under the projection of $X\times Y$ onto $Y$ is $\nu_j$, and $\sigma_j$ is a joining of the systems $(X,\mu_j,T)$ and $(Y,\nu_j,R)$.

For $j\in J$ the measure  $\nu_j$ is absolutely continuous with respect to $\nu$ and thus the spectrum of $(Y,\nu_j,R)$ is contained in the spectrum of
$(Y,\nu,R)$. Similarly,  for $j\in J$ the measure   $\mu_j$ is absolutely continuous with respect to $\mu$ and thus the spectrum of $(X,\mu_j,T)$ is contained in the spectrum of
$(X,\mu,T)$. Moreover, every ergodic component of $\mu_j$ is an ergodic component of $\mu$ and thus is isomorphic to the   direct product of   an infinite-step nilsystem and a   Bernoulli system.

In case~\eqref{it:disjoint-1}, suppose that $f\in L^\infty(\mu)$ is orthogonal to $\CK(X,\mu,T)$. This means that $f$ is orthogonal in $L^2(\mu)$ to every eigenfunction of $(X,\mu,T)$ corresponding to a rational eigenvalue. It follows that  for every $j\in J$ the function $f$ is orthogonal in $L^2(\mu_j)$ to every eigenfunction of $(X,\mu_j,T)$ corresponding to a rational eigenvalue, and by Part~\eqref{it:disjoint-1} of Proposition~\ref{P:disjoint} we have
$\int f(x)\, g(y)\, d\sigma_j(x,y)=0$
for every $g\in L^\infty(\nu_j)$. Summing up, we obtain
$\int f(x)\, g(y)\, d\sigma(x,y)=0$
for every $g\in L^\infty(\nu)$.

 Furthermore, for every $j\in J$ the systems
$(X,\mu_j,T)$ and $(Y,\nu_j,R)$ have no common eigenvalue except $1$, and thus are disjoint by Part~\eqref{it:disjoint-2} of Proposition~\ref{P:disjoint}.
Therefore,  for every $j\in J$ the measure $\sigma_j$ defined above is equal to $\mu_j\times\nu_j$.  Summing up, and since by assumption $\int g\, d\nu_j=0$ for every $j\in J$, we get
that $\int f(x)\, g(y)\, d\sigma(x,y)=0$. This completes the proof.
\end{proof}

\subsection{Proof of   Theorem~\ref{th:main-1} assuming the preceding material}
\label{SS:Proof2}
 We consider only the case of the  M\"obius  function, the proof for the Liouville function is identical.

 Arguing  by contradiction, suppose that the conclusion of Theorem~\ref{th:main-1} fails. Then   there exist a  topological  dynamical system $(Y,R)$,   a point $y_0\in Y$ generic for a measure $\nu$ such that the system $(Y,\nu,R)$ has  zero entropy and  at most countably many  ergodic components, and a function $g_0\in C(Y)$ such that the averages
\begin{equation}
\label{eq:main-1-proof}
\lE_{n\in [N]}g_0(R^ny_0)\, \bmu(n)
\end{equation}
do not converge to $0$ as $N\to\infty$. Hence, there exists a sequence $\bN=(N_k)_{k\in\N}$ of intervals with $N_k\to\infty$  such that the  limit
\begin{equation}
	\label{E:000}
\lE_{n\in\bN}\,g_0(R^ny_0)\, \bmu(n)=
\lim_{k\to\infty} \lE_{n\in [N_k]}\,g_0(R^ny_0)\, \bmu(n)
\end{equation}
exists and is non-zero. After passing to a subsequence, which we also denote by  $\bN$, we can further assume that the limit
\begin{equation}
\label{eq:main-1-proof2}
\lE_{n\in\bN}\, g(R^ny_0)\prod_{j=1}^{\ell}\, \bmu(n+h_j)
\end{equation}
exists for every $\ell \in \N$, $h_1,\dots,h_\ell\in\Z$, and $g\in C(Y)$.

Let $X:=\{-1,0,1\}^\Z$, $T\colon X\to X$ be the shift transformation,  and $x_0\in X$  be defined by $x_0(n)=\bmu(n)$, $n\in\Z$. Then the convergence~\eqref{eq:main-1-proof2} implies  that for every  $\ell \in\N$,  $h_1,\ldots,h_\ell\in\Z$, and every $g\in C(Y)$ the limit
$$
\lE_{n\in\bN}\,g(R^ny_0)\, \bigl(\prod_{j=1}^\ell F_{h_j}\bigr)(T^nx_0)
$$
exists (recall that $F_h(x)=x(h)$, $x\in X$,  $h\in\Z$). Since the algebra generated by  the functions $F_h$, $h\in \Z$,  is dense in $C(X)$  with the uniform topology, we deduce  that the sequence of measures
$$
\lE_{n\in [N_k]}\delta_{(T^nx_0,R^ny_0)}, \quad k\in\N,
$$
converges weak-star to some probability measure $\sigma$ on $X\times Y$ that satisfies
\begin{equation}
	\label{eq:main-1-proof3}
	\lE_{n\in\bN}\,g(R^ny_0)\, \prod_{j=1}^\ell\bmu(n+h_j)=\int \prod_{j=1}^\ell F_{h_j}(x)\,g(y)\,d\sigma(x,y)
\end{equation}
 for every $\ell \in \N$, $h_1,\dots,h_\ell\in\Z$, and $g\in C(Y)$.
 By construction, $\sigma$ is invariant under $T\times R$.

  The projection of $\sigma$ on $Y$ is the weak-star limit of the sequence of measures $\lE_{n\in [N_k]}\delta_{R^ny_0}$, $k\in\N$, and since the point $y_0$ is generic for $\nu$, this measure is equal to $\nu$ and thus the corresponding  measure preserving system  has zero entropy and  at most countably many  ergodic components.

 The projection of $\sigma$ on $X$ is the weak-star limit of the sequence of measures $\lE_{n\in [N_k]}\delta_{T^nx_0}$, $k\in\N$.
 It is thus a $T$-invariant measure  $\mu$ which is the Furstenberg measure associated with $\bmu$ and $\bN$ by Proposition~\ref{P:correspondence} and  $\sigma$ is a joining of the systems  $(X,\mu,T)$ and $(Y,\nu,R)$.

 By Proposition~\ref{prop:mu-fator-tilde-mu} and its proof, $(X,\mu,T)$
 is a factor of the system $(X^\Z,\wt \mu, S)$,
 with factor map  $\pi \colon X^\Z\to X$ given by
 $$
 (\pi(\ux))(n)=-x_n(0), \quad \ux\in X^\Z, \ n\in \Z.
 $$
 We define the joining $\wt\sigma$  of the systems  $(X^\Z, \wt \mu, S)$ and $(Y,\nu,R)$ by
\begin{equation}\label{E:f|X}
\int_{X^\Z\times Y} f(\ux)\cdot g(y) \, d\wt\sigma(\ux,y)= \int_{X\times Y} \E_{\wt\mu}(f\mid X)(x) \cdot g(y) \, d\sigma(x,y)
\end{equation}
for every $f\in L^\infty(\wt \mu)$ and $g\in L^\infty(\nu)$.

  By Theorems~\ref{th:main-mutilde} and \ref{th:no-irr-eigen},  the system $(X^\Z,\wt\mu,S)$  has no irrational spectrum and its ergodic components are isomorphic to
direct products of  infinite-step nilsystems and  Bernoulli systems.

 We  verify now that the function $\wt F_0:=F_0\circ\pi$ is
 orthogonal to the rational Kronecker factor of the system $(X^\Z,\wt \mu, S)$.
In fact we will show that $\wt F_0$ is orthogonal to the Kronecker factor of this system. By a well known consequence of the spectral theorem for unitary operators,
this property is equivalent to establishing that
\begin{equation}\label{E:Kronecker}
	\E_{n\in\N} \Big|\int  \wt F_0\cdot S^n  \wt F_0\, d\wt \mu\Big|=0.
\end{equation}
By the definition of the measure $\wt\mu$ (see \eqref{eq:def-mutilde})  and since for $h\in \N$ we have $\wt F_0(\ux) \,\wt F_0(S^h\ux)=(-F_0(x_0)) \, (-F_0(x_h))$,  we get for every $n\in \N$ that
$$
\int  \wt F_0\cdot S^n  \wt F_0\, d\wt \mu=\E_{p\in\P}\int  F_0\cdot T^{pn}  F_0\, d \mu.
$$
By \eqref{E:correspondence}, for  every $h\in \N$ we have
$$
\int F_0\cdot T^hF_0\, d \mu=	\lE_{n\in {\bN} }\,  \bmu(n)\, \bmu(n+h)=0
$$
where  the vanishing of the average follows from the main result of Tao in \cite{Tao15}. Combining the above identities we get \eqref{E:Kronecker}.

By Corollary~\ref{C:disjoint}, we have
$$
0=\int \wt F_0(\ux) \cdot g_0(y)\,d\wt\sigma(\ux,y)= \int F_0(x)\cdot  g_0(y)\,d\sigma(x,y)=\lE_{n\in\bN}\,g_0(R^ny_0)\, \bmu(n)
$$
by~\eqref{eq:main-1-proof3}, contradicting our assumption that the limit in \eqref{E:000} is non-zero. This completes the proof.
\qed

\subsection{Proof of Theorem~\ref{th:main-new} assuming the preceding material}
We proceed exactly as in the proof of Theorem~\ref{th:main-1}  in Section~\ref{SS:Proof2}. Arguing by contradiction, we assume that there exist a  topological dynamical system $(Y,R)$, a point $y_0\in Y$, and a continuous function $g_0$ on $Y$ such that the logarithmic averages~\eqref{eq:main-1-proof} do not converge to $0$.
We construct a sequence of intervals $\bN=(N_k)_{k\in\N}$, a system $(X,T)$,  and a measure $\sigma$ on $X\times Y$, as in the proof of Theorem~\ref{th:main-1} in Section~\ref{SS:Proof2}. The projection $\nu$ of $\sigma$ on $Y$ is an $R$-invariant measure, and since $(Y, R)$ has at most  countably many    ergodic invariant measures, $\nu$ has at most  countably many ergodic components. Since the system $(Y,R)$ has zero topological entropy, all these components have zero entropy and the system $(Y,\nu,R)$ has zero entropy. We conclude as in the proof of Theorem~\ref{th:main-1}  in Section~\ref{SS:Proof2}.\qed

\subsection{Proof of Theorem~\ref{th:main-2} assuming the preceding material}
We consider only the case of the  M\"obius  function, the proof for the Liouville function is identical.

Arguing  by contradiction, suppose that the conclusion of Theorem~\ref{th:main-2} fails.  Then   there exist a  topological dynamical system $(Y,R)$, a point $y_0\in Y$ that is generic for a measure $\nu$ such that the system $(Y,\nu,R)$ has  zero entropy and at most  countably many ergodic components all of which are totally ergodic, and   a function  $g_0\in C(Y)$  such that  for some $\ell_0 \in \N$ and some $h_{0,1},\dots,h_{0,\ell_0}\in\Z$  the identity \eqref{eq:main-2} fails, namely, the averages
$$
 \lE_{n\in [N]} \,g_0(R^ny_0)\, \prod_{j=1}^{\ell_0}\bmu(n+h_{0,j})
$$
do not converge to $0$ as $N\to\infty$.

 As in the proof of Theorem~\ref{th:main-1}  in Section~\ref{SS:Proof2},
we define a sequence of intervals $\bN=(N_k)_{k\in\N}$ such that the above averages converge to some non-zero number, a system $(X,T)$, and  a measure $\sigma$ on $X\times Y$ such that \eqref{eq:main-1-proof3} holds.
By construction, $\sigma$ is invariant under $T\times R$.
 By  assumption and the definition of genericity, the projection of $\sigma$ on $Y$ is the measure $\nu$, and thus   the system  $(Y,\nu,R)$ has zero entropy,   at most countably many  ergodic components, and no rational eigenvalue except  $1$.

The projection of $\sigma$ on $X$ is a $T$-invariant measure  $\mu$ which by \eqref{eq:main-1-proof3} is the Furstenberg measure associated with $\bmu$ and $\bN$ by Proposition~\ref{P:correspondence}. Hence, by Proposition~\ref{prop:mu-fator-tilde-mu}, the system $(X,\mu,T)$
 is a factor of the system $(X^\Z,\wt \mu, S)$. By Theorems~\ref{th:main-mutilde} and \ref{th:no-irr-eigen},  the system $(X^\Z,\wt\mu,S)$  has no irrational spectrum and its ergodic components are isomorphic to
direct products of   infinite-step nilsystems and  Bernoulli systems.

From the previous discussion it follows that    the function $g_0$ and the systems $(X^\Z,\wt \mu, S)$ and  $(Y,\nu,R)$
 satisfy the hypothesis of the second part   of  Corollary~\ref{C:disjoint}. Hence, for every joining $\wt{\sigma}$ of these
 systems and $\tilde{f}\in  L^\infty(\wt \mu)$, we have $\int \tilde{f}(\ux)\, g_0(y)\, d\wt{\sigma}(\ux,y)=0$.
  Since $\sigma$ is a joining of the systems $(X,\mu,T)$ and $(Y,\nu,R)$, and
 the system $(X,\mu,T)$ is a factor of  $(X^\Z,\wt \mu, S)$,the measure $\sigma$ can be lifted to  a joining $\wt\sigma$ of $(X^\Z,\wt \mu,
 S)$ and  $(Y,\nu,R)$. It follows that for every $f\in  L^\infty(\mu)$ we
 have $\int f(x)\, g_0(y)\, d\sigma(x,y)=0$.
We deduce that
$$
 \lE_{n\in \bN} \,g_0(R^ny_0)\, \prod_{j=1}^{\ell_0}\bmu(n+h_{0,j})
 =\int_{X\times Y} \prod_{j=1}^{\ell_0}F_{h_{0,j}}(x)\cdot g_0(y)\,d\sigma(x,y)=0.
$$
This contradicts our assumption that $\lE_{n\in \bN} \,g_0(R^ny_0)\, \prod_{j=1}^{\ell_0}\bmu(n+h_{0,j})\neq 0$ and completes the proof of  Theorem~\ref{th:main-2}. \qed

 \section{The structure of systems of arithmetic progressions}\label{S:structure}

The goal of this section is to prove Theorem~\ref{th:main-mutilde}  which gives information about   the structure of systems of arithmetic progressions
 with prime steps associated with a system $(X,\mu,T)$.
We will   work   progressively with systems of increasing complexity  starting from the  case where  $(X,\mu,T)$ is a nilsystem. This important  case will be dealt using the theory of arithmetic progressions on nilmanifolds which is summarized  in Appendix~\ref{A:AP}.

\subsection{Systems of arithmetic progressions}
We start with the definition of systems of arithmetic progressions with integer steps which are
a stepping stone towards understanding the structure of the systems of arithmetic progressions with prime steps.
\subsubsection{The system of arithmetic progressions with integer steps}
We  will use the following  result from \cite{HK} (convergence was also established in \cite{Zi07}):
\begin{theorem}
	\label{T:HK}
	Let $(X,\mu,T)$ be a system. Then
	for every $\ell\in\N$ and  $f_{1},\ldots ,f_\ell\in L^\infty(\mu)$ the following limit exists in $L^2(\mu)$
	\begin{equation}
		\label{E:HK}
		\E_{n\in\N}\prod_{j=1}^\ell T^{nj}f_j.
	\end{equation}
	Furthermore, if the system is ergodic, $Z_\infty$ is the infinite-step nilfactor of the system (see Appendix~\ref{SS:infnilfac}),  and if
	$\E_\mu(f_j\mid Z_\infty)=0$ for some  $j\in\{1,\ldots, \ell\}$, then the limit  \eqref{E:HK} is $0$.
\end{theorem}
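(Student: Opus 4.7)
The plan is to follow the now-classical Host--Kra approach, based on a characteristic factor argument that reduces convergence to equidistribution of polynomial sequences on nilmanifolds.

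First I would reduce to the ergodic case. By the ergodic decomposition $\mu=\int\mu_\omega\,dP(\omega)$, it suffices to prove both statements for each ergodic component; convergence in $L^2(\mu)$ then follows from convergence in $L^2(\mu_\omega)$ for $P$-a.e.\ $\omega$ together with dominated convergence, since the averages are uniformly bounded by $\prod_j\|f_j\|_\infty$.

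Second, I would introduce the Host--Kra seminorms $\nnorm{\cdot}_s$ (defined inductively via the ergodic averages $\E_{n\in\N}T^nf\cdot\bar f$ together with self-dual box structures) and the associated $T$-invariant sub-$\sigma$-algebras $\CZ_s\subset\CX$ characterized by
\[
\nnorm{f}_{s+1}=0\ \iff\ \E_\mu(f\mid \CZ_s)=0.
\]
The key structural fact from \cite{HK} is that the factor $(Z_s,\mu_s,T)$ associated with $\CZ_s$ is an inverse limit of ergodic $s$-step nilsystems, so $Z_\infty=\lim_{\leftarrow}Z_s$ is the infinite-step nilfactor.

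Third, I would show by induction on $\ell$ the generalized van der Corput estimate
\[
\Bigl\|\E_{n\in [N]}\prod_{j=1}^\ell T^{nj}f_j\Bigr\|_{L^2(\mu)}\le C_\ell\,\min_{1\le j\le\ell}\nnorm{f_j}_\ell,
\]
obtained by repeatedly applying van der Corput's lemma to the squared norm of the average, each step shifting one of the $f_j$ by $T^h$ and integrating over $h$, until one recovers a box-seminorm. This proves that $\CZ_{\ell-1}$ (in fact $\CZ_\ell$ suffices for the bound, and a refinement yields $\CZ_{\ell-1}$) is a \emph{characteristic factor} for the average: replacing $f_j$ by $\E_\mu(f_j\mid\CZ_{\ell-1})$ changes the $L^2$-limit by zero. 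Granting this, the vanishing statement is immediate: if $\E_\mu(f_j\mid\CZ_\infty)=0$ for some $j$, then a fortiori $\E_\mu(f_j\mid\CZ_{\ell-1})=0$, so the average tends to $0$ in $L^2(\mu)$.

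Fourth, having reduced the problem to functions measurable with respect to $\CZ_{\ell-1}$, and since $\CZ_{\ell-1}$ is an inverse limit of ergodic $(\ell-1)$-step nilsystems, I may approximate each $f_j$ in $L^2(\mu)$ by a function pulled back from a nilfactor $(G/\Gamma,\mu_{G/\Gamma},T_a)$ where $T_ax=a\cdot x$ for some $a\in G$. On such a nilsystem the average
\[
\E_{n\in[N]}\prod_{j=1}^\ell f_j(a^{jn}\cdot x)
\]
is driven by the orbit of $(a^n,a^{2n},\dots,a^{\ell n})$ on $(G/\Gamma)^\ell$, and by Leibman's equidistribution theorem for polynomial sequences on nilmanifolds (Appendix~\ref{A:AP}) this sequence equidistributes on a sub-nilmanifold $Y\subset(G/\Gamma)^\ell$ with its Haar measure, uniformly in the base point of the ergodic components. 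Hence the Ces\`aro averages converge pointwise, and then in $L^2(\mu)$ by dominated convergence. Combined with the characteristic factor reduction, this yields the first assertion.

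The principal obstacle is the construction of the seminorms $\nnorm{\cdot}_s$, the verification that they control the multilinear averages (requiring a careful iterated van der Corput argument that must keep track of all $\ell$ indices simultaneously), and the identification of $\CZ_s$ as an inverse limit of $s$-step nilfactors; this last identification, which is the heart of \cite{HK}, cannot be avoided if one wants to invoke nilpotent equidistribution in the final step. Everything else is standard once these structural results are in hand.
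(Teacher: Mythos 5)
The paper does not prove this statement: it is quoted verbatim from \cite{HK} (with the convergence part also credited to \cite{Zi07}) and used as a black box. Your sketch is a faithful outline of the actual Host--Kra argument --- ergodic decomposition, the seminorms $\nnorm{\cdot}_s$ and the characteristic-factor bound $\limsup_N\bigl\Vert\E_{n\in[N]}\prod_j T^{nj}f_j\bigr\Vert_{L^2}\le C_\ell\min_j\nnorm{f_j}_\ell$ obtained by iterated van der Corput, the structure theorem identifying $Z_s$ as an inverse limit of $s$-step nilsystems, and finally equidistribution of the polynomial orbit $(a^n x,a^{2n}x,\dots,a^{\ell n}x)$ on a sub-nilmanifold of $(G/\Gamma)^\ell$ --- so it is consistent with the source the paper relies on; as you yourself note, the genuinely hard content (the seminorm/structure machinery of \cite{HK}) is invoked rather than proved, which is exactly the level at which the present paper uses the result.
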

In accordance to the system of arithmetic progressions with prime steps (see Definition~\ref{D:tilde}) we define  systems of arithmetic progressions with integer steps as follows:
\begin{definition}
	\label{D:under}
	Let $(X,\mu,T)$ be a system. We write $\umu$ for the measure on $X^\Z$ characterized as follows:
	For every $m\in\N$ and all $f_{-m},\ldots,f_m\in L^\infty(\mu)$, we define
	\begin{equation}
		\label{eq:def-mubar}
\int_{X^\Z}\prod_{j=-m}^m f_j(x_j)\,d\umu(\ux):=		\E_{n\in\N}\int_X\prod_{j=-m}^m T^{nj}f_j\,d\mu.
	\end{equation}
Note that the limit above exists by Theorem~\ref{T:HK} and
the measure $\umu$ is invariant under 
the shift $S$ of $X^\Z$.  We say that $(X^\Z, \umu,S)$ is the \emph{system of arithmetic progressions with integer steps} associated with the system  $(X,\mu,T)$.		
\end{definition}

\subsubsection{The system of arithmetic progressions with prime steps}
\label{subsec:prime-steps}

The system of arithmetic progressions with prime steps  $(X^\Z,\wt\mu,S)$ was defined in Section~\ref{SS:AP}.
We  recall here the defining property of the measure $\wt\mu$: For every $m\in\N$ and $f_{-m},\ldots, f_m\in L^\infty(\mu)$, we have
$$
\int_{X^\Z}\prod_{j=-m}^m f_j(x_j)\,d\wt\mu(\ux)=		
\E_{p\in\P}\int_X\prod_{j=-m}^m T^{pj}f_j\,d\mu.
$$
Note that convergence of the averages on the right hand side follows from the next result
that  was proved in \cite{FHK}
conditional to some conjectures obtained later in \cite{GT7, GTZ12} and the convergence part was also proved in \cite{WZ11}:
\begin{theorem}
	\label{T:FHK}
	Let $(X,\mu,T)$ be a system. Then
	for every $\ell\in\N$ and  $f_{1},\ldots ,f_\ell\in L^\infty(\mu)$ the following limit exists in $L^2(\mu)$
	\begin{equation}
		\label{E:FHK}
		\E_{p\in\P}\prod_{j=1}^\ell T^{pj}f_j.
	\end{equation}
	Furthermore, if the system is ergodic, $Z_\infty$ is the infinite-step nilfactor of the system (see Appendix~\ref{SS:infnilfac}), and if
	$\E_\mu(f_j\mid Z_\infty)=0$ for some $j\in\{1,\ldots, \ell\}$, then the limit \eqref{E:FHK} is $0$.
\end{theorem}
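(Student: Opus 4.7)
The plan is to reduce convergence of the prime-step averages in \eqref{E:FHK} to that of the integer-step averages of Theorem~\ref{T:HK}, via the $W$-trick of Green and Tao. By partial summation and the prime number theorem, writing $\Lambda$ for the von Mangoldt function, it suffices to establish the $L^2(\mu)$-convergence of
\begin{equation*}
\frac{1}{N}\sum_{n=1}^N\Lambda(n)\prod_{j=1}^\ell T^{nj}f_j\quad\text{as } N\to\infty,
\end{equation*}
the contribution of prime powers being negligible. Fix a parameter $w$, set $W:=\prod_{p\leq w}p$, and for each residue $a$ with $(a,W)=1$ define the normalised weight $\Lambda_{W,a}(n):=\frac{\phi(W)}{W}\Lambda(Wn+a)$. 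Splitting the above average according to the residue class of $n$ modulo $W$, it becomes
\begin{equation*}
\frac{1}{\phi(W)}\sum_{\substack{1\leq a\leq W\\(a,W)=1}}\frac{1}{N}\sum_{n=1}^N\Lambda_{W,a}(n)\prod_{j=1}^\ell (T^W)^{nj}(T^{aj}f_j)+o_{N\to\infty}(1).
\end{equation*}

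The crucial analytic input is the Green--Tao--Ziegler theorem asserting that the functions $\Lambda_{W,a}-1$ have vanishing Gowers $U^s$-seminorms of every order $s$ uniformly in $a$, as $w\to\infty$. Combined with the ergodic-theoretic inequality that multilinear averages of the form $\E_{n\in[N]}\,b(n)\prod_{j=1}^\ell (T^W)^{nj}g_j$ are controlled in $L^2(\mu)$ by a suitable Gowers seminorm of the bounded weight $b$, this permits us to replace $\Lambda_{W,a}$ by the constant $1$ in the inner average, with an error that tends to zero as first $N\to\infty$ and then $w\to\infty$. Once the weight is removed, the inner average becomes $\E_{n\in[N]}\prod_j(T^W)^{nj}(T^{aj}f_j)$, whose $L^2(\mu)$-convergence follows directly from Theorem~\ref{T:HK} applied to the system $(X,\mu,T^W)$. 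A Cauchy-sequence argument along $w\to\infty$ then yields the claimed convergence.

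For the second assertion, assume $(X,\mu,T)$ is ergodic and $\E_\mu(f_j\mid Z_\infty)=0$ for some $j$. One needs to verify that this orthogonality descends to each ergodic component of $(X,\mu,T^W)$: since $Z_\infty$ is an inverse limit of ergodic nilfactors, and each such nilfactor decomposes under $T^W$ into nilfactors on its ergodic components, the conditional expectation of $f_j$ onto the infinite-step nilfactor of each ergodic component of $(X,\mu,T^W)$ still vanishes. The vanishing clause of Theorem~\ref{T:HK} applied to $T^W$ then forces the unweighted inner averages to be zero; combined with the Gowers-norm reduction above, the full prime-step average vanishes.

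The main obstacle is the Gowers-norm control step, namely proving
\begin{equation*}
\lim_{w\to\infty}\limsup_{N\to\infty}\Bigl\Vert\frac{1}{N}\sum_{n=1}^N\bigl(\Lambda_{W,a}(n)-1\bigr)\prod_{j=1}^\ell(T^W)^{nj}g_j\Bigr\Vert_{L^2(\mu)}=0.
\end{equation*}
This rests on both the uniformity estimates of Green--Tao and Green--Tao--Ziegler \cite{GT09b,GT7,GTZ12} and a non-trivial multilinear ergodic inequality bounding such averages by Gowers seminorms of the weight; the latter is the technical heart of \cite{FHK}, and is what permits any $L^\infty$ bound on $\Lambda_{W,a}$ (which is only polylogarithmic in $N$, hence must be truncated with care) to nevertheless produce a usable estimate.
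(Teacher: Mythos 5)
Your proposal follows essentially the same route as the paper, which proves Theorem~\ref{T:FHK} only by reference: it invokes the argument of \cite[Section~5]{FHK}, i.e.\ exactly the reduction you describe (partial summation to a von Mangoldt weight, the $W$-trick, the Gowers uniformity of $\Lambda_{W,a}-1$ from \cite{GT09b,GT7,GTZ12}, a generalized von Neumann--type inequality bounding the weighted multilinear ergodic average by a Gowers norm of the weight, and Theorem~\ref{T:HK} applied to $T^W$ for the unweighted averages). So in outline your argument is the intended one.

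One point in your treatment of the vanishing clause is argued in the wrong direction. You need that $\E_\mu(f_j\mid Z_\infty(T))=0$ forces $\E_{\mu_c}(f_j\mid Z_\infty(T^W,\mu_c))=0$ on almost every ergodic component $\mu_c$ of $(X,\mu,T^W)$, i.e.\ the inclusion $Z_\infty(T^W,\mu_c)\subseteq Z_\infty(T)$ modulo $\mu_c$-null sets. The observation that each ergodic nilfactor of $(X,\mu,T)$ decomposes under $T^W$ into nilsystems on its ergodic components only yields the reverse inclusion, namely that the restriction of $Z_\infty(T)$ to a component is contained in $Z_\infty(T^W,\mu_c)$. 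The statement you actually need is true, but it comes from the comparison of the Host--Kra seminorms of $T$ and of $T^W$ (they have the same zero sets); alternatively, as in \cite[Section~5]{FHK}, one bypasses the issue entirely because the generalized von Neumann inequality bounds the full prime-step average directly by $\min_j\nnorm{f_j}_{k}$ for a suitable $k$ computed with respect to $T$ itself, so the vanishing clause drops out of the same estimate that gives convergence. With that repair your argument is complete at the level of detail the paper itself supplies.
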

\begin{remark}  This result is not stated explicitly in \cite{FHK}, but  follows from the argument in  \cite[Section~5]{FHK}, using Theorem~\ref{T:HK} and
$U_{\ell+1}$-uniformity of the $W$-tricked von Mangoldt function (established in  \cite{GT09b, GT7, GTZ12}) in place of $U_3$-uniformity.
\end{remark}
In order to determine the support of the measure $\wt\mu$
we will use the following  multiple ergodic theorem:
\begin{theorem}\label{T:ergid2}
	Let  $(X, \mu, T)$ be a system and suppose that for some $d\in \N$ the ergodic components of the system $(X,\mu,T^d)$ are totally ergodic. Then
	\begin{equation}\label{E:primeform}
		\E_{p\in \P} \prod_{j=1}^\ell T^{pj}f_j=\E_{(k,d)=1} \E_{n\in\N} \prod_{j=1}^\ell T^{(nd+k)j}f_j
	\end{equation}
	for all $\ell \in \N$ and $f_{1},\ldots, f_\ell\in L^\infty(\mu)$, where convergence takes place in $L^2(\mu)$ and  the average $\E_{(k,d)=1}$ is taken over those $k\in\{1,\ldots,d-1\}$ such that $(k,d)=1$.
\end{theorem}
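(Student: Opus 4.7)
The plan is to decompose the average over primes by residue class modulo $d$, using Dirichlet's theorem, and then within each residue class identify the resulting prime-weighted multiple ergodic average with an unweighted integer average for the system $S := T^d$.

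First, by the prime number theorem in arithmetic progressions, for any bounded $H\colon \P\to L^2(\mu)$ for which the inner limits below exist, one has
$$
\E_{p\in\P}H(p) \;=\; \frac{1}{\phi(d)}\sum_{\substack{1\le k\le d-1\\(k,d)=1}}\lim_{N\to\infty}\frac{1}{\pi_k(N)}\sum_{\substack{p\le N\\ p\equiv k\bmod d}}H(p),
$$
where $\pi_k(N)$ denotes the number of primes $p\le N$ with $p\equiv k\bmod d$. I would apply this with $H(p)=\prod_{j=1}^\ell T^{pj}f_j$, thereby reducing the theorem to verifying, for each fixed $k\in\{1,\dots,d-1\}$ with $(k,d)=1$, that
$$
\lim_{N\to\infty}\frac{1}{\pi_k(N)}\sum_{\substack{p\le N\\ p\equiv k\bmod d}}\prod_{j=1}^\ell T^{pj}f_j \;=\; \E_{n\in\N}\prod_{j=1}^\ell T^{(nd+k)j}f_j
$$
in $L^2(\mu)$.

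Second, for each such $k$ I would make the change of variables $p=nd+k$ and set $S:=T^d$, $g_j:=T^{kj}f_j$, so that $T^{pj}f_j=S^{nj}g_j$. The desired identity becomes
$$
\lim_{M\to\infty}\frac{1}{\pi_k(Md+k)}\sum_{\substack{n\le M\\ nd+k\in\P}}\prod_{j=1}^\ell S^{nj}g_j \;=\; \E_{n\in\N}\prod_{j=1}^\ell S^{nj}g_j,
$$
which is a purely ergodic statement about the system $(X,\mu,S)=(X,\mu,T^d)$, whose ergodic components are totally ergodic by hypothesis.

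Third, I would establish this identity by adapting the argument used in the proof of Theorem~\ref{T:FHK} in \cite{FHK}. One replaces the normalized indicator of $\{n\colon nd+k\in\P\}$ by the restriction of the von Mangoldt function to the progression $k\bmod d$, applies the $W$-trick, and invokes $U_{s+1}$-uniformity of the $W$-tricked von Mangoldt function in arithmetic progressions, as established in \cite{GT09b,GT7,GTZ12}, to discard the components of the $g_j$ orthogonal to the infinite-step nilfactor $Z_\infty(S)$ of $(X,\mu,S)$. By Theorem~\ref{T:HK} applied to $S$, the right-hand side is also carried by $Z_\infty(S)$, so it suffices to match the two averages on $Z_\infty(S)$. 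On each ergodic component of $(X,\mu,S)$, total ergodicity forces the corresponding nilfactor to be realized on a connected nilmanifold, and Green--Tao's equidistribution theorem for polynomial orbits on connected nilmanifolds (together with PNT in APs) ensures that the prime-in-AP average of any nilsequence equals its uniform integer average. Summing over $k$ coprime to $d$ with weight $1/\phi(d)$ then yields the theorem.

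\textbf{Main obstacle.} The crux is the third step: showing that the multiple ergodic averages $\prod_j S^{nj}g_j$ restricted to the sparse set $\{n\colon nd+k\in\P\}$ reproduce the full integer average. The assumption that the $T^d$-ergodic components are totally ergodic is indispensable here, for otherwise primes in the class $k\bmod d$ would concentrate on a proper subnilmanifold determined by the residue of $k$ modulo the discrete part of the $S$-Kronecker factor, and the identity would genuinely fail. Carrying out this step requires combining the Host--Kra seminorm reductions with the Green--Tao--Ziegler uniformity theorem in arithmetic progressions and Green--Tao's equidistribution on connected nilmanifolds; once this machinery is assembled, the theorem follows by averaging over residue classes.
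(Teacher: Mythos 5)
Your proposal is correct in outline and rests on the same two pillars as the paper's proof: the $W$-trick combined with Gowers uniformity of the $W$-tricked von Mangoldt function in arithmetic progressions (the input from \cite{GT09b,GT7,GTZ12}, run through the argument of \cite{FHK2}), and the total ergodicity of the ergodic components of $T^d$ to remove the resulting dilations. The difference is in the bookkeeping and in the endgame. The paper does not split into residue classes mod $d$ first; it runs the $W$-trick with modulus $dW$ directly, obtaining $\E_{p\in\P}\prod_j T^{pj}f_j=\lim_{W\to\infty}\E_{(k,dW)=1}\E_{n\in\N}\prod_j T^{(ndW+k)j}f_j$, then invokes Theorem~\ref{T:ergid1} (applied to $S=T^d$ with $r=W$ and $h_j=T^{kj}f_j$) to replace $ndW+k$ by $nd+k$, and concludes with an elementary counting observation: for a $d$-periodic sequence $a$ and $(W,d)=1$ one has $\E_{(k,dW)=1}a(k)=\E_{(k,d)=1}a(k)$. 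Your plan instead descends again to the infinite-step nilfactor of $T^d$ and finishes with equidistribution of primes on connected nilmanifolds; this can be made to work, but it essentially re-proves the content of Theorem~\ref{T:ergid1} rather than quoting it, and it obliges you to separately justify the existence of the limits along primes in a fixed residue class, which the paper's route gets for free. One point in your third step needs more care than you give it: the assertion that ``the prime-in-AP average of any nilsequence equals its uniform integer average'' is false for nilsequences with nontrivial rational frequencies (e.g.\ $n\mapsto \e(n/q)$ lives on the connected nilmanifold $\T$), so you must use that the specific nilsequences arising here --- those generated by the nilsystem of arithmetic progressions attached to a \emph{totally ergodic} nilsystem --- have no such frequencies; this is exactly what Theorem~\ref{T:ergid1} encapsulates, and both arguments use the total ergodicity hypothesis at this same point and for the reason you identify.
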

\begin{remark}
	The existence of the limits on the left and right hand side follows from Theorems~\ref{T:FHK} and \ref{T:HK} respectively.
\end{remark}
\begin{proof}
	For $w\in \N$ let  $W$ denote the product of the first $w$ primes that are relatively prime to $d$.
	Following the proof of \cite[Theorem~1.3]{FHK2}  we get that
	the limit on the left hand side of \eqref{E:primeform} is equal to the following limit\footnote{This is established in \cite{FHK2} only for $d=1$ but the same argument works for every $d\in \N$ using the  Gowers uniformity (as $N\to\infty$ and then $W\to \infty$) of the $W$-tricked von Mangoldt function $(\frac{\phi(dW)}{dW}\Lambda(dWn+k)-1)_{n\in [N]}$ for $k\in \N$ relatively prime to $dW$.}
	$$
	\lim_{W\to \infty}\E_{(k,dW)=1}\, \E_{n\in\N} \,\prod_{j=1}^\ell T^{(ndW+k)j}f_j
	$$
	where the average
	$\E_{(k,dW)=1}$ is taken over those $k\in \{1,\ldots, dW-1\}$ such that $(k,dW)=1$.  Since the ergodic components of $T^d$ are totally ergodic, we get by \cite[Theorem~6.4]{Fr04} (see also  Theorem~\ref{T:ergid1} below) that
	$$
	\E_{n\in\N} \prod_{j=1}^\ell T^{(ndW+k)j}f_j=
	\E_{n\in\N} \prod_{j=1}^\ell T^{(nd+k)j}f_j
	$$
	holds for every $W\in \N$. Hence, the limit we want to compute is
	\begin{equation}\label{E:Wm}
		\lim_{W\to \infty}\E_{(k,dW)=1}\E_{n\in\N} \prod_{j=1}^\ell T^{(nd+k)j}f_j.
	\end{equation}
	
	We claim that for general $d$-periodic sequences $(a(k))_{k\in\N}$,  for every $W\in \N$ with $(d,W)=1$ we have
	\begin{equation}\label{E:dW}
	\E_{(k,dW)=1}a(k)=\E_{(k,d)=1}a(k).
	\end{equation}
	To see this, for  $j\in \{0,\ldots, d-1\}$ consider  the set
	$$
	A_j:= \{k\in \{1,\ldots dW\}\colon k\equiv j \! \! \! \pmod{d} \ \text{ and  }\  (k,Wd)=1\}.
	$$
	If $(j,d)>1$, then $A_j=\emptyset$.  If $(j,d)=1$, then $(k,d)=1$ and
	$$
	A_j= \{k\in \{1,\ldots dW\}\colon k\equiv j \! \! \! \pmod{d} \ \text{ and  }\  (k,W)=1\}.
	$$
	Since $(W,d)=1$, we have  $|A_j|=\phi(W)$ if $(j,d)=1$. It follows from these simple facts and
	our assumption of $d$-periodicity of $(a(k))_{k\in\N}$ that \eqref{E:dW} holds.
	
	Applying \eqref{E:dW}   for $a(k):=  \E_{n\in\N} \prod_{j=1}^\ell T^{(nd+k)j}f_j$, $k\in \N$, which is $d$-periodic, we see that the limit in \eqref{E:Wm} is equal to the expression on the right hand side of \eqref{E:primeform}.
	This   completes the proof.
\end{proof}
%
%
%

\subsection{The case of a nilsystem} \label{SS:finitestep} We start with the following intermediate result which establishes  Theorem~\ref{th:main-mutilde} in the case where $(X,\mu,T)$ is a (finite-step) nilsystem:
\begin{proposition}\label{P:nil}
	If $(X,\mu,T)$ is an ergodic nilsystem, then the ergodic components of the systems $(X^\Z,\umu,S)$ and $(X^\Z,\wt\mu,S)$  are isomorphic to  nilsystems.
\end{proposition}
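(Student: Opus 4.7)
The plan is to handle the integer-step system $(X^\Z, \umu, S)$ first, and then reduce the prime-step system $(X^\Z, \wt\mu, S)$ to it via Theorem~\ref{T:ergid2}. Write $X = G/\Gamma$ with $Tx = a \cdot x$ for some $a \in G$, and for each $m \in \N$ let $\pi_m \colon X^\Z \to X^{2m+1}$ be the projection onto the coordinates $[-m,m]$. By the defining formula~\eqref{eq:def-mubar}, the pushforward $(\pi_m)_*\umu$ is the Ces\`aro limit in $n$ of the distributions of the points $(T^{-mn}x, T^{(-m+1)n}x, \ldots, T^{mn}x)$ as $x$ ranges uniformly over $(X,\mu)$. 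The theory of arithmetic progressions on nilmanifolds reviewed in Appendix~\ref{A:AP} identifies this limit with the Haar measure of a sub-nilmanifold $Y_m \subset X^{2m+1}$.

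The $Y_m$ are consistent under forgetting of coordinates, so $\umu$ is the Haar measure of the inverse limit $Y \subset X^\Z$, and the shift $S$ preserves $Y$. The shift does not act on $Y$ by a nilrotation in the most obvious sense, but rather by a polynomial-skew dynamics coupling the shift of indices to the diagonal action of $G$; nevertheless, for each $m$ the induced system on the finite-dimensional sub-nilmanifold $Y_m$ embeds into a larger nilsystem on which $S$ acts by a bona fide nilrotation (by enlarging the group by an extra coordinate recording the polynomial phase in $n$). Applying the standard ergodic decomposition of nilrotations then shows that the ergodic components of $(Y_m, \mu_{Y_m}, S)$ are finite-step nilsystems, and these decompositions are compatible under the inverse limit, so the ergodic components of $(Y, \umu, S)$ are finite-step nilsystems as well.

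For the prime-step case, since $(X,\mu,T)$ is an ergodic nilsystem, $T$ cyclically permutes the finitely many connected components of $X$. Letting $d$ be the number of these components, $T^d$ acts totally ergodically on each, so Theorem~\ref{T:ergid2} applies and gives
\begin{equation*}
\int_{X^\Z} F \, d\wt\mu \;=\; \E_{(k,d)=1} \int_{X^\Z} F\circ \Psi_k \, d\umu^{(d)},
\end{equation*}
where $\umu^{(d)}$ denotes the integer-step measure associated to $(X, \mu, T^d)$ and $\Psi_k \colon (x_j)_{j\in\Z} \mapsto (T^{kj} x_j)_{j\in\Z}$ is an $S$-equivariant measurable map on $X^\Z$. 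Applying the integer-step case to each (totally ergodic) component of $(X,\mu,T^d)$ yields that $\umu^{(d)}$ has ergodic components isomorphic to nilsystems. Since each pushforward $\Psi_k^*\umu^{(d)}$ preserves this property and $\wt\mu$ is a finite convex combination of the $\Psi_k^*\umu^{(d)}$, the ergodic components of $(X^\Z, \wt\mu, S)$ are nilsystems.

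The main obstacle is the verification in the integer-step analysis that the ergodic components of the inverse-limit system $(Y, \umu, S)$ are genuine finite-step nilsystems and not merely pro-nilsystems or infinite-step objects (the latter would only yield Theorem~\ref{th:main-mutilde}, not the sharper statement of this proposition). The key point is to exploit the polynomial character of the orbit generator $(a^j)_{j\in\Z}$ together with the finite nilpotency step $s$ of $G$ in order to bound, uniformly in $m$, the step of the nilsystem underlying each ergodic component of $(Y_m, \mu_{Y_m}, S)$, and then to argue that this uniform bound survives the passage to the inverse limit.
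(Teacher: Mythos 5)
Your overall strategy is the paper's: identify the integer-step measure $\umu$ with the Haar measure of the nilmanifold of arithmetic progressions (this is \cite[Theorem~5.4]{BHK}, quoted as Proposition~\ref{P:Z}), realize the shift as a genuine nilrotation by adjoining the index coordinate (the semidirect product $\uG\rtimes_\phi\Z$ of Appendix~\ref{A:AP}), decompose ergodically into closed $S$-orbits, and reduce the prime-step case to the integer-step case via Theorem~\ref{T:ergid2} and the connected component $X_0$. However, the step you yourself single out as "the main obstacle" is a genuine gap, and the resolution you sketch does not work. You propose to bound the step of the ergodic components of $(Y_m,\mu_{Y_m},S)$ uniformly in $m$ and "argue that this uniform bound survives the passage to the inverse limit." It does not: an inverse limit of ergodic $1$-step nilsystems (e.g.\ of finite rotations) can be an odometer, which is a rotation on a profinite group and hence not a nilsystem in the sense required here, even though every finite stage has step $1$. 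So a uniform bound on the step, by itself, only yields that the ergodic components are inverse limits of nilsystems of bounded step --- i.e.\ the weaker conclusion of Theorem~\ref{th:main-mutilde}, exactly as you fear. What actually closes the gap is that there is no genuine inverse limit at all: by the Green--Tao lemma quoted in Appendix~\ref{A:AP} (a consequence of Lemma~\ref{L:leibman}, i.e.\ of the fact that elements of the Hall--Petresco group $\uG$ are polynomial sequences of degree at most $s$), the projection of $\uX\subset X^\Z$ onto the coordinates $0,1,\dots,s$ is injective. Hence $\uX$ is homeomorphic to a sub-nilmanifold of $X^{s+1}$, your tower $(Y_m)$ stabilizes, and the closed $S$-orbits in $\uX$ are honest uniquely ergodic nilsystems (Proposition~\ref{P:uX}). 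Without invoking this injectivity your argument does not prove the proposition.

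Two smaller points in the prime-step reduction. First, the map $\Psi_k=\overrightarrow T^k$ is not $S$-equivariant: one has $[S,\overrightarrow T]=\overline T$, so $S\circ\Psi_k=\overline T^{\,k}\circ\Psi_k\circ S$, and consequently the individual pushforwards $(\Psi_k)_*\umu^{(d)}$ need not be $S$-invariant; you cannot speak of their ergodic components term by term. The paper's identity \eqref{E:tildemu} includes the extra average over the diagonal translates $\overline T^{\,l}$, $0\le l\le d-1$, precisely to account for the decomposition $\mu=\E_{0\leq l\leq d-1}T^l\mu_0$ over connected components, and then sidesteps the equivariance issue entirely by only using that $\wt\mu$ is supported inside the $S$-invariant set $\uX$, after which the closed-orbit argument applies verbatim. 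Second, for small $m$ the shift does not induce a transformation of $Y_m\subset X^{2m+1}$ at all (it moves the coordinate window), so "the induced system on $Y_m$" only makes sense once the injectivity above is in hand --- which again points to that injectivity being the load-bearing ingredient your write-up omits.
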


The proof is given in Section~\ref{SS:laststep}. We start with  some preliminaries.
\begin{notation}
	If $T$ is a transformation on $X$, we write $\overline T$ and $\overrightarrow T$
	for the transformations of $X^\Z$ given by
	$$
	(\overline T\ux)_j=Tx_j\ \text{ and } \ (\overrightarrow T\ux)_j=T^jx_j, \quad j\in\Z,
	$$
	where  $\ux=(x_k)_{k\in\Z}\in X^\Z$. We call $\overline T$ the \emph{diagonal transformation}.
	As usual, with  $S$ we denote the shift transformation  on $X^\Z$.
\end{notation}
We remark that $\overline T$ commutes with $\overrightarrow T$ and with $S$, and that $[S,\overrightarrow T]=\overline T$.
\subsubsection{Integer steps}
We use the same hypothesis and notation as in the preceding sections and now
we assume  in addition  that  $X=G/\Gamma$ is a nilmanifold,  $\mu=\mu_X$ is the Haar measure on $X$, and $T$ is an ergodic translation by some $\tau\in G$. Arguing as in \cite[Section~2.1]{leibman1} we can and will assume   that $G$  is spanned by the  connected component $G^0$ of $e_G$ and $\tau$. This condition implies that the groups $G_s$ are connected for every $s\geq 2$ (see \cite[Theorem~4.1]{BHK}).  The transformations $\overline T$ and $\overrightarrow T$ of $X^\Z$ are the translations by $\overline \tau=(\dots,\tau,\tau,\tau\dots)$ and $\overrightarrow\tau=(\dots,\tau^{-2},\tau^{-1},e_G,\tau,\tau^2,\dots)$, respectively.

The Hall-Petresco group $\uG$ and the nilmanifold of arithmetic progressions $\uX$ are defined in the Appendices~\ref{SS:HP} and \ref{subsec:AP-in-nilmanif}.
 It is immediate from the definition of $\uG$ that  $\overline \tau, \overrightarrow\tau\in \uG$. Therefore, $\overline T$ and $\overrightarrow T$ are nilrotations of   $\uX$.
The next result was established in \cite[Lemma~5.2]{BHK}:
\begin{lemma}\label{L:minimal}
 	If $(X,T)$ is a minimal nilsystem then
 	$$
 	\uX=\overline{\bigl\{ \overrightarrow T^n\overline T^me_{\uX}\colon m,n\in\Z\bigr\}}.
 	$$		
\end{lemma}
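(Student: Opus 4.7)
My plan is to identify the orbit closure as a sub-nilmanifold and then reduce the problem to a density check on the horizontal torus. Write $Y$ for the set on the right-hand side of the claimed equality. Since $\overline T$ and $\overrightarrow T$ are commuting nilrotations of $\uX$, the set $Y$ is the closure of the orbit of $e_{\uX}$ under the abelian subgroup of $\uG$ generated by $\overline\tau$ and $\overrightarrow\tau$. By the orbit closure theorem for nilrotations (Leibman, or Ratner's theorem specialized to unipotent translations), $Y = H\cdot e_{\uX}$ where $H$ is a closed subgroup of $\uG$ containing $\overline\tau$ and $\overrightarrow\tau$. Hence the problem is reduced to showing $H = \uG$.

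For the reduction to the abelianization, I would use the standing assumption that $G = G^0\cdot\tau$, which forces the groups $G_s$ to be connected for $s\geq 2$, and hence gives connectedness of the commutator subgroups of $\uG$ at every step past the first. A standard induction on the lower central series of $\uG$ (pushing forward by the quotient maps $\uG_s/\uG_{s+1}$ and using that a closed subgroup of a connected torus surjecting onto the quotient by any connected subtorus is the whole torus) shows that $H=\uG$ as soon as the image of $H$ in the horizontal torus $\uG^{ab}:=\uG/([\uG,\uG]\cdot\uGamma)$ is all of $\uG^{ab}$.

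The final step is to identify $\uG^{ab}$ and the images of $\overline\tau$ and $\overrightarrow\tau$ there. Using the explicit description of $\uG$ in Appendix~\ref{SS:HP} (its generators are the ``constant'' and ``linear in $n$'' sequences, with higher order polynomial corrections taking values in $[G,G]$), one checks that $\uG^{ab}$ is isomorphic to $G^{ab}\times G^{ab}$, with $\overline\tau$ mapping to $(\overline\tau_{ab},0)$ and $\overrightarrow\tau$ mapping to $(0,\overline\tau_{ab})$, where $\overline\tau_{ab}$ is the projection of $\tau$ to $G^{ab}=G/[G,G]\Gamma$. Minimality of $(X,T)$ is equivalent to $\overline\tau_{ab}$ generating a dense subgroup of $G^{ab}$, and this immediately yields density of the orbit of the origin under $(\overline\tau_{ab},0)$ and $(0,\overline\tau_{ab})$ in $G^{ab}\times G^{ab}$, completing the proof.

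The main obstacle is bookkeeping: correctly identifying $\uG^{ab}$ and the images of the two generators in coordinates, which requires the precise definition of the Hall--Petresco group and its lattice given in the appendix. Once this identification is made, both the reduction step (via the connectedness of the successive quotients $\uG_s/\uG_{s+1}$) and the torus density step are straightforward.
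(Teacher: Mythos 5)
The paper contains no proof of this lemma: it is quoted directly from \cite[Lemma~5.2]{BHK}, so the only comparison available is with the argument there, and your outline follows essentially the same route --- orbit closure via Leibman/Ratner, reduction to the horizontal torus by induction along the lower central series of $\uG$, identification of that torus with two copies of $G/(G_2\Gamma)$ on which $\overline\tau$ and $\overrightarrow\tau$ project to $(\tau,e)$ and $(e,\tau)$ respectively, and minimality of the base rotation to finish.

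Two steps that you file under ``bookkeeping'' are, however, exactly the places where the proof could break, so let me flag them. First, the isomorphism $\uG/\overline{[\uG,\uG]\,\uGamma}\cong G/(G_2\Gamma)\times G/(G_2\Gamma)$ requires showing that $\uG\cap G_2^{\Z}$ (the ``higher order corrections'') lies in the closure of $[\uG,\uG]\,\uGamma$; an element of $\uG$ taking values in $[G,G]$ is not automatically a product of commutators of elements of $\uG$. In the $2$-step case, for instance, the commutator of the linear sequences $(a^j)_{j\in\Z}$ and $(b^j)_{j\in\Z}$ is $([a,b]^{j^2})_{j\in\Z}=([a,b]^{2\binom j2+j})_{j\in\Z}$, which after removing the linear part only yields the $\binom j2$-power sequences of \emph{squares} of commutators; one must invoke connectedness, hence divisibility, of $G_2,\dots,G_s$ (this is precisely where the standing hypothesis $G=\langle G^0,\tau\rangle$ enters) to capture all of $\uG\cap G_2^{\Z}$. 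If this inclusion failed, the abelianization would be strictly larger than $G/(G_2\Gamma)\times G/(G_2\Gamma)$ and the images of $\overline\tau,\overrightarrow\tau$ would not generate a dense subgroup of it. Second, your justification of the reduction step --- ``a closed subgroup of a connected torus surjecting onto the quotient by any connected subtorus is the whole torus'' --- is false as stated: $\{0\}\times\T$ is a closed subgroup of $\T^2$ surjecting onto the quotient by the connected subtorus $\T\times\{0\}$. What the induction actually uses is that once $H\uGamma$ covers $\uG$ modulo the $s$-th term of the lower central series of $\uG$, commutators of its elements generate that term modulo the next one, and connectedness of these successive quotients upgrades ``generates a dense subgroup'' to ``equals''. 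With these two points made precise the argument is correct and coincides with the proof in \cite{BHK}.
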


The next result was established in the form stated in \cite[Theorem~5.4]{BHK} and previously in a slightly different form in \cite{Zi05}:
\begin{proposition}\label{P:Z}
	Let $(X,T,\mu)$ be an ergodic nilsystem. Then
	for every $m\in\N$ and all $f_{-m},\ldots ,f_m\in L^\infty(\mu)$ we have
	$$
	\int_\uX \prod_{j=-m}^m f_j(x_j)\,d\mu_{\uX}(\ux)=\E_{n\in\N}\int_X\prod_{j=-m}^m T^{nj}f_j\,d\mu.
	$$
	In other words, the Haar measure $\mu_\uX$ of $\uX$ coincides with the measure $\umu$ on $\uX$ defined in Definition~\ref{D:under}.
\end{proposition}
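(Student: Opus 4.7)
The plan is to identify $\umu$ as a Borel probability measure on $\uX$ invariant under the commuting nilrotations $\overline T$ and $\overrightarrow T$, and then to use unique ergodicity to conclude it equals $\mu_{\uX}$. The key input is Lemma~\ref{L:minimal}, which says the $\Z^2$-action of $(\overline T, \overrightarrow T)$ on $\uX$ has a dense orbit at $e_{\uX}$ and is therefore minimal; combined with the classical theorem that minimal actions by nilrotations on a nilmanifold are uniquely ergodic, this pins down the $(\overline T, \overrightarrow T)$-invariant probability measure on $\uX$ uniquely.

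Three facts about $\umu$ must be checked. First, that it has total mass one: take all $f_j = 1$ in \eqref{eq:def-mubar}. Second, that it is supported on $\uX$: the diagonal embedding sends $G$ into $\uG$, so $\overline x \in \uX$ for every $x \in X$, and since $\overrightarrow \tau \in \uG$ we have $\overrightarrow T^n \overline x \in \uX$ for all $n \in \Z$ and $x \in X$; the integrals defining $\umu$ are therefore Cesàro averages of pushforwards of $\mu$ under $x \mapsto \overrightarrow T^n \overline x$, each supported on $\uX$. Third, that $\umu$ is invariant under $\overline T$ and $\overrightarrow T$: the former follows because $\overline T$ acts on a cylinder function by replacing each $f_j$ by $Tf_j$, which gives $\int_X T(\prod_j T^{nj}f_j)\,d\mu = \int_X \prod_j T^{nj}f_j\,d\mu$ by $T$-invariance of $\mu$; the latter follows because $\overrightarrow T$ acts on a cylinder function by replacing each $f_j$ by $T^j f_j$, which corresponds to the index shift $n \mapsto n+1$ in the Cesàro average and is harmless. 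Shift-invariance was already recorded in Definition~\ref{D:under}.

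Once these properties are in hand, Lemma~\ref{L:minimal} combined with unique ergodicity of minimal nilsystems (in the form extended to $\Z^2$-actions by commuting nilrotations) forces $\umu = \mu_{\uX}$, which is precisely the asserted identity once \eqref{eq:def-mubar} is rewritten with $\mu_{\uX}$ in place of $\umu$. The main conceptual obstacle is the invocation of unique ergodicity for the $\Z^2$-action generated by the two nilrotations $\overline T$ and $\overrightarrow T$ on $\uX$; this is classical but requires careful citation in the multi-parameter setting, and it is precisely the place where the assumption that $G$ be spanned by $G^0$ and $\tau$ (made in the paragraph preceding Lemma~\ref{L:minimal}) enters through the connectedness and structural properties of $\uG$. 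All other steps reduce to routine bookkeeping from the definitions.
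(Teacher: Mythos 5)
Your argument is correct. Note, however, that the paper does not prove this proposition at all: it is quoted verbatim from \cite[Theorem~5.4]{BHK} (with a pointer to \cite{Zi05}), so there is no in-paper proof to compare against; what you have written is essentially the standard argument behind that cited result. The three verifications (mass one, support in $\uX$ via $\overline g,\overrightarrow\tau\in\uG$ and closedness of $\uX$, invariance under $\overline T$ by $T$-invariance of $\mu$ and under $\overrightarrow T$ by the shift $n\mapsto n+1$ in the Ces\`aro average) are all sound, and the conclusion then follows from Lemma~\ref{L:minimal} together with unique ergodicity of a topologically transitive action by commuting nilrotations on the nilmanifold $\uX=\uG/\uGamma$; the correct reference for that multiparameter statement is \cite[Theorems~2.19 and 2.21]{leibman1}, which the paper already invokes for exactly this purpose in Lemma~\ref{lem:joining-infinite}. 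Two small points to tidy up. First, the unique ergodicity argument identifies $\umu$ with $\mu_{\uX}$ as Borel measures, i.e.\ it gives the displayed identity for \emph{continuous} $f_j$; since the proposition asserts it for $f_j\in L^\infty(\mu)$, you should add the routine extension step: the continuous case shows each coordinate marginal of $\mu_{\uX}$ equals $\mu$, and both sides of the identity are then continuous in each $f_j$ with respect to $L^1(\mu)$ (uniformly in $n$ on the right, by measure preservation), so $L^1$-approximation by continuous functions finishes the job. Second, your attribution of the hypothesis $G=\langle G^0,\tau\rangle$ to the unique ergodicity step is slightly off: transitive nilrotation actions are uniquely ergodic without it; that reduction is used to guarantee the structural facts underlying Lemma~\ref{L:minimal} itself. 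Neither point affects the validity of the proof.
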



\subsubsection{Prime steps}
\label{subsec:prime-AP-nil}
Let  $(X,\mu,T)$ be an ergodic nilsystem. It is a known and easy to prove   fact that this system is totally ergodic if and only if $X$ is connected. In general,  let $X_0$ be the connected component of $e_X$ and $\mu_0$ be its Haar measure. Then there exists $d\in \N$ such that the sets $T^lX_0$, $l \in \{0,\ldots, d-1\}$, form a partition of $X$ and we have
\begin{equation}\label{E:mu0}
\mu=\E_{0\leq l\leq d-1}T^l\mu_0.
\end{equation}
Moreover, the system  $(X_0,\mu_0,T^d)$ and the other ergodic components of the system $(X,\mu,T^d)$ are totally ergodic. We call $d$ the \emph{index} of $X_0$.

Let $\uX_0\subset X_0^\Z$ and the measure $\umu_0$ on $\uX_0$ be defined as $\uX$ and $\umu$ are  defined in Definition~\ref{D:under}, with the system $(X_0,\mu_0,T^d)$ in place of $(X,\mu,T)$.  Then  $\uX_0$ and $\umu_0$ are invariant under $\overline T^d$, $\overrightarrow T^d$, and $S$.
Applying Theorem~\ref{T:ergid2} for the  nilsystem $(X,\mu,T)$ which has index $d$,   we get
that for every  $m\in\N$ and
	$f_{-m},\ldots,f_m\in L^\infty(\mu)$ we have
	\begin{equation}\label{E:dprime}
	\E_{p\in\P}\int_X\prod_{j=-m}^m T^{pj}f_j\,d\mu=
	\E_{(k,d)=1}\E_{n\in\N}\int_X\prod_{j=-m}^m T^{(nd+k)j}f_j\,d\mu
	\end{equation}
where the average $\E_{(k,d)=1}$ is taken over those $k\in\{1,\ldots, d-1\}$ such that $(k,d)=1$.
Combining  \eqref{eq:def-mutilde}, \eqref{E:mu0}, and  \eqref{E:dprime},  we get    for every  $m\in\N$ and
$f_{-m},\ldots,f_m\in L^\infty(\mu)$ that
$$
\int_\uX \prod_{j=-m}^m f_j(x_j)\,d\wt\mu(\ux)=\E_{0\leq l \leq d-1}	\E_{(k,d)=1}\E_{n\in\N}\int_X\prod_{j=-m}^m T^{(nd+k)j+l}f_j\,d\mu_0.
$$
Moreover, applying \eqref{eq:def-mubar} for the system $(X_0,\mu_0,T^d)$ we get
$$
\int_\uX \prod_{j=-m}^m f_j(x_j)\,d\umu_0(\ux)=\E_{n\in\N}\int_X\prod_{j=-m}^m T^{ndj}f_j\,d\mu_0.
$$
Combining the last two identities we deduce that
\begin{equation}
	\label{E:tildemu}
	\wt\mu=
\E_{0\leq l\leq d-1}	\E_{(k,d)=1}\overline T^l\overrightarrow T^k\umu_0.
\end{equation}
Since the support of $\umu_0$ is $\uX_0$, it follows that the measure $\wt\mu$ is  supported on the set
$$
\wt X:=\bigcup_{l=0}^{d-1}\, \bigcup_{k\colon \! (k,d)=1}
 \overline T^l\overrightarrow T^k\uX_0.
$$
The precise form of $\wt X$ is not important,  the crucial point is that
$\wt X\subset \uX$. To see this, note that Lemma~\ref{L:minimal} implies that
the set $\uX$ is $\overline T$ and
$\overrightarrow T$ invariant and
$$
\uX_0=
\overline{\bigl\{ \overrightarrow T^{dn}\overline T^{dm}e_{\uX_0}\colon m,n\in\Z\bigr\}}\subset \uX.
$$
\subsubsection{Proof of Proposition~\ref{P:nil}}\label{SS:laststep}
	Let  
	$\wt\mu=\int \wt\mu_\omega\, dP(\omega)$ be the ergodic decomposition of the measure
	 $\wt\mu$ with respect to the transformation $S$ acting on $X^\Z$. Since as established above
	 $\wt\mu$ is supported on the $S$-invariant set $\uX$, almost every ergodic component
	 $\wt\mu_\omega$ admits a generic point in $\uX$.  For these $\omega$, we  have that
	 $\wt\mu_\omega$  is supported on  a closed $S$-orbit  in $\uX$ which we denote by
	 ${\wt X}_\omega$.
	By Proposition~\ref{P:uX} in the Appendix, the system
	 $({\wt X}_\omega,S)$ is topologically isomorphic to a  uniquely ergodic nilsystem. Thus,  
	 $\wt\mu_\omega$ is the unique invariant measure for the action of $S$ on
	  ${\wt X}_\omega$  and the system
	  	$({\wt X}_\omega,\wt\mu_\omega,S)$  is (measure theoretically) isomorphic to
	  	an ergodic nilsystem.

	  	A similar argument applies to the system $(\uX,\umu,S)$.\qed
	

\subsection{The case of an infinite-step nilsystem}\label{SS:infstep}
Our next goal is to treat the case where $(X,\mu,T)$ is an ergodic  infinite-step nilsystem and prove the following intermediate result:
\begin{proposition}\label{P:infnil}
	If $(X,\mu,T)$ is an ergodic infinite-step nilsystem, then the ergodic components of the systems $(X^\Z,\umu,S)$ and $(X^\Z,\wt\mu,S)$  are  isomorphic to  infinite-step nilsystems.
\end{proposition}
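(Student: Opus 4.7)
The strategy is to reduce to Proposition~\ref{P:nil} by realizing both $(X^\Z,\umu,S)$ and $(X^\Z,\wt\mu,S)$ as inverse limits of the analogous systems built over the finite-step nilfactors of $X$. Since $(X,\mu,T)$ is an ergodic infinite-step nilsystem, by the definition recalled in Appendix~\ref{subsec:infinite-step} we may write it as an inverse limit $(X,\mu,T)=\varprojlim (X_k,\mu_k,T_k)$ of ergodic finite-step nilsystems, with factor maps $\pi_k\colon X\to X_k$. Set $\Pi_k:=\pi_k^{\Z}\colon X^\Z\to X_k^\Z$, which intertwines the shift on the two spaces. For each $k$ let $\umu_k$ and $\wt\mu_k$ denote the measures on $X_k^\Z$ associated with $(X_k,\mu_k,T_k)$ by Definitions~\ref{D:under} and~\ref{D:tilde}.

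First I would verify the compatibility $(\Pi_k)_*\umu=\umu_k$ and $(\Pi_k)_*\wt\mu=\wt\mu_k$. For cylinder integrands of the form $\prod_{j=-m}^m (F_j\circ\pi_k)(x_j)$ with $F_j\in L^\infty(\mu_k)$, this is a direct computation from the defining formulas \eqref{eq:def-mubar} and \eqref{eq:def-mutilde}, using $\pi_k\circ T=T_k\circ\pi_k$; the functions of this form are dense in $L^2((\Pi_k)_*\umu)$ and $L^2((\Pi_k)_*\wt\mu)$, so the measures agree. Since $X^\Z=\varprojlim X_k^\Z$ topologically, the $\sigma$-algebras $\Pi_k^{-1}(\CB_{X_k^\Z})$ increase to the Borel $\sigma$-algebra of $X^\Z$, so $(X^\Z,\umu,S)$ is the inverse limit (in the measure-theoretic sense) of $(X_k^\Z,\umu_k,S)$, and likewise for $\wt\mu$.

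Next I would decompose $\umu=\int \umu_\omega\,dP(\omega)$ into ergodic components. Pushing forward by $\Pi_k$ yields $\umu_k=\int (\Pi_k)_*\umu_\omega\,dP(\omega)$, and each $(\Pi_k)_*\umu_\omega$ is ergodic as the image of an ergodic measure under a factor map. By uniqueness of the ergodic decomposition, for $P$-almost every $\omega$ the measure $(\Pi_k)_*\umu_\omega$ is an ergodic component of $\umu_k$, hence by Proposition~\ref{P:nil} is isomorphic to a (finite-step) nilsystem. Taking a countable intersection over $k$, we obtain a $P$-conull set of $\omega$'s for which $(\Pi_k)_*\umu_\omega$ is a nilsystem for \emph{every} $k$ simultaneously. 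Since $\umu_\omega$ is the inverse limit of these projections (by the same $\sigma$-algebra argument as above, applied to $\umu_\omega$), it is isomorphic to an inverse limit of ergodic finite-step nilsystems, i.e.~to an infinite-step nilsystem. The same argument applied verbatim to $\wt\mu$ handles the prime-step case.

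The main subtlety I expect is justifying that the inverse limit structure descends to almost every ergodic component: one must check both that each $(\Pi_k)_*\umu_\omega$ is (almost surely) an ergodic component of $\umu_k$ rather than a non-trivial integral of such components, and that the projections $\Pi_k$ generate the full $\sigma$-algebra of $X^\Z$ modulo $\umu_\omega$-null sets for almost every $\omega$. Both are handled by the uniqueness of the ergodic decomposition and by the topological inverse limit $X^\Z=\varprojlim X_k^\Z$, but care is needed to avoid choosing a $P$-null exceptional set that depends on $k$ uncontrollably, which is why a countable intersection over $k$ is the right move.
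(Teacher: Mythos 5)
Your argument is correct, but it identifies the ergodic components by a genuinely different mechanism than the paper. The paper also begins by showing that the images of $\umu$ and $\wt\mu$ under $\pi_j^\Z$ are $\umu_j$ and $\wt\mu_j$ and that $(X^\Z,\wt\mu,S)=\varprojlim(X_j^\Z,\wt\mu_j,S)$ (this is~\eqref{eq:tildemu-lim-proj}), but from there it takes a topological route: it constructs the set $\uX\subset X^\Z$ of arithmetic progressions as the inverse limit of the finite-step sets $\uX_j$, shows via~\eqref{eq:uX} that $\wt\mu$ is supported inside $\uX$, and then invokes Proposition~\ref{P:uXinfinite} (every closed $S$-orbit in $\uX$ is a uniquely ergodic infinite-step nilsystem) so that almost every ergodic component, being carried by such an orbit, is forced to be its unique invariant measure. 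You instead stay entirely measure-theoretic: you push the ergodic decomposition of $\umu$ (resp.\ $\wt\mu$) through the projections $\Pi_k$, use ergodicity of factors plus the uniqueness of the ergodic decomposition to see that $(\Pi_k)_*\umu_\omega$ is almost surely one of the ergodic components covered by Proposition~\ref{P:nil}, intersect over $k$, and realize $\umu_\omega$ as the inverse limit of its projections (the $\sigma$-algebras $\Pi_k^{-1}(\CB_{X_k^\Z})$ generate the Borel $\sigma$-algebra of $X^\Z$ outright, so this holds for every measure). Your version uses Proposition~\ref{P:nil} purely as a black box and dispenses with the infinite-step orbit-closure machinery of Proposition~\ref{P:uXinfinite}; the paper's version yields as a byproduct the topological statements (support contained in $\uX$, unique ergodicity of the relevant orbit closures), which are of independent interest but are not needed for the conclusion of this proposition. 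The subtleties you flag --- that the exceptional null set must not depend on $k$, and that each pushforward is genuinely an ergodic component rather than a mixture --- are exactly the right ones, and your resolutions (countable intersection; uniqueness of the ergodic decomposition together with the fact that a factor of an ergodic measure is ergodic) are sound.
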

The proof is given in Section~\ref{subsec:proof48}. We start with  some preliminaries.

Our setup is as follows (see Appendix~\ref{A:A} for definitions and properties of inverse limits):
We have $(X,\mu,T)=\varprojlim(X_j,\mu_j,T)$ where for $j\in\N$ the system  $(X_j,\mu_j,T)$ is an ergodic nilsystem with base point $e_{X_j}$. For $j\in\N$, the factor maps are written $\pi_{j,j+1}\colon X_{j+1}\to X_j$ and $\pi_j\colon X\to X_j$ and, as explained in Appendix~\ref{subsec:infinite-step}, $\pi_{j,j+1}$ and $\pi_j$ are also topological factor maps. Thus,  we  also have $(X,T)=\varprojlim(X_j,T)$ in the topological sense (see Appendix~\ref{subsec:infinite-step}).

 The sequence $(X_j^\Z, \overline T,\overrightarrow T)$, $j\in\N$, with factor maps  $\pi_{j,j+1}^\Z\colon X_{j+1}^\Z\to X_j^\Z$, $j\in\N$,  is an inverse system. By the characterization of inverse limits stated in~\eqref{it:inv-lim-8c} and~\eqref{it:inv-lim-10} of Appendix~\ref{subsec:inv-lim-top}, we get that $(X^\Z, \overline T,\overrightarrow T)$, endowed with the factor maps $\pi_j^\Z\colon X^\Z\to X_j^\Z$, $j\in \N$,  is the inverse limit of the sequence $(X_j^\Z, \overline T,\overrightarrow T)$, $j\in\N$.

\subsubsection{Integer steps}


Let $\uX$ be the  orbit closure in  $X^\Z$ of  $e_{\uX}:=(\dots,e_X,e_X,e_X,\dots)$ under the transformations $\overline T$ and $\overrightarrow T$.
 Since $\pi_j^\Z(e_{\uX})=e_{\uX_j}$ for every $j\in \N$, it follows from Lemma~\ref{L:minimal}   and Part~\eqref{it:inv-lim-12} of Lemma~\ref{lem:inv-lim-top} in the Appendix that $\pi_j^\Z(\uX)=\uX_j$, $j\in\N$, and
	$(\uX,\overline T,\overrightarrow T)$ is the inverse limit of the systems $(\uX_j,\overline T,\overrightarrow T)$, $j\in\N$.
	In particular, we have
\begin{equation}
	\label{eq:uX}
	\uX=\bigl\{\ux\in X^\Z\colon\pi_j^\Z(\ux)\in\uX_j\ \text{ for every }j\in \N\bigr\}.
\end{equation}

Note that for $j\in\N$ the maps  $\pi_{j,j+1}^\Z\colon \uX_{j+1}\to\uX_j$ and $\pi_j^\Z\colon \uX\to \uX_j$ commute with the shift transformation $S$, and thus are factor maps from $(\uX_{j+1},S)$ and $(\uX,S)$ to $(\uX_j,S)$, respectively. It follows from the
characterization of topological inverse limits stated in~\eqref{it:inv-lim-8c} and~\eqref{it:inv-lim-10} of   Appendix~\ref{subsec:inv-lim-top} that
$$
(\uX,S)=\varprojlim(\uX_j,S)
$$
with factor maps $\pi_{j,j+1}^\Z\colon \uX_{j+1}\to\uX_j$ and $\pi_j^\Z\colon\uX\to\uX_j$, $j\in \N$.
By Proposition~\ref{P:uX} in the Appendix, for every $j\in \N$ we have that $(\uX_j,S)$ is topologically isomorphic to a nilsystem, hence the action of $S$ on each closed orbit  under $S$ in $\uX_j$ induces a uniquely ergodic nilsystem. From Lemma~\ref{lem:inv-lim-top} in the Appendix we deduce the following:

\begin{proposition}
\label{P:uXinfinite}
Let  $\uX$ be as above and for  $\ux\in\uX$ let $\uX':= \overline{\{S^n\ux\colon n\in \Z\}}$ be the closed orbit of $\ux$ under $S$. Then the system $(\uX',S)$ is topologically isomorphic to  a uniquely ergodic infinite-step nilsystem.
\end{proposition}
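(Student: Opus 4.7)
The plan is to lift the nilsystem structure already established for each finite level $(\uX_j, S)$ to the inverse limit, using $\ux$ to single out a closed orbit at each level and then assemble them.

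First, I would set up the relevant orbit closures at each finite level. For each $j \in \N$, define
\[
\uX_j' := \overline{\{S^n \pi_j^\Z(\ux) : n \in \Z\}} \subseteq \uX_j.
\]
Since $\pi_j^\Z$ is continuous and commutes with $S$, the image $\pi_j^\Z(\uX')$ is a closed $S$-invariant subset of $\uX_j$ containing $\pi_j^\Z(\ux)$; by minimality of closed orbits and compactness of $\uX'$ one checks $\pi_j^\Z(\uX') = \uX_j'$. By Proposition~\ref{P:uX} applied inside $\uX_j$, the system $(\uX_j', S)$ is topologically isomorphic to a uniquely ergodic nilsystem, hence minimal. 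Moreover, the maps $\pi_{j,j+1}^\Z$ restrict to continuous $S$-equivariant surjections $\uX_{j+1}' \to \uX_j'$ (surjectivity follows because the image is closed, $S$-invariant, and contains $\pi_j^\Z(\ux)$, hence equals $\uX_j'$ by minimality), so that the sequence $(\uX_j', S)$ forms an inverse system of nilsystems.

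Next, I would identify $(\uX', S)$ with $\varprojlim (\uX_j', S)$. Let $Y := \varprojlim (\uX_j', S)$, which by the characterization of topological inverse limits in Lemma~\ref{lem:inv-lim-top} embeds naturally as the closed $S$-invariant subset
\[
Y = \bigl\{\uy \in \uX \colon \pi_j^\Z(\uy) \in \uX_j' \text{ for every } j\in\N\bigr\}
\]
of $\uX$. Since $\pi_j^\Z(\uX') = \uX_j'$ for every $j$, we have $\uX' \subseteq Y$. For the reverse inclusion, observe that an inverse limit of minimal systems with surjective connecting maps is itself minimal: any basic open set $(\pi_j^\Z)^{-1}(U)$ with $U \subseteq \uX_j'$ nonempty open projects to a nonempty open set in $\uX_j'$, so by minimality of $(\uX_j', S)$ every $S$-orbit in $Y$ enters it; thus every $S$-orbit in $Y$ is dense. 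Since $\uX'$ is a nonempty closed $S$-invariant subset of the minimal system $(Y, S)$, we conclude $\uX' = Y$. Therefore $(\uX', S)$ is the topological inverse limit of the ergodic nilsystems $(\uX_j', S)$, and hence by definition is topologically isomorphic to an infinite-step nilsystem.

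It remains to verify unique ergodicity. Let $\nu$ be an $S$-invariant Borel probability measure on $\uX'$. For each $j$, the pushforward $\nu \circ (\pi_j^\Z|_{\uX'})^{-1}$ is an $S$-invariant probability measure on $\uX_j'$; by unique ergodicity of the nilsystem $(\uX_j', S)$ it must equal the Haar measure $\mu_{\uX_j'}$. Cylinder sets of the form $(\pi_j^\Z|_{\uX'})^{-1}(A)$ with $A$ Borel in $\uX_j'$ and $j \in \N$ generate the Borel $\sigma$-algebra of $\uX'$, so $\nu$ is determined by these pushforwards. Hence $(\uX', S)$ is uniquely ergodic, completing the proof. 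The main subtlety is the reverse inclusion $Y \subseteq \uX'$ in the second step, for which one needs the minimality of inverse limits of minimal systems with surjective connecting maps; this is precisely what Lemma~\ref{lem:inv-lim-top} is tailored to supply.
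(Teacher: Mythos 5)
Your proof is correct and follows essentially the same route as the paper: the paper deduces the statement directly from Proposition~\ref{P:uX} together with Lemma~\ref{lem:inv-lim-top} (whose Part~\eqref{it:inv-lim-12} gives exactly your identification $\uX'=\varprojlim(\uX_j',S)$, and whose Parts~\eqref{it:inv-lim-13} and~\eqref{it:inv-lim-14} give the minimality and unique ergodicity arguments you spell out by hand). The only difference is that you re-derive the content of that lemma rather than citing it, which is harmless.
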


\subsubsection{Prime steps}

From  Definition~\ref{D:tilde} it follows that for every $j\in \N$ the image of the measure $\wt\mu$ under the maps $\pi_j^\Z$ is equal to $\wt\mu_j$ and that the image of $\wt\mu_{j+1}$ under $\pi_{j,j+1}^\Z$ is equal to $\wt\mu_j$. These maps commute with $S$, hence
it follows from the characterization of inverse limits~\eqref{it:inv-lim-1} and~\eqref{it:inv-lim-7} given in Appendix~\ref{subsec:inv-lim-ergo} that
\begin{equation}
	\label{eq:tildemu-lim-proj}
	(X^\Z,\wt\mu,S)=\varprojlim(X_j^\Z,\wt\mu_j,S).
\end{equation}
Furthermore, we saw in Section~\ref{subsec:prime-AP-nil} that for every $j\in \N$ the measure $\wt\mu_j$ is supported inside $\uX_j$ and thus
$$
\wt\mu\big(\bigl\{\ux\in X^\Z\colon\pi_j^\Z(\ux)\notin\uX_j\bigr\}\big)=0.
$$
It follows from this and \eqref{eq:uX} that  $\wt\mu$ is supported inside the subset $\uX$ of $X^\Z$.

\subsubsection{Proof of Proposition~\ref{P:infnil}}
\label{subsec:proof48}
In the previous subsection we established that the measure  $\wt\mu$  is supported inside the $S$-invariant set $\uX$. Using this and Proposition~\ref{P:uXinfinite} we deduce  that   almost every ergodic component of the system 
 $(X^\Z,\wt\mu,S)$  is  isomorphic to an infinite-step nilsystem;
the argument is identical to the one used in the last step of the  proof of
 Proposition~\ref{P:nil} (see Section~\ref{SS:laststep}).

A similar argument applies to the system $(X^\Z,\umu,S)$.
\qed

\subsection{General ergodic systems}
Our next goal is to prove the following result which comes very close to establishing Theorem~\ref{th:main-mutilde}:
\begin{proposition}\label{P:ergodic}
	If $(X,\mu,T)$ is an ergodic system,  then almost every  ergodic component of the systems $(X^\Z,\umu,S)$ and $(X^\Z,\wt\mu,S)$  is isomorphic to  a direct product of an infinite-step nilsystem and a  Bernoulli system.
\end{proposition}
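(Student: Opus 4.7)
The plan is to reduce to Proposition~\ref{P:infnil} by comparing $(X,\mu,T)$ with its infinite-step nilfactor $Z_\infty$, and then exploiting a relatively independent extension structure that produces a Bernoulli factor. Let $\pi\colon X\to Z_\infty$ denote the factor map and $\mu_{Z_\infty}:=\pi_*\mu$; I treat the case of $\wt\mu$, the argument for $\umu$ being identical with Theorem~\ref{T:HK} used in place of Theorem~\ref{T:FHK}. The first step is to establish the key identity
$$
\int_{X^\Z}\prod_{j=-m}^m f_j(x_j)\,d\wt\mu(\ux)=\int_{X^\Z}\prod_{j=-m}^m \E_\mu(f_j\mid Z_\infty)(x_j)\,d\wt\mu(\ux)
$$
for all $m\in\N$ and $f_{-m},\dots,f_m\in L^\infty(\mu)$. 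Writing $f_j=\E_\mu(f_j\mid Z_\infty)+g_j$ and expanding, any term in the expansion of $\prod f_j$ containing at least one factor $g_j$ integrates against $\wt\mu$ to a quantity of the form $\E_{p\in\P}\int\prod T^{pj}k_j\,d\mu$, which vanishes by Theorem~\ref{T:FHK} since one of the $k_j$'s has zero conditional expectation on $Z_\infty$.

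Next, apply Rokhlin's disintegration theorem to identify $(X,\mu)$ as a Lebesgue space with $(Z_\infty\times F,\mu_{Z_\infty}\otimes\mu_F)$ for some standard probability space $(F,\mu_F)$, in such a way that $\pi$ becomes projection onto the first coordinate. Under this identification the conditional expectation $\E_\mu(f\mid Z_\infty)$ is the function $(z,w)\mapsto\int_F f(z,w')\,d\mu_F(w')$, which depends only on $z$. Combining the identity of the previous paragraph with the observation that $(\pi^\Z)_*\wt\mu=\wt{(\mu_{Z_\infty})}$, we obtain
$$
\int_{X^\Z}\prod_j f_j\,d\wt\mu=\int_{Z_\infty^\Z}\int_{F^\Z}\prod_j f_j(z_j,w_j)\,d\mu_F^\Z(\uw)\,d\wt{(\mu_{Z_\infty})}(\uz).
$$
Under the induced identification $X^\Z\cong Z_\infty^\Z\times F^\Z$ the measure $\wt\mu$ therefore corresponds to $\wt{(\mu_{Z_\infty})}\otimes\mu_F^\Z$, and since the shift $S$ on $X^\Z$ factors as the product of the shifts on $Z_\infty^\Z$ and $F^\Z$, we obtain a dynamical isomorphism
$$
(X^\Z,\wt\mu,S)\cong (Z_\infty^\Z,\wt{(\mu_{Z_\infty})},S)\,\times\,(F^\Z,\mu_F^\Z,S).
$$

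The second factor is a Bernoulli system. By Proposition~\ref{P:infnil} applied to the infinite-step nilsystem $(Z_\infty,\mu_{Z_\infty},T)$, almost every ergodic component of the first factor is isomorphic to an infinite-step nilsystem. Since a Bernoulli system has continuous spectrum (no non-trivial eigenvalues), the product of any ergodic system with a Bernoulli system is ergodic; hence, taking the ergodic decomposition $\wt{(\mu_{Z_\infty})}=\int P_\omega\,d\sigma(\omega)$ we obtain $\wt\mu=\int (P_\omega\otimes\mu_F^\Z)\,d\sigma(\omega)$ as the ergodic decomposition of $\wt\mu$, and each component $P_\omega\otimes\mu_F^\Z$ is the direct product of an infinite-step nilsystem and a Bernoulli system, as required.

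I expect the main technical obstacle to be verifying precisely that the Rokhlin identification lifts cleanly to an identification of $\wt\mu$ with a product measure on $Z_\infty^\Z\times F^\Z$; this is precisely where the conditional-expectation vanishing from Theorem~\ref{T:FHK} is essential. Once this product structure is established, the rest follows by invoking Proposition~\ref{P:infnil} and the standard fact about ergodic decompositions of products with a Bernoulli factor.
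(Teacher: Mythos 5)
Your proposal is correct and follows essentially the same route as the paper: the key identity obtained from the second part of Theorem~\ref{T:FHK}, the Rokhlin identification $(X,\mu)\cong(Z_\infty\times F,\mu_\infty\otimes\mu_F)$ yielding $\wt\mu\cong\wt\mu_\infty\otimes\mu_F^\Z$ (this is the paper's Lemma~\ref{L:mutilde-infty2}), followed by Proposition~\ref{P:infnil} and the weak-mixing argument for the ergodic decomposition of the product. The only detail worth making explicit is that $(Z_\infty,\mu_\infty,T)$ is isomorphic to an ergodic infinite-step nilsystem (Corollary~\ref{C:HK}), which is what licenses the application of Proposition~\ref{P:infnil}.
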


This result is proved in Section~\ref{subsec:proof410}. First  we make some preparatory work.

Let $(X,\mu,T)$ be an ergodic system.
The infinite-step nilfactor of the system is defined in Section~\ref{SS:infnilfac}  and is denoted by  $(Z_\infty,\mu_\infty,T)$; in Corollary~\ref{C:HK} we show that
 it is isomorphic to  an infinite-step nilsystem.  Let  $p_\infty\colon X\to Z_\infty$ be the corresponding factor map and
let the measures $\umu_\infty$ and   $\wt\mu_\infty$ on $Z_\infty^\Z$  be  associated with the system $(Z_\infty,\mu_\infty,T)$ as in Definitions~\ref{D:tilde} and \ref{D:under} respectively. Then  $\umu_\infty$ and $\wt\mu_\infty$ are respectively the images of $\umu$ and $\wt\mu$  under $p_\infty^\Z\colon X^\Z\to Z_\infty^\Z$. Combining the second part of Theorems~\ref{T:HK} and \ref{T:FHK} with the definitions of the measures $\umu$ and $\wt\mu$, we get  for every  $m\in\N$ and $f_{-m},\ldots,f_m\in L^\infty(\mu)$ that
$$
\int_{X^\Z}\prod_{j=-m}^mf_j(x_j)\,d\umu(\ux)=\int_{Z_\infty^\Z}\prod_{j=-m}^m\E_\mu(f_j\mid Z_\infty)(z_j)\,d\umu_\infty(\uz)
$$
and
\begin{equation}\label{E:mutil}
\int_{X^\Z}\prod_{j=-m}^mf_j(x_j)\,d\wt\mu(\ux)=\int_{Z_\infty^\Z}\prod_{j=-m}^m\E_\mu(f_j\mid Z_\infty)(z_j)\,d\wt\mu_\infty(\uz).
\end{equation}
\begin{lemma}
	\label{L:mutilde-infty2}
	Let $(X,\mu,T)$ be an ergodic system and
	$(Z_\infty,\mu_\infty,T)$ be
	its infinite-step nilfactor. Then the system    $(X^\Z,\wt\mu,S)$  is isomorphic to the direct product of the system $(Z_\infty^\Z,\wt\mu_\infty,S)$ and a   Bernoulli system (that can be trivial). A similar statement also holds for the system $(X^\Z,\umu,S)$.
\end{lemma}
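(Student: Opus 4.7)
The approach is to first identify $\wt\mu$ as the relatively independent joining of $\wt\mu_\infty$ with itself along the factor map $p_\infty^\Z\colon X^\Z\to Z_\infty^\Z$, and then to apply a measurable fiberwise ``straightening'' of the extension $p_\infty\colon X\to Z_\infty$ to promote this joining to a direct product with a Bernoulli factor.

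The first step is immediate from identity \eqref{E:mutil}: writing $\mu=\int_{Z_\infty}\mu_z\,d\mu_\infty(z)$ for the disintegration of $\mu$ over $p_\infty$, we have $\E_\mu(f\mid Z_\infty)(z)=\int f\,d\mu_z$, so \eqref{E:mutil} says precisely that the disintegration of $\wt\mu$ over $\wt\mu_\infty$ is the product measure $\prod_{j\in\Z}\mu_{z_j}$ on the fiber $\prod_j p_\infty^{-1}(z_j)$. The second step is to produce a Lebesgue probability space $(W,\rho)$ together with a measure-space isomorphism $\Theta\colon(X,\mu)\to(Z_\infty\times W,\mu_\infty\times\rho)$ whose first coordinate is $p_\infty$; equivalently, a measurable family of measure-preserving isomorphisms $\psi_z\colon(p_\infty^{-1}(z),\mu_z)\to(W,\rho)$. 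The key input here is that the Lebesgue isomorphism type of $(p_\infty^{-1}(z),\mu_z)$ is constant for $\mu_\infty$-almost every $z$: indeed $T\colon X\to X$ restricts to a measure-space bijection $p_\infty^{-1}(z)\to p_\infty^{-1}(Tz)$ pushing $\mu_z$ to $\mu_{Tz}$, so the decreasing sequence of atom masses of $\mu_z$ is invariant under the factor action of $T$ on $(Z_\infty,\mu_\infty)$, which is ergodic, and hence constant almost surely. Classical Rohlin theory of Lebesgue extensions then delivers the measurable family $\{\psi_z\}$.

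For the third step, let $\Theta^\Z\colon X^\Z\to Z_\infty^\Z\times W^\Z$ be the coordinatewise application of $\Theta$. This map intertwines the shift $S$ on $X^\Z$ with the shift $S$ on $(Z_\infty\times W)^\Z\cong Z_\infty^\Z\times W^\Z$, and a direct computation using the disintegration from the first step shows $(\Theta^\Z)_*\wt\mu=\wt\mu_\infty\times\rho^\Z$: on a product function $\prod_j g_j(z_j)h_j(w_j)$ both sides reduce to $\int\prod_j g_j(z_j)\,d\wt\mu_\infty(\uz)\cdot\prod_j\int h_j\,d\rho$. Thus $(X^\Z,\wt\mu,S)$ is isomorphic to $(Z_\infty^\Z,\wt\mu_\infty,S)\times(W^\Z,\rho^\Z,S)$, whose second factor is a Bernoulli system that degenerates to a trivial one exactly when $p_\infty$ is already an isomorphism and $\rho$ is a Dirac mass. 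The proof for $\umu$ is identical, using the analogous identity stated just before \eqref{E:mutil}. I expect the main obstacle to be the second step: while the constancy of the fiber Lebesgue type follows cleanly from ergodicity, passing from pointwise constancy to a genuinely measurable choice of fiberwise isomorphisms requires invoking the classical measurable selection statement for Rohlin extensions rather than a short direct argument.
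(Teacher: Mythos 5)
Your proposal is correct and follows essentially the same route as the paper: the paper likewise invokes Rohlin's theorem (for the ergodic extension $p_\infty\colon X\to Z_\infty$) to write $(X,\mu)\cong(Z_\infty,\mu_\infty)\times(U,\rho)$ with $p_\infty$ the first-coordinate projection and conditional expectation given by integrating out the second coordinate, and then uses identity \eqref{E:mutil} on product functions to conclude $\wt\mu=\wt\mu_\infty\times\rho^\Z$. The extra detail you supply on why the fiber Lebesgue type is a.s.\ constant is exactly the content of the cited Rohlin structure theorem, so nothing is missing.
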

\begin{proof}[Proof of Lemma~\ref{L:mutilde-infty2}]
We give the argument for the system	$(X^\Z,\wt\mu,S)$; an analogous argument works for the system $(X^\Z,\umu,S)$.
	
 	Since the system $(X,\mu,T)$ is ergodic (and it is our working assumption that it is Lebesgue), it is a classical result of Rohlin (see for example   \cite[Theorem~3.18]{G03}) that there exists a (Lebesgue) probability space $(U,\rho)$ such that the (Lebesgue) probability  spaces $(X,\mu)$ and $(Z_\infty,\mu_\infty)\times(U,\rho)$ are isomorphic,  the factor map $p_\infty\colon X\to Z_\infty$ corresponds to the first  coordinate projection $Z_\infty\times U\to Z_\infty$, and  the conditional expectation $f\mapsto\E(f\mid Z_\infty)$ corresponds to the map $f\mapsto\int f(\cdot,u)\,d\rho(u)$ from $L^1(\mu_\infty\times\rho)$ to $L^1(\mu_\infty)$.  We identify  
	 $x$ with $(z,u)$ and  $\ux$ with $(\uz,\underline{u})$;  then identity \eqref{E:mutil} becomes
	\begin{multline*}
	\int_{X^\Z}\prod_{j=-m}^mf_j(x_j)\,d\wt\mu(\ux)=
	\int_{Z_\infty^\Z}\prod_{j=-m}^m\Bigl(\int_U f_j(z_j,u_j)\,d\rho(u_j)\Bigr)d\wt\mu_\infty(\uz)\\
		=\int_{Z_\infty^\Z\times U^{\Z}}\prod_{j=-m}^m f_j(z_j,u_j)\,d(\wt\mu_\infty\times \rho^{\Z})(\uz,\underline{u})
	\end{multline*}
	where $\rho^\Z$ is the measure $\dots\times\rho\times\rho\times\rho\times\dots$ on $U^\Z$.
	
	Since the algebra generated by functions  of the form $\ux\mapsto f(x_j)$, $j\in \Z$,  $f\in C(X)$, is dense in $C(X^\Z)$  with the uniform topology, we deduce that  	
	$\wt\mu=\wt\mu_\infty\times \rho^\Z$.
	Let $S_1$, $S_2$ denote the shift transformations on the spaces $Z_\infty^\Z$ and  $U^Z$ respectively.
	Then the system  $(X^\Z,\wt\mu,S)$ is  the direct product of the system $(Z_\infty^\Z,\wt\mu_\infty,S_1)$  and the Bernoulli system  $(U^\Z,\rho^\Z,S_2)$. This  completes the proof.
\end{proof}
\subsubsection{Proof of Proposition~\ref{P:ergodic}}
\label{subsec:proof410}
 We give the argument for the system	$(X^\Z,\wt\mu,S)$; an analogous argument works for the system $(X^\Z,\umu,S)$.

  By Lemma~\ref{L:mutilde-infty2}, the  system    $(X^\Z,\wt\mu,S)$  is isomorphic to the direct product of the system $(Z_\infty^\Z,\wt\mu_\infty,S)$ and a Bernoulli system. Since Bernoulli systems are weakly mixing, almost every  ergodic component
 of $(X^\Z,\wt\mu,S)$  is a direct product of an ergodic component of the system  $(Z_\infty^\Z,\wt\mu_\infty,S)$ and the  Bernoulli system given by Lemma~\ref{L:mutilde-infty2} (we used the uniqueness property of the ergodic decomposition here). As explained in Section~\ref{SS:infnilfac} in the Appendix, the system $(Z_\infty,\mu_\infty,T)$ is isomorphic to  an ergodic infinite-step nilsystem, hence  Proposition~\ref{P:infnil} applies and gives that the ergodic components of the system  $(Z_\infty^\Z,\wt\mu_\infty,S)$
 are isomorphic to infinite-step nilsystems. This completes the proof of Proposition~\ref{P:ergodic}. \qed

\subsection{General systems - Proof of Theorem~\ref{th:main-mutilde}}
Let $(X,\mu,T)$ be a system and let  $\mu=\int\mu_\omega\,dP(\omega)$ be the ergodic decomposition of $\mu$ under $T$. It follows from Definition~\ref{D:tilde} that
$$
\wt\mu=\int\wt\mu_\omega\,dP(\omega).
$$
As a consequence, by the uniqueness property of the ergodic decomposition, almost every  ergodic component of the system $(X^\Z,\wt\mu,S)$ is an ergodic component of the system
$(X^\Z,\wt\mu_\omega,S)$ for some $\omega\in \Omega$. We can therefore restrict to the case where
the system  $(X,\mu,T)$ is ergodic. In this case the result follows from Proposition~\ref{P:ergodic}. This completes the proof of Theorem~\ref{th:main-mutilde}.

 A similar argument applies for the system $(X^\Z,\umu,S)$. \qed


\section{Strong stationarity and systems of arithmetic progressions} \label{S:sst}
The goal of this section is to  introduce the notion of  strong stationarity and variants of it that turn out to be
linked to structural properties of systems of arithmetic progressions. We  then use this connection in order
to prove that systems of arithmetic progressions have no irrational spectrum, thus establishing  Theorem~\ref{th:no-irr-eigen}, which in turn gives the first part of Theorem~\ref{th:main-structure} (via Proposition~\ref{prop:mu-fator-tilde-mu}).

\subsection{Strong stationarity}\label{SS:sst}
Throughout this section we continue to denote by $X$ a compact metric space and we equip the sequence space $X^\Z$ with the product topology and the Borel $\sigma$-algebra. With $S$ we denote the shift transformation on $X^\Z$.
With $\CB_0$ we denote all Borel subsets of $X^\Z$ that depend only on the $0$-th coordinate of elements of $X^\Z$. Equivalently, $\CB_0$ consists of sets of the form
$\{x\in X^\Z\colon x(0)\in A\}$ where $A$ is a Borel subset of $X$. We also denote by $\CF_0$ the  algebra of $\CB_0$-measurable functions.

For $r\in\N$ we define the map $\tau_r\colon X^\Z\to X^\Z$ by
$$
(\tau_r(\ux))(j):=x(rj)\ \text{ for }\ux\in X^\Z\text{ and }j\in\Z.
$$
We remark that the maps $S$ and $\tau_r$ satisfy the following    commutation relation
\begin{equation}\label{E:comm}
	S\circ\tau_r=\tau_r\circ S^r.
\end{equation}
The notion of strong stationarity was introduced in a rather abstract setting by Furstenberg and  Katznelson in \cite{FK91}, we use here a variant adapted to our purposes:
\begin{definition}\label{D:sst}
	If $X$ is as above, we say that an $S$-invariant Borel measure $\nu$ on $X^\Z$ is
	\emph{strongly stationary} if it is invariant under $\tau_r$ for every $r\in\N$, and 	\emph{partially strongly stationary} if for some  $d\in \N$ it is invariant  under $\tau_r$ for every $r\in d\N+1$.	 Respectively, we say that the system $(X^\Z,\nu,S)$ is \emph{strongly stationary} and \emph{partially strongly stationary}.
\end{definition}
\begin{remark}
	Equivalently, we have  strong stationarity if and only if
	$$
	\int \prod_{j=-m}^m S^{j}f_j\, d\nu =\int \prod_{j=-m}^m S^{rj}f_j\, d\nu
	$$
	for all $m, r\in \N$ and  
	$f_{-m},\ldots, f_m\in  C(X^\Z)\cap \CF_0$. A similar equivalent condition holds for partial strong stationarity.
\end{remark}
In the next subsection we explain why the notion of partial strong stationarity is  linked to structural properties of systems of arithmetic progressions.

\subsection{Systems of arithmetic progressions and partial strong stationarity}
If a system  
is totally ergodic, then it can be shown that the associated system
of arithmetic progressions with prime and integer steps
is strongly stationary. The notion of total ergodicity turns
out to be  too restrictive, so  we introduce a somewhat  weaker notion that is better adapted to our purposes.
\begin{definition}
	We say that a system $(X,\mu,T)$ has {\em finite rational spectrum} if the set of eigenvalues
	of the system of the form $\e(t)$ with $t\in \Q$ is finite.
\end{definition}
\begin{remark}
	Equivalently, $(X,\mu,T)$  has finite rational spectrum
	if there exists $d\in \N$ such that
	the ergodic components of the system $(X,\mu,T^d)$ are totally ergodic.
\end{remark}
The link between strong stationarity and systems of arithmetic progressions
is given by the next result which is proved in Section~\ref{SS:pfsst} and forms an essential part of
the proof of Theorem~\ref{th:no-irr-eigen}:
\begin{proposition}\label{P:partialsst}
	Let $(X,\mu,T)$ be a system with finite rational spectrum.
	Then the systems  $(X^\Z,\wt\mu,S)$ and $(X^\Z,\umu,S)$ 
	are partially strongly stationary.
\end{proposition}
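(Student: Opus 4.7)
The plan is to verify invariance of $\wt\mu$ and $\umu$ under $\tau_r$ by testing against the dense family of product functions $F(\ux)=\prod_{j=-m}^m f_j(x_j)$ with $f_j\in L^\infty(\mu)$. Since $F\circ\tau_r(\ux)=\prod_{j=-m}^m f_j(x_{rj})$, the definitions \eqref{eq:def-mutilde} and \eqref{eq:def-mubar} (applied with the functions reindexed by their positions $rj$) reduce the $\tau_r$-invariance of $\wt\mu$ and $\umu$, respectively, to the $T$-side identities
\begin{equation*}
\E_{p\in\P}\int_X\prod_{j=-m}^m T^{prj}f_j\,d\mu=\E_{p\in\P}\int_X\prod_{j=-m}^m T^{pj}f_j\,d\mu
\end{equation*}
and the analogous identity with $\E_{n\in\N}$ in place of $\E_{p\in\P}$, to be established for every $r\in d\N+1$.

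Let $d\in\N$ be such that the ergodic components of $(X,\mu,T^d)$ are totally ergodic, and fix $r\in d\N+1$. Then $\gcd(r,d)=1$ and each $T^d$-ergodic component of $\mu$ is also totally ergodic under $T^{rd}=(T^d)^r$, so the $T^{rd}$-ergodic decomposition of $\mu$ coincides with its $T^d$-ergodic one. Hence Theorem~\ref{T:ergid2} applies to both $T$ and $T^r$ with the same $d$, yielding
\begin{align*}
\E_{p\in\P}\prod_{j=-m}^m T^{pj}f_j&=\E_{(k,d)=1}\E_{n\in\N}\prod_{j=-m}^m T^{(nd+k)j}f_j,\\
\E_{p\in\P}\prod_{j=-m}^m T^{prj}f_j&=\E_{(k,d)=1}\E_{n\in\N}\prod_{j=-m}^m T^{(nd+k)rj}f_j.
\end{align*}
A parallel splitting $n=n'd+k$ handles the $\umu$ identities. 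Thus the proposition reduces to showing, in $L^2(\mu)$, that for every $k\in\{0,\dots,d-1\}$,
\begin{equation*}
\E_{n\in\N}\prod_{j=-m}^m T^{(nd+k)rj}f_j=\E_{n\in\N}\prod_{j=-m}^m T^{(nd+k)j}f_j.
\end{equation*}

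To prove this, I would write $kr=k+\ell d$ (possible since $r\equiv 1\pmod d$), factor $T^{(nd+k)rj}=(T^d)^{(nr+\ell)j}T^{kj}$, set $g_j:=T^{kj}f_j$, and absorb the constant shift $\ell j$ by shift-invariance of the Ces\`aro average in $n$, thereby reducing the claim to
\begin{equation*}
\E_{n\in\N}\prod_{j=-m}^m(T^d)^{nrj}g_j=\E_{n\in\N}\prod_{j=-m}^m(T^d)^{nj}g_j.
\end{equation*}
Decomposing $\mu$ into $T^d$-ergodic components, each totally ergodic by hypothesis, Theorem~\ref{T:HK} together with Corollary~\ref{C:HK} reduces each side to an integral on the infinite-step nilfactor. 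On a totally ergodic ergodic nilsystem Proposition~\ref{P:Z} identifies both $\E_n\int\prod_j(T^d)^{nj}g_j\,d\mu_\omega$ and $\E_n\int\prod_j(T^d)^{nrj}g_j\,d\mu_\omega$ with the single integral $\int_{\uX}\prod_j g_j(x_j)\,d\mu_{\uX}$, since the Hall-Petresco nilmanifold $\uX$ and its Haar measure depend only on the ambient nilpotent Lie group and not on the particular translation used. The infinite-step case then follows from its finite-step counterparts via the inverse-limit description in Section~\ref{SS:infstep}. This last identification is the heart of the argument, and coincides in substance with the device used through \cite[Theorem~6.4]{Fr04} in the proof of Theorem~\ref{T:ergid2}; everything else is bookkeeping based on $r\equiv 1\pmod d$.
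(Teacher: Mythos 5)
Your argument is correct and its skeleton coincides with the paper's: test $\tau_r$-invariance on product functions, reduce to the dilation identities $\E_{p\in\P}\int\prod_j T^{prj}f_j\,d\mu=\E_{p\in\P}\int\prod_j T^{pj}f_j\,d\mu$ (and the Ces\`aro analogue) for $r\in d\N+1$, and handle the prime average via Theorem~\ref{T:ergid2} applied to both $T$ and $T^r$. The paper packages this last step as Corollary~\ref{C:ergid}, whose proof replaces your term-by-term identity in $k$ with the observation that $k\mapsto kr$ permutes the residues coprime to $d$; your term-by-term version is the stronger statement but is indeed valid precisely because $r\equiv 1\pmod d$, via the rewriting $kr=k+\ell d$ and the absorption of $\ell$ that you describe (note the correct order: first move $(T^d)^{\ell j}$ onto the functions, then apply the dilation identity, then use shift-invariance in $n$). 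Where you genuinely diverge is at the final step: the paper simply cites Theorem~\ref{T:ergid1} (i.e.\ \cite[Theorem~6.4]{Fr04}) for the dilation invariance $\E_{n}\prod_j(T^d)^{nrj}g_j=\E_{n}\prod_j(T^d)^{nj}g_j$, whereas you re-derive its integrated form from Proposition~\ref{P:Z} and the fact that the Hall--Petresco nilmanifold $\uX$ and its Haar measure do not depend on the translation element. This buys self-containedness (only the weak, integrated identity is needed here, and your route proves exactly that), at the cost of two technical points you gloss over: to reduce both sides to the nilfactor you need that $Z_\infty(T^{dr})=Z_\infty(T^{d})$ for totally ergodic components (standard, but not stated in the paper), and to apply Proposition~\ref{P:Z} to $T^{dr}$ on the same nilmanifold with the same group $\uG$ you need the normalization of $G$ (reduce to $G$ connected, which is possible since total ergodicity makes $X$ connected). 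Both points are routine, so the proof stands.
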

\begin{remark}

Our argument shows that we get full strong stationarity if the ergodic components of the system $(X,\mu,T)$ are totally ergodic. We do not use this fact though because  we are not able to verify this hypothesis for Furstenberg systems of the   Liouville function.
\end{remark}

\subsubsection{Some multiple ergodic theorems} The proof of Proposition~\ref{P:partialsst} is rather simple but is  based  on some highly non-trivial   known identities involving multiple ergodic averages that we use as a black box. Note that we implicitly assume convergence in $L^2(\mu)$ for all the multiple ergodic averages in this subsection; this is guaranteed to be the case by    Theorems~\ref{T:HK} and \ref{T:FHK}.

The first identity we use   was proved in \cite[Theorem~6.4]{Fr04}:
\begin{theorem}\label{T:ergid1}
	Suppose that the ergodic components of the system  $(X, \mu, T)$ are totally ergodic. Then for every $r\in \N$ we have
	$$
	\E_{n\in \N} \prod_{j=1}^\ell T^{nj}f_j= \E_{n\in\N} \prod_{j=1}^\ell T^{rnj}f_j
	$$
	for all $\ell \in \N$ and $f_{1},\ldots, f_\ell\in L^\infty(\mu)$, where   convergence takes place in $L^2(\mu)$.
\end{theorem}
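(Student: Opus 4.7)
The plan is to reduce, in two steps, to an equidistribution statement on a connected nilmanifold. First, by the ergodic decomposition of $\mu$ under $T$, both sides of the claimed identity decompose linearly over the ergodic components, so under the hypothesis that each ergodic component is totally ergodic it suffices to treat the case where $(X,\mu,T)$ itself is totally ergodic. Second, by the Host-Kra structure theorem (Theorem~\ref{T:HK}), each of the two multiple ergodic averages is unchanged if every $f_j$ is replaced by its conditional expectation onto the infinite-step nilfactor $Z_\infty$. Since $Z_\infty$ is an inverse limit of finite-step ergodic nilsystems (Corollary~\ref{C:HK}) and total ergodicity descends to factors, a standard approximation and density argument further reduces the identity to the case where $(X,\mu,T)$ is itself a totally ergodic nilsystem $X=G/\Gamma$ with $T$ given by translation by some $a\in G$.

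In this setting, total ergodicity is equivalent to $X$ (and hence the Hall-Petresco nilmanifold $\uX$) being connected. By Proposition~\ref{P:Z}, the integrated average $\E_{n\in\N}\int_X\prod_j T^{nj}f_j\,d\mu$ coincides with $\int_{\uX}f_1\otimes\cdots\otimes f_\ell\,d\mu_{\uX}$, which is a manifestation of the unique ergodicity of the nilrotation $\overrightarrow{T}$ on the connected nilmanifold $\uX$. The dilated sequence $n\mapsto(T^{rnj})_{j=1}^\ell$ corresponds to iterating $\overrightarrow{T}^r=\overrightarrow{T^r}$ instead. On a connected nilmanifold, any power of an ergodic nilrotation remains ergodic (since ergodicity and total ergodicity coincide there), and hence is uniquely ergodic with respect to the same Haar measure $\mu_{\uX}$. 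Consequently Proposition~\ref{P:Z} applied to $(X,\mu,T^r)$ yields the same integrated value for the dilated averages, and testing against an arbitrary $g\in L^\infty(\mu)$ by enlarging the Hall-Petresco scheme with one extra coordinate upgrades the integrated identity to the required $L^2$-identity.

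The main technical obstacle is the inverse-limit reduction from $Z_\infty$ down to finite-step nilsystems, since one needs the $L^2$-identity to pass through the limit. This is handled by noting that both sides are bounded multilinear continuous functionals of $(f_1,\dots,f_\ell)$, and that by the second part of Theorem~\ref{T:HK} the contribution of the components of the $f_j$'s orthogonal to $Z_\infty$ is zero, which reduces the identity to test functions measurable with respect to some finite-step nilfactor. A secondary subtlety is the passage from the integrated identity of Proposition~\ref{P:Z} to the pointwise $L^2$-identity asserted in the theorem, but this is routine given the unique-ergodicity structure on the enlarged Hall-Petresco scheme sketched above.
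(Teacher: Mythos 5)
The paper offers no proof of this statement: it is quoted from \cite[Theorem~6.4]{Fr04} and used as a black box, so there is no internal argument to measure your proposal against. Your outline --- ergodic decomposition, reduction to the infinite-step nilfactor via Theorem~\ref{T:HK}, approximation down to a totally ergodic (hence connected) nilsystem, and identification of both averages with the same integral over the Hall--Petresco nilmanifold via Proposition~\ref{P:Z} together with unique ergodicity of powers of ergodic nilrotations on connected nilmanifolds --- is the natural route and is sound in structure; in particular, testing against a function placed at the $0$-th coordinate (which Proposition~\ref{P:Z} already accommodates) does upgrade the integrated identity to the asserted $L^2$ identity, since both limits are known to exist by Theorem~\ref{T:HK}.

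Two steps that you present as immediate carry real content and must be supplied. (a) Theorem~\ref{T:HK} applied to the dilated average $\E_{n\in\N}\prod_{j=1}^\ell (T^r)^{nj}f_j$ only says that $Z_\infty(T^r)$ is characteristic for it; to replace every $f_j$ by $\E_\mu(f_j\mid Z_\infty(T))$ in \emph{both} averages at once you need the inclusion $Z_\infty(T^r)\subseteq Z_\infty(T)$, equivalently that $\nnorm{f}_{k,T}=0$ implies $\nnorm{f}_{k,T^r}=0$ for every $k$. This is true for totally ergodic systems (induct on $k$ using the recursive definition of the seminorms, the bound $\E_{n}\,a_{rn}\le r\,\E_{n}\,a_n$ for nonnegative sequences with convergent Ces\`aro means, and ergodicity of $T^r$ for the base case), but it already fails at $k=1$ without total ergodicity, so it cannot be waved through. (b) The parenthetical claim that connectedness of $X$ forces connectedness of $\uX$ is precisely what makes the subaction generated by $\overline T^{\,r}$ and $\overrightarrow T^{\,r}$ still uniquely ergodic on $\uX$ with the same Haar measure; it is correct, but it requires the normalization $G=\langle G^0,\tau\rangle$, the connectedness of $G_s$ for $s\ge 2$, and the identity $G=G^0\Gamma$ to check that $\uG$ acts on $e_{\uX}$ through its identity component. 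With these two justifications added, the argument is complete.
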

Combining this result with Theorem~\ref{T:ergid2}  we get the following ergodic theorem that is better adapted to our purposes:
\begin{corollary}\label{C:ergid}
	Let $d\in \N$ and $(X, \mu, T)$ be a system such that the ergodic components of the system  $(X,\mu,T^d)$ are totally ergodic. Then for every  $r\in \N$ with $(r,d)=1$ we have
	$$
	\E_{n\in \N}\prod_{j=1}^\ell T^{nj}f_j=\E_{n\in \N}  \prod_{j=1}^\ell T^{rnj}f_j \quad \text{and} \quad
	\E_{p\in \P}\prod_{j=1}^\ell T^{pj}f_j=\E_{p\in \P}  \prod_{j=1}^\ell T^{rpj}f_j
	$$
	for all $\ell\in \N$ and $f_{1},\ldots, f_\ell\in L^\infty(\mu)$, where convergence takes place in $L^2(\mu)$.
\end{corollary}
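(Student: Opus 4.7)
The plan is to prove the integer-step identity first and then bootstrap to the prime-step identity via Theorem~\ref{T:ergid2}, which already expresses the prime averages as a finite combination of Cesàro averages along arithmetic progressions.

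For the integer identity, I first decompose $n$ modulo $d$. Writing $n=md+k$ with $k\in\{0,\dots,d-1\}$ gives
$$
\E_{n\in\N}\prod_{j=1}^\ell T^{nj}f_j=\E_{0\le k\le d-1}\E_{m\in\N}\prod_{j=1}^\ell (T^d)^{mj}(T^{kj}f_j),
$$
and the analogous decomposition applied to the right-hand average reads
$$
\E_{n\in\N}\prod_{j=1}^\ell T^{rnj}f_j=\E_{0\le k\le d-1}\E_{m\in\N}\prod_{j=1}^\ell (T^d)^{rmj}(T^{rkj}f_j).
$$
Since the ergodic components of $(X,\mu,T^d)$ are totally ergodic, Theorem~\ref{T:ergid1} applied to $(X,\mu,T^d)$ permits replacing $(T^d)^{rmj}$ by $(T^d)^{mj}$ in the second display. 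For each $k$, write $rk=dq_k+s_k$ with $s_k=rk\bmod d$, so that $T^{rkj}=(T^d)^{q_k j}T^{s_k j}$. A standard index-shift argument (the Cesàro averages differ in at most $O(q_k)$ terms out of $N$, each uniformly bounded) then yields
$$
\E_{m\in\N}\prod_{j=1}^\ell (T^d)^{mj}T^{rkj}f_j=\E_{m\in\N}\prod_{j=1}^\ell (T^d)^{mj}T^{s_k j}f_j.
$$
Since $(r,d)=1$, the map $k\mapsto s_k$ is a bijection on $\{0,\dots,d-1\}$, so summing over $k$ reproduces the first display, proving the integer identity.

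For the prime identity, I apply Theorem~\ref{T:ergid2} to the left-hand side, obtaining
$$
\E_{p\in\P}\prod_{j=1}^\ell T^{pj}f_j=\E_{(k,d)=1}\E_{n\in\N}\prod_{j=1}^\ell T^{(nd+k)j}f_j.
$$
I then apply Theorem~\ref{T:ergid2} to the system $(X,\mu,T^r)$---whose hypothesis holds because the $T^{rd}$-ergodic components refine, and hence coincide with, the (already totally ergodic) $T^d$-ergodic components---to obtain
$$
\E_{p\in\P}\prod_{j=1}^\ell T^{rpj}f_j=\E_{(k,d)=1}\E_{n\in\N}\prod_{j=1}^\ell T^{r(nd+k)j}f_j.
$$
Next, the restricted integer identity
$\E_{m\equiv k\,(d)}\prod_j T^{rmj}f_j=\E_{m\equiv rk\,(d)}\prod_j T^{mj}f_j$,
obtained exactly as in the integer case above (decompose into pure $T^d$-averages, invoke Theorem~\ref{T:ergid1}, absorb the shift $s_k$), converts each term on the right-hand side of the second display to the corresponding term on the right-hand side of the first display, after re-indexing via the bijection $k\mapsto rk\bmod d$ on the units mod $d$. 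This yields the prime identity.

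The main obstacle is purely bookkeeping: tracking residues mod $d$ cleanly, verifying that index shifts in Cesàro averages are permissible in $L^2(\mu)$ (which they are because each $f_j$ is bounded), and matching the two bijections---on all residues mod $d$ in the integer case, on units mod $d$ in the prime case. No genuinely new ideas beyond the cited theorems are required; the corollary is a synthesis of Theorems~\ref{T:ergid1} and~\ref{T:ergid2}.
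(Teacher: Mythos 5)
Your proof is correct and follows essentially the same route as the paper: both arguments reduce the prime-step identity to Cesàro averages along progressions via Theorem~\ref{T:ergid2} (applied to $T$ and to $T^r$, after noting that the $T^{rd}$- and $T^d$-ergodic decompositions coincide), remove the dilation by $r$ using Theorem~\ref{T:ergid1} applied to $T^d$, and conclude by the index shift together with the bijection $k\mapsto rk\bmod d$; the integer-step identity is handled by the same scheme with the trivial decomposition modulo $d$ in place of Theorem~\ref{T:ergid2}. No gaps.
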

\begin{proof}
	We prove the second identity, the proof of the first is similar (simply replace below $p\in \P$ with $n\in\N$ and  $\E_{(k,d)=1}$ with $\E_{k\in [d]}$). Our assumption gives that the ergodic components of $(T^r)^d$ are also totally ergodic.
	By Theorem~\ref{T:ergid2} (applied for $T^r$ in place of $T$), 	we get the identity
	$$
	\E_{p\in \P} \prod_{j=1}^\ell T^{rpj}f_j=\E_{(k,d)=1} \E_{n\in\N}\prod_{j=1}^\ell T^{(dn+k)rj}f_j
	$$
	where  the average $\E_{(k,d)=1}$ is taken over those $k\in\{1,\ldots,d-1\}$ such that $(k,d)=1$.
	Using Theorem~\ref{T:ergid1},   we get that the average on the right hand side is equal to
		$$
	\E_{(k,d)=1} \E_{n\in\N}\prod_{j=1}^\ell T^{(dn+kr)j}f_j=
	\E_{(k,d)=1} \E_{n\in\N} \prod_{j=1}^\ell T^{(dn+k)j}f_j=
	\E_{p\in \P} \prod_{j=1}^\ell T^{pj}f_j,
	$$
	where the first identity follows since 	$(r,d)=1$  and the second from Theorem~\ref{T:ergid2}.
	Combining the above we get the asserted identity.
\end{proof}
\subsubsection{Proof of Proposition~\ref{P:partialsst}}\label{SS:pfsst}
Our assumption gives that there exists $d\in \N$ such that the ergodic components of the system  $(X,\mu,T^d)$ are totally ergodic.
Let   $m\in\N$ and $f_{-m},\ldots, f_m\in C(X^\Z)\cap \CF_0$.  We have
\begin{align*}
	\int_{X^\Z}\prod_{j=-m}^m S^{(dn+1)j}f_j\,d\wt\mu & =	
	\E_{p\in\P}\int_X\prod_{j=-m}^m T^{(dn+1)pj}f_j\,d\mu\\
	&=\E_{p\in\P}\int_X\prod_{j=-m}^m T^{pj}f_j\,d\mu
	=\int_{X^\Z}\prod_{j=-m}^m S^{j}f_j\,d\wt\mu,
\end{align*}
where we used the defining property  of the measure $\wt\mu$ (see Definition~\ref{D:tilde}) to get the first and third identity and the second identity of Corollary~\ref{C:ergid} (for $r:=dn+1$) to get the middle identity.  This proves that the system $(X^\Z,\wt\mu,S)$ is partially strongly stationary.

A similar argument shows that  the system $(X^\Z,\umu,S)$ is partially strongly stationary, the only difference is that one uses
 the  first identity of Corollary~\ref{C:ergid} instead of the second.

\subsection{Spectrum of partially strongly stationary systems}
The next result was obtained in \cite[Section~3]{J}  for ergodic strongly stationary systems, but  the same argument also works with minor modifications for  partially strongly stationary  systems that are not necessarily ergodic.  We will summarize its proof for completeness. Note also that a somewhat more complicated argument can be used to show that a strongly stationary system can only have $1$ in its spectrum (see \cite[Section~4]{J}); but unfortunately a similar result fails for partially strongly stationary systems which can have rational spectrum different than $1$.
\begin{proposition}\label{P:sst-spectrum}
	Let $(X^\Z,\nu,S)$ be 	a partially strongly stationary system. Then the system   has no irrational spectrum.
\end{proposition}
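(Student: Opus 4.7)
The plan is to adapt Jenvey's argument~\cite{J}, originally given for ergodic strongly stationary systems. Suppose for contradiction that $\e(t_0)$ is an irrational eigenvalue, and fix a unit eigenfunction $f\in L^2(\nu)$ with $Sf=\e(t_0)f$. The goal is to show that $f$ is orthogonal to a dense subspace of $L^2(\nu)$, forcing $f=0$.

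The central computation concerns functions $G\in L^\infty(\nu)\cap\CF_0$, that is, those depending only on $\ux(0)$. Since $(\tau_r\ux)(0)=\ux(0)$, we have $G\circ\tau_r=G$ for every $r$. Combining this with the commutation $S^n\circ\tau_r=\tau_r\circ S^{rn}$ (obtained by iterating~\eqref{E:comm}) and with the $\tau_r$-invariance of $\nu$ for $r\in d\N+1$, the correlation $c_G(n):=\int G\cdot S^n\bar G\, d\nu$ satisfies
\[
c_G(n)=\int (G\cdot S^n\bar G)\circ\tau_r\, d\nu=\int G\cdot S^{rn}\bar G\, d\nu=c_G(rn)
\]
for all such $r$ and all $n\in\Z$. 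In spectral terms, writing $c_G(n)=\int_\T\e(nt)\, d\sigma_G(t)$, this identity reads $\sigma_G=(T_r)_*\sigma_G$, where $T_r\colon\T\to\T$ denotes multiplication by $r$ modulo $1$. Taking $r=d+1\geq 2$, any atom of the finite measure $\sigma_G$ must have a finite $T_r$-orbit, hence be $T_r$-periodic; solving $T_r^k(t)=t$ yields $(r^k-1)t\in\Z$, so $t$ is rational. Therefore $\sigma_G$ has no atoms at irrational points, and in particular $|\langle f,G\rangle|^2\leq\sigma_G(\{t_0\})=0$. Using the eigenvalue identity $\langle f,S^nG\rangle=\e(-nt_0)\langle f,G\rangle$, we conclude that $f$ is orthogonal to the closed linear span of all shifts of bounded $\CF_0$-functions.

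The hardest step will be extending this orthogonality to all of $L^2(\nu)$. The cylinder functions $G=\prod_{j=-m}^m S^j g_j$ with $g_j\in L^\infty\cap\CF_0$ are dense in $L^2(\nu)$ by Stone--Weierstrass, but are not contained in the subspace treated above. For such $G$ the analogous computation only yields the weaker identity $c_G(n)=c_{\tau_r G}(rn)$, equivalently $\sigma_G=(T_r)_*\sigma_{\tau_r G}$, rather than $T_r$-invariance of $\sigma_G$ itself. Following~\cite{J}, the plan is to exploit the semigroup structure $\tau_r\tau_{r'}=\tau_{rr'}$ under which $d\N+1$ is closed: an irrational atom of $\sigma_G$ at $t_0$ would propagate via iterated pushforwards into $R$ distinct irrational atoms of $\sigma_{\tau_R G}$ for every $R\in d\N+1$, with prescribed total mass equal to $\sigma_G(\{t_0\})$. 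Combined with the uniform bound $\sigma_{\tau_R G}(\T)=\|G\|_{L^2(\nu)}^2$ and a careful spectral-aggregation argument, this produces the desired contradiction and shows that $\sigma_G$ has no irrational atoms for every cylinder $G$. Density then forces $f=0$.
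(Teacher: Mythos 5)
Your first step is correct: for $G\in L^\infty(\nu)\cap \CF_0$ the identity $c_G(n)=c_G(rn)$ for $r\in d\N+1$ does follow from the $\tau_r$-invariance of $\nu$, the commutation relation, and $G\circ\tau_r=G$; it forces the spectral measure $\sigma_G$ to have no irrational atoms, and hence the eigenfunction is orthogonal to the closed span of all shifts of bounded $\CF_0$-functions. But that span is in general a proper subspace of $L^2(\nu)$, and the entire difficulty of the proposition lies in the step you defer. There your sketch does not work as described: the relation $\sigma_G=(T_R)_*\sigma_{G\circ\tau_R}$ merely \emph{redistributes} the mass $\sigma_G(\{t_0\})$ among the $R$ preimages $(t_0+j)/R$ inside the different measure $\sigma_{G\circ\tau_R}$; the total mass at those points equals $\sigma_G(\{t_0\})$, which is perfectly compatible with the bound $\sigma_{G\circ\tau_R}(\T)=\Vert G\Vert_{L^2(\nu)}^2$ for every $R$. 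Since there is one measure per $R$ and the atomic mass is conserved rather than multiplied, no pigeonhole or ``aggregation'' produces a contradiction, and iterating (via $\sigma_G=(T_{RR'})_*\sigma_{G\circ\tau_{RR'}}$) only reproduces the same situation. In effect you have reduced ``no irrational eigenvalues'' to ``no irrational atoms of $\sigma_G$ for every cylinder function $G$,'' which is essentially the original statement.

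The paper closes this gap by a different mechanism. It writes $\int \chi\cdot\prod_j S^jf_j\,d\nu=\int \chi\circ\tau_r\cdot\prod_j S^{rj}f_j\,d\nu$, averages over $r=dn+1$, and then applies van der Corput's lemma $m+1$ times to strip off the functions $f_j$ one by one. What survives is controlled by Lemma~\ref{L:taueigen}, applied to the eigenfunction itself: $\chi\circ\tau_{dn+1}$ is a finite combination of eigenfunctions whose eigenvalues are roots of unity times $\e\bigl(\alpha/(dn+1)\bigr)$. The terminal integrals then vanish unless the alternating sum $\sum_{\epsilon\in\{0,1\}^{m+1}}(-1)^{|\epsilon|}\phi(n+\epsilon\cdot h)$ with $\phi(n)=1/(dn+1)$ vanishes, and since this is a nontrivial rational function of $n$ it vanishes for only finitely many $n$. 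This combination --- the structure of $\chi\circ\tau_r$ as a sum of eigenfunctions at the ``$r$-th roots'' of $\e(t_0)$, fed into a van der Corput iteration --- is the ingredient your proposal is missing; without it, the passage from $\CF_0$-functions to general cylinder functions remains unproved.
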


In the proof of Proposition~\ref{P:sst-spectrum}  we will use the following
key property of the maps $\tau_r$:
\begin{lemma}[Lemma~2.3 in \cite{J}]
\label{L:taueigen}
	Let $\chi$ be an eigenfunction of the system $(X^\Z,\nu,S)$ with eigenvalue $\e(t)$ and suppose that for some $r\in \N$ the measure $\nu$ is invariant under $\tau_r$. Then $\chi\circ \tau_r$ is
	a finite linear combination of eigenfunctions for  eigenvalues of the form $\e((j+t)/r)$ for $j=0,\dots,r-1$.
\end{lemma}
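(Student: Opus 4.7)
The plan is to first reduce the problem to the following claim: any $\psi \in L^2(\nu)$ satisfying $\psi \circ S^r = \e(t)\psi$ can be written as $\psi = \sum_{j=0}^{r-1} \psi_j$ where $\psi_j \circ S = \e((t+j)/r)\psi_j$. Once we verify that $\psi := \chi \circ \tau_r$ satisfies this hypothesis, the lemma follows. The $\tau_r$-invariance of $\nu$ ensures that $\chi \circ \tau_r$ is a well-defined element of $L^2(\nu)$ with the same norm as $\chi$.

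The first step is to compute, using the commutation relation $S \circ \tau_r = \tau_r \circ S^r$ from \eqref{E:comm} together with $\chi \circ S = \e(t)\chi$:
\begin{equation*}
(\chi \circ \tau_r) \circ S^r = \chi \circ (S \circ \tau_r) = \chi \circ (\tau_r \circ S^r)
\end{equation*}
Wait, I must recompute: \eqref{E:comm} says $S \circ \tau_r = \tau_r \circ S^r$, so iterating, $S^k \circ \tau_r = \tau_r \circ S^{rk}$, hence $(\chi \circ \tau_r) \circ S = \chi \circ S \circ \tau_r \cdot (\text{not quite})$. Let me redo: from \eqref{E:comm}, $\tau_r \circ S^r = S \circ \tau_r$, so composing $\chi$ on the left gives $\chi \circ \tau_r \circ S^r = \chi \circ S \circ \tau_r = \e(t) \chi \circ \tau_r$. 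Thus $\psi := \chi \circ \tau_r$ is indeed an eigenfunction of $S^r$ with eigenvalue $\e(t)$.

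The second step is to construct the decomposition explicitly. Set $\alpha_j := \e((t+j)/r)$ for $j = 0, \dots, r-1$; these are precisely the $r$-th roots of $\e(t)$. Define
\begin{equation*}
\psi_j := \frac{1}{r} \sum_{k=0}^{r-1} \alpha_j^{-k}\, \psi \circ S^k.
\end{equation*}
A direct calculation, using the identity $\alpha_j^{-(r-1)} \e(t) = \alpha_j \cdot \alpha_j^{-r} \e(t) = \alpha_j$ (since $\alpha_j^r = \e(t)$) to handle the boundary term when $k = r-1$, shows that $\psi_j \circ S = \alpha_j \psi_j$. For the summation identity, one writes
\begin{equation*}
\sum_{j=0}^{r-1} \psi_j = \frac{1}{r} \sum_{k=0}^{r-1} \Bigl(\sum_{j=0}^{r-1} \alpha_j^{-k}\Bigr) \psi \circ S^k = \frac{1}{r} \sum_{k=0}^{r-1} \e(-kt/r)\Bigl(\sum_{j=0}^{r-1} \e(-kj/r)\Bigr) \psi \circ S^k,
\end{equation*}
and the inner geometric sum vanishes for $k \in \{1,\dots,r-1\}$ and equals $r$ for $k=0$, yielding $\sum_j \psi_j = \psi$.

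There is essentially no obstacle here: the lemma is a standard projection-onto-eigenspaces argument, and the only ingredient beyond linear algebra is the commutation relation \eqref{E:comm}. The only mild subtlety is making sure the composition $\chi \circ \tau_r$ is interpreted correctly on $L^2(\nu)$, which is justified precisely by the hypothesized $\tau_r$-invariance of $\nu$.
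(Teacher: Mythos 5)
Your proof is correct and follows essentially the same route as the paper: both define the projections $\psi_j=\tfrac1r\sum_{k=0}^{r-1}\e(-k(j+t)/r)\,\chi\circ\tau_r\circ S^k$ onto the eigenspaces of $S$ and verify $\psi_j\circ S=\e((j+t)/r)\psi_j$ and $\sum_j\psi_j=\chi\circ\tau_r$ by the same direct computation (the paper omits the $1/r$ normalization, which you correctly include). Your explicit preliminary step that $\chi\circ\tau_r$ is an $S^r$-eigenfunction with eigenvalue $\e(t)$ is the same use of the commutation relation \eqref{E:comm} that the paper leaves implicit.
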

\begin{proof}
	For $j=0,\ldots, r-1$ let
	$
	g_{j}:=\sum_{k=0}^{r-1}\, \e(- k(j+t)/r) \, \,
	\chi\circ\tau_r\circ S^k.
	$
	Then direct computation shows that
	$g_{j}\circ S= \e((j+t)/r) \,  g_{j}$, $j=0,\ldots, r-1$
	and that
	$
	\chi=\sum_{j=0}^{r-1}g_{j}.
	$
\end{proof}

We will also use the following  classical variant of van der Corput's fundamental Lemma (the stated version  is from  \cite{Be87a}):
\begin{lemma}[\bf{Van der Corput}]
\label{L:VDC}
	Let  $(v_n)_{n\in\N}$ be a  bounded sequence of vectors in a Hilbert space.
	Suppose that  for each $h\in \N$ we have
	$$
	\E_{n\in\N}\,
	\langle v_{n+h},v_n \rangle=0.
	$$
	Then
	$$
	\E_{n\in\N} \, v_n=0
	$$
	where convergence takes place in  norm.
	\end{lemma}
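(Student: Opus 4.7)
The plan is to follow the classical smoothing-and-Cauchy-Schwarz argument. Let $u_N := \E_{n\in[N]} v_n$; the goal is to show $\lim_{N\to\infty} \|u_N\| = 0$. Fix an auxiliary parameter $H\in\N$ which will eventually be sent to infinity. The key identity is the $H$-smoothing:
\[
\E_{n\in[N]}\, v_n \;=\; \E_{n\in[N]}\,\Bigl(\E_{h\in[H]}\, v_{n+h}\Bigr) \;+\; R_{N,H},
\]
where the error $R_{N,H}$ comes from the boundary telescoping $\E_{n\in[N]}(v_n - v_{n+h}) = \frac{1}{N}\bigl(\sum_{n=1}^h v_n - \sum_{n=N+1}^{N+h} v_n\bigr)$. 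Using boundedness of $(v_n)$, one has $\|R_{N,H}\| = O(H/N)$ for each fixed $H$, so $R_{N,H} \to 0$ as $N\to\infty$. Hence it suffices to bound the smoothed average.

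For the smoothed average, apply Cauchy--Schwarz (or convexity of $\|\cdot\|^2$) in the outer $n$-average to obtain
\[
\Bigl\|\E_{n\in[N]}\,\E_{h\in[H]}\, v_{n+h}\Bigr\|^{2} \;\leq\; \E_{n\in[N]}\,\Bigl\|\E_{h\in[H]}\, v_{n+h}\Bigr\|^{2}
\;=\; \E_{n\in[N]}\,\E_{h_1,h_2\in[H]}\,\langle v_{n+h_1},v_{n+h_2}\rangle.
\]
Split the inner sum into the diagonal $h_1=h_2$ and the off-diagonal $h_1\neq h_2$. The diagonal part contributes at most $C/H$, where $C := \sup_n \|v_n\|^2$. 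For the off-diagonal part, fix $h_1\neq h_2$, set $h := |h_1-h_2|\geq 1$, and perform a change of variables in $n$ (introducing a further $O(h/N) = O(H/N)$ boundary error): the average becomes
\[
\E_{n\in[N]}\,\langle v_{n+h},v_n\rangle + O(H/N).
\]
By the standing hypothesis, as $N\to\infty$ this tends to $0$ for each of the finitely many values of $h\in\{1,\ldots,H-1\}$.

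Putting the estimates together, for each fixed $H$ one gets $\limsup_{N\to\infty}\|u_N\|^2 \leq C/H$. Since $H$ is arbitrary, $\|u_N\|\to 0$, which is the desired conclusion. The argument is entirely routine; the only point requiring a little care is the quantified order in which the limits are taken (first $N\to\infty$, then $H\to\infty$) and the uniform control of the boundary errors $R_{N,H}$ and the change-of-variables error, both of which are linear in $H$ and hence negligible once $N$ is sent to infinity first. No hypothesis beyond boundedness of $(v_n)$ and the vanishing of the correlations $\E_n\langle v_{n+h},v_n\rangle$ for $h\in\N$ is used.
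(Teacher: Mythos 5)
Your proof is correct; it is the standard smoothing-plus-Cauchy--Schwarz argument for the van der Corput lemma, with the boundary errors and the order of limits ($N\to\infty$ first, then $H\to\infty$) handled properly. The paper itself gives no proof of this statement --- it is quoted as a classical result from Bergelson's paper \cite{Be87a} --- and the argument you give is precisely the classical one, so there is nothing to reconcile.
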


We are now ready to prove Proposition~\ref{P:sst-spectrum}.
\begin{proof}[Proof of Proposition~\ref{P:sst-spectrum}]
	By our assumption, there exists $d\in \N$ such that the measure $\nu$ is $\tau_r$-invariant for every $r\in d\N+1$.
	
	 Let $\chi\in L^\infty(\mu)$ be such that
	$
	S\chi=\lambda\cdot \chi
	$
	where $\lambda=\e(\alpha)$ with $\alpha$ irrational. We will show that $\chi=0$. To do this we follow closely the argument of Jenvey in \cite[Section~3]{J}.
	
	 Since for $r\in d\N+1$  the maps $\tau_r$ leave the $0$-th coordinate of $x\in X^\Z$  unchanged, we  have
	$f=f\circ \tau_r$ for every $f\in \CF_0$.
	Since linear combinations of functions of the form
	$\prod_{j=-m}^m S^j f_j$ with $f_{-m},\ldots, f_m\in C(X^\Z)\cap \CF_0$, $m\in\N$,  are dense in the space $C(X^\Z)$ with the uniform topology, it suffices to show that
	$$
	\int \chi \cdot \prod_{j=-m}^m S^j f_j\, d\nu=0
	$$
	for all $m\in \N$ and $f_{-m},\ldots, f_m\in C(X^\Z)\cap \CF_0$.
	Composing with the $\nu$-preserving maps  $S^m$ for $m\in \N,$ we see that it suffices to show that
	\begin{equation}\label{E:int0}
		\int \chi \cdot \prod_{j=0}^m S^j f_j\, d\nu=0
	\end{equation}
	for all $m\in \N$ and $f_{0},\ldots, f_m\in C(X^\Z)\cap \CF_0$.

	For $r\in d\N+1$,  we  compose the integrand with the $\nu$-preserving maps $\tau_r$  and then  use the commutation relations \eqref{E:comm} and the fact that $f\circ \tau_r=f$ for $f\in \CF_0$.  We deduce that the integral in  \eqref{E:int0} is equal to
	$$
	\int \chi\circ\tau_r \cdot \prod_{j=0}^m S^{rj} f_j\, d\nu
	$$
	for every $r\in d\N+1$. Averaging over $r\in d\N+1$ gives the identity
	$$
	\int \chi \cdot \prod_{j=0}^m S^j f_j\, d\nu=\E_{n\in\N}\int \chi\circ \tau_{dn+1} \cdot \prod_{j=0}^m S^{(dn+1)j} f_j\, d\nu.
	$$
	Hence, it suffices to show that for every $m\in \N$ and $f_1, \ldots, f_m\in L^\infty(\nu)$ we have
	\begin{equation}\label{E:wanted'}
		\E_{n\in\N}\, \chi\circ \tau_{dn+1} \cdot \prod_{j=1}^m S^{dnj} f_j=0
	\end{equation}
	where the limit is taken in $L^2(\nu)$. Note that from this point on we work with general functions $f_j\in L^\infty(\nu)$, $j=1,\ldots, m$,  not just those in $ C(X^\Z)\cap \CF_0$.
	
	Our first goal   is to successively apply van der Corput's lemma   and the  Cauchy-Schwarz inequality
	in order to reduce our problem  to establishing convergence to zero
	for  an expression that does not depend on the functions $f_1,\ldots, f_m$.
	In our first iteration, we apply  Lemma~\ref{L:VDC},  compose the integrand with $S^{-dn}$,  and use the Cauchy-Schwarz inequality;
	we see that in order to establish \eqref{E:wanted'} it suffices to show that for every $h_1\in \N$ we have
	$$
	\E_{n\in\N}\, S^{-dn}(\chi\circ \tau_{d(n+h_1)+1} \cdot  \overline\chi\circ\tau_{dn+1})
	\prod_{j=1}^{m-1} S^{dnj} f_j=0
	$$
	for all $f_1, \ldots, f_{m-1}\in L^\infty(\nu)$. Note  that the number of functions
	$f_j$ has decreased by one. Note also that by Lemma~\ref{L:taueigen}
	the function
	\begin{equation}\label{E:h_1}
		F_{h_1,n}:= S^{-dn}(\chi\circ \tau_{d(n+h_1)+1} \cdot \overline\chi\circ \tau_{dn+1})
	\end{equation}
	is a finite linear combination of eigenfunctions for $S$ with eigenvalue some root of unity times
	$$
	\e\bigl(\alpha\cdot (\phi(n+h_1)-\phi(n)\bigr)
	$$
	where
	$$
	\phi(n):=\frac{1}{dn+1}, \quad n\in \N.
	$$

	We define inductively the  functions $F_{h_1,\ldots, h_k,n}$, $h_1,\ldots,h_k,n\in \N$ as follows:   For $k=1$ and $h_1,n\in \N$ we let  $F_{h_1,n}$ be as in \eqref{E:h_1} and for $k\geq 2$ and  $h_1,\ldots,h_k,n\in \N$ we let
	$$
	F_{h_1,\ldots, h_k,n}:= S^{-dn}\big(F_{h_1,\ldots, h_{k-1},n+h_k}\cdot \overline{F_{h_1,\ldots, h_{k-1},n}}\big).
	$$
	After successively applying Lemma~\ref{L:VDC}  ($m+1$ times) and using the Cauchy-Schwarz inequality ($m$ times)
	we are left with showing that for every $h_1,\ldots, h_{m+1}\in \N$ we have
	\begin{equation}\label{E:is0}
		\E_{n\in\N} \int F_{h_1,\ldots, h_{m+1},n}\, d\nu=0.
	\end{equation}
	Using  Lemma~\ref{L:taueigen} and the inductive definition of the functions $F_{h_1,\ldots, h_{m+1},n}$, we get that for every $h_1,\ldots, h_{m+1},n\in\N,$ the function
	$F_{h_1,\ldots, h_{m+1},n}$ is a finite linear combination of eigenfunctions with eigenvalue equal to some root of unity times the number
	$$
	\e\big(\alpha \cdot \sum_{\epsilon\in\{0,1\}^{m+1}} (-1)^{|\epsilon|}\phi(n+\epsilon \cdot h)\big)
	$$
	where $h:=(h_1,\ldots, h_{m+1})$,
	$|\epsilon|:=\epsilon_1+\cdots +\epsilon_{m+1}$, and $\epsilon\cdot h:=\epsilon_1h_1+\cdots+ \epsilon_{m+1}h_{m+1}$. Hence,
	\begin{equation}\label{E:is0'}
		\int F_{h_1,\ldots, h_{m+1},n}\, d\nu=0
	\end{equation}
	unless some of the eigenvalues of the eigenfunctions composing the function  $F_{h_1,\ldots, h_{m+1},n}$ is $1$. Since  $\alpha$ is irrational and $\phi$ takes rational values, this
	can only happen if
	\begin{equation}\label{E:is0''}
		\sum_{\epsilon\in\{0,1\}^{m+1}} (-1)^{|\epsilon|}\phi(n+\epsilon \cdot h)=0.
	\end{equation}
Note that for fixed $h=(h_1,\dots,h_{m+1})\in\N^{m+1}$, the left hand side in \eqref{E:is0''} is a rational function in the variable $n$ and  has a pole at $n=0$, hence it  is not identically zero. 	
 After clearing denominators, \eqref{E:is0''} becomes a non-trivial polynomial identity in $n$, hence it can only have finitely many solutions in $n$. We deduce that~\eqref{E:is0'} holds for all large enough $n\in \N$. As a consequence,
	\eqref{E:is0} holds for all  $h_1,\ldots, h_{m+1}\in \N$. As remarked above, this proves that $\chi=0$ and completes the proof.
\end{proof}

\subsection{Proof of Theorem~\ref{th:no-irr-eigen}}
	Let $(X,\mu,T)$ be a system with ergodic decomposition  $\mu=\int\mu_\omega\,dP(\omega)$. It follows from \eqref{eq:def-mutilde} that
	$$
	\wt\mu=\int\wt\mu_\omega\,dP(\omega).
	$$ If $\alpha\in\T$ is irrational and  $\e(\alpha)$ is an eigenvalue of $(X^\Z,\wt\mu,S)$, then for $\omega$ in a set of positive $P$-measure the number
	$\e(\alpha)$ is an eigenvalue of $(X^\Z,\wt\mu_\omega,S)$. It thus suffices  to prove the theorem in the case where $(X,\mu,T)$ is ergodic and we restrict to this case.
	
	Let $(Z_\infty,\mu_\infty,T)$ be
	the infinite-step nilfactor of $(X,\mu,T)$. By Lemma~\ref{L:mutilde-infty2}, the system $(X^\Z,\wt\mu,S)$ is isomorphic to the direct product of the system $(Z_\infty^\Z,\wt\mu_\infty,S)$ and  a Bernoulli system. Since Bernoulli systems are weakly mixing, the system $(X^\Z,\wt\mu,S)$  has the same eigenvalues as the system  $(Z_\infty^\Z,\wt\mu_\infty,S)$. We can therefore restrict to the case where
	$(X,\mu,T)$ is an ergodic infinite-step  nilsystem.

	If  $(X,\mu,T)=\varprojlim (X_j,\mu_j,T)$ where for $j\in \N$ each system $(X_j,\mu_j,T)$ is an ergodic nilsystem, then we get   by~\eqref{eq:tildemu-lim-proj} that
	$$
	(X^\Z,\wt\mu,S)=\varprojlim (X_j^\Z,\wt\mu_j,S).
	$$
	Suppose that $\alpha$ is irrational and $\e(\alpha)$ is an eigenvalue of $(X^\Z,\wt\mu,S)$ with eigenfunction $f$. Then for every large enough $j\in \N$ the conditional expectation of $f$ with respect to $X_j^\Z$ is non-zero, and this function is an eigenfunction of $(X_j^\Z,\wt\mu_j,S)$ with eigenvalue $\e(\alpha)$ as well. Therefore, we can and will restrict to the case where   $(X,\mu,T)$ is an ergodic nilsystem.
	
	If $(X,\mu,T)$ is an ergodic nilsystem, then it has finite rational spectrum. Hence,  Proposition~\ref{P:partialsst}
	applies and gives that the system $(X^\Z, \wt\mu, S)$ is partially strongly stationary.  Proposition~\ref{P:sst-spectrum} then  shows that the system $(X^\Z, \wt\mu, S)$
	has no  irrational spectrum.	This finishes the proof of the absence of irrational spectrum for the system  $(X^\Z,\wt\mu,S)$. \qed
	
	We remark that a similar argument also shows that the system  $(X^\Z,\umu,S)$ has no irrational spectrum.

\subsection{An alternate approach to  Theorem~\ref{th:main-mutilde}}
In \cite{Fr04} it is shown that almost every  ergodic component of a strongly stationary system
is isomorphic to a direct product of an infinite-step nilsystem and a Bernoulli system.
A similar statement with exactly the same proof is valid under the weaker assumption of partial strong stationarity.
If
$(X,\mu,T)$ is an ergodic nilsystem, then it has finite rational spectrum and Proposition~\ref{P:partialsst}
shows that the system  $(X^\Z,\wt\mu,S)$ is partially strongly stationary.
By combining these  results we get   a different proof for a weaker version   of  Proposition~\ref{P:nil}, which  states  that in the case where $(X,\mu,T)$ is an ergodic nilsystem,  the ergodic components of the  system $(X^\Z,\wt\mu,S)$ are direct products of infinite-step nilsystems and Bernoulli systems (note that Proposition~\ref{P:nil} shows that the Bernoulli systems are superfluous). One could use this result as a starting point for an alternate proof of Theorems~\ref{th:main-mutilde} and \ref{th:no-irr-eigen}.
 The  disadvantage of this approach is that we get an unwanted Bernoulli component at a very early stage in the argument which causes some delicate technical problems in the subsequent analysis.

\section{Disjointness result}\label{S:Disjoint}
 The goal of this section is to prove the disjointness result of  Proposition~\ref{P:disjoint}. 
We
start with the following  simpler  result:
\begin{lemma}
	\label{L:disjoint}
	Let $(X,\mu,T)$ be an ergodic infinite-step nilsystem and  $(Y,\nu,R)$ be an ergodic system.
		\begin{enumerate}
		\item If the two systems   have disjoint irrational spectrum, then for every  joining  $\sigma$  of the two systems  and  function $f\in L^\infty(\mu)$  orthogonal to $\CK(T)$, we have
		$$
		\int f(x)\, g(y)\, d\sigma(x,y)=0
		$$
		for every $g\in L^\infty(\nu)$.

		\item If the two systems     have disjoint spectrum different than $1$,  then they are disjoint.
	\end{enumerate}
\end{lemma}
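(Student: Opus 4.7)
The plan is to prove both parts by a two-stage reduction followed by induction on the nilpotency step. First, using the ergodic decomposition of $\sigma$ under $T\times R$ together with the uniqueness of the ergodic decompositions of the ergodic measures $\mu$ and $\nu$, I reduce to the case where $\sigma$ itself is ergodic; almost every component is again a joining because $\mu$ and $\nu$ are each ergodic. Second, writing $(X,\mu,T)=\varprojlim(X_j,\mu_j,T)$ as an inverse limit of ergodic finite-step nilsystems (with factor maps $\pi_j$), I reduce to the case of a finite-step ergodic nilsystem: for Part~(2), any joining at the limit projects compatibly to each $X_j$, so disjointness at every level suffices; for Part~(1), since the Kronecker factor of an ergodic nilsystem is already determined at the first step (being the group rotation on the abelianization), the rational Kronecker pulls back from any finite level, and any $f\perp\CK(T)$ can be $L^2$-approximated by functions $f_j\circ\pi_j$ with $f_j\perp\CK(T|_{X_j})$.

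Assuming now that $(X,\mu,T)$ is an ergodic nilsystem of finite step $s$, I induct on $s$. The base case $s=1$ is a group rotation. Disintegrate $\sigma=\int\sigma_y\,d\nu(y)$ over $Y$ and set $G_h(y):=\int h\,d\sigma_y$. The $T\times R$-invariance of $\sigma$ gives $\sigma_{Ry}=T_*\sigma_y$ for $\nu$-a.e.\ $y$, hence $G_h(Ry)=G_{h\circ T}(y)$. When $h$ is a $T$-eigenfunction with eigenvalue $\e(\alpha)$, this forces $G_h$ to be an $R$-eigenfunction with the same eigenvalue, so $G_h=0$ whenever $\e(\alpha)$ is not an eigenvalue of $R$. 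In Part~(1), the hypothesis that irrational spectra are disjoint together with $f\perp\CK(T)$ annihilates every spectral component of $f$ in this way, giving $\int f\,g\,d\sigma=\int G_f\cdot g\,d\nu=0$. In Part~(2), every non-trivial $T$-eigenfunction is annihilated, and combining with the trivial one yields $\sigma=\mu\times\nu$.

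For the inductive step, assume the conclusion for step $\leq s$ and let $(X,\mu,T)$ be an ergodic $(s+1)$-step nilsystem, realized as an abelian compact group extension $X\to W$ (with fiber group $K$) over its maximal $s$-step nilfactor $W$. The projected joining $\sigma_W$ satisfies the conclusion by the inductive hypothesis; for Part~(2) in particular, $\sigma_W=\mu_W\times\nu$. To lift from $W$ to $X$, expand functions along characters of $K$: any $h\in L^\infty(\mu)$ decomposes locally as $h(w,k)=\sum_{\chi\in\widehat K}h_\chi(w)\,\chi(k)$. The $T\times R$-invariance of $\sigma$ together with the cocycle structure defining $X\to W$ forces the $\chi$-isotypic part of the joining to couple non-trivially with $Y$ only when $\chi$ witnesses a common eigenvalue of $T$ and $R$; under our hypotheses, no such non-trivial couplings survive beyond those permitted by the rational Kronecker (Part~1) or are entirely absent (Part~2). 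The principal obstacle will be carrying out this fiber-wise Fourier analysis and precisely identifying the spectral mechanism by which a non-trivial $\chi$ could couple with $Y$; once that is controlled, both conclusions lift from $W$ to $X$ and the induction closes.
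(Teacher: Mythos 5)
Your preliminary reductions are fine: passing to ergodic components of $\sigma$ (legitimate since $\mu$ and $\nu$ are ergodic, so a.e.\ component is again a joining and $f$ stays orthogonal to $\CK(T)$), and passing from the infinite-step inverse limit to a finite-step ergodic nilsystem, both match what the paper does. Your base case $s=1$ is also correct; it is the classical disjointness argument for Kronecker systems via the eigenfunction relation $G_h(Ry)=G_{h\circ T}(y)$.

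The genuine gap is the inductive step, which is where all the content of the lemma lies, and which you explicitly leave open (``the principal obstacle will be carrying out this fiber-wise Fourier analysis\dots''). Moreover, the mechanism you propose there cannot work as stated. The functions in a non-trivial $\chi$-isotypic component of the extension $X\to W$ are not eigenfunctions of $T$ --- they carry higher-order (quadratic, cubic, \dots) structure --- so whether they couple with $Y$ under a joining is \emph{not} governed by the eigenvalues of $R$ at all; it is governed by the Host--Kra factors $\CZ_s(R)$ of $Y$. Indeed, the bare statement ``two ergodic systems with no common eigenvalue except $1$ are disjoint'' is false in general (two copies of a weakly mixing system have no non-trivial eigenvalues yet are not disjoint), so any correct proof of Part~(ii) must exploit the nilsystem structure of $X$ against the \emph{full} system $Y$ beyond eigenvalue bookkeeping. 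The paper does this in two steps that your outline is missing: first, the generalized Wiener--Wintner theorem for nilsequences (\cite[Theorem~2.13]{HK09}) shows that the part of $g$ orthogonal to $\CZ_s(R)$ contributes nothing, since $(f(T^nx))_{n}$ is an $s$-step nilsequence for each $x$; second, by the Host--Kra structure theorem $\CZ_s(R)$ is an inverse limit of $s$-step nilsystems, so one reduces to $Y$ itself being a nilsystem, where disjointness of spectrum forces the product nilsystem to be ergodic, hence uniquely ergodic, and the conclusion follows from pointwise convergence of the ergodic averages. Without an input of this kind, your fiber-wise character decomposition has no way to control the coupling of the non-trivial isotypic pieces with a general ergodic $Y$, and the induction does not close.
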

\begin{proof}
We prove part $\text{(i)}$.	We write $(X,\mu,T)=\varprojlim(X_j,\mu_j,T)$, where $(X_j,\mu_j,T)$, $j\in\N$,  are ergodic (finite-step)  nilsystems,  and let $\pi_j\colon X\to X_j$, $j\in \N$,  be the factor maps. Then for every $j\in \N$ the image $\sigma_j$ of $\sigma$ under $\pi_j\times\id \colon X\times Y\to X_j\times Y$ is a joining of $X_j$ and $ Y$ and  for every $f\in L^\infty(\mu)$ and $g\in L^\infty(\nu)$ we have
$$
\int f(x)\, g(y)\, d\sigma(x,y)=\lim_{j\to \infty}	\int (f\circ\pi_j)(x)\, g(y)\, d\sigma_j(x,y).
$$
	Since the function $f$  is orthogonal to $\CK(X,T)$, the function $f\circ\pi_j$  is  orthogonal to
	$\CK(X_j, T)$ for every $j\in \N$. We can therefore restrict to the case where $(X,\mu,T)$ is  an ergodic nilsystem.

	Suppose that  $(X,\mu,T)$ is an ergodic
	$s$-step nilsystem for some $s\in \N$. The eigenfunctions of $X$ associated to rational eigenvalues are  constant on the connected components of $X$. Therefore,  we can approximate in  $L^2(\mu)$ the function $f$ which is orthogonal to $\CK(X,T)$  by a function in $C^\infty(X)$, still orthogonal to $\CK(X,T)$, thus  reducing to the case where
$f\in C^\infty(X)$. Let $g\in L^\infty(\nu)$.
	Since $\sigma$ is $(T\times R)$-invariant we have
		$$
	\int f(x)\, g(y)\, d\sigma(x,y)=\int f(T^nx)\, g(R^ny)\, d\sigma(x,y)
	$$
	for every $n\in \N$.
	We average over $n\in\N$ and reduce  to showing  that
	\begin{equation}\label{E:disjoint'}
		\lim_{N\to\infty}\E_{n\in[N]}\int f(T^nx)\cdot g(R^ny)\, d\sigma(x,y)=0.
	\end{equation}
	
	Since $(X,T)$ is an $s$-step nilsystem and $f\in C^\infty(X)$,
	  it follows from  \cite[Theorem~2.13]{HK09} and the property characterizing the factors $\CZ_s$ given in \eqref{E:Zk} of  Appendix~\ref{SS:infnilfac}, that if
	$g$ is orthogonal to the factor $\CZ_s(R)$,  then
	there exists a set $Y_0$ with $\nu(Y_0)=1$ such that
	for every $y\in Y_0$ we have
	$$
	\lim_{N\to\infty}\E_{n\in[N]} f(T^nx)\cdot g(R^ny)=0
	$$
	for every $x\in X$. This implies that
	the last identity holds for $\sigma$-a.e.  $(x,y)\in X\times Y$ and the bounded convergence theorem gives
	\eqref{E:disjoint'}.
	
	
	
	Hence, we have reduced the problem to verifying \eqref{E:disjoint'} when $g\in \CZ_s(R)$. By Theorem~\ref{T:HK'} in the Appendix, the factor $(Z_s,\CZ_s,\nu_s,R)$ associated with $\CZ_s$ is
	 an inverse limit of ergodic  $s$-step nilsystems. Thus, by $L^2(\nu)$-approximation,
	in order to verify \eqref{E:disjoint'}, we can assume that the system on $Y$ is an
	ergodic $s$-step nilsystem and  $g\in C(Y)$.

	Let $X_0$  be the connected components of $e_X$ in $X$ and   let $\mu_0$ be the Haar measure of this nilmanifold. Then $\mu_0$ is the normalized restriction of $\mu$ to $X_0$. It is a general fact about nilsystems that there exists $k\in\N$ such that the sets $T^jY_0$, $0\leq j<k$, form a partition of $X$ and that $(X_0,\mu_0,T^k)$ is totally ergodic. The rational eigenvalues of $(X,\mu,T)$ are $\e(i/k)$ for $i=0,\dots,k-1$. Let $Y_0$, $\nu_0$ and $\ell$ be defined in the same way as $X_0,\mu_0,k$ was defined with $Y$ substituted for $X$ and let $d$ be the 	least common multiple of $k$ and $\ell$. Then
 $(X_0,\mu_0,T^d)$ and $(Y_0,\nu_0,R^d)$ are totally ergodic and thus have no rational spectrum except $1$. Moreover, if for some irrational $t$ we have that $e(t)$    is a common eigenvalue  for $(X_0,\mu_0,T^d)$ and $(Y_0,\nu_0,R^d)$,
then  $e(t)$ is a common eigenvalue for the systems  $(X,\mu,T^d)$ and $(Y,\nu,R^d)$. It is then
an easy  consequence that the systems $(X,\mu,T)$ and $(Y,\nu,R)$ have a common eigenvalue
of the form $e(s)$ with $s$ irrational (which can be chosen to satisfy $ds=t \bmod{1}$),  contradicting our  assumption that these systems have disjoint irrational spectrum.

We conclude from the previous analysis that   the systems $(X_0,\mu_0,T^d)$ and  $(Y_0,\nu_0,R^d)$ have disjoint  spectrum different than $1$.
As a consequence,  the product system
	$(X_0\times Y_0, \mu_{0}\times\nu_{0}, T^d\times R^d)$  is  ergodic, and since it is a nilsystem, it is uniquely ergodic.
	Let $x\in X, y\in Y$. There exist  $i,j\in \{0,\ldots, d-1\}$ such that $x':=T^{-i}x\in X_0$ and $y':=R^{-j}y\in Y_0$.
	Since the action of $T^d\times R^d$ on $X_0\times Y_0$ is uniquely ergodic, we have
	$$
	\E_{n\in[N]} f(T^{dn}x)\cdot g(R^{dn}y)=\E_{n\in[N]} f(T^{dn+i}x')\cdot g(R^{dn+j}y')\to
	\int T^{i}f\, d\mu_{0}\cdot \int T^{j}g\, d\nu_{0}=0,
	$$
	where the last identity follows since our assumption that  $f$ is orthogonal to $\CK(T)$ implies that   $\int T^{i}f\, d\mu_{0}=0$ for every $i\in \N$. Applying the last identity for  $T^qx, R^r y$ where $q,r\in \{0,\ldots, d-1\}$, in place of $x,y$, we deduce that
	$$
	\lim_{N\to\infty}\E_{n\in[N]} f(T^nx)\cdot g(R^ny)=0
	$$
	holds for every $x\in X, y\in Y$,   and the bounded convergence theorem gives
	\eqref{E:disjoint'}. This completes the proof of part $\text{(i)}$.
	
	We prove part $\text{(ii)}$.  In order to show that the systems are disjoint, it suffices to show  that for all $f\in C^\infty(X)$ and $g\in L^\infty(\nu)$, with $\int g\, d\nu=0$, we have
	\begin{equation}\label{E:product}
		\int f(x) \cdot g(y)\, d\sigma(x,y)=0.
	\end{equation}
As in the proof of part $\text{(i)}$ we  reduce to the case where the system $(X,\mu,T)$ is a  nilsystem. Composing with $(T\times R)^n$ and averaging over $n\in\N$, it thus suffices to show that
\begin{equation}\label{E:disjoint}
	\lim_{N\to\infty}\E_{n\in[N]}\int f(T^nx)\cdot g(R^ny)\, d\sigma(x,y)=0.
\end{equation}

As in the proof of part $\text{(i)}$ we reduce to the case where
 the system $(Y,\nu,R)$ is also a nilsystem, so  now the systems on $X$ and on $Y$ are ergodic nilsystems  with  disjoint  spectrum other than $1$. Then  the product system
$(X\times Y, \mu\times\nu, T\times R)$  is  ergodic and since it is a nilsystem, it is uniquely ergodic.
Hence,   for every $x\in X$ and $y\in Y$ we have
\begin{equation}\label{E:00}
	\lim_{N\to\infty}\E_{n\in[N]} f(T^nx)\cdot g(R^ny)=\int f\, d\mu\cdot \int g\, d\nu=0
\end{equation}
where the second identity follows since by assumption $\int g\, d\nu=0$.
Finally, using \eqref{E:00} and   the bounded convergence theorem we get
\eqref{E:disjoint}. This completes the proof of part $\text{(ii)}$.
\end{proof}

 \begin{lemma}
 	\label{L:disjoint2}
 	Proposition~\ref{P:disjoint} holds under the additional assumption that the system $(X,\mu,T)$ is ergodic.
 \end{lemma}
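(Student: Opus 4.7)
The plan is to reduce Lemma~\ref{L:disjoint2} to Lemma~\ref{L:disjoint} by peeling off the Bernoulli factor. Since $(X,\mu,T)$ is ergodic and its only ergodic component is itself, which by hypothesis is isomorphic to a direct product of an infinite-step nilsystem and a Bernoulli system, I would write $(X,\mu,T)=(X_1,\mu_1,T_1)\times (X_2,\mu_2,T_2)$ with $(X_1,\mu_1,T_1)$ an ergodic infinite-step nilsystem and $(X_2,\mu_2,T_2)$ a (possibly trivial) Bernoulli system.

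Given any joining $\sigma$ of $(X,\mu,T)$ with $(Y,\nu,R)$, let $\rho$ denote the pushforward of $\sigma$ under the coordinate projection $(x_1,x_2,y)\mapsto(x_1,y)$. Then $\rho$ is a $(T_1\times R)$-invariant joining of the two zero-entropy systems $(X_1,\mu_1,T_1)$ and $(Y,\nu,R)$, and the standard bound $h(\rho,T_1\times R)\le h(\mu_1,T_1)+h(\nu,R)=0$, obtained by approximating arbitrary partitions of $X_1\times Y$ by product partitions, shows that $(X_1\times Y,\rho,T_1\times R)$ has zero entropy. Viewing $\sigma$ next as a joining of the Bernoulli system $(X_2,\mu_2,T_2)$ with the zero-entropy system $(X_1\times Y,\rho,T_1\times R)$ via the identification $X_1\times X_2\times Y\cong X_2\times(X_1\times Y)$, the disjointness of Bernoulli systems from all zero-entropy systems forces, after rearrangement of coordinates,
\[\sigma=\mu_2\times\rho.\]

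For $f\in L^\infty(\mu)$ and $g\in L^\infty(\nu)$, setting $\tilde f(x_1):=\E_\mu(f\mid X_1)(x_1)=\int f(x_1,x_2)\,d\mu_2(x_2)$, Fubini gives
\[\int f(x)\,g(y)\,d\sigma(x,y)=\int \tilde f(x_1)\,g(y)\,d\rho(x_1,y).\]
For part~(i): since $(X_2,\mu_2,T_2)$ is weakly mixing, every rational eigenfunction of $(X,\mu,T)$ depends only on the $X_1$-coordinate, so $\CK(T)$ coincides with the pullback of $\CK(T_1)$ along the projection $X\to X_1$; hence $f\perp\CK(T)$ implies $\tilde f\perp\CK(T_1)$. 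Moreover the irrational spectrum of the factor $(X_1,\mu_1,T_1)$ is contained in that of $(X,\mu,T)$, hence disjoint from that of $(Y,\nu,R)$. Lemma~\ref{L:disjoint}(i), applied with $\rho$, $\tilde f$, and $g$, then yields the desired vanishing. For part~(ii): the same containment of spectra shows $(X_1,\mu_1,T_1)$ and $(Y,\nu,R)$ share no eigenvalue except $1$, so Lemma~\ref{L:disjoint}(ii) gives $\rho=\mu_1\times\nu$, and the displayed identity becomes $\sigma=\mu_2\times(\mu_1\times\nu)=\mu\times\nu$, which is the asserted disjointness.

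The only delicate point is the entropy estimate that certifies $\rho$ as a zero-entropy measure; once this is in hand, the Bernoulli versus zero-entropy disjointness automatically splits off the $X_2$-coordinate, and the remainder is a direct translation of the ergodic infinite-step nilsystem case already proved in Lemma~\ref{L:disjoint}.
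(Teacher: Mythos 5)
Your proof is correct and rests on the same two pillars as the paper's: the disjointness of Bernoulli systems from zero-entropy systems to split off the Bernoulli coordinate, followed by an application of Lemma~\ref{L:disjoint} to the remaining joining of the infinite-step nilsystem with $(Y,\nu,R)$. The organization is slightly different and arguably cleaner: the paper approximates $f$ by product functions $f_1(x')\,f_2(w)$ and runs a two-case analysis (either $\int f_2\,d\lambda=0$, in which case it splits $\sigma$ over $X'\times Y$ exactly as you do, or $f_1\perp\CK(T')$, in which case it instead splits the projection of $\sigma$ onto $W\times Y$ and views $\sigma$ as a joining of $X'$ with $W\times Y$). You avoid the approximation and the case split entirely by establishing the single identity $\sigma=\mu_2\times\rho$ up front and passing to $\tilde f=\E_\mu(f\mid X_1)$; the paper's first case then corresponds to $\tilde f=0$ and its second to $\tilde f\perp\CK(T_1)$, so your argument subsumes both. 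The entropy bound $h(\rho)\le h(\mu_1)+h(\nu)$ that you flag as the delicate point is indeed needed, but the paper invokes the same standard fact implicitly when it calls $(X'\times Y,\tau,T'\times R)$ a zero-entropy system, so there is no gap.
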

 \begin{proof}
 		By assumption,  $(X,\mu,T)$ is  the  direct product of an ergodic infinite-step nilsystem $(X',\mu',T')$ and a Bernoulli system $(W,\lambda, S)$.
 	
We prove part $\text{(i)}$.  After identifying $X$ with $X'\times W$,  we have to show that
 	\begin{equation}\label{E:xwy}
 		\int f(x',w)\, g(y)\, d\sigma(x',w,y)=0
 	\end{equation}
 	for every $g\in L^\infty(\nu)$.

 	Using  $L^2(\mu'\times \lambda)$-approximation on the orthocomplement of $\CK(T'\times S)$, we get that it suffices to verify \eqref{E:xwy} when $f(x',w)=f_1(x')\, f_2(w)$ for some $f_1\in L^\infty(\mu')$ and $f_2\in L^\infty(\lambda)$. Since Bernoulli systems are weakly mixing, we get that  $\CK(T'\times S)=\CK(T')$. Hence, our assumption on $f$ translates to the fact that either $\int f_2\, d\lambda=0$ or $f_1$ is orthogonal to $\CK(T')$.

 	Suppose that  $\int f_2\, d\lambda=0$. Let $\tau$ be the image of $\sigma$ under the projection of $X'\times W\times Y$ onto $X'\times Y$. Then $\sigma$ defines a joining of the zero entropy system  $(X'\times Y, \tau, T'\times R)$ and
 	the Bernoulli system $(W,\lambda, S)$. Since these systems are disjoint, we have
 	$\sigma=\tau\times \lambda$. Hence,
 	$$
 	\int f_1(x')\, f_2(w) \, g(y)\, d\sigma(x',w,y)=\int f_1(x')\, g(y)\, d\tau(x',y)\, \int f_2(w)\, d\lambda(w)=0,
 	$$
 	establishing that   \eqref{E:xwy} holds in this case.

 	Suppose now that  $f_1$ is orthogonal to $\CK(T')$. Let $\rho$ be   the image of $\sigma$ under the projection of $X'\times W\times Y$ onto $W\times Y$. Then $\rho$  defines a joining   of the Bernoulli system $(W,\lambda,S)$ and the zero entropy system $(Y,\nu,R)$. Since the systems are disjoint, we have $\rho=\lambda\times\nu$.
 	Hence, we can consider $\sigma$ as a joining of the system $(X',\mu',T')$ and the system   $(W\times Y,\lambda\times \nu,S\times R)$.
 	Since Bernoulli systems are weakly mixing,  the  system on $W\times Y$  is ergodic  and has the same eigenvalues as the system $(Y,\nu,R)$; hence no common irrational eigenvalue with the system $(X',\mu',T')$. It follows that  the assumptions of Part $\text{(i)}$ of  Lemma~\ref{L:disjoint} are satisfied and we conclude that \eqref{E:xwy} holds in this case as well,
 	completing the proof. 	

We prove part $\text{(ii)}$. 	Let $\sigma$ be a joining of the systems on   $X'\times W$ and on $Y$. As in the proof of part $\text{(i)}$ we get that 
$\sigma$ is a joining of the ergodic infinite-step nilsystem  $(X',\mu',T')$ and the ergodic system   $(W\times Y,\lambda\times \nu,S\times R)$ and that these systems have disjoint spectrum other than $1$. It follows that  the assumptions of Part $\text{(ii)}$ of  Lemma~\ref{L:disjoint} are satisfied and we conclude that  $\sigma=\mu'\times\lambda\times \nu$.
Hence,  the systems on $X$ and on $Y$ are disjoint, completing the proof.
 \end{proof}

 We are now ready to complete the proof of  Proposition~\ref{P:disjoint}.

 \begin{proof}[Proof of Proposition~\ref{P:disjoint}]	
 	We write
 	\begin{equation}\label{E:siom}
 	\sigma=\int\sigma_\omega\,dP(\omega)
 	\end{equation}
 	for the ergodic decomposition of the joining $\sigma$ under $T\times R$.  Since the system on $Y$ is ergodic, for almost every  $\omega\in\Omega$ the projection of $\sigma_\omega$ onto $Y$ is equal to $\nu$. We write $\mu_\omega$  for the projection of $\sigma_\omega$ on $X$. Then
 	by the uniqueness property of the ergodic decomposition, we get that for almost every  $\omega\in\Omega$ the measure   $\mu_\omega$ is  $T$-invariant and ergodic,  the measure $\sigma_\omega$ is an ergodic joining of the systems $(X,\mu_\omega,T)$ and $(Y,\nu,R)$, and the following  identity holds
 	\begin{equation}\label{E:muom}
 		\mu=\int \mu_\omega\,dP(\omega).
 	\end{equation}

 We prove part $\text{(i)}$.	Let $\lambda$ be an irrational eigenvalue of $(Y,\nu,R)$. By assumption, $\lambda$ is not an  eigenvalue of $(X,\mu,T)$, hence
 	$$
 	P\big(\bigl\{\omega\colon \lambda\text{ is an eigenvalue of }(X,\mu_\omega,T)\bigr\}\big)=0.
 	$$
 	Since $(Y,\nu,R)$ has at most countably many eigenvalues, it follows that
 	there exists a subset $\Omega_1$ of $\Omega$ with $P(\Omega_1)=1$ and such that for every $\omega\in \Omega_1$  the systems $(Y,\nu,T)$ and $(X,\mu_\omega,T)$ do not have any irrational eigenvalue in  common.
 	Moreover, since $f$ is orthogonal to $\CK(\mu, T)$, there exists $X_1\subset X$ with $\mu(X_1)=1$ and such that
 	$$
 	\E_{n\in \N} \, \e(n \alpha)\, f(T^nx)\to 0\text{ for every } \alpha\in \Q \text{ and every }x\in X_1.
 	$$
 	By \eqref{E:muom},  there exists a subset $\Omega_2$ of $\Omega_1$ with $P(\Omega_2)=1$ and such that for every $\omega\in \Omega_2$ we have   $\mu_\omega(X_1)=1$ and  the convergence above holds for $\mu_\omega$ almost every $x\in X$.
 	We conclude that  for every $\omega\in \Omega_2$ the function $f$ is orthogonal to $\CK(\mu_\omega, T)$.

 	From the above discussion we have that  for every $\omega\in \Omega_2$ the hypothesis of Part $\text{(i)}$ of  Lemma~\ref{L:disjoint2} is  satisfied for the function $f$ and  the joining $\sigma_\omega$ of the systems $(X,\mu_\omega,T)$ and $(Y,\nu,R)$. We deduce that  for every $\omega\in \Omega_2$ we have
 	$$
 	\int f(x) \, g(y)\,d\sigma_\omega(x,y)=0
 	$$
 	for every $g\in L^\infty(\nu)$.
 	Since $P(\Omega_2)=1$,  it follows from \eqref{E:muom} that
 	$$
 	\int f(x)\, g(y)\,d\sigma(x,y)=0
 	$$
 	for every $g\in L^\infty(\nu)$. This completes the proof of part $\text{(i)}$.

 	We prove part $\text{(ii)}$. As in the first part we show that for $P$-almost every $\omega\in \Omega$ the systems $(Y,\nu,R)$ and $(X,\mu_\omega,T)$ have disjoint spectrum other than $1$. Hence, Part $\text{(ii)}$  of Lemma~\ref{L:disjoint2} applies and gives that these two systems are disjoint and thus $\sigma_\omega=\mu_\omega\times\nu$ for almost every $\omega\in \Omega$. Therefore, by \eqref{E:siom} and \eqref{E:muom} we get   $\sigma=\mu\times\nu$. This completes the proof of part $\text{(ii)}$.
 \end{proof}

\section{Subshifts with linear block growth and proof of Theorem~\ref{th:Liou-non-linear}}
\label{S:linear-growth}
 The goal of this section is to deduce Theorem~\ref{th:Liou-non-linear} from Theorem~\ref{th:main-new} and some
 facts about invariant measures of subshifts with linear block growth.

\subsection{Measures on a subshift with  linear block  growth}
We start with some definitions.
Let $A$ be  a non-empty finite set whose elements are called \emph{letters}. $A$ is endowed with the discrete topology and $A^\Z$ with the product topology and with the shift $T$. For $n\in\N$, a \emph{word of length $n$}  is a sequence $u=u_1\dots u_n$ of $n$ letters (we omit the commas), and we write $[u]=\{x\in A^\Z\colon x_1\dots x_n=u_1\dots u_n\}$.

A \emph{subshift}, also called a \emph{symbolic system}, is a closed non-empty $T$-invariant subset $X$ of $A^\Z$. Recall that $X$ is transitive if it has at least one dense orbit under $T$.

Let $(X,T)$ be a transitive subshift, equal to the closed orbit of some point $\omega\in A^\Z$.
For every $n\in \N$ we let $L_n(X)$ denote the set of words $u$ of length $n$ such that $[u]\cap X\neq\emptyset$.   Then $L_n(X)$ is also the set of words of length $n$ that occur (as consecutive values) in $\omega$. Note that  the set $L(X):=\bigcup_{n\in \N}L_n(X)$ determines $X$.    The \emph{block complexity} of $X$ or of $\omega$ is defined by $p_X(n)=|L_n(X)|$ for $n\in\N$.
We say that the subshift $(X,T)$ (or the sequence $\omega$) \emph{has linear block growth} if
$\liminf_{n\to\infty} p_X(n)/n<\infty$.

\begin{proposition}
	\label{prop:linear-grow}
	Let $(X,T)$ be a transitive subshift with linear block growth. Then $(X,T)$ admits only finitely many ergodic invariant measures.
\end{proposition}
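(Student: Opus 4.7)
The plan is to adapt to the transitive setting the classical argument of Boshernitzan, referenced in the paper as establishing this result for minimal subshifts. Fix a transitive point $\omega\in X$ (so that $X$ is the closed orbit of $\omega$) and a constant $K$ such that $p_X(n)\le Kn$ holds along an infinite sequence $n_k\to\infty$. Arguing by contradiction, choose $r$ distinct ergodic invariant probability measures $\mu_1,\dots,\mu_r$ on $X$, with $r$ eventually taken much larger than $K$. The goal is to derive, for some constant $c>0$ independent of $r$, the estimate $p_X(n_k)\ge c\cdot r\cdot n_k$ for arbitrarily large $k$, contradicting linear block growth once $r>K/c$.

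The lower bound on $p_X(n_k)$ is produced by exploiting mutual singularity of distinct ergodic measures. For any $\varepsilon>0$ and every sufficiently large $m$, a weak-$\ast$ approximation argument applied to the Borel sets witnessing mutual singularity yields pairwise disjoint ``signature'' collections of words $W_1(m),\dots,W_r(m)\subseteq L_m(X)$ such that $\mu_i\bigl(\bigcup_{u\in W_i(m)}[u]\bigr)>1-\varepsilon$ for every $i$. Since the $W_i(m)$ are disjoint subsets of $L_m(X)$, one has $\sum_{i=1}^{r}|W_i(m)|\le p_X(m)$, and the task is reduced to bounding each $|W_i(m)|$ from below by a quantity of order $m$.

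The main step is thus to prove $|W_i(m)|\gtrsim c_i\,m$ as $m\to\infty$ for each ergodic measure $\mu_i$ that is not supported on a finite (periodic) orbit. This is achieved by combining Birkhoff's ergodic theorem, applied to a $\mu_i$-generic point $y_i$, with a combinatorial analysis of the Rauzy graph of $X$ (equivalently, of the left- and right-special factors of length $m$ in $L_m(X)$): the $\mu_i$-typical $m$-words must accommodate linearly many distinct ``phases'' of $y_i$, forcing the signature set to grow at least linearly with $m$. Ergodic measures supported on finite orbits are handled separately: a periodic orbit of period $p$ contributes exactly $p$ distinct words to $L_n(X)$ for every $n\ge p$, so the relation $\sum_p k_p\cdot p\le p_X(n)\le Kn$ (valid along $n=n_k$, where $k_p$ denotes the number of periodic orbits of period $p$ in $X$) immediately forces only finitely many periodic orbits in $X$ — and combined with the previous step this bounds the total number of ergodic measures.

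The step I expect to be the main obstacle is the linear lower bound on $|W_i(m)|$ for non-atomic $\mu_i$. In the minimal setting it follows from Boshernitzan's inequalities relating cylinder measures to the local complexity via the Rauzy graph; in the transitive case one must check that these inequalities can be localised to the (possibly non-minimal) support of each $\mu_i$, that a $\mu_i$-generic $y_i$ provides a genuinely transitive point inside $\mathrm{supp}(\mu_i)$, and that the resulting estimates combine across different $\mu_i$'s without losing the linear growth factor needed to contradict $p_X(n_k)\le K n_k$.
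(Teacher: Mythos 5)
There are two genuine gaps here. The first is in your treatment of the atomic ergodic measures. You claim that, since a periodic orbit of period $p$ contributes exactly $p$ words to $L_n(X)$, the inequality $\sum_p k_p\cdot p\le p_X(n)\le Kn$ ``immediately forces only finitely many periodic orbits''. It does not: for any finite collection of periodic orbits the left-hand side is a constant, and a constant bounded by $Kn_k$ with $n_k\to\infty$ is no contradiction at all, so nothing in this count prevents $X$ from containing arbitrarily many periodic orbits while $p_X(n_k)\le Kn_k$ along a subsequence. The paper closes exactly this point with a different idea: it shows that for every finite orbit $Y\subset X$ and every $n$, the set $L_n(Y)$ must contain a left- or right-special word of $X$ (otherwise, starting from an occurrence in the transitive point $\omega$ of a length-$n$ word of some $x\in Y$, the absence of special words forces $T^k\omega=x$, making $\omega$ periodic, a contradiction); since the number of special words of length $n$ is at most $2\bigl(p_X(n+1)-p_X(n)\bigr)\le 2K$ for infinitely many $n$, and the languages of distinct finite orbits are eventually disjoint, there are at most $2K$ finite orbits. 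Your proposal contains no substitute for this step.

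The second gap is the non-atomic case, which you yourself flag as ``the main obstacle'': the uniform linear lower bound $|W_i(m)|\ge c\,m$ with $c$ independent of the measure (and hence of $r$) is precisely the hard content of the Boshernitzan/Cyr--Kra theorem, and your sketch (Birkhoff's theorem plus ``linearly many phases'' in the Rauzy graph) does not supply it; note in particular that the support of an ergodic measure on a transitive subshift need not be minimal, so Boshernitzan's minimal-case result cannot simply be localised to each support, as you acknowledge. The paper sidesteps this entirely by quoting the result of Cyr and Kra \cite{CK} (alternatively \cite{FMo}) that a transitive subshift of linear block growth has only finitely many non-atomic ergodic measures, and reserves its own argument for the atomic ones. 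If you are permitted to cite \cite{CK} as the paper does, the non-atomic half of your plan is unnecessary; if you are not, your proposal leaves its central estimate unproved.
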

This result was proved in~\cite{Bos} under the stronger hypothesis that $(X,T)$ is minimal. In order to
replace this hypothesis with transitivity we will use a result from \cite{CK} (alternatively we could use ~\cite[Theorem 7.3.7]{FMo}) which treats the case of non-atomic invariant measures.

\begin{proof}[Proof of Proposition~\ref{prop:linear-grow}]
	Let $X$ be the closed orbit under $T$ of some $\omega\in A^\Z$ and suppose  that  the subshift $(X,T)$ has linear block growth. If $\omega$ is periodic, then $X$ is  a finite orbit and the shift transformation on $X$  admits only one invariant measure; hence, we can restrict  to the case where $\omega$ is not periodic.
	Let $K$ be an integer such that $\liminf_{n\to\infty} p_X(n)/n\leq K$. Then for infinitely many $n\in \N$ we have $p_X(n+1)-p_X(n)\leq K$.
	
	We say that a word $u\in L_n(X)$ is right special if there exist two different letters $a,b\in A$ such that $ua$ and $ub$ belong to $L_{n+1}(X)$. The number of right  special words of length $n$ is clearly bounded by $p_X(n+1)-p_X(n)$. The left special words of length $n$ are defined in a similar way and their number is also bounded by $p_X(n+1)-p_X(n)$. By a \emph{special word} of length $n$  we mean a left or right special word. Then for  infinitely many values of $n\in \N$ there are at most $2K$ special words of length $n$.

	We claim that for every finite orbit $Y$ in $X$ and every $n\in\N$, the set  $L_n(Y)$ contains a special word. Suppose that this is not the case.
	Let $x\in Y$. Since the orbit of $\omega$ is dense in $X$, there exists $k\in\Z$ such that the words $\omega_{k+1}\dots\omega_{k+n}$ and $x_1\dots x_n$ are equal. We show that $T^k\omega=x$. We claim first that for $\ell\geq 1$ we have $\omega_{k+\ell}=x_\ell$.
	For $1\leq\ell\leq n$ there is nothing to prove. Suppose that this property holds until some $\ell\geq n$. Then the words $\omega_{k+\ell-n+1}\dots\omega_{k+\ell}$ and $x_{\ell-n+1}\dots x_\ell$ are equal, and since $x\in Y$, this word belongs to $L_n(Y)$ and thus is not right special. Since
	$\omega_{k+\ell-n+1}\dots\omega_{k+\ell}\omega_{k+\ell+1}$ and $x_{\ell-n+1}\dots x_\ell x_{\ell+1}$ belong to $L_{n+1}(X)$, we  have
	$\omega_{k+\ell+1}=x_{\ell+1}$, and the claim is proved. In the same way, using now the fact that $L_n(Y)$ does not contain any left special word, we obtain that   $\omega_{k+\ell}=x_\ell$ for $\ell\leq 0$ and we conclude that  $T^k\omega=x$. Since the orbit of $\omega$ is dense we deduce that
	$X=Y$ and thus $\omega$ is periodic. This contradicts our assumption and proves  the claim.

	We claim now that $X$ contains at most $2K$ distinct  finite orbits.  Suppose that this is not the case
	and that $Y_1,\dots,Y_{2K+1}$ are distinct finite orbits. Then the set $Y_j$, $j=1,\dots,2K+1$, are closed, invariant, pairwise disjoint, and it follows that for every sufficiently large $n\in \N$  the sets $L_n(Y_1),\dots,L_n(Y_{2K+1})$ are pairwise disjoint. Let $n\in \N$ be chosen so that there are at most $2K$ special words of length $n$. By the preceding step, each set $L_n(Y_j)$ contains a special word, and since these words are distinct,  we have a contradiction and the  claim is proved.

	By~\cite{CK},  the subshift $(X,T)$ has only finitely many non-atomic ergodic measures. Each atomic ergodic invariant measure is the uniform measure of a finite orbit, and we previously showed that there are at most $2K$ such orbits, hence there are at most $2K$ such measures. This completes the proof.
\end{proof}


\subsection{Proof of Theorem~\ref{th:Liou-non-linear}}
Suppose  that $\blambda$ has linear block growth. We extend $\blambda$ to a two sided sequence, written also $\blambda\in\{-1,1\}^\Z$, by letting $\blambda(n)=1$ for non-positive $n\in \Z$;  then the extended sequence  still has linear block growth.  Let $Y$ be the closed orbit of $\blambda$ in $\{-1,1\}^\Z$ and let $R$ be the shift on $Y$. Then $(Y,R)$ is a transitive subshift, and since it has linear block growth  it has zero topological entropy. Moreover,  by Proposition~\ref{prop:linear-grow} this system admits only finitely many ergodic invariant measures. Note that for every $n\in \N$ we have $\blambda(n)=F_0(R^n\blambda)$, where $F_0\colon \{-1,1\}^\Z\to\R$ is the map $x\mapsto x_0$. By Theorem~\ref{th:main-new} we get
$$
0= \lim_{N\to\infty} \frac 1{\log N}\sum_{n=1}^N\frac{F_0(R^n\blambda)\, \blambda(n)}{n} =\lim_{N\to\infty} \frac 1{\log N}\sum_{n=1}^N\frac {\blambda(n)^2}{n}=1,
$$
a contradiction. \qed

\appendix

\section{Inverse limits and infinite-step nilsystems}\label{A:A}
\subsection{Inverse limits in ergodic theory}
\label{subsec:inv-lim-ergo}
Let $(X_j,\CX_j,\mu_j,T_j)$, $j\in \N$,  be  measure preserving  systems and
let $\pi_{j,j+1}\colon X_{j+1}\to X_j$, $j\in \N$,  be  factor maps. We say that $(X_j,\pi_{j,j+1}\colon j\in\N)$ is an \emph{inverse sequence} of systems.
An  \emph{inverse limit} of this inverse sequence is defined  to be a system $(X,\CX,\mu, T)$ endowed with factor maps $\pi_j\colon X\to X_j$, $j\in \N$,  satisfying the following two properties:
\begin{enumerate}
	\item
	\label{it:inv-lim-1}
	$\pi_j=\pi_{j,j+1}\circ\pi_{j+1}$ for every $j\in\N$;
	\item
	\label{it:inv-lim-7}
	$\CX=\vee_{j\in\N}\pi_j\inv(\CX_j)$. 
\end{enumerate}
For a given inverse sequence of systems the existence of an inverse limit can be shown by an explicit construction.
Properties~\eqref{it:inv-lim-1} and~\eqref{it:inv-lim-7} characterize the system  $(X,\mu,T)$ up to isomorphism, thus we can say that $(X,\mu, T)$, endowed with the factor maps $\pi_j$, $j\in \N$,  is {\em the}  inverse limit instead of {\em an} inverse limit, and write
$$
(X,\mu,T)= \varprojlim(X_j,\mu_j,T_j)
$$
when the factor maps are clear from the context.

A typical example is when a system  $(X,\CX,\mu,T)$ is given and for $j\in \N$  the systems on
 $X_j$ are the ones associated to an increasing sequence $\CX_j$ of $T$-invariant sub-$\sigma$-algebras of $\CX$.
Then the inverse limit of this inverse sequence can be defined as the factor of $\CX$ associated with the $T$-invariant sub-$\sigma$-algebra $\CX':=\vee_{j\in\N}\pi_j^{-1}(\CX_j)$.

We record some easy but important properties of inverse limits:
\begin{lemma}
	\label{lem:inv-lim-erg}
Suppose  that $(X,\mu,T)=\varprojlim(X_j,\mu_j,T_j)$. Then
	\begin{enumerate}
		\item
		$(X,\mu,T)$ is ergodic if and only if $(X_j,\mu_j,T_j)$ is ergodic for every $j\in \N$.
		\item
		A complex number of modulus $1$ is an eigenvalue of $(X,\mu,T)$ if and only if it is an eigenvalue of  $(X_j,\mu_j,T_j)$ for every sufficiently large $j\in \N$.
	\end{enumerate}
\end{lemma}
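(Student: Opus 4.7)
The plan is to treat both parts by the standard approximation scheme for sub-$\sigma$-algebras that exhaust $\CX$, plus the observation that each $\pi_j^{-1}(\CX_j)$ is $T$-invariant, so that $T$ commutes with the conditional expectation $\E_\mu(\cdot\mid \pi_j^{-1}(\CX_j))$.

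For part (1), the forward direction is immediate: ergodicity passes to factors, since any $T_j$-invariant $f\in L^2(\mu_j)$ lifts via $\pi_j$ to a $T$-invariant function on $X$, which is constant by ergodicity of $(X,\mu,T)$. For the converse, let $f\in L^2(\mu)$ satisfy $Tf=f$. Set $f_j:=\E_\mu(f\mid\pi_j\inv(\CX_j))$. Since the $\sigma$-algebra $\pi_j\inv(\CX_j)$ is $T$-invariant, each $f_j$ is $T$-invariant, and since $f_j=g_j\circ\pi_j$ for some $g_j\in L^2(\mu_j)$, we have $T_jg_j=g_j$. By ergodicity of $(X_j,\mu_j,T_j)$, $g_j$ is $\mu_j$-almost everywhere constant, hence so is $f_j$. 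By property~\eqref{it:inv-lim-7} and the martingale convergence theorem, $f_j\to f$ in $L^2(\mu)$, so $f$ is constant.

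For part (2), the backward direction is the easy one: if $g_j\in L^2(\mu_j)$ satisfies $T_jg_j=\lambda g_j$ with $g_j$ nonzero for some $j$, then $g_j\circ\pi_j$ is a nonzero eigenfunction of $T$ on $X$ with eigenvalue $\lambda$. (In particular the hypothesis ``for every sufficiently large $j$'' is more than enough.) For the forward direction, suppose $f\in L^2(\mu)$ is nonzero with $Tf=\lambda f$. As above let $f_j:=\E_\mu(f\mid\pi_j\inv(\CX_j))$; since $T$-invariance of the sub-$\sigma$-algebra forces $T$ to commute with the conditional expectation, we have $Tf_j=\lambda f_j$. Writing $f_j=g_j\circ\pi_j$ gives $T_jg_j=\lambda g_j$. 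By martingale convergence $f_j\to f$ in $L^2(\mu)$, so $f_j\neq 0$ for all sufficiently large $j$, hence $g_j\neq 0$ in $L^2(\mu_j)$, i.e.\ $\lambda$ is an eigenvalue of $(X_j,\mu_j,T_j)$ for every sufficiently large $j$.

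Nothing here is really obstructive; the only point to verify carefully is the commutation $T\circ\E_\mu(\cdot\mid\pi_j\inv(\CX_j))=\E_\mu(\cdot\mid\pi_j\inv(\CX_j))\circ T$, which follows from $T$-invariance of the sub-$\sigma$-algebra together with the fact that $T$ is $\mu$-preserving. After that, both parts are just a book-keeping exercise combining martingale convergence with the defining properties~\eqref{it:inv-lim-1} and~\eqref{it:inv-lim-7} of the inverse limit.
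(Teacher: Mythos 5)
Your proof is correct, and it is the standard argument the paper has in mind (the paper states this lemma without proof, calling it an ``easy but important'' property). The two key points — that $T$ commutes with $\E_\mu(\cdot\mid\pi_j\inv(\CX_j))$ because these sub-$\sigma$-algebras are $T$-invariant and increasing, and that martingale convergence applies by property~\eqref{it:inv-lim-7} — are exactly what is needed, and you handle both correctly.
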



\subsection{Inverse limits of topological dynamical systems}
\label{subsec:inv-lim-top}
Let $(X_j,T_j)$, $j\in\N$,  be  topological dynamical systems
and
$\pi_{j,j+1}\colon X_{j+1}\to X_j$, $j\in \N$,  be  factor maps. We say that $(X_j,\pi_{j,j+1}\colon j\in\N)$ is an \emph{inverse sequence} of topological dynamical systems.
An  \emph{inverse limit} of this inverse sequence is defined  to be a topological dynamical system $(X, T)$ endowed with factor maps $\pi_j\colon X\to X_j$, $j\in \N$,  satisfying
\begin{enumerate}
	\item
	\label{it:inv-lim-8c}
	$\pi_j=\pi_{j,j+1}\circ\pi_{j+1}$ for every $j\in\N$;
	\item
	\label{it:inv-lim-10}
	If $x,x'\in X$ are distinct, then  $\pi_j(x)\neq\pi_j(x')$ for some $j\in \N$.
\end{enumerate}
Again, for a given inverse sequence of topological systems the existence of an inverse limit can be established by an explicit construction.
Properties~\eqref{it:inv-lim-8c} and~\eqref{it:inv-lim-10} characterize the system $(X,T)$ up to isomorphism. We state the following  easy but important properties:
\begin{lemma}
	\label{lem:inv-lim-top}
	Suppose  that $(X,T)=\varprojlim(X_j,T_j)$ with factor maps $\pi_j\colon X\to X_j$, $j\in\N$.  Then
	\begin{enumerate}
		\item\label{it:inv-lim-12}
		Let $x\in X$ and $Y$ be the orbit closure of $x$ under $T$. Then for every $j\in\N$, $\pi_j(Y)$ is the orbit closure of  $\pi_j(x)$ under $T$ and $(Y,T)=\varprojlim(\pi_j(Y),T_j)$.
		\item\label{it:inv-lim-13}
		If $(X_j,T_j)$ is minimal for every $j\in \N$, then $(X,T)$ is minimal.
			\item\label{it:inv-lim-14}
		If $(X_j,T_j)$ is uniquely ergodic for every $j\in \N$, then $(X,T)$ is uniquely ergodic.
	\end{enumerate}
\end{lemma}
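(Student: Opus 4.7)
The plan is to prove the three parts in order, using (i) as a stepping stone for (ii) and a density argument based on the separation axiom for (iii).

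For part~\eqref{it:inv-lim-12}, I would first use the intertwining $\pi_j\circ T=T_j\circ\pi_j$ to deduce that $\pi_j$ maps the $T$-orbit of $x$ onto the $T_j$-orbit of $\pi_j(x)$. Since $Y$ is compact and $\pi_j$ is continuous, $\pi_j(Y)$ is closed; it contains the $T_j$-orbit of $\pi_j(x)$, and conversely any element of $\pi_j(Y)$ is a limit of images of orbit points of $x$, so $\pi_j(Y)$ equals the orbit closure of $\pi_j(x)$. To get the inverse limit conclusion for $(Y,T)$ with factor maps $\pi_j|_Y$, note that the compatibility axiom~\eqref{it:inv-lim-8c} is inherited by restriction to $Y$, and the separation axiom~\eqref{it:inv-lim-10} on $X$ restricts to separation on $Y\subseteq X$.

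For part~\eqref{it:inv-lim-13}, pick any $x\in X$ and let $Y$ be its orbit closure. By part (i), $\pi_j(Y)$ is a closed, $T_j$-invariant, nonempty subset of $X_j$, so by minimality of $(X_j,T_j)$ we get $\pi_j(Y)=X_j$ for every $j$. Thus both $(Y,T)$ (with factor maps $\pi_j|_Y$) and $(X,T)$ (with factor maps $\pi_j$) satisfy the two axioms of an inverse limit of the same sequence $(X_j,T_j)$. The mild subtlety is that I need set-theoretic equality $Y=X$, not just isomorphism: this follows from the separation axiom applied to the inclusion $\iota\colon Y\hookrightarrow X$, since $\pi_j\circ\iota=\pi_j|_Y$ for every $j$, so any point of $X$ that shares all coordinates with a point of $Y$ must equal it; combined with the surjectivity $\pi_j(Y)=X_j=\pi_j(X)$ and a standard compactness/diagonal argument in the product $\prod_j X_j$, this gives $Y=X$.

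For part~\eqref{it:inv-lim-14}, let $\mu$ and $\mu'$ be two $T$-invariant Borel probability measures on $X$. For each $j\in\N$ the pushforwards $(\pi_j)_*\mu$ and $(\pi_j)_*\mu'$ are $T_j$-invariant Borel probability measures on $X_j$, hence equal by unique ergodicity of $(X_j,T_j)$. Consequently $\int f\circ\pi_j\,d\mu=\int f\circ\pi_j\,d\mu'$ for every $f\in C(X_j)$ and every $j\in\N$. The algebra $\mathcal A:=\bigcup_{j\in\N}\{f\circ\pi_j:f\in C(X_j)\}$ (more precisely, the algebra it generates) is a subalgebra of $C(X)$ containing the constants, and by the separation axiom~\eqref{it:inv-lim-10} it separates points of $X$; by the Stone--Weierstrass theorem it is uniformly dense in $C(X)$. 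Hence $\int g\,d\mu=\int g\,d\mu'$ for all $g\in C(X)$, so $\mu=\mu'$. The main obstacle in the whole argument is the bookkeeping in part (ii) between the abstract uniqueness of inverse limits and the concrete equality of the subsystem $Y$ with $X$; once one invokes the separation axiom on the inclusion map this reduces to a straightforward compactness consideration.
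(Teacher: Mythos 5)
Your proof is correct; the paper only verifies part (iii), and your argument for it (pushing forward the two invariant measures, invoking unique ergodicity of each $(X_j,T_j)$, and then using the separation axiom plus Stone--Weierstrass to get density of the functions $f\circ\pi_j$ in $C(X)$) is exactly the paper's. Parts (i) and (ii) are left unproved in the paper as "easy but important properties," and your arguments for them — including the nested-closed-sets/compactness step showing $Y=X$ rather than merely $Y\cong X$ — are sound.
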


We verify the third property only. Let $\mu,\mu'$ be two $T$-invariant measures on $X$. For every $j\in \N$ the system  $(X_j,T_j)$ is uniquely ergodic with invariant measure $\mu_j$. Hence, for every $j\in \N$ the images of $\mu$ and $\mu'$ under $\pi_j$ are equal to $\mu_j$,  and $\int f\circ\pi_j\,d\mu=\int f\circ\pi_j\,d\mu'$ for every $f\in C(X_j)$. It follows from Property~\eqref{it:inv-lim-10} of topological inverse limits and the Stone-Weierstrass theorem that the collection of  functions $f\circ\pi_j$ where  $f\in C(X_j)$ and $j\in\N$ is dense in $C(X)$ with the uniform norm. We conclude that $\mu=\mu'$. Hence, the system $(X,T)$ is uniquely ergodic.

Up to notational changes, all definitions and results of Sections~\ref{subsec:inv-lim-ergo} and~\ref{subsec:inv-lim-top} remain valid for systems with several commuting transformations.

\subsection{Infinite step nilsystems}
\label{subsec:infinite-step}
Let  $(X_j,\mu_j,T_j)$, $j\in\N$,  be ergodic nilsystems  and $\pi_{j,j+1}\colon X_{j+1}\to X_j$, $j\in \N$,  be factor maps. By~\cite[Theorem~3.3]{parry3},\footnote{In \cite{parry3}  the result is given only when the groups defining the nilmanifolds are connected, but the proof extends to the general case. Another proof is implicit in~\cite[Section 6]{HKM}; see also~\cite[Chapter XII]{HK17}.} for every $j\in \N$ the measure theoretic factor map
$\pi_{j,j+1}\colon X_{j+1}\to X_j$ agrees almost everywhere with a topological factor map which we also denote by $\pi_{j,j+1}$.  Therefore, the topological dynamical systems $(X_j,T_j)$,  with factor maps $\pi_{j,j+1}$, $j\in\N$, form an inverse system. Let $(X,T)$ be the inverse limit of this sequence, and $\pi_j\colon X\to X_j$ be  the associated factor maps.
By Part~\eqref{it:inv-lim-14} of  Lemma~\eqref{lem:inv-lim-top}, the system  $(X,T)$ is uniquely ergodic.
Let $\mu$ be the unique  invariant measure of $(X,T)$. Then the Properties~\eqref{it:inv-lim-1} and~\eqref{it:inv-lim-7} of Section~\ref{subsec:inv-lim-ergo} are satisfied and
$(X,\mu,T)=\varprojlim(X_j,\mu_j,T_j)$.

We use the following terminology from~\cite{DDMSY}:
\begin{definition}
	We say that a measure preserving system $(X,\mu,T)$ is an \emph{ergodic infinite-step nilsystem} if it is the inverse limit of a sequence $(X_j,\mu_j,T_j)$, $j\in \N$,  of  ergodic nilsystems.
	By the preceding discussion, the topological dynamical system $(X,T)$   is then the inverse limit of the minimal nilsystems $(X_j,T_j)$, $j\in \N$,  and we say that $(X,T)$ is a \emph{minimal  infinite-step nilsystem}. We often abuse notation and denote the transformation $T_j$ on $X_j$ by $T$.
\end{definition}
We caution the reader that if $s_j$ is the degree of nilpotency of the nilmanifolds $X_j$, $j\in \N$,  then the sequence $(s_j)_{j\in\N}$ may be unbounded.

It follows from Property~\eqref{it:inv-lim-14} of Lemma~\ref{lem:inv-lim-top} and the well known fact that minimal (finite-step) nilsystems are uniquely ergodic, that minimal infinite-step nilsystems are uniquely ergodic.
\begin{lemma}
\label{lem:joining-infinite}
An ergodic joining of two ergodic finite or  infinite-step nilsystems is a finite or an infinite-step nilsystem respectively.
\end{lemma}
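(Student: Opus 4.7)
The plan is to treat the two assertions in sequence, the finite-step case being the main content and the infinite-step case reducing to it by a projective limit argument.

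For the finite-step case, let $(X_i,\mu_i,T_i)$ be the ergodic nilsystem arising from the nilmanifold $X_i = G_i/\Gamma_i$ and translation by $a_i \in G_i$, for $i=1,2$. The product $X_1 \times X_2 = (G_1 \times G_2)/(\Gamma_1 \times \Gamma_2)$ is naturally a nilmanifold, and $T_1 \times T_2$ is the translation by $(a_1,a_2)$. An ergodic joining $\sigma$ is then an ergodic, translation-invariant Borel probability measure on this nilmanifold whose projections onto $X_1$ and $X_2$ are $\mu_1$ and $\mu_2$. I would invoke the classification of such measures due to Leibman (extending work of Ratner and Lesigne): every ergodic invariant measure for a nilrotation is the Haar measure of a closed orbit, hence is itself the Haar measure of a sub-nilmanifold. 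Consequently $(X_1 \times X_2, \sigma, T_1 \times T_2)$ is isomorphic to an ergodic nilsystem.

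For the infinite-step case, write $(X_i,\mu_i,T_i) = \varprojlim_j (X_{i,j},\mu_{i,j},T_{i,j})$ where each $X_{i,j}$ is an ergodic nilsystem, with factor maps $\pi_{i,j}\colon X_i \to X_{i,j}$. Given an ergodic joining $\sigma$ of $X_1$ and $X_2$, for each $j\in\N$ set $\sigma_j := (\pi_{1,j} \times \pi_{2,j})_*\sigma$. Ergodicity of $\sigma$ passes to each $\sigma_j$, and since the marginals of $\sigma_j$ are $\mu_{1,j}$ and $\mu_{2,j}$, we get that $\sigma_j$ is an ergodic joining of two ergodic finite-step nilsystems. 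By the finite-step case just established, each $(X_{1,j}\times X_{2,j},\sigma_j, T_{1,j}\times T_{2,j})$ is isomorphic to an ergodic nilsystem. These systems form an inverse sequence under the natural factor maps $\pi_{1,j,j+1}\times\pi_{2,j,j+1}$, and I would  verify that $(X_1\times X_2,\sigma, T_1\times T_2)$ is their inverse limit: property (i) of Appendix~\ref{subsec:inv-lim-ergo} is immediate from compatibility of the $\pi_{i,j}$, while property (ii) follows from the fact that products of cylinder sets $A_1\times A_2$ with $A_i \in \pi_{i,j}^{-1}(\CX_{i,j})$ generate the product $\sigma$-algebra on $X_1\times X_2$ modulo $\sigma$-null sets. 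This exhibits the joined system as an infinite-step nilsystem.

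The main obstacle is the finite-step case, which rests entirely on the classification of ergodic invariant measures for a translation on a nilmanifold; this is a substantial but now classical result and I would simply cite it. Once that input is in hand, the rest is routine bookkeeping: ergodicity is preserved under the pushforwards $\pi_{1,j}\times\pi_{2,j}$, and the two defining axioms of an inverse limit of measure preserving systems are straightforward to check from the density of cylinder functions.
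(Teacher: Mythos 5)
Your proposal is correct and follows essentially the same route as the paper: the finite-step case is handled by viewing the joining as an ergodic invariant measure for a nilrotation on the product nilmanifold and invoking Leibman's classification (the paper cites \cite[Theorems~2.19 and 2.21]{leibman1}) to conclude it is the Haar measure of a sub-nilmanifold, and the infinite-step case is reduced to this by pushing the joining forward to the finite-step factors and identifying the joined system as the inverse limit of the resulting ergodic nilsystems.
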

\begin{proof} We give the argument for infinite-step nilsystems only, the other case is similar.
Let $\sigma$ be an ergodic joining of the ergodic infinite-step nilsystems $(X,\mu,T)$ and $(X',\mu',T')$. We write $(X,\mu,T)=\varprojlim_j(X_j,\mu_j,T_j)$ and $(X',\mu',T')=\varprojlim_j(X'_j,\mu'_j,T'_j)$ where
the systems on $X_j$ and $X_j'$  are ergodic nilsystems for every $j\in \N$. For $j\in \N$ let $\sigma_j$ be the projection of $\sigma$ on $X_j\times X'_j$;  then $\sigma_j$ is an ergodic joining of the systems on  $X_j$ and $X'_j$. By \cite[Theorems~2.19 and 2.21]{leibman1}, for $j\in \N$,  the measure $\sigma_j$
 is the Haar measure on some sub-nilmanifold of the product nilmanifold $X_j\times X'_j$ , hence $(X_j\times X'_j,\sigma_j,T_j\times T_j')$ is an ergodic nilsystem. Since
$(X\times X',\sigma,T\times T')=\varprojlim_j(X_j\times X'_j,\sigma_j,T_j\times T_j')$, the result follows.
\end{proof}

\subsection{The infinite-step nilfactor of a system}\label{SS:infnilfac}
Let $(X,\mu,T)$ be an ergodic system and for  $k\in \N$ let
$(Z_k, \CZ_k, \mu_k,T)$ be the factor of order $k$ of $X$ as defined in~\cite{HK}.
In \cite{HK}  it is shown that $Z_k$ is characterized by the following property:
\begin{equation}\label{E:Zk}
	\text{\em for } \ f\in L^\infty(\mu),\ \ \E(f|Z_{k})=0\quad \text{ \em if and
		only if } \quad   \nnorm f_{k+1} = 0,
\end{equation}
where the seminorms $\nnorm{\cdot}_k$ are defined inductively
as follows: for  $f\in L^\infty(\mu)$ we let
	$\nnorm{f}_{1}\mathrel{\mathop:}=\big| \int f \ d\mu\big|$ and
	$\nnorm f_{k+1}^{2^{k+1}} \mathrel{\mathop:}=\E_{n\in\N}
	\nnorm{\bar{f}\cdot T^nf}_{k}^{2^{k}}$ for $k\in \N$, where all limits can be shown to exist.

The following  result was  proved in \cite{HK}:
\begin{theorem}\label{T:HK'}
If $(X,\mu,T)$ is an ergodic system, then the system $(Z_k,\CZ_k, \mu_k,T)$ is an inverse limit of ergodic $k$-step nilsystems.
\end{theorem}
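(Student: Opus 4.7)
The plan is to establish Theorem~\ref{T:HK'} by an inductive construction running on $k$, with the key intermediate step being a tower of compact abelian group extensions whose cocycles are shown to satisfy a polynomial-type functional equation (the Conze-Lesigne equation) that forces a nilpotent structure.

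First I would set up the cubic machinery. Define inductively the \emph{cubic measures} $\mu^{[k]}$ on the product space $X^{2^k}$: set $\mu^{[0]}:=\mu$, and for $k\geq 1$ let $\mu^{[k]}$ be the relative independent joining of two copies of $\mu^{[k-1]}$ over the $\sigma$-algebra $\CI^{[k-1]}$ of sets in $X^{2^k}$ invariant under the diagonal transformation $T^{[k-1]}\times T^{[k-1]}$. A standard Cauchy--Schwarz argument (the Cauchy--Schwarz--Gowers inequality) then shows that the quantities $\nnorm{f}_{k+1}^{2^{k+1}}:=\int \bigotimes_{\epsilon\in\{0,1\}^{k+1}}\CC^{|\epsilon|}f\,d\mu^{[k+1]}$ coincide with the inductively defined seminorms in the theorem statement (by the mean ergodic theorem applied along the diagonal), and that they are indeed seminorms on $L^\infty(\mu)$. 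The factor $(Z_k,\CZ_k,\mu_k,T)$ is then shown to exist and to be characterized by the stated property \eqref{E:Zk} by an application of the Riesz representation / martingale convergence theorem.

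The heart of the argument is to show $Z_k$ is a compact abelian group extension of $Z_{k-1}$. One proves that $Z_k$ is generated over $Z_{k-1}$ by functions $\phi$ satisfying a ``parallelogram identity'' on the cubic system, and via a Mackey-type argument these generate an extension of the form $Z_{k-1}\times_\rho U$ for a compact abelian group $U$ and a cocycle $\rho\colon Z_{k-1}\to U$. This is the step I expect to be the main technical obstacle: one must simultaneously verify that such an extension structure exists, identify the structure group $U$ as a compact abelian Polish group, and carry out the gluing with the lower-step factor in a measurable way.

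Next, the cocycle $\rho$ must be shown to satisfy a \emph{Conze-Lesigne type functional equation}: there exist measurable functions $F_s$ and characters $c_s$ of $U$ such that $\Delta_s \rho(z)=c_s\cdot \Delta F_s(z)$ for $\mu_{k-1}$-almost every $z$ and every $s$ in an appropriate subgroup of $Z_{k-1}$, where $\Delta_s$ denotes the derivative along $s$. This equation, combined with the inductive hypothesis that $Z_{k-1}$ is an inverse limit of ergodic $(k-1)$-step nilsystems, allows one to identify $Z_k$ up to isomorphism with an inverse limit of group extensions of nilsystems whose cocycles are of polynomial (degree $k$) type. Finally, invoking the classical fact that a compact abelian extension of an ergodic nilsystem by a cocycle satisfying such a polynomial equation is itself isomorphic to an ergodic $k$-step nilsystem (a result ultimately using the Malcev embedding and Lie-theoretic constructions of the Host-Kra group), one concludes that $Z_k$ is realized as an inverse limit of ergodic $k$-step nilsystems, completing the induction.
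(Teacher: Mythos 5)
The paper does not actually prove this statement: Theorem~\ref{T:HK'} is quoted verbatim as the main structure theorem of Host and Kra and is attributed to \cite{HK}, so there is no in-paper argument to compare yours against. What you have written is a roadmap of the proof that appears in \cite{HK} (and in \cite{HK17}), and at the level of architecture it is the right roadmap: cubic measures $\mu^{[k]}$, the identification of the inductively defined seminorms with integrals over $\mu^{[k+1]}$ via the mean ergodic theorem, the characterization of $\CZ_k$ by the vanishing property \eqref{E:Zk}, and the fact that $Z_k$ is a compact abelian group extension of $Z_{k-1}$ are all genuine steps of that proof.

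However, as a proof your proposal has a real gap, and not only because the hard steps are named rather than carried out. The reduction you describe for the final stage is not how the general-$k$ argument can go and, as stated, is close to circular. The Conze--Lesigne functional equation in the form you write characterizes cocycles of type $2$ and yields the structure of $Z_2$; for $k\geq 3$ one cannot simply posit an analogous equation level by level and then invoke a ``classical fact'' that an abelian extension of a nilsystem by a polynomial cocycle is again a nilsystem --- that assertion, in the generality required, \emph{is} the structure theorem, not an input to it. What Host and Kra actually do is quite different at this stage: after reducing (by induction and a separability argument) to the case where $X$ is an extension of an inverse limit of $(k-1)$-step nilsystems by a finite-dimensional torus, they introduce the group $\mathcal{G}(X)$ of measure-preserving transformations whose side actions preserve the cubic measures, prove it is a locally compact $k$-step nilpotent Lie group acting transitively on $X$, and thereby exhibit $X$ as a nilmanifold $\mathcal{G}/\Lambda$. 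That transitivity argument is the crux of the theorem and is absent from your outline. So your proposal should be read as an accurate table of contents for \cite{HK} rather than a proof; for the purposes of this paper the correct move is simply to cite the result, as the authors do.
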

The factors $\CZ_k,$ $k\in\N$,  form an increasing sequence of $T$-invariant  sub-$\sigma$-algebras of $\CX$ and we let
$\CZ_\infty:=\vee_{k\in \N}\CZ_k$ and  $(Z_\infty,\CZ_\infty,\mu_\infty,T)$ be the factor system associated with the $\CZ_\infty$. Then, this system is the inverse limit of the systems $(Z_k,\CZ_k, \mu_k,T)$, $k\in \N$.
\begin{corollary}\label{C:HK}
If $(X,\mu,T)$ is an ergodic system, then 	$(Z_\infty,\mu_\infty,T)$  is an ergodic infinite-step nilsystem.
\end{corollary}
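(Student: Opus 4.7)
The plan is to collapse the ``doubly indexed'' inverse limit structure (first over $k$, then over $j$ for each $Z_k$) into a single inverse system of ergodic nilsystem factors of $Z_\infty$, using Lemma~\ref{lem:joining-infinite} to ensure that the factors we build remain (finite-step) nilsystems. Ergodicity of $(Z_\infty,\mu_\infty,T)$ is immediate since it is a factor of the ergodic system $(X,\mu,T)$.

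First I would unpack what we already have. By Theorem~\ref{T:HK'}, for each $k\in\N$ we can write
$(Z_k,\mu_k,T)=\varprojlim_{j\in\N}(Y_{k,j},\nu_{k,j},T)$,
where each $(Y_{k,j},\nu_{k,j},T)$ is an ergodic $k$-step nilsystem. Composing the corresponding factor maps with the projection $Z_\infty\to Z_k$ realizes each $Y_{k,j}$ as a factor of $Z_\infty$, with associated sub-$\sigma$-algebra $\CY_{k,j}\subset\CZ_\infty$. Since $\CZ_k=\vee_j\CY_{k,j}$ and $\CZ_\infty=\vee_k\CZ_k$ by construction, we have
$$
\CZ_\infty=\bigvee_{k,j\in\N}\CY_{k,j},
$$
a join of countably many sub-$\sigma$-algebras each coming from an ergodic nilsystem factor of $Z_\infty$.

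Next I would show that the collection of ergodic nilsystem factors of $Z_\infty$ is directed. Given two such factors $(U_1,T)$ and $(U_2,T)$, the induced image of $\mu_\infty$ on $U_1\times U_2$ is an ergodic joining (ergodicity inherited from $Z_\infty$) of two ergodic finite-step nilsystems, hence, by Lemma~\ref{lem:joining-infinite}, is itself an ergodic nilsystem. This joined system is again a factor of $Z_\infty$ and its $\sigma$-algebra contains those of $U_1$ and $U_2$. Enumerating the countable family $\{\CY_{k,j}\}$ as $\CY_1,\CY_2,\dots$ and iterating this joining construction, we obtain ergodic nilsystem factors $(W_n,\rho_n,T)$ of $(Z_\infty,\mu_\infty,T)$, together with factor maps $W_{n+1}\to W_n$ compatible with the projections from $Z_\infty$, such that the associated sub-$\sigma$-algebras $\CW_n$ form an increasing sequence with $\CW_n\supset\CY_1\vee\cdots\vee\CY_n$.

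Finally I would verify the inverse-limit identification. The projections $Z_\infty\to W_n$ and $W_{n+1}\to W_n$ satisfy the compatibility in~\eqref{it:inv-lim-1}, and
$$
\bigvee_{n\in\N}\CW_n=\bigvee_{k,j\in\N}\CY_{k,j}=\CZ_\infty,
$$
which is property~\eqref{it:inv-lim-7}. Hence $(Z_\infty,\mu_\infty,T)=\varprojlim_n(W_n,\rho_n,T)$, so by the very definition given in Section~\ref{subsec:infinite-step}, $(Z_\infty,\mu_\infty,T)$ is an ergodic infinite-step nilsystem. The only real point of substance is the closure of ergodic nilsystem factors under joint factoring, which is exactly Lemma~\ref{lem:joining-infinite}; once that is in hand, the rest is bookkeeping around cofinal sequences in the countable family $\{\CY_{k,j}\}$.
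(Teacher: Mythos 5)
Your proposal is correct and follows essentially the same route as the paper: both decompose each $Z_k$ via Theorem~\ref{T:HK'}, use Lemma~\ref{lem:joining-infinite} to combine finitely many ergodic nilsystem factors into a single ergodic nilsystem factor, and then exhibit $\CZ_\infty$ as the join of a cofinal increasing sequence of such factors to invoke the inverse-limit characterization. The only difference is bookkeeping --- the paper indexes its cofinal sequence by $\CY_\ell=\bigvee_{k+j\le\ell}\CZ_{k,j}$ rather than by an arbitrary enumeration joined iteratively.
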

\begin{proof}
	For $k\in \N$ we write $(Z_k,\mu_k,T)=\varprojlim_j (Z_{k,j},\mu_{k,j}, T_j)$ where the systems on $Z_{k,j}$ are ergodic  $k$-step ergodic nilsystems for every $j\in \N$.
	For  $\ell\in \N$ let $(Y_\ell,\nu_\ell,T)$ be the factor of $X$ associated with the $\sigma$-algebra
	$$
	\CY_\ell:=\bigvee_{k,j\in\N,\ k+j\leq\ell}\, \CZ_{k,j}.
	$$
 Then the system on $Y_\ell$ is an ergodic joining of the nilsystems on $Z_{k,j}$ with $k+j\leq\ell$. Hence,
 Lemma~\ref{lem:joining-infinite} gives that
	$(Y_\ell,\nu_\ell,T)$ is an ergodic nilsystem. Moreover, for every $\ell\in \N$ and for all $k,j\in \N$ with $k+j\leq\ell$ we have  $\CZ_{k,j}\subset \CZ_\ell$ and thus $\CY_\ell\subset\CZ_\ell$ and $\vee_\ell\CY_\ell\subset \CZ_\infty$. Conversely, for every $k\in \N$ we have $\CY_{k+j}\supset \CZ_{k,j}$ for every $j\in\N$, hence $\vee_\ell\CY_\ell=\vee_j\CY_{k+j}\supset \vee_j\CZ_{k,j}=\CZ_k$. Therefore, $\vee_\ell\CY_\ell\supset \CZ_\infty$ and we have equality $\vee_\ell\CY_\ell= \CZ_\infty$. By the characterization \eqref{it:inv-lim-1} and~\eqref{it:inv-lim-7} of inverse limits, we deduce that  $(Z_\infty,\mu_\infty,T)=\varprojlim_\ell(Y_\ell,\nu_\ell,T)$ and thus $(Z_\infty,\mu_\infty,T)$ is an infinite-step nilsystem.
 \end{proof}


%
\section{The nilmanifold and nilsystem of arithmetic progressions}\label{A:AP}
A key step in the proof of Theorem~\ref{th:main-structure}  is to determine  the structure of the system of arithmetic progressions with integer steps (see Definition~\ref{D:under})  in the case where the base system
is a nilsystem. We are thus naturally led to study configurations defined by  arithmetic progressions
on $G^\Z$, where $G$ is some nilpotent group,  of the form $(\ldots, h^{-2}g, h^{-1}g, g, hg, h^2g,\ldots)$,  where $g,h\in G$. It turns out that such configurations are not closed under pointwise multiplication and the smallest closed  subgroup of $G^\Z$ that contains these ``arithmetic progressions'' is the Hall-Petresco group that we define next.  An extensive study of arithmetic progressions in a nilpotent group and in a nilmanifold can be found in~\cite[Chapter XIV]{HK17}
and in \cite{HK17}.
 \subsection{The  group of arithmetic progressions}\label{SS:HP}
Let $s\in\N$ and let $X=G/\Gamma$ be an $s$-step nilmanifold.
We write
$$G=G_0=G_1\supset G_2\supset \dots\supset G_s\supset G_{s+1}=\{e_G\}$$
for the lower central series of $G$. 	 We denote by $\mu_X$ the Haar measure of $X$ and by $e_X$ the image of $e_G$ in $X$. The action of $G$ on $X$ is written $(g,x)\mapsto g\cdot x$.

We use the following convention for binomial coefficients with  negative entries:
$$
\binom nm=\frac{n(n-1)\cdots(n-m+1)}{m!}, \quad n\in \Z, \, m\geq 0,
$$
where the empty product is equal to $1$ by convention.

We write $\uG$ for the set of sequences $\ug=(g_j)_{j\in\Z}$ given by
\begin{equation}
	\label{eq:HP}
	g_j= a_0a_1^{\binom j1}a_2^{\binom j2}\cdots 
	a_s^{\binom js},\quad j\in\Z,
\end{equation}
where $a_m\in G_m$  for $m=0,1,\dots,s$.

It is known since the work of Hall~\cite{Hall} and Petresco~\cite{Pe} that $\uG$ forms a group with respect to  pointwise multiplication. This group is called the \emph{Hall-Petresco group of $G$} and  was extensively studied  by Leibman~\cite{leibman} and later  by Green and Tao~\cite{GT1,GT3}.

Elements of $\uG$ have  the following useful equivalent  characterization:
For $\ug=(g_j)_{j\in\Z}$ in $G^\Z$, let $\partial\ug\in G^\Z$ be defined by
$$
(\partial \ug)_j:=g_{j+1}g_j\inv, \quad j\in\Z.
$$
In other words, $\partial \ug= \sigma\ug\cdot \ug\inv$ where $\sigma\colon G^\Z\to G^\Z$ is the shift   defined by
$$
(\sigma(\ug))_j:=g_{j+1},  \quad \ug\in G^\Z, \ j\in\Z.
$$
For $m\in \N$ we let $\partial ^{\circ m} :=\partial\circ\dots\circ\partial$ ($m$ times). The next result was proved in
\cite[Proposition~3.1]{leibman} and also in \cite{lazard}:
\begin{lemma}\label{L:leibman}
	An element $\ug\in G^\Z$ belongs to $\uG$ if and only if for every $m\in \N$  we have  $\partial^{\circ m}\ug\in G_m^\Z$.
\end{lemma}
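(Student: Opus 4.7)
The plan is to prove both implications by induction on the nilpotency class $s$ of $G$, exploiting the centrality of $G_s$ in $G$ (which follows from $[G,G_s] \subseteq G_{s+1} = \{e_G\}$). The base case $s=1$ is abelian and reduces to standard discrete calculus: $g_j = a_0 a_1^{\binom{j}{1}}$ yields $(\partial\ug)_j = a_1$ constant and $(\partial^{\circ 2}\ug)_j = e_G$, and the converse is equally elementary.

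For the forward direction at class $s$, given $\ug \in \uG$ as in~\eqref{eq:HP}, I peel off the highest-order term by writing $g_j = h_j \cdot a_s^{\binom{j}{s}}$ where $h_j := a_0 a_1^{\binom{j}{1}} \cdots a_{s-1}^{\binom{j}{s-1}}$. The key computation, using Pascal's identity $\binom{j+1}{s} - \binom{j}{s} = \binom{j}{s-1}$ together with the centrality of $a_s \in G_s$, is
\[
(\partial\ug)_j = (\partial\uh)_j \cdot a_s^{\binom{j}{s-1}}.
\]
The second factor lies in the central abelian subgroup $G_s$ and commutes with every subsequent application of $\partial$, so $\partial$ distributes over the product upon iteration. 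Applying the inductive hypothesis to $\pi(\uh)$ in the $(s-1)$-step quotient $G/G_s$ handles $\partial^{\circ m}\uh$ modulo $G_s$ for $m \leq s-1$, and a direct computation on the abelian $G_s$-valued second factor handles the remaining degrees (including $m > s$, where everything becomes identically $e_G$). Assembling the two contributions yields $\partial^{\circ m}\ug \in G_m^\Z$ for every $m$, and the computation simultaneously shows $(\partial^{\circ m}\ug)_0 = a_m$.

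For the reverse direction at class $s$, assume $\partial^{\circ m}\ug \in G_m^\Z$ for all $m$. Set $a_m := (\partial^{\circ m}\ug)_0 \in G_m$ (with $a_m = e_G$ for $m > s$) and let $\uh$ denote the Hall--Petresco candidate built from these $a_m$ via~\eqref{eq:HP}. By the forward direction just proved, $\uh \in \uG$ and satisfies $(\partial^{\circ m}\uh)_0 = a_m$ for every $m$. The projections $\pi(\ug)$ and $\pi(\uh)$ in the $(s-1)$-step quotient $G/G_s$ satisfy the hypothesis of the lemma there with matching values of $(\partial^{\circ m})_0$, so the inductive hypothesis forces the two projections to coincide. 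Hence $\underline k := \ug \cdot \uh^{-1}$ takes values in the central subgroup $G_s$, and centrality makes $\partial$ distribute over $\ug = \underline k \cdot \uh$, giving $\partial^{\circ m}\underline k = (\partial^{\circ m}\ug)(\partial^{\circ m}\uh)^{-1}$, which vanishes at $j = 0$ for every $m$ and which is identically $e_G$ for $m > s$. Since $\underline k$ is thus a polynomial sequence of degree $\leq s$ in the abelian group $G_s$ whose first $s+1$ discrete derivatives at $0$ vanish, a Taylor-type argument in $G_s$ forces $\underline k \equiv e_G$, so $\ug = \uh \in \uG$.

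The main technical obstacle is that $\partial$ is not a group homomorphism in the non-abelian setting: one cannot in general expand $\partial(\ug\uh)$ as a product of $\partial\ug$ and $\partial\uh$. The escape route in both directions is to isolate one layer of the lower central series at a time, whereupon the top layer $G_s$ sits centrally inside $G$ and the corresponding piece of the calculation becomes abelian, while the non-abelian complications get pushed into the smaller-class quotient $G/G_s$ where the inductive hypothesis applies.
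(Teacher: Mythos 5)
The paper does not actually prove Lemma~\ref{L:leibman}; it is quoted from Leibman \cite[Proposition~3.1]{leibman} and Lazard \cite{lazard}, so your attempt is measured against those arguments rather than against a proof in the text. Your overall strategy --- induction on the nilpotency class, peeling off the central layer $G_s$ so that $\partial$ distributes over the product, and finishing with the discrete Taylor formula in the abelian group $G_s$ --- is the right skeleton and is close in spirit to Lazard's proof, but two of your steps fail as written.

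In the forward direction, the inductive hypothesis applied to $\pi(\uh)$ in $G/G_s$ controls $\partial^{\circ m}\uh$ only modulo $G_s$: for $m\ge s$ it yields $\partial^{\circ m}\uh\in G_s^{\Z}$ and nothing more. Your assertion that for $m>s$ ``everything becomes identically $e_G$'' is justified for the central factor $\uw_j=a_s^{\binom js}$, whose $m$-th derivative is $a_s^{\binom j{s-m}}$, but not for $\uh$: from the inductive hypothesis one only knows $\partial^{\circ(s-1)}\uh_j=c\,z_j$ with $c\in G_{s-1}$ fixed and $z_j\in G_s$ unknown, whence $\partial^{\circ(s+1)}\uh=\partial^{\circ 2}\uz$ for a $G_s$-valued sequence $\uz$ about which nothing forces vanishing. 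Since $\partial^{\circ(s+1)}\ug=e_{\uG}$ (the unipotence of $\sigma$, which the paper extracts from this lemma) is precisely the content of the condition at $m=s+1$, this is a genuine gap; closing it requires tracking the $G_s$-component of $\uh$ explicitly, e.g.\ by proving the stronger inductive statement that $\partial^{\circ m}\ug$ is itself a Hall--Petresco sequence for the filtered group $G_m\supset G_{m+1}\supset\cdots$ with identified coefficients, which is essentially what Leibman does.

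In the reverse direction, with the paper's convention $(\partial\ug)_j=g_{j+1}g_j^{-1}$ the identity $(\partial^{\circ m}\ug)_0=a_m$ that you rely on is false: already $(\partial\ug)_0=g_1g_0^{-1}=a_0a_1a_0^{-1}$, which differs from $a_1$ by $[a_0,a_1]\in G_2$. Consequently the candidate $\uh$ built from $a_m:=(\partial^{\circ m}\ug)_0$ need not satisfy $(\partial^{\circ m}\uh)_0=a_m$, the derivatives of $\underline{k}=\ug\cdot\uh^{-1}$ at $0$ do not all vanish (at $m=1$ the discrepancy is exactly the commutator above), and the Taylor argument in $G_s$ does not give $\underline{k}\equiv e_G$. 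The coefficients must be extracted recursively (set $a_0=g_0$, divide off, then read the next coefficient from the quotiented sequence, and so on), and the uniqueness statement you invoke --- that two sequences satisfying the derivative condition with matching values of $(\partial^{\circ m}\cdot)_0$ coincide --- is not part of the lemma and would have to be carried along in the induction. A cleaner repair of this half is to show only that $\underline{k}$ is $G_s$-valued with $\partial^{\circ(s+1)}\underline{k}=\ue$, conclude from the abelian Newton formula that $\underline{k}$ is itself a Hall--Petresco element (all coefficients lying in $G_s\subseteq G_m$ for every $m\le s$), and then use that $\uG$ is a group; but this again presupposes the top-degree vanishing from the forward direction.
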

We immediately deduce from Lemma~\ref{L:leibman} the following basic properties:
\begin{itemize}
	\item  $\uG$ is invariant under the shift $\sigma\colon G^\Z\to G^\Z$.
	
	\item $\partial^{\circ(s+1)} \ug=e_{\uG}$ for every $\ug\in \uG$, that is,  $\sigma$ is a unipotent automorphism of $\uG$.
	
	\item  $\uG$ is a closed subgroup of $G^\Z$.
\end{itemize}

\subsection{The nilmanifold of arithmetic progressions}
\label{subsec:AP-in-nilmanif}

Let $X^\Z$ be endowed with the action of $\uG$ given by $(\ug\cdot\ux)_j=g_j\cdot x_j$ for $\ug\in\uG$, $\ux\in X^\Z$, and $j\in\Z$. If $e_{\uX}=(\dots,e_X,e_X,e_X,\dots)$ we define
$$
\uX:= \uG\cdot e_\uX= \bigl\{ (g_j\cdot e_X)_{j\in\Z}\colon (g_j)_{j\in \Z}\in \uG\bigr\}.
$$
The  stabilizer  of $e_{\uX}$ in $\uG$ is the subgroup $\uGamma:=\uG\cap\Gamma^\Z$ and thus we have
$$
\uX=\uG\,/\,\uGamma.
$$

%
%
%
%
%
%
%

 A priori, $\uX$ is an infinite dimensional object, but it will be convenient for us to represent  it as a nilmanifold, in order to be able to apply   the machinery of nilmanifolds.
 To this end, we show   that $\uG$ can be represented as a subgroup of $G^{s+1}$ and $\uX$ as a sub-nilmanifold of $X^{s+1}$.
We make use of the next lemma that follows from Lemma~\ref{L:leibman} and
was established by Green and Tao in the course of proving Lemma~14.2 in \cite{GT1}.
\begin{lemma}
The projection homomorphism
$$
	p\colon \uG\to G^{s+1}\text{ given by }p(\ug):=(g_0,g_1,\dots,g_s)
$$
is one to one and satisfies $p\inv(\Gamma^{s+1})=\uGamma$.
Furthermore, the projection
$$
	q\colon\uX\to X^{s+1}
	\text{ given by }q(\ux):=(x_0,x_1,\dots,x_s)
$$
is one to one.
\end{lemma}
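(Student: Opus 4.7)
The plan is to derive everything from uniqueness of the Hall--Petresco representation \eqref{eq:HP}. First I would exploit the fact that the matrix $\bigl(\binom{j}{m}\bigr)_{0\le j,m\le s}$ is upper-triangular with $1$'s on the diagonal, since $\binom{j}{m}=0$ for $m>j$ and $\binom{j}{j}=1$. Evaluating \eqref{eq:HP} at $j=0,1,\ldots,s$ yields a triangular system that can be inverted stepwise: set $a_0=g_0$, and then
$$a_k=\Bigl(\prod_{m=0}^{k-1}a_m^{\binom{k}{m}}\Bigr)^{-1}g_k \quad (1\le k\le s),$$
so the parameters $a_0,\ldots,a_s$ are uniquely determined by $g_0,\ldots,g_s$.

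The injectivity of $p$ is then immediate, since the parameters are uniquely determined by $g_0,\ldots,g_s$, and once the $a_m$'s are known formula \eqref{eq:HP} recovers $g_j$ for every $j\in\Z$. For the identity $p^{-1}(\Gamma^{s+1})=\uGamma$, the inclusion $\uGamma\subseteq p^{-1}(\Gamma^{s+1})$ is trivial. For the converse I would argue by induction on $k$: if $g_0,\ldots,g_s\in\Gamma$, then $a_0=g_0\in\Gamma$, and assuming $a_0,\ldots,a_{k-1}\in\Gamma$, the inversion formula shows $a_k\in\Gamma$ (and hence $a_k\in\Gamma\cap G_k$, since membership in $G_k$ is part of the definition of $\uG$). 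Substituting back into \eqref{eq:HP} gives $g_j\in\Gamma$ for every $j\in\Z$, i.e.\ $\ug\in\uG\cap\Gamma^\Z=\uGamma$.

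For the injectivity of $q$, I would lift to $\uG$ using transitivity of the action. Suppose $q(\ux)=q(\ux')$, write $\ux=\ug\cdot e_{\uX}$ and $\ux'=\ug'\cdot e_{\uX}$ with $\ug,\ug'\in\uG$, and set $\uh:=(\ug')^{-1}\ug$. Since $\uG$ is a group, $\uh\in\uG$, and the hypothesis $g_j\cdot e_X=g_j'\cdot e_X$ for $0\le j\le s$ is equivalent to $h_j\in\Gamma$ for $0\le j\le s$, that is, $p(\uh)\in\Gamma^{s+1}$. By the previous step $\uh\in\uGamma$, so $h_j\in\Gamma$ for every $j\in\Z$, and therefore $g_j\cdot e_X=g_j'\cdot e_X$ for every $j\in\Z$, i.e.\ $\ux=\ux'$.

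The only point requiring care, rather than being a serious obstacle, is the need to know that the constraint $a_m\in G_m$ is compatible with the triangular recovery formula; but this is automatic, because existence of \emph{some} decomposition \eqref{eq:HP} with $a_m\in G_m$ is built into the definition of $\uG$, and the triangular inversion merely extracts those same $a_m$'s from the values $g_0,\ldots,g_s$. No input beyond \eqref{eq:HP} and elementary binomial-coefficient identities is needed; in particular Lemma~\ref{L:leibman} is not required for this statement.
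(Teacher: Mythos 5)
Your proof is correct, and it is genuinely different from what the paper does: the paper does not prove this lemma at all, but cites Green and Tao (\cite{GT1}, in the course of their Lemma~14.2) and remarks that it follows from Lemma~\ref{L:leibman}, i.e.\ from the discrete-derivative characterization $\ug\in\uG\iff\partial^{\circ m}\ug\in G_m^\Z$ for all $m$. That route views an element of $\uG$ as a ``polynomial sequence of degree $\le s$'' and deduces that it is determined by its values on any $s+1$ consecutive integers; it fits naturally with the other facts the paper extracts from Lemma~\ref{L:leibman} (shift-invariance of $\uG$, unipotence of $\sigma$, closedness). Your argument instead works directly from the Hall--Petresco parametrization \eqref{eq:HP}: the unitriangularity of $\bigl(\binom jm\bigr)_{0\le j,m\le s}$ (with your indexing this matrix is in fact lower-triangular, a purely cosmetic point) lets you solve for $a_0,\dots,a_s$ stepwise from $g_0,\dots,g_s$, which gives injectivity of $p$, the identity $p^{-1}(\Gamma^{s+1})=\uGamma$ by the inductive $\Gamma$-membership argument, and then injectivity of $q$ by lifting to $\uh=(\ug')^{-1}\ug$ --- where you correctly (and necessarily) invoke the Hall--Petresco theorem that $\uG$ is a group, which the paper records just before the lemma. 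Your approach is more elementary and self-contained, and it yields as a byproduct the uniqueness of the coordinates $(a_0,\dots,a_s)$ in \eqref{eq:HP}; the paper's approach is shorter on the page but outsources the content to \cite{GT1} and to Lemma~\ref{L:leibman}.
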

We let
$$
\uG':=p(\uG), \quad \uGamma':=p(\uGamma)=\uG\cap \Gamma^{s+1}, \quad  \uX':=q(\uX).
$$
Writing  $\underline e_X':=(e_X,e_X,\dots,e_X)\in X^{s+1}$, we have  $\uX'=\uG'\cdot\underline e'_X$ by construction and we can identify $\uX'$ with $\uG'/\uGamma'$.
	
 By \cite[Section 5]{BHK} (see also~\cite{Zi05}),
$\uG'$ is a closed subgroup of $G^{s+1}$, hence a nilpotent Lie group, and the discrete subgroup $\uGamma'$ of $\uG'$ is cocompact. Therefore,  $\uX'$ is compact and can be identified with the nilmanifold $\uG'/\uGamma'$.

Since $\uG$ and $\uG'$ are Polish groups and $p\colon\uG\to\uG'$ is a continuous bijective homomorphism,  the inverse homomorphism is also continuous. Since $\uGamma'$ is cocompact in $\uG'$, it follows that $\uGamma$ is cocompact in $\uG$, hence $\uX$ is  compact and thus $q\colon\uX\to\uX'$ is a homeomorphism.
\begin{convention}
	In the sequel, we use the isomorphism $p$ to identify $\uG$ with $\uG'$ and $\uGamma$ with $\uGamma'$.  We use the homeomorphism $q$ to identify $\uX=\uG/\uGamma$ with the nilmanifold $\uX'=\uG'/\uGamma'$. We write $\mu_{\uX}$ for the Haar measure of $\uX$.
\end{convention}
\begin{definition}
	$\uX=\uG/\uGamma$ is called the \emph{nilmanifold of arithmetic progressions} in $X$.
\end{definition}


\subsection{The nilsystem of arithmetic progressions}
Since $\uG$ is invariant under the shift $\sigma$ of $G^\Z$ we get that $\uX$ is invariant under   the shift $S$ of $X^\Z$. We  have
\begin{equation}
	\label{eq:Sgphi}
	S(\ug\cdot\ux)=\sigma(\ug)\cdot S\ux, \quad \ux\in \uX, \ \ug\in \uG.
\end{equation}
By~\eqref{eq:Sgphi} the image of the measure $\mu_{\uX}$ under $S$ is invariant under translation by  elements of $\uG$, hence it is equal to  $\mu_{\uX}$. We have thus established that  $(\uX,\mu_{\uX},S)$ is a measure preserving system and
our next goal is to give  $(\uX,S)$ the structure of a nilsystem, called the \emph{nilsystem of arithmetic progressions in $X$}.

We define the group $\hat{\uG}$ to be the semidirect product $\hat{\uG}= G\rtimes_\phi\Z$, where $\phi\colon \Z\to\Aut(G)$ is the homomorphism $n\mapsto \overrightarrow \sigma^{\circ n}$ where
$\sigma^{\circ n}=\sigma\circ\dots\circ\sigma$ ($n$ times).  More explicitly, as a set we have $\hat{\uG}=\uG\times\Z$ and  the multiplication is given by
$$
(\ug,m)\cdot(\uh,n)=(\ug\cdot \sigma^{\circ m}(\uh),m+n), \quad   \ug,\uh\in \uG, \ m,n\in\Z.
$$
Then $\uG\times\{0\}$ is a normal subgroup of $\hat{\uG}$ that we identify with $\uG$.
Since $\uG$ is nilpotent and the automorphism $\sigma$ of $\uG$ is unipotent, it follows that $\hat{\uG}$ is nilpotent~\cite[Proposition~3.9]{leibman1}.
We give  $\hat{\uG}$ the structure of a Lie group by letting $\uG$  be an open subgroup of $\hat{\uG}$.

The group $\hat{\uG}$ acts on $\uX$ by $(\ug,m)\cdot \ux= \ug\cdot S^m\ux$ and this action preserves the Haar measure of $\uX$. Moreover, the stabilizer of $e_{\uX}$ is the discrete cocompact subgroup $\hat{\uGamma}:= \uGamma\rtimes_\phi\Z$ of $\hat{\uG}$ and we can identify $\uX$ with the nilmanifold $\hat{\uG}/\hat{\uGamma}$.
Since the measure $\umu$ is invariant under $S$ and the action of $\uG$, it is invariant under the action of $\hat{\uG}$ and thus coincides with the Haar measure of $\uX$ when identified with $\hat{\uG}/\hat{\uGamma}$.
Finally, with the above identifications, the  transformation $S$ is  the translation by the element $(\ue_G,1)$ of $\hat{\uG}$ and  thus $(\uX,\mu_\uX,S)$ is a nilsystem. The previous discussion leads to the following basic result:
\begin{proposition}\label{P:uX}
	If $X$ is a nilmanifold,  then the system $(\uX,S)$ is topologically isomorphic to  a nilsystem. As a consequence,
	if  $Y=\overline{\{S^n\ux\colon n\in \Z\}}$ for some $\ux\in \uX$,
	then the system $(Y,S)$ is  topologically isomorphic to  a uniquely  ergodic nilsystem.
\end{proposition}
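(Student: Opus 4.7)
The first assertion is essentially assembled from the material already developed: the plan is to make the semidirect product construction preceding the statement into a rigorous argument. First I would check that $\hat{\uG}=\uG\rtimes_\phi\Z$ really is a nilpotent Lie group. Nilpotency follows from the general fact (Leibman, \cite[Proposition~3.9]{leibman1}) that the semidirect product of a nilpotent group by $\Z$ via a unipotent automorphism is nilpotent, applied to $\sigma$ acting on $\uG$, whose unipotency was recorded just after Lemma~\ref{L:leibman} in the form $\partial^{\circ(s+1)}\ug=e_{\uG}$. Since $\uG$ already carries a Lie group structure (as a closed subgroup of $G^{s+1}$ under the identification $p$), declaring $\uG\times\{0\}$ to be open in $\hat{\uG}$ gives $\hat{\uG}$ a Lie group structure.

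Next I would verify that the formula $(\ug,m)\cdot\ux:=\ug\cdot S^m\ux$ defines a continuous action of $\hat{\uG}$ on $\uX$: associativity is a direct consequence of \eqref{eq:Sgphi} and the semidirect product multiplication. The stabilizer of $e_{\uX}$ under this action is $\hat{\uGamma}=\uGamma\rtimes_\phi\Z$, which is discrete (because $\uGamma$ is) and cocompact in $\hat{\uG}$ (because $\uGamma$ is cocompact in $\uG$, and $\uG$ is an open, finite-index-modulo-$\Z$ subgroup of $\hat{\uG}$). The orbit map $\hat{\uG}\to\uX$ therefore descends to a continuous bijection $\hat{\uG}/\hat{\uGamma}\to\uX$; since $\hat{\uG}/\hat{\uGamma}$ is compact and $\uX$ is Hausdorff, this is a homeomorphism. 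Under this identification, $S$ is the translation by $(e_G,1)\in\hat{\uG}$, so $(\uX,S)$ is topologically isomorphic to a nilsystem.

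For the consequence, I would invoke the classical structure theorem for orbit closures of a translation on a nilmanifold, due to Leibman \cite{leibman1} (building on Parry and Lesigne): if $T$ is a translation on a nilmanifold $H/\Lambda$ and $x\in H/\Lambda$, then $Y:=\overline{\{T^nx\}}$ is a sub-nilmanifold of the form $H'x$ for some closed subgroup $H'\leq H$ containing the translation element, and the restricted system $(Y,T)$ is a minimal nilsystem, hence uniquely ergodic. Applied to the nilsystem $(\uX,S)=(\hat{\uG}/\hat{\uGamma},\,\cdot\,(e_G,1))$ at the point $\ux$, this immediately yields the second assertion.

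I do not anticipate a substantive obstacle: the only slightly delicate point is to check that the semidirect product $\hat{\uG}$ is genuinely a (finite-dimensional) nilpotent Lie group, but this is covered by \cite{leibman1} once we use the identification $\uG\cong\uG'\subset G^{s+1}$ established just before the statement, and everything else is a routine assembly of the homomorphism, stabilizer, and orbit-closure ingredients already in place.
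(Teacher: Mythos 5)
Your proposal is correct and follows essentially the same route as the paper: the paper's proof of the first claim is precisely the preceding semidirect-product construction of $\hat{\uG}=\uG\rtimes_\phi\Z$ (with nilpotency from \cite[Proposition~3.9]{leibman1} via unipotency of $\sigma$) and the identification of $\uX$ with $\hat{\uG}/\hat{\uGamma}$ on which $S$ acts as translation by $(\ue_G,1)$, and the second claim is likewise deduced from \cite[Theorems~2.19 and 2.21]{leibman1}. No gaps.
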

The first claim was established in the previous discussion. The consequence follows, for example,  from \cite[Theorems~2.19 and 2.21]{leibman1}.

\section{Sketch of proof of Tao's identity}\label{A:Tao}
We recall the statement of Theorem~\ref{T:Tao} and briefly sketch its proof following almost entirely \cite{Tao15}. The  only  difference in our presentation, is that our assumption of existence of certain  limits allows  us to perform a partial summation at the beginning of the argument in order to connect the averages we are interested in to the averages treated in \cite{Tao15}.
\begin{proposition}
	Let $\bN=([N_k])_{k\in\N}$ be a sequence of intervals,  $\ell\in \N$,  $a_1,\ldots, a_\ell$  be bounded sequences of complex numbers,   and   $h_1,\ldots, h_\ell\in \Z$.
	Let also $(c_p)_{p\in\P}$ be a bounded sequence of complex numbers.
	Then,
	assuming that on the left and  right hand side below the   limits $\lE_{n\in \bN}$ exist for every $p\in\P$ and the limit  $\E_{p\in\P}$  exists, we have the identity
	\begin{equation}\label{E:wanted}
		\E_{p\in\P}\, c_p\,
		\Big(\lE_{n\in \bN}\,\prod_{j=1}^\ell a_j(pn+ph_j)\Big)=
		\E_{p\in \P}\,
		c_p\,
		\Big(\lE_{n\in \bN}\, \prod_{j=1}^\ell a_j(n+ph_j)\Big).
	\end{equation}
\end{proposition}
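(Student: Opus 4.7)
The plan is to reduce the identity to an error term controlled by Tao's entropy decrement argument, which is the main obstacle and which I would import from \cite{Tao15} with minor modifications. First, for each fixed prime $p$, I substitute $m = pn$ in the logarithmic average on the left-hand side: this replaces the factor $1/n$ by $p/m$ and the range $n\leq N_k$ by $\{m\leq pN_k\colon p\mid m\}$. Since $\log(pN_k)/\log N_k\to 1$ for each fixed $p$, and since by hypothesis the limit $\lE_{n\in\bN}\prod_j a_j(pn+ph_j)$ exists, a partial summation argument lets me replace the upper summation limit $pN_k$ by $N_k$ at a cost of $O(p\log p/\log N_k)$, which vanishes for each fixed $p$ as $k\to\infty$. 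This yields
$$
F_p := \lE_{n\in\bN}\prod_{j=1}^\ell a_j(pn+ph_j) = \lim_{k\to\infty}\frac{1}{\log N_k}\sum_{m\leq N_k}\frac{p\,\one_{p\mid m}}{m}\prod_{j=1}^\ell a_j(m+ph_j).
$$
This partial summation step (made possible precisely by the assumed existence of the limits) is the only essential modification relative to the argument in \cite{Tao15}.

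Writing $H_p := \lE_{n\in\bN}\prod_{j=1}^\ell a_j(n+ph_j)$ and interchanging the finite $p$-average with the $k$-limit, the difference of the two sides of \eqref{E:wanted} can be rewritten as
$$
\E_{p\in\P}\, c_p(F_p-H_p) = \lim_{P\to\infty}\lim_{k\to\infty}\frac{1}{\log N_k}\sum_{m\leq N_k}\frac{1}{m}\,\E_{p\leq P}\,c_p\bigl(p\,\one_{p\mid m}-1\bigr)\prod_{j=1}^\ell a_j(m+ph_j).
$$
Heuristically the factor $\E_{p\leq P}(p\,\one_{p\mid m}-1)$ has mean zero in $m$, since for each $p$ the density of multiples of $p$ is $1/p$; but pointwise the quantity $p\,\one_{p\mid m}$ can be large when $m$ is divisible by a large prime $p\leq P$, so a crude $L^1$ bound is insufficient.

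The hard part, and the real content of the theorem, is to show that this double limit vanishes. Here I would invoke Tao's entropy decrement argument from \cite{Tao15}: viewing $n$ drawn from $[N_k]$ with logarithmic density as a random variable, one studies the joint Shannon entropy of the residues $(n\bmod p)_{p\leq P}$ together with the discretized correlation patterns $(\prod_j a_j(n+ph_j))_{p\leq P}$. Since the total entropy is bounded by $O(\log N_k)$, a pigeonhole argument over an appropriate sequence of scales $P=P(k)\to\infty$ forces near-independence (in the mutual-information sense) between the residues $(n\bmod p)_{p\leq P}$ and the correlation patterns, for a positive proportion of scales. This near-independence allows one to decouple the factor $p\,\one_{p\mid m}$ from $\prod_j a_j(m+ph_j)$ in the above display, reducing the expression to the residual factor $\E_{p\leq P}(p\,\one_{p\mid m}-1)$, which tends to $0$ in $L^2$-average over $m$ as $P\to\infty$ by a direct second-moment computation based on the fact that $(\one_{p\mid m})$ and $(\one_{q\mid m})$ are nearly independent for distinct primes $p,q$. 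Combining these steps yields the identity \eqref{E:wanted}.
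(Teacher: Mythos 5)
Your overall architecture matches the paper's proof in Appendix~C: the substitution $m=pn$ inside the logarithmic average (equivalently the identity $\lE_{n\in\bN}\bigl(b(pn)-p\,\one_{p\Z}(n)\,b(n)\bigr)=0$, which is exactly where logarithmic averaging is essential), the reformulation of the difference of the two sides as a correlation between the pattern $\prod_j a_j(n+ph_j)$ and the balanced divisibility indicator $p\,\one_{p\Z}(n)-1$, and the appeal to the entropy decrement argument of \cite{Tao15} for the decoupling. Two points you gloss over: the paper first converts $\E_{p\in\P}$ into the weighted dyadic form $\bigl(\sum_{H/2\le p<H}1/p\bigr)^{-1}\sum_{H/2\le p<H}\frac{c_p}{p}(\cdots)$ by partial summation (using the assumed existence of the limit $\E_{p\in\P}$), and it then averages over shifts $n\mapsto n+h$ with $h\in[H]$; both steps are what make the entropy/Pinsker/Hoeffding machinery applicable. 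Since you explicitly defer that machinery to \cite{Tao15}, these omissions are tolerable in a sketch.

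The genuine error is in your endgame. The quantity $\E_{p\le P}\,(p\,\one_{p\mid m}-1)$ does \emph{not} tend to $0$ in $L^2$-average over $m$: the off-diagonal covariances essentially cancel, but the diagonal term $\E_m(p\,\one_{p\mid m}-1)^2\approx p-1$ makes the second moment of $\frac{1}{\pi(P)}\sum_{p\le P}(p\,\one_{p\mid m}-1)$ grow like $\log P$, and with the $1/p$-weighted dyadic normalization it grows like $\log H$ as well. Even the $L^1$-average over $m$ is bounded below by an absolute constant, since for a positive proportion of $m$ one has $\sum_{p\le P,\ p\mid m}p=o(\pi(P))$ and hence the expression equals $-1+o(1)$. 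This failure of the naive moment computation is precisely the obstruction that the entropy decrement argument is designed to circumvent, so one cannot decouple and then finish with a second-moment bound. The correct conclusion after decoupling is different and sharper: once the true residues $(n\bmod p)_{p\le P}$ are replaced by formal uniform residues $(r_p)$, the expectation over $r_p$ of $\one_{p\Z}(r_p+h)-p^{-1}$ is \emph{exactly} zero for every $p$ and every $h$, so the decoupled expression vanishes identically and there is no residual factor left to estimate. Replace your final second-moment step by this exact cancellation; that is how both \cite{Tao15} and the paper's Appendix~C conclude.
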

\begin{proof}[Sketch of Proof]

For $H\in \N$ let\footnote{In \cite{Tao15} the respective set $\CP_H$ consists of primes on the
	interval  $[\delta H/2, \delta H)$ for a sufficiently small $\delta$, but for our purposes we can take $\delta=1$.}
$$
\CP_{H}:=\{p\in \P\colon  H/2\leq p<H\}, \qquad W_H:=\sum_{p\in \CP_H} \frac{1}{p}\sim  \frac{1}{\log{H}}
$$
where the last asymptotic means that the quotient of the two quantities involved converges to a non-zero constant as $H\to \infty$, and follows from the prime number theorem using partial summation.

We first claim that the limits on the left and right hand side of \eqref{E:wanted}
are equal to
\begin{equation}\label{E:pn}
	\lim_{H\to\infty}		\frac{1}{W_H}\sum_{p\in \CP_H}\,\frac{c_p}{p}\,  \lE_{n\in \bN}\,  \prod_{j=1}^\ell\,a_j(pn+ph_j)
\end{equation}
and
$$
\lim_{H\to\infty}		\frac{1}{W_H}\sum_{p\in \CP_{H}}\,\frac{c_p}{p}\,  \lE_{n\in \bN}\,  \prod_{j=1}^\ell\,a_j(n+ph_j)
$$
respectively.
To see this, let  $$
A(p):= c_p\, \lE_{n\in \bN}\, \prod_{j=1}^\ell\,a_j(pn+ph_j).
$$
Our assumptions give that the limit
	$L:=\E_{p\in \P}A(p)$ exists and we  want to show that $$
	B(H):=\frac{1}{W_H}\sum_{p\in\CP_H}\frac{A(p)}{p}\to L\ \text{ as } \ H\to\infty.
	$$
	 (In a similar manner we treat the second average.)
	 Let $\varepsilon>0$. If  $S(x):=\sum_{p\leq x} (A(p)-L)$, where $x\in \N$, our hypothesis gives that $|S(x)|\leq \varepsilon \frac{x}{\log x}$ for all sufficiently large $x$.  Since $S(x)-S(x-1)$ is equal to $A(x)-L$ if $x$ is a prime and is $0$ otherwise, we get that  for every $H\in \N$ we have
	$$
	B(H)-L=\frac{1}{W_H}\sum_{H/2\leq n< H}\frac{S(n)-S(n-1)}{n}.
	$$ Using partial summation
	we get that $|B(H)-L|$ is bounded by   a sum of terms of the form $S(H)/(HW_H)$ and $\frac{1}{W_H}\sum_{H/2\leq n< H}\frac{S(n)}{n^2}$. For sufficiently large $H\in \N$ the first term is bounded by $\varepsilon$ and the second by $\varepsilon H \sum_{H/2\leq n<H} \frac{1}{n^2}\leq 2 \varepsilon$. This  completes the proof of the claim.

	 Next note the simple but important fact that  if $b\in \ell^\infty(\Z)$, then  for every $r\in \N$ we have\footnote{This identity holds for logarithmic averages and fails in general for Ces\`aro averages, which is the main reason why we cannot treat Ces{\`a}ro averages in this article.}
	  $$
\lE_{n\in \bN} (b(r n)- b(n) \, r\, {\bf 1}_{r\Z}(n))=0.
$$
 Using this for  $r=p$ and
for the sequence $b_p$,  $p\in \P$,  defined by
$$
b_{p}(n):= c_p\, \prod_{j=1}^\ell a_j(n+ph_j), \quad n\in \N,
$$
we can rewrite the limit in \eqref{E:pn} as
$$
\lim_{H\to\infty}
\frac{1}{W_H}\sum_{p\in \CP_H}\,
 	c_p\, \lE_{n\in \bN}\, \prod_{j=1}^\ell a_j(n+ph_j)
\cdot {\bf 1}_{p\Z}(n).
$$

Hence, in order to establish \eqref{E:wanted} and because all relevant limits exist,  it suffices to show that
\begin{equation}\label{E:Goal1}
	 \liminf_{H\to\infty}	\Big| \lE_{n\in \bN}	\,\frac{1}{W_H}\sum_{p\in \CP_H}\,   c_p\, \prod_{j=1}^\ell a_j(n+ph_j)
	\cdot \big({\bf 1}_{p\Z}(n)-p^{-1}\big)\Big|=0.
\end{equation}
We argue by contradiction. Suppose \eqref{E:Goal1} fails for some $h_1,\ldots, h_\ell\in\Z$. Since $W_H\sim \frac{1}{\log{H}}$   there exists $\varepsilon>0$ such that for $\delta:=\varepsilon^2$ (we can choose it any function of $\varepsilon$ we like)  we have (the argument is similar if $\leq -\varepsilon\, \frac{1}{\log{H}}$)
\begin{equation}\label{E:Goal1'}
	\lE_{n\in \bN}\,\sum_{p\in \CP_H}\, c_p\, \prod_{j=1}^\ell a_j(n+ph_j)
	\cdot \big({\bf 1}_{p\Z}(n)-p^{-1}\big) \geq \varepsilon\, \frac{1}{\log{H}}
\end{equation}
for all large enough $H\in \N$.
Using the translation invariance of the average $\lE_{n\in \bN}$  we shift $n$ by $h$ and sum over $h\in [H]$. We get that

\begin{equation}\label{E:Goal2}
	\lE_{n\in [N_k]}\,\sum_{p\in \CP_H}\, \sum_{h\in[H]} c_p\, \prod_{j=1}^\ell a_j(n+h+ph_j)
	\cdot \big({\bf 1}_{p\Z}(n+h)-p^{-1}\big) \geq \varepsilon\, \frac{H}{\log{H}}
\end{equation}
for all large enough $H\in \N$ depending on $\varepsilon$ and all large enough $k$ depending on $\varepsilon$ and $H$.  Furthermore, after approximating the sequences $a_j$, $j=1,\ldots, \ell$,  to the nearest element of the lattice $\varepsilon^2\Z[i]$ we can assume that they take values on a finite set $A=A_\varepsilon$
and \eqref{E:Goal2} continues to hold  (with $\varepsilon/2$ in place of $\varepsilon$). For details
see \cite[Section~2]{Tao15}.


For $k\in \N$, on the space $\N$ we define the (non-shift invariant) probability
measure $\mathbb{P}_k$ on all subsets of $\N$ by letting
$$
\mathbb{P}_k(E):=\lE_{n\in [N_k]} {\bf 1}_E(n), \qquad E\subset \N.
$$
We also define the vector valued random variables ${\bf X}_H\colon \N\to \C^{\ell H}$ and ${\bf Y}_H\colon \N\to \prod_{p\leq H}\Z/ p\Z$ as follows:
$$
{\bf X}_H(n):=(a_{j,h}(n))_{j\in [\ell], h\in [ H]}, \ n\in \N,
\
\text{  where }  \ a_{j,h}(n):=a_j(n+h),
$$
$$
{\bf Y}_H(n):=\big(n \, \, (p) \big)_{p\leq H},\  n\in \N,
$$
where $\big(n \, \, (p) \big)_{p\leq H}$ denotes  the reductions of $n$ modulo the primes $p$ that are less than $H$.
Furthermore,  for $H\in \N$ we let $F_H\colon A^{\ell H}\times \prod_{p\leq H}\Z/ p\Z\to \R$
be defined by
\begin{equation}\label{E:FH}
F_H((x_{j,h})_{j\in [\ell], h\in [L H]}, (r_p)_{p\leq H} ):=
\sum_{p\in \CP_H} \sum_{h\in [H]} \, c_p\, \prod_{j=1}^\ell x_{j, h+ph_j}
\,  ( {\bf 1}_{p\Z}(r_p+h) -p^{-1})
\end{equation}
where $L:=\max_{j=1,\ldots, \ell}(h_j)+1$.
Let also $\E_{k}F$ denote the expectation of a function  $F\colon \N\to \C$ with respect to the probability measure $\P_k$.
Then  \eqref{E:Goal2}  gives that
\begin{equation}\label{E:Goal3}
	|\E_kF_H({\bf X}_H(n), {\bf Y}_H(n))| \geq \varepsilon\, \frac{H}{\log{H}}
\end{equation}
for all large enough $H$ depending on $\varepsilon$ and all large enough $k$
depending on  $\varepsilon$ and $H$.

Using the entropy decrement argument as in \cite[Lemma~3.2]{Tao15} we get that there exist a positive integer   $H_-=H_-(\varepsilon)$  (which can be chosen suitably large depending on $\varepsilon$), a larger positive integer    $H^+=H^+(\varepsilon)$,  and for $k\in \N$ there exist $H_k\in [H_-,H_+]$ such that
$$
	\mathbb{I}_k({\bf X}_{H_k},{\bf Y}_{H_k})\leq \frac{H_k}{\log{H_k}\log\log{H_k}}
$$
for every $k\in \N$ where $\mathbb{I}_k$ is the mutual information function (defined in \cite[Section~3]{Tao15}) with respect to the probability measure $\P_k$. Since the integers $H_k$ belong to the finite interval $[H_-,H_+]$ for every  $k\in\N$,   there exists a fixed  integer $H_0\in [H_-,H_+]$ such that
\begin{equation}\label{E:independence}
	\mathbb{I}_k({\bf X}_{H_0},{\bf Y}_{H_0})\leq \frac{H_0}{\log{H_0}\log\log{H_0}}
\end{equation}
for infinitely many $k\in \N$.  We deduce that  for $H:=H_0$,
\eqref{E:Goal3} and \eqref{E:independence} hold simultaneously for infinitely many $k\in \N$.



Using \eqref{E:independence} one gets as in \cite{Tao15} (using the Pinsker type inequality
\cite[Lemma~3.3]{Tao15} and then the Hoeffding inequality as in \cite[Lemma~3.5]{Tao15})
the following estimate (it corresponds to \cite[Equation~(3.16)]{Tao15})
\begin{equation}\label{E:decoupled}
	\E_{(r_{p})_{p\leq H}\in \prod_{p\leq {H_0}}\Z/ p\Z}\,  \E_kF_{H_0}({\bf X}_{H_0}(n), (r_p)_{p\leq H_0})\geq C \varepsilon \frac{H_0}{\log{H_0}}
\end{equation}
for some $C>0$ and for infinitely many $k\in \N$.
But by \eqref{E:FH} we have
$$
\E_{(r_p)_{p\leq H}\in \prod_{p\leq {H}}\Z/ p\Z}\,  F_H({\bf X}_{H}(n), (r_p)_{p\leq H})=0,
$$
for every $n,H\in \N$. This contradicts \eqref{E:decoupled} and completes the proof.
\end{proof}

\end{document}